\documentclass[11pt]{amsart}

\usepackage[T1]{fontenc}
\usepackage[utf8]{inputenc}
\usepackage{lmodern}
\usepackage{microtype}
\usepackage[a4paper,margin=3.2cm]{geometry}

\usepackage{amsmath,amssymb,amsfonts,amsthm,mathtools}
\usepackage{mathrsfs}
\usepackage{tikz-cd}
\usepackage{enumitem}
\usepackage{xcolor}
\usepackage{booktabs}
\usepackage{tabularx}
\usepackage{array}

\usepackage[colorlinks=true,linkcolor=blue!50!black,citecolor=blue!50!black,urlcolor=blue!50!black]{hyperref}
\usepackage[nameinlink,capitalise,noabbrev]{cleveref}

\theoremstyle{plain}
\newtheorem{theorem}{Theorem}[section]
\newtheorem{proposition}{Proposition}[section]
\newtheorem{lemma}{Lemma}[section]
\newtheorem{corollary}{Corollary}[section]

\theoremstyle{definition}
\newtheorem{definition}{Definition}[section]
\newtheorem{notation}{Notation}[section]
\newtheorem{setup}{Setup}[section]

\newtheorem{example}{Example}[section]

\theoremstyle{remark}
\newtheorem{remark}{Remark}[section]

\newcommand{\X}{\mathfrak X}

\newcommand{\DdagX}{\mathscr D^{\dagger}_{\X,\mathbb Q}}

\newcommand{\Ovhol}{\operatorname{Ovhol}}
\newcommand{\Coh}{\operatorname{Coh}}
\newcommand{\Irr}{\operatorname{Irr}}
\newcommand{\Isoc}{\operatorname{Isoc}}
\newcommand{\Car}{\operatorname{Car}}
\newcommand{\Supp}{\operatorname{Supp}}
\newcommand{\Spec}{\operatorname{Spec}}

\newcommand{\Lie}{\operatorname{Lie}}

\newcommand{\gr}{\operatorname{gr}}

\newcommand{\fstr}{\mathrm{fstr}}

\newcommand{\Loc}{\operatorname{Loc}}
\newcommand{\Coeff}{\operatorname{Coeff}}
\newcommand{\Adm}{\operatorname{Adm}}

\crefname{theorem}{Theorem}{Theorems}
\Crefname{theorem}{Theorem}{Theorems}
\crefname{proposition}{Proposition}{Propositions}
\Crefname{proposition}{Proposition}{Propositions}
\crefname{lemma}{Lemma}{Lemmas}
\Crefname{lemma}{Lemma}{Lemmas}
\crefname{corollary}{Corollary}{Corollaries}
\Crefname{corollary}{Corollary}{Corollaries}
\crefname{remark}{Remark}{Remarks}
\Crefname{remark}{Remark}{Remarks}
\crefname{definition}{Definition}{Definitions}
\Crefname{definition}{Definition}{Definitions}
\crefname{setup}{Setup}{Setups}
\Crefname{setup}{Setup}{Setups}
\crefname{section}{Section}{Sections}
\Crefname{section}{Section}{Sections}

\title[Filtered Strongly Equivariant Arithmetic $D^\dagger$-Modules]
{Arithmetic Kashiwara Regularity and Orbit Classification for Filtered Strongly Equivariant  \(\mathscr D^\dagger\)-Modules}

\author{Andr\'es Sarrazola-Alzate}
\address{STEAM School, Universidad EIA, Envigado, Colombia}
\email{andres.sarrazola@eia.edu.co}

\date{}

\subjclass[2020]{Primary 14F10; Secondary 14G22, 14L30, 14M15, 22E50}
\keywords{Arithmetic D-modules, Berthelot differential operators, formal flag varieties, equivariant D-modules, overconvergent isocrystals, crystalline distribution algebras}

\begin{document}

\begin{abstract}
We prove an arithmetic analogue of Kashiwara's regularity and orbit
classification theorem in the setting of filtered strongly equivariant Berthelot
arithmetic \(\mathscr D^\dagger\)-modules with Frobenius structure on formal
flag varieties. Let \(\mathcal G\) be a split connected reductive group scheme
over a complete discrete valuation ring of mixed characteristic, let
\(\mathfrak X=\widehat{\mathcal G/\mathcal B}\) be the formal flag variety,
and let \(\mathcal K\subseteq\mathcal G\) be a smooth closed subgroup whose
special fiber acts on \(X_s\) with finitely many orbits, all satisfying the orbit-separability condition. Writing
\(\mathfrak K=\widehat{\mathcal K}\), we introduce a filtered strong notion of
\(\mathfrak K\)-equivariance for coherent
\(\mathscr D^\dagger_{\mathfrak X,\mathbb Q}\)-modules.  Without Frobenius this
condition implies the microlocal holonomicity statement.  The point of the
filtered clause is precisely that it realizes infinitesimal equivariance on good finite-level models, so that the principal symbols of the fundamental vector fields vanish on the characteristic variety.  With Frobenius
structure, Caro's resolution of Berthelot's holonomicity-stability conjectures over smooth projective formal schemes identifies Frobenius holonomicity with Frobenius overholonomicity; the Frobenius base-change convention recalled by Huyghe--Schmidt, reinforced by Caro--Tsuzuki's overholonomicity theorem for overconvergent Frobenius isocrystals, then gives geometric overholonomicity, and the orbital devissage is then carried
out inside the overholonomic heart.  Thus the theorem is a regularity theorem for objects satisfying the filtered strong condition, not an existence theorem for that condition. The proof
proceeds through a moment-map containment for characteristic varieties,
followed, in the Frobenius range, by Caro's projective holonomicity-stability theorem
and then by an equivariant orbital devissage by arithmetic intermediate extensions. As a consequence, simple Frobenius filtered
strongly equivariant coherent arithmetic \(\mathscr D^\dagger\)-modules are
classified by pairs \((O,E)\), where \(O\) is a \(\mathcal K_s\)-orbit in
\(X_s\) and \(E\) is an irreducible \(\mathcal K\)-equivariant overconvergent
\(F\)-isocrystal on \((O,\overline O)\) whose intermediate extension is filtered
strongly equivariant.  Finally, by transport of structure through arithmetic
Beilinson--Bernstein localization at trivial character, the same parameter set
gives a formal distribution-side classification for the corresponding
essential-image category of finitely presented modules over the crystalline
distribution algebra.
\end{abstract}

\maketitle


\section{Introduction}

The classical theory of equivariant \(D\)-modules on flag varieties is a
geometric mechanism for converting representation-theoretic questions into
orbit geometry.  Let a complex algebraic group \(K\) act on a smooth algebraic
variety \(Y\), and assume that there are only finitely many \(K\)-orbits.  In
this situation, the classical equivariant theory separates two statements:
coherent equivariant \(D_Y\)-modules lie in the regular holonomic range, and
simple equivariant objects are parametrized by pairs consisting of a
\(K\)-orbit and an irreducible equivariant local system on that orbit; for the
flag-variety formulation of the classification, see
\cite[Theorem 11.6]{KashiwaraFlag}.  On a flag variety, this result is one of
the standard bridges between Beilinson--Bernstein localization,
Harish--Chandra modules, and the geometry of orbit stratifications
\cite[Th\'eor\`eme principal]{BeilinsonBernstein}, \cite[Theorem 11.6]{KashiwaraFlag}, and \cite[\S1]{KashiwaraSchmid}.

The purpose of this article is to prove an arithmetic counterpart of this
picture for Berthelot arithmetic differential operators.  We work over a
complete discrete valuation ring \(\mathcal V\) of mixed characteristic
\((0,p)\), with fraction field \(K\) and perfect residue field \(k\).  Let
\(\mathcal G\) be a split connected reductive group scheme over \(\mathcal V\),
let \(\mathcal B\subseteq\mathcal G\) be a Borel subgroup scheme, and let
\[
  \mathfrak X=\widehat{\mathcal G/\mathcal B}
\]
be the \(p\)-adic formal flag variety.  Berthelot's sheaf
\(\mathscr D^\dagger_{\mathfrak X,\mathbb Q}\) replaces the classical sheaf of
algebraic differential operators.  Its construction is obtained from the
inductive system of completed finite-level arithmetic differential operators
\(\widehat{\mathscr D}^{(m)}_{\mathfrak X,\mathbb Q}\), and it is designed to
capture overconvergent arithmetic differential equations
\cite[Sections 2.2 and 3]{BerthelotDModules}.

There are two arithmetic replacements for the classical objects appearing in
Kashiwara's theorem.  First, local systems are replaced by overconvergent
isocrystals.  Second, the usual middle extension of a local system is replaced
by the arithmetic intermediate extension.  Huyghe and Schmidt use the theory
of intermediate extensions, together with arithmetic Beilinson--Bernstein
localization, to classify irreducible modules over the crystalline
distribution algebra in terms of overconvergent isocrystals on smooth locally
closed subvarieties of the special fiber
\cite[Theorem 2.3.4]{HuygheSchmidtIntermediate}.  They also identify the
corresponding global section algebra with the crystalline distribution algebra
at trivial central character through the arithmetic localization theorem
\cite[Theorem 4.2.1]{HuygheSchmidtIntermediate}.

We use this non-equivariant classification as an input, not as a novelty claim.
The new point of the present paper is the equivariant regularity mechanism.
Under the finite-orbit and orbit-separability hypotheses, filtered strong equivariance forces the
characteristic variety into the zero fiber of the orbit moment map and therefore
yields holonomicity without any Frobenius hypothesis.  The upgrade from
holonomicity to geometric overholonomicity is made only in the Frobenius range:
there the restrictions to open orbits are overconvergent \(F\)-isocrystals, and
arithmetic intermediate extension preserves Frobenius structures and
geometric overholonomicity.  Thus the classification by orbits and coefficient
objects is a consequence of regularity in the Frobenius filtered strongly
equivariant category, rather than an assumption of overholonomicity imported
from the outset.

The key arithmetic regularity issue is the following.  Holonomicity alone is not
the correct replacement for regular holonomicity in Berthelot's theory.  The
stable category for the six operations is the overholonomic category, and, for
the base-change arguments used by Huyghe--Schmidt, the relevant condition is
geometric overholonomicity.  Thus the theorem proved here is split into two logically separate parts:
filtered strong equivariance gives a characteristic-variety theorem and
holonomicity without Frobenius, while Frobenius filtered strong equivariance
permits the geometric-overholonomicity upgrade by Caro's resolution of Berthelot's holonomicity-stability conjectures over smooth projective formal schemes.  The subsequent finite orbital devissage is used for
the orbit refinement and classification, and is performed only after the object
has been placed in the overholonomic heart.

The present article develops the equivariant orbit refinement of this
arithmetic picture.  Let \(\mathcal K\subseteq\mathcal G\) be a smooth closed
subgroup scheme and let \(\mathfrak K=\widehat{\mathcal K}\).  We assume that
\(\mathfrak K\) acts on \(\mathfrak X\) and that the induced action of
\(\mathcal K_s\) on the special fiber \(X_s\) has finitely many orbits and satisfies the orbit-separability hypothesis.  The
non-equivariant locally closed supports appearing in Huyghe--Schmidt's
classification are then replaced by the \(\mathcal K_s\)-orbits, and the
coefficient objects are replaced by \(\mathcal K\)-equivariant overconvergent
isocrystals on the corresponding pairs \((O,\overline O)\).

The main point is not merely that arithmetic intermediate extensions can be
made equivariant.  The essential new microlocal statement is that filtered strong equivariance
forces holonomicity.  The stronger geometric-overholonomicity statement is then
proved in the Frobenius range, where the external overholonomic formalism and
intermediate-extension theory are available in the required form.  Thus the
overholonomic condition is not imposed as an auxiliary hypothesis in the final
Frobenius classification; it is obtained from the equivariant geometry together
with the Frobenius overholonomic stability input.  This is the arithmetic
analogue of the regularity part of Kashiwara's theorem.

\subsection*{Strong equivariance}

A first subtlety is that one must use the correct notion of equivariance.
A weak equivariant structure is a linearization: it is an isomorphism between
pullbacks along the action and the projection, satisfying the usual unit and
cocycle conditions.  Such a structure records global symmetry, but it is not
sufficient for a microlocal regularity theorem.  The regularity mechanism
requires compatibility with infinitesimal symmetries.

For this reason we introduce \emph{strong} \(\mathfrak K\)-equivariance.  If
\(\mathcal M\) is a coherent
\(\mathscr D^\dagger_{\mathfrak X,\mathbb Q}\)-module, the action of
\(\mathfrak K\) differentiates to an action of
\(\operatorname{Lie}(\mathcal K)\otimes_{\mathcal V}K\).  On the other hand,
the action of \(\mathcal K\) on \(\mathfrak X\) gives fundamental vector fields,
hence differential operators acting on \(\mathcal M\).  Strong equivariance
means that these two infinitesimal actions coincide and that this equality is
realized on sufficiently large good finite-level models.  This filtered
realizability clause is not a cosmetic addition: in Berthelot's theory the
characteristic variety is defined through finite-level models and good
filtrations.  Thus the microlocal argument must be made at that level rather
than only after passing to the dagger limit.  This condition excludes the
spurious phenomenon in which a large coherent module carries a formal
linearization but is not constrained microlocally by the orbit directions.

The strong condition is the key that unlocks the characteristic variety.  We
construct the arithmetic moment map
\[
  \mu_{\mathfrak K}:T^*X_s\longrightarrow \mathfrak k_s^*
\]
associated with the infinitesimal action and prove that, for every filtered strongly
equivariant coherent module \(\mathcal M\),
\[
  \operatorname{Car}(\mathcal M)
  \subseteq
  \mu_{\mathfrak K}^{-1}(0).
\]
Equivalently, the principal symbols of all fundamental vector fields vanish on
\(\operatorname{Car}(\mathcal M)\).  Since the zero fiber of this moment map is
precisely the union of the conormal bundles to the \(\mathcal K_s\)-orbits, one
obtains
\[
  \operatorname{Car}(\mathcal M)
  \subseteq
  \bigcup_{O\in\mathcal K_s\backslash X_s}T_O^*X_s.
\]
The finite-orbit and orbit-separability hypotheses then imply holonomicity by the arithmetic
Bernstein inequality and holonomicity criterion \cite[Th\'eor\`emes 2.2.3 and 2.2.8]{CaroHolonomieSansFrobenius}.
The microlocal part of the proof can therefore be read as the following
numbered chain of implications:
\begin{equation}
\label{eq:intro-microlocal-chain}
\begin{aligned}
  \mathcal M\in\Coh^{\fstr}_{\mathfrak K}(\DdagX)
  &\Longrightarrow
  \sigma(v_\xi)|_{\Car(\mathcal M)}=0
  \quad (\xi\in\mathfrak k_K) \\
  &\Longrightarrow
  \Car(\mathcal M)\subseteq\mu_{\mathcal K}^{-1}(0) \\
  &\Longrightarrow
  \Car(\mathcal M)\subseteq
  \bigcup_{O\in\mathcal K_s\backslash X_s}T^*_O X_s \\
  &\Longrightarrow
  \dim\Car(\mathcal M)\leq\dim X_s.
\end{aligned}
\end{equation}
Together with Bernstein's inequality, the last line is exactly the
holonomicity criterion needed for the devissage argument.

\subsection*{Equivariant orbital devissage}

Holonomicity is not yet geometric overholonomicity.  The latter is the
arithmetic stability condition needed for the six operations and for the
intermediate-extension formalism; it is substantially stronger than a mere
bound on the dimension of the characteristic variety.  The central structural
result of the paper is therefore an equivariant orbital devissage theorem.

The devissage used later is deliberately carried out only after the relevant
Frobenius object has already been placed in the overholonomic heart.  At the
holonomic level, support truncation along a closed \(\mathcal K_s\)-stable subset
is used only to isolate open-orbit restrictions.  If \(O\) is open in
\(\operatorname{Supp}(\mathcal M)\), the characteristic containment shows that
\(\mathcal M|_O\) has characteristic variety contained in the zero section of
\(T^*O\).  Hence \(\mathcal M|_O\) is an overconvergent isocrystal, and the
equivariant structure on \(\mathcal M\) restricts to a \(\mathcal K\)-equivariant
structure on this isocrystal.

In the Frobenius range, Caro's smooth-projective holonomicity-stability theorem first places \(\mathcal M\) in the
geometrically overholonomic heart.  Only then do we compare the open-orbit
quotient with the arithmetic intermediate extension \(j_{O,!+}(E)\).  The
remaining quotient is supported on the boundary \(\overline O\setminus O\), and
induction on the finite number of orbits in the support gives a finite
filtration whose successive quotients are of the form
\[
  j_{O_i,!+}(E_i),
\]
where \(O_i\) is a \(\mathcal K_s\)-orbit and \(E_i\) is a Frobenius
\(\mathcal K\)-equivariant overconvergent isocrystal whose intermediate
extension is filtered strongly equivariant.  No exactness of \(j_{!+}\) is used:
we extract one middle-extension piece at a time and pass the rest to the
boundary.

\subsection*{Scope of the theorem}

The theorem is deliberately stated in the natural Kashiwara-type range.  The
microlocal holonomicity statement is a result for the formal flag variety at
trivial infinitesimal character, for a smooth subgroup
\(\mathcal K\subseteq\mathcal G\) whose special fiber has finitely many orbits satisfying orbit separability
on \(X_s\), and for the filtered strong equivariance condition of
\Cref{def:strong-equivariance}.  The geometric-overholonomicity and
classification statements are stated for the corresponding Frobenius subcategory.  The word ``strong'' is used in this arithmetic
sense throughout the paper: it includes both infinitesimal compatibility and
finite-level filtered realizability.  The filtered part is not a regularity
assumption and is not used to import overholonomicity; rather, it is the
finite-level form of infinitesimal equivariance needed to run Berthelot's
principal-symbol construction.  Its role, scope, and basic sources of examples
are discussed in \Cref{subsec:filtered-hypothesis}.

The result is a regularity theorem, not an existence theorem.  It says that any
coherent object satisfying the filtered strong condition is forced, by the
orbit geometry and the finite-level symbol calculation, into the holonomic range
and, with Frobenius, into the geometrically overholonomic range.  It does not
assert that the filtered strong condition is automatic, nor that the admissible
locus constructed below is larger than the objects obtained by restricting the
non-equivariant overholonomic classification to orbit supports.  Whether ordinary
infinitesimal strong equivariance implies the finite-level condition \((SE2)\),
or whether the resulting class can be characterized more intrinsically, is left
as an explicit open problem.  This distinction is important: the theorem controls
regularity once the filtered strong hypothesis is present, independently of how
such an object was constructed.

We do not assert that weak equivariance alone implies overholonomicity, nor do
we treat arbitrary smooth formal schemes, infinitely many orbits, inseparable
orbit actions, non-smooth subgroup schemes, or twisted arithmetic differential
operators.  These restrictions are used in the proof: filtered strong
equivariance gives moment-zero containment, and finiteness of the orbit
stratification turns conormal containment into holonomicity.  The optional
stabilizer interpretation is deliberately separated from the proof of the main
theorem and is stated only under the additional stabilizer-admissibility
hypothesis.

\subsection*{Main results}

The first result is the arithmetic analogue of Kashiwara's regularity theorem in
the precise two-step sense used in this paper: filtered strong equivariance gives
holonomicity, and Frobenius filtered strong equivariance gives geometric
overholonomicity.

\begin{theorem}[Arithmetic Kashiwara regularity]
\label{thm:A-intro}
Let \(\mathcal V\) be a complete discrete valuation ring of mixed
characteristic \((0,p)\), with fraction field \(K\) and perfect residue field
\(k\). Let \(\mathcal G\) be a split connected reductive group scheme over
\(\mathcal V\), let \(\mathcal B\subseteq \mathcal G\) be a Borel subgroup
scheme, and let
\[
  \mathfrak X=\widehat{\mathcal G/\mathcal B}
\]
be the formal flag variety. Let \(\mathcal K\subseteq\mathcal G\) be a smooth
closed subgroup scheme acting on \(\mathfrak X\), and assume that the special
fiber \(\mathcal K_s\) acts on \(X_s\) with finitely many orbits and satisfies
the orbit-separability hypothesis of \Cref{hyp:orbit-separability}. Then:
\begin{enumerate}[label=\textup{(\roman*)}]
\item every filtered strongly \(\mathfrak K\)-equivariant coherent
\(\mathscr D^\dagger_{\mathfrak X,\mathbb Q}\)-module is holonomic;
\item every Frobenius filtered strongly \(\mathfrak K\)-equivariant coherent
\(\mathscr D^\dagger_{\mathfrak X,\mathbb Q}\)-module is geometrically
overholonomic.
\end{enumerate}
Here \(\mathfrak K=\widehat{\mathcal K}\).
\end{theorem}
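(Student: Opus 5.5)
The argument runs along the chain \eqref{eq:intro-microlocal-chain} for part (i), and then feeds the resulting holonomicity into Caro's stability theorem for part (ii).

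\textbf{Part (i).} Let \(\mathcal M\) be filtered strongly \(\mathfrak K\)-equivariant and coherent.

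1. Using the filtered clause \((SE2)\) of \Cref{def:strong-equivariance}, choose, for \(m\) large, a good finite-level model \(\mathcal M^{(m)}\) over \(\widehat{\mathscr D}^{(m)}_{\mathfrak X}\), together with a good filtration. The infinitesimal action of \(\xi\in\mathfrak k\) is realized on this model compatibly with the filtration. Concretely, the operator \(v_\xi\) acting on \(\mathcal M^{(m)}\) agrees with the differentiated group action. The differentiated action does not raise the order filtration, because it comes from the linearization, which acts through \(\mathcal O\)-linear transport. Hence \(v_\xi\) maps \(F_i\) into \(F_i\), whereas a first-order operator a priori only maps \(F_i\) into \(F_{i+1}\). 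On \(\gr\) reduced modulo \(\varpi\), this says the principal symbol \(\sigma(v_\xi)\) acts by zero. Therefore \(\sigma(v_\xi)\) lies in the radical of the annihilator, and so it vanishes on \(\Car(\mathcal M)\) as defined through Berthelot's finite-level characteristic varieties.

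2. The map \(\xi\mapsto\sigma(v_\xi)\) is by definition the pullback of coordinates under \(\mu_{\mathfrak K}\). Hence \(\Car(\mathcal M)\subseteq\mu_{\mathfrak K}^{-1}(0)\).

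3. Use \Cref{hyp:orbit-separability}. Separability means the orbit maps \(\mathcal K_s\to O\) are smooth, so the fundamental vector fields span \(T_xO\) at every point \(x\). Then \(\mu^{-1}(0)\cap T^*X_s|_O=T^*_OX_s\). Taking the union over the finitely many orbits gives the conormal containment.

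4. Each \(T^*_OX_s\) has dimension \(\dim X_s\), so \(\dim\Car(\mathcal M)\le\dim X_s\). Bernstein's inequality then gives equality on every component, and Caro's criterion \cite[Th\'eor\`emes 2.2.3 and 2.2.8]{CaroHolonomieSansFrobenius} yields holonomicity.

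\textbf{Part (ii).} With a Frobenius structure added, (i) makes \(\mathcal M\) an \(F\)-holonomic \(\mathscr D^\dagger_{\mathfrak X,\mathbb Q}\)-module on the smooth projective formal scheme \(\mathfrak X\). Caro's resolution of Berthelot's conjectures then gives overholonomicity. For the geometric version, I will apply the Frobenius base-change convention as recalled by Huyghe--Schmidt. Base change along finite extensions of \(k\) or \(\mathcal V\) preserves Frobenius, holonomicity and \(\mathfrak X\) being smooth projective. So the same theorem applies after every such base change, which is exactly geometric overholonomicity. Caro--Tsuzuki can be cited where overconvergent \(F\)-isocrystal restrictions appear, for consistency with the convention.

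\textbf{Main obstacle.} I expect the hardest step to be step 1. The issue is that the equality of the two infinitesimal actions must be seen at a single finite level \(m\), with both actions preserving a common good filtration. Without this, the dagger limit loses control of the symbols. There is also the torsion issue: the characteristic variety lives over \(k\), so one must check that \(\sigma(v_\xi)\) survives reduction modulo \(\varpi\) in the level-\(m\) graded ring. At level \(m\) this ring has divided-power symbols, but first-order symbols are unaffected, so I expect this to reduce to Berthelot's description of \(\gr\widehat{\mathscr D}^{(m)}\) in degree one.
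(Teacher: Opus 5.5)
Your proposal follows the paper's proof essentially step for step: \textup{(SE1)} and \textup{(SE2)} make $v_\xi$ act with filtration degree zero, so $\sigma(v_\xi)$ annihilates $\gr_F\mathcal M^{(m)}$ and $\Car(\mathcal M)\subseteq\mu_{\mathcal K}^{-1}(0)$; orbit separability turns this into conormal containment; the dimension count plus Bernstein's inequality gives (i); and (ii) is Caro's smooth-projective stability theorem followed by Huyghe--Schmidt's remark that Frobenius objects automatically satisfy the base-change condition. Three small repairs are needed: first, the degree-zero action of $d\alpha(\xi)$ is exactly the hypothesis \textup{(SE2)}, not a consequence of the linearization (its derivative satisfies $[d\alpha(\xi),f]=v_\xi(f)$, so it is not $\mathcal O$-linear, and the paper explicitly leaves \textup{(SE1)}$\Rightarrow$\textup{(SE2)} open); second, \textup{(SE2)} supplies only one level $m_0$, so you need the propagation argument of \Cref{lem:level-propagation} to reach the levels at which $\Car$ is computed; and third, your rerun of Caro's theorem over finite extensions is unnecessary and would not by itself give overholonomicity after arbitrary base change, which is what ``geometric'' requires.
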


The second result is the orbit classification of simple objects in the Frobenius filtered strongly equivariant category.  It is the
arithmetic version of the classical dictionary
\[
  \text{orbit plus equivariant local system}
  \quad\rightsquigarrow\quad
  \text{simple equivariant }D\text{-module}.
\]
In the arithmetic setting, local systems are replaced by equivariant
overconvergent isocrystals and the classical middle extension is replaced by
arithmetic intermediate extension.

\begin{theorem}[Filtered arithmetic orbit classification]
\label{thm:B-intro}
Under the hypotheses of \Cref{thm:A-intro}, restriction to the open orbit of a
simple object gives an injective map
\[
  \rho:
  \Irr F\text{-}\Coh^{\fstr}_{\mathfrak K}
  \bigl(\mathscr D^\dagger_{\mathfrak X,\mathbb Q}\bigr)
  \longrightarrow
  \coprod_{O\in \mathcal K_s\backslash X_s}
  \Irr F\text{-}\Isoc^{\dagger}_{\mathcal K}(O,\overline O).
\]
The image of \(\rho\) is precisely the subset of irreducible
\(\mathcal K\)-equivariant overconvergent \(F\)-isocrystals whose arithmetic
intermediate extensions are filtered strongly equivariant with Frobenius.
Equivalently, writing this image as
\(\coprod_O\Adm^{\Phi,\fstr}_{\mathcal K}(O,\overline O)\), the inverse map is
\[
  (O,E)\longmapsto j_{O,!+}(E).
\]
\end{theorem}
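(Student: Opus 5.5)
The proof of \Cref{thm:B-intro} will rest on \Cref{thm:A-intro}(ii). By that theorem, every simple object \(\mathcal M\) of \(F\text{-}\Coh^{\fstr}_{\mathfrak K}(\DdagX)\) is geometrically overholonomic, so all arguments can take place in the overholonomic heart, where the Huyghe--Schmidt intermediate-extension formalism is available \cite[Theorem 2.3.4]{HuygheSchmidtIntermediate}.

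\textbf{Step 1: definition of \(\rho\).} Let \(\mathcal M\) be simple. Its support \(\Supp(\mathcal M)\) is a closed \(\mathcal K_s\)-stable subset of \(X_s\). By finiteness of orbits, it is a finite union of orbits. Simplicity and equivariance should force this support to be irreducible, hence of the form \(\overline O\) for a unique orbit \(O\) that is open in the support. To see this, I would use the closed \(\mathcal K_s\)-stable complement of any open orbit and apply support truncation; this produces an equivariant subobject or quotient, which must be \(0\) or \(\mathcal M\).

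Next, the moment-map containment \(\Car(\mathcal M)\subseteq\bigcup_{O'}T^*_{O'}X_s\), restricted over \(O\), shows that \(E:=\mathcal M|_O\) has characteristic variety inside the zero section of \(T^*O\). Hence \(E\) is an overconvergent isocrystal on \((O,\overline O)\). It inherits Frobenius and a \(\mathcal K\)-equivariant structure by restriction. We set \(\rho(\mathcal M)=(O,E)\).

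\textbf{Step 2: identification with the intermediate extension.} I would show that \(E\) is irreducible and that \(\mathcal M\cong j_{O,!+}(E)\). The adjunction maps \(j_{O,!}E\to\mathcal M\to j_{O,+}E\) can be taken in the overholonomic heart, using \(\mathcal H^0\) of the adjunctions. The image of \(j_{O,!}E\) in \(j_{O,+}E\) is \(j_{O,!+}(E)\). Since \(\mathcal M\) is simple and has nothing supported on \(\overline O\setminus O\), the morphism \(\mathcal M\to j_{O,+}E\) is injective and \(j_{O,!}E\to\mathcal M\) is surjective. Therefore \(\mathcal M\cong j_{O,!+}(E)\).

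Irreducibility of \(E\) follows because a proper subisocrystal \(E'\subset E\) would give a proper nonzero subobject \(j_{O,!+}(E')\subset j_{O,!+}(E)\). Here I use the non-equivariant Huyghe--Schmidt theory, in which \(j_{!+}\) of an irreducible object is simple and \(j_{!+}\) preserves monomorphisms on the image level. One point must be checked: is \(E'\) equivariant? Irreducibility is asserted in \(\Irr F\text{-}\Isoc^\dagger_{\mathcal K}\), that is, among equivariant subobjects. It therefore suffices to check that \(j_{O,!+}(E')\) is a subobject of \(\mathcal M\) in the equivariant category. This requires that \(j_{O,!+}\) be functorial for equivariant structures; I will derive that from the smooth base change of \(j_{!+}\) along the action and projection maps.

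\textbf{Step 3: injectivity, image, and inverse.} Injectivity follows from Step 2, because \(\mathcal M\) is recovered as \(j_{O,!+}(\rho(\mathcal M))\). For the image, one inclusion is immediate: \(j_{O,!+}(E)\cong\mathcal M\) is filtered strongly equivariant with Frobenius. Conversely, suppose \(E\) is irreducible, equivariant, Frobenius, and \(j_{O,!+}(E)\) is filtered strongly equivariant. Then \(j_{O,!+}(E)\) is simple in the overholonomic category, so a fortiori it is simple in the full subcategory \(F\text{-}\Coh^{\fstr}_{\mathfrak K}\). Moreover \(j_{O,!+}(E)|_O=E\). Together with Step 2 this shows that \((O,E)\mapsto j_{O,!+}(E)\) is inverse to \(\rho\).

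\textbf{Main obstacle.} I expect the hardest point to be the comparison of simplicity across categories. Simplicity in the strongly equivariant Frobenius category must be shown to imply support irreducibility and the absence of boundary-supported sub- and quotient objects. This needs the support truncations \(\mathbb R\underline\Gamma\) and \(j^\dagger\), together with the adjunction images, to remain filtered strongly equivariant. My first attempt would prove this stability using the equivariant orbital devissage described above, which is performed inside the overholonomic heart. If it fails in general, I would define the image condition \(\Adm^{\Phi,\fstr}_{\mathcal K}\) exactly so as to absorb it.
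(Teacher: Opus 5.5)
Your overall architecture is the paper's: regularity places $\mathcal M$ in $\Ovhol(X_s/K)$, you truncate along $Z=\Supp(\mathcal M)\setminus O$, you use the adjunction maps $j^0_{O,!}E\to\mathcal M\to j^0_{O,+}E$, and injectivity comes from reconstruction. However, two steps do not go through as written.

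\textbf{First gap: simplicity of $j_{O,!+}(E)$ in Step 3.} You assert that $j_{O,!+}(E)$ is simple in the overholonomic category. That holds only if $E$ is irreducible as an object of $\Ovhol(O/K)$. Here $E$ is only irreducible in $F\text{-}\Isoc^\dagger_{\mathcal K}(O,\overline O)$. An equivariantly irreducible (or Frobenius-irreducible) isocrystal can be reducible once that structure is forgotten, for instance when the group or Frobenius permutes non-equivariant simple factors (cf.\ \Cref{rem:irreducible-equivariant-vs-nonequivariant}). So the inclusion of the admissible locus in the image is not proved.

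The correct argument uses only equivariant irreducibility plus non-equivariant boundary minimality. Let $0\neq\mathcal N\subseteq j_{O,!+}(E)$ be a Frobenius filtered strongly equivariant subobject. Then $\mathcal N|_O$ is a subobject of $E$ in $F\text{-}\Isoc^\dagger_{\mathcal K}(O,\overline O)$, hence equals $0$ or $E$.
\begin{itemize}
\item If $\mathcal N|_O=0$, then $\mathcal N$ is a nonzero subobject supported on $\partial O$.
\item If $\mathcal N|_O=E$, then $j_{O,!+}(E)/\mathcal N$ is a quotient supported on $\partial O$.
\end{itemize}
Minimality of $j_{O,!+}$ excludes a nonzero object in either case, so $\mathcal N=j_{O,!+}(E)$.

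\textbf{Second gap: the main obstacle is left open.} Simplicity of $\mathcal M$ only controls subobjects lying in $F\text{-}\Coh^{\fstr}_{\mathfrak K}$. Each of the following claims therefore needs to know that $\Gamma_Z(\mathcal M)$, $\operatorname{Im}(j^0_{O,!}E\to\mathcal M)$ and $\operatorname{Im}(j_{O,!+}(E')\to\mathcal M)$ are filtered strongly equivariant:
\begin{itemize}
\item $\mathcal M$ has nothing supported on the boundary;
\item $\mathcal M\to j^0_{O,+}E$ is injective;
\item $j^0_{O,!}E\to\mathcal M$ is surjective;
\item $j_{O,!+}(E')$ contradicts simplicity.
\end{itemize}

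Redefining $\Adm^{\Phi,\fstr}_{\mathcal K}$ cannot absorb this. That locus is fixed by the statement, and the missing property concerns an arbitrary simple $\mathcal M$ (it is needed for $\rho$ to be well defined and injective), not the coefficient $E$. Appealing to the orbital devissage is also circular, since the devissage rests on exactly the following two ingredients.
\begin{enumerate}
\item[(i)] \emph{Weak equivariance of these subobjects.} This comes from compatibility of $\Gamma_Z$ (using $a^{-1}Z=p^{-1}Z$), of the adjunction morphisms, and of $j_{!+}$ with the smooth pullbacks $a^\sharp,p^\sharp$.
\item[(ii)] \emph{Automatic inheritance of \textup{(SE1)} and \textup{(SE2)}.} Any coherent submodule or quotient stable under the weak structure inherits \textup{(SE1)}, because the two infinitesimal actions already agree on $\mathcal M$. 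It inherits \textup{(SE2)} as follows. After raising the level, represent the submodule by a $\widehat{\mathscr D}^{(m)}$-submodule of the \textup{(SE2)} model and give it the intersection filtration. The fundamental vector fields preserve this submodule because they lie in $\widehat{\mathscr D}^{(m)}$. They preserve the ambient good filtration by \textup{(SE2)}, hence they preserve the intersection (see \Cref{lem:fstr-subquotient-closure} and \Cref{lem:invariant-support-truncation}).
\end{enumerate}

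With these in hand, the argument closes:
\begin{itemize}
\item $\Gamma_Z(\mathcal M)$ is an equivariant subobject with $\Gamma_Z(\mathcal M)|_O=0\neq\mathcal M|_O$, so simplicity forces $\Gamma_Z(\mathcal M)=0$.
\item This kills every coherent submodule supported on $Z$, equivariant or not, so $\mathcal M\to j^0_{O,+}E$ is injective.
\item The image of $j^0_{O,!}E\to\mathcal M$ is a nonzero equivariant subobject, hence all of $\mathcal M$.
\end{itemize}
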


The representation-theoretic form of the classification is a formal corollary obtained by transport of structure.
Let
\[
  A_0=\Gamma\bigl(\mathfrak X,
  \mathscr D^\dagger_{\mathfrak X,\mathbb Q}\bigr).
\]
The arithmetic localization theorem identifies \(A_0\) with the crystalline
distribution algebra at trivial central character \cite[Theorem 4.2.1]{HuygheSchmidtIntermediate}.
This geometric classification therefore has a natural algebraic shadow.

\begin{corollary}[Crystalline distribution algebra]
\label{thm:C-intro}
By transport of structure through the arithmetic Beilinson--Bernstein
equivalence, Frobenius filtered strongly \(\mathfrak K\)-equivariant coherent
\(\mathscr D^\dagger_{\mathfrak X,\mathbb Q}\)-modules correspond to the
essential-image category of Frobenius filtered strongly \(\mathfrak K\)-equivariant
finitely presented \(A_0\)-modules. Consequently, the simple Frobenius filtered strongly \(\mathfrak K\)-equivariant
finitely presented \(A_0\)-modules are classified by pairs \((O,E)\), where
\(O\subseteq X_s\) is a \(\mathcal K_s\)-orbit and
\[
  [E]\in \Adm^{\Phi,\fstr}_{\mathcal K}(O,\overline O).
\]
The simple module corresponding to \((O,E)\) is
\[
  \Gamma\bigl(\mathfrak X,j_{O,!+}(E)\bigr).
\]
\end{corollary}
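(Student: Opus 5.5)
The plan is to obtain the corollary by pure transport of structure along the arithmetic Beilinson--Bernstein equivalence at trivial central character, with no new geometric input. By \cite[Theorem 4.2.1]{HuygheSchmidtIntermediate}, the global-section functor \(\Gamma(\mathfrak X,-)\) is an exact equivalence between coherent \(\DdagX\)-modules and finitely presented \(A_0\)-modules. Its quasi-inverse is the localization functor \(\mathscr D^\dagger_{\mathfrak X,\mathbb Q}\otimes_{A_0}(-)\). I would first record that this equivalence is compatible with Frobenius structures. Concretely, \(\mathcal G\) and \(\mathcal B\) are defined over \(\mathcal V\), so Frobenius pullback on \(\mathfrak X\) corresponds, through \(\Gamma\), to the induced Frobenius twist on \(A_0\)-modules, and \(\Gamma\) of a Frobenius structure is again a Frobenius structure. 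This is standard, and I would cite it rather than reprove it.

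Next, I \emph{define} the category of Frobenius filtered strongly \(\mathfrak K\)-equivariant finitely presented \(A_0\)-modules as the essential image of \(F\text{-}\Coh^{\fstr}_{\mathfrak K}(\DdagX)\) under \(\Gamma\). Morphisms are taken to be images of morphisms, which is the meaning of ``transport of structure.'' With this definition, the first sentence of the corollary is tautological: \(\Gamma\) restricts to an equivalence between the geometric category and its essential image. The point I expect to need care is that simplicity is preserved. This requires checking that \(F\text{-}\Coh^{\fstr}_{\mathfrak K}(\DdagX)\) is a full subcategory stable under the subquotients used to define \(\Irr\) in \Cref{thm:B-intro}. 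Since an equivalence of categories preserves and reflects simple objects (as objects with no nontrivial subobjects in the category), \(\Gamma\) induces a bijection
\[
  \Irr F\text{-}\Coh^{\fstr}_{\mathfrak K}\bigl(\DdagX\bigr)\xrightarrow{\ \sim\ }\Irr\bigl(\text{essential image}\bigr).
\]
If simplicity in the geometric category is instead measured inside the overholonomic heart, then by \Cref{thm:A-intro}(ii) all objects already lie there. In that case I would use exactness of \(\Gamma\) to match subobjects on both sides.

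Finally, I compose this bijection with the inverse of \(\rho\) from \Cref{thm:B-intro}. That theorem identifies \(\Irr F\text{-}\Coh^{\fstr}_{\mathfrak K}(\DdagX)\) with \(\coprod_O\Adm^{\Phi,\fstr}_{\mathcal K}(O,\overline O)\) via \((O,E)\mapsto j_{O,!+}(E)\). Applying \(\Gamma\) then yields the stated parametrization, with the simple module attached to \((O,E)\) being \(\Gamma(\mathfrak X,j_{O,!+}(E))\). Injectivity and surjectivity both follow because each map in the composite is a bijection.

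The only genuine obstacle is bookkeeping: making the notion of ``simple'' on the algebraic side agree with the one used in \Cref{thm:B-intro}, including Frobenius. I would handle this by stating explicitly that both the category and its simple objects on the \(A_0\)-side are defined by transport, so that no intrinsic characterization is claimed.
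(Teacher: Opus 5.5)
Your proposal is correct and follows the paper's route. The paper defines $A_0\operatorname{-mod}^{\Phi,\fstr}_{\mathfrak K,\mathrm{fp}}$ as the essential image with transported structure (\Cref{def:strong-equivariant-A0-modules}) and notes that $\Gamma$ and $\Loc$ restrict to mutually quasi-inverse equivalences (\Cref{prop:equivariant-ABB-trivial}); it then transports \Cref{thm:orbit-classification} to obtain the bijection and the formula $\Gamma(\mathfrak X,j_{O,!+}(E))$. Two of your steps can be dropped. The preliminary claim that $\Gamma$ intrinsically matches Frobenius pullback with a Frobenius twist on $A_0$-modules is not needed, because in the paper the Frobenius and equivariant data are part of the chosen structure on $\Loc(M)$. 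The concern about fullness or subquotient-stability is also unnecessary, since any equivalence of categories preserves and reflects simple objects.
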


\subsection*{Relation with previous work}

Berthelot's theory provides the sheaves of arithmetic differential operators
and the formalism of coherent arithmetic \(\mathscr D\)-modules
\cite[Sections 2.2 and 3]{BerthelotDModules}.  Huyghe--Schmidt recall a stable-after-base-change overholonomic framework,
including a without-Frobenius formulation, in
\cite[\S2.1, first paragraphs and Definition 2.1.3]{HuygheSchmidtIntermediate}, following Caro and Abe--Caro.
For the main regularity upgrade in this paper we use the Frobenius range of
that framework: Frobenius overholonomic complexes automatically satisfy the
base-change condition, and intermediate extension preserves Frobenius
structures.  Huyghe--Schmidt's work also supplies the
arithmetic intermediate-extension classification of irreducible overholonomic
modules and the arithmetic Beilinson--Bernstein equivalence at trivial
character \cite[Theorem 2.3.4 and Theorem 4.2.1]{HuygheSchmidtIntermediate}.
They also treat the Bruhat-cell case explicitly: in their Section~5 they
identify the localization of the relevant crystalline highest-weight modules
with intermediate extensions of the constant coefficients on Bruhat cells,
more precisely with objects of the form \(v_{!+}(\mathcal O_{Y_w})\)
\cite[Theorem 5.1.9]{HuygheSchmidtIntermediate}.  Thus the Bruhat-cell
examples in \Cref{sec:examples} below are not presented as new constructions
of those constant-coefficient intermediate extensions.  They are compatibility
tests for the filtered strongly equivariant framework, and they indicate how
additional equivariant coefficient data and admissibility conditions enter the
orbitwise parameter set.
A parallel, but technically different, bridge between analytic distribution
algebras, geometric localization and representation theory is developed in the
rigid-analytic theory of Ardakov--Wadsley. Their sheaf
\(\widehat{\mathcal D}\) and the associated coadmissible-module formalism
provide a rigid analytic counterpart to algebraic localization
\cite[Theorems A and B]{ArdakovWadsleyRigidD}, while their work on compact
\(p\)-adic analytic groups relates canonical dimension and coadjoint geometry
\cite[Theorem A]{ArdakovWadsleyCompactReps}. Our setting is Berthelot's
arithmetic \(\mathscr D^\dagger\)-theory on formal flag varieties; the
comparison is motivational rather than formal, but it explains the representation-theoretic motivation for the
crystalline distribution algebra side.

The novelty of the present work is the equivariant finite-level regularity
mechanism and its orbitwise refinement.  We do not merely rename the locally
closed supports in Huyghe--Schmidt's classification as orbits.  We prove that,
under the finite-orbit and orbit-separability hypotheses, the finite-level
realizability built into filtered strong equivariance forces the principal
symbols of the fundamental vector fields to vanish on the characteristic
variety; equivalently, the characteristic variety lies in the union of orbit
conormals.  This is the arithmetic finite-level form of the classical
moment-map proof, and it gives holonomicity without Frobenius.  In the
Frobenius range, Caro's smooth-projective holonomicity-stability theorem
supplies the overholonomicity upgrade, Huyghe--Schmidt's Frobenius base-change
convention gives geometric overholonomicity, and we then prove an equivariant
orbital devissage inside the overholonomic heart.  The classification of simple
Frobenius objects follows from this regularity theorem and from the extraction
of the open orbit, rather than from assuming overholonomicity at the outset.

\subsection*{Relation with the literature}

The argument uses results from three traditions, so we recall the precise background used below.  On the classical side, the localization
philosophy originates in Beilinson--Bernstein localization
\cite[Th\'eor\`eme principal]{BeilinsonBernstein}; the equivariant orbit
classification and its representation-theoretic interpretation are modeled on
Kashiwara's account of flag varieties \cite[Theorem 11.6]{KashiwaraFlag} and on
the quasi-equivariant formalism of Kashiwara--Schmid
\cite[\S1]{KashiwaraSchmid}.  We use standard facts on holonomic
\(D\)-modules, characteristic varieties, and perverse sheaves in the form found
in Hotta--Takeuchi--Tanisaki \cite[Chapters 2--3]{HottaTakeuchiTanisaki},
Beilinson--Bernstein--Deligne \cite[\S1.4]{BBD}, Kashiwara--Schapira
\cite[Chapter VIII]{KashiwaraSchapira}, and Chriss--Ginzburg
\cite[\S1.4]{ChrissGinzburg}.  The moment-map and conormal geometry used in the
proof is also parallel to the geometric representation-theoretic treatment in
Borho--Brylinski \cite[\S2]{BorhoBrylinskiI}, Borho--Brylinski
\cite[\S1]{BorhoBrylinskiII}, Bernstein--Gelfand--Gelfand
\cite[\S3]{BGGDiffOperators}, Kashiwara \cite[\S2]{KashiwaraLieGroups}, and
Mili\v{c}i\'c \cite[\S2]{MilicicDmodules}.

On the arithmetic side, Berthelot's rigid-cohomological motivation,
finite-level operators and the \(\dagger\)-limit are taken from
\cite[\S3]{BerthelotRigidDwork}, \cite[\S\S2.2--3]{BerthelotDModules}, and
\cite[\S2]{BerthelotIntro}; Frobenius descent and its compatibility with
arithmetic differential operators are used in the background as in
\cite[\S3]{BerthelotDescente}.  The overholonomic framework recalled by Huyghe--Schmidt
\cite[\S2.1, first paragraphs, Definition 2.1.3 and Theorem 2.1.4]{HuygheSchmidtIntermediate} includes a without-Frobenius
stable-after-base-change category, but the holonomic-to-overholonomic upgrade below is
stated only in the Frobenius range. In that range the base-change condition is automatic for
Frobenius overholonomic complexes, and Caro's projective stability theorem supplies the required identification of Frobenius holonomicity with Frobenius overholonomicity. Caro--Tsuzuki's theorem for overconvergent Frobenius isocrystals also supports the coefficient side of the geometric-overholonomic formalism. Caro's 2009 overholonomic theory \cite{CaroSurholonomie} supplies the
background six-operation formalism, while the holonomicity criterion without
Frobenius structure is taken from
\cite[Th\'eor\`eme 2.2.8]{CaroHolonomieSansFrobenius}.  The relationship
between overconvergent isocrystals, intermediate extensions, and six-functor
stability is informed by Abe--Caro \cite[\S4]{AbeCaroWeights}, Caro
\cite[\S3]{CaroOverconvergentFIsocrystals}, Kedlaya
\cite[\S2]{KedlayaNotesIsocrystals}, Le Stum's overconvergent site
\cite[Sections 7--8]{LeStumOverconvergentSite}, Lazda's effective descent
result \cite[Theorem 5.1]{LazdaDescent}, and Huyghe--Schmidt
\cite[Theorem 2.3.4]{HuygheSchmidtIntermediate}.

For the formal flag variety and the distribution algebra side, we use the
arithmetic Beilinson--Bernstein and \(D^\dagger\)-affinity results of Huyghe
\cite[Th\'eor\`eme 4.2.1]{HuygheProjectiveDdagger}, Huyghe--Patel--Schmidt--Strauch
\cite[Theorem 1.1]{HuyghePatelSchmidtStrauch}, Huyghe--Schmidt
\cite[\S4]{HuygheSchmidtDistributions}, and Huyghe--Schmidt
\cite[Theorem 3.2]{HuygheSchmidtFlag}.  The crystalline distribution algebra
classification used for comparison is \cite[Theorem 4.2.1]{HuygheSchmidtIntermediate};
the closed-immersion arithmetic Kashiwara theorem in the twisted setting is
\cite[Theorem 1.1]{HuygheTwistedKashiwara}.  Finally, the representation-theoretic
motivation through locally analytic distribution algebras is related to
Schneider--Teitelbaum \cite[Theorem 3.3]{SchneiderTeitelbaum}. We also keep in
view the rigid-analytic \(\widehat{\mathcal D}\)-module theory of
Ardakov--Wadsley \cite[Theorems A and B]{ArdakovWadsleyRigidD} and their
representation-theoretic applications \cite[Theorem A]{ArdakovWadsleyCompactReps}.
The present article is also related to the author's work on
\(G\)-equivariant formal models of flag varieties
\cite{SarrazolaRendiconti}.  That paper constructs equivariant arithmetic
\(\mathcal D(\lambda)\)-module categories over families of formal models of the
rigid flag variety and proves a localization theorem for coadmissible
\(G\)-equivariant modules when the algebraic character satisfies the usual
dominant-regular condition.  Here we do not use that theorem as an input: the
present paper works on the fixed formal flag variety at trivial character and
proves a Kashiwara-type microlocal regularity and orbit-classification theorem
for filtered strongly equivariant \(\mathscr D^\dagger\)-modules.  Thus the two
results are complementary: the earlier Rendiconti paper supplies a global
\(G\)-equivariant localization framework over formal models, while the present
article isolates the orbitwise regularity mechanism for a subgroup
\(\mathcal K\) with finitely many special-fiber orbits.

The present untwisted paper is complementary to the twisted arithmetic
localization theorem of \cite[Theorem 1.1]{SarrazolaJTNB}, but the two results
have different aims.  The earlier work proves a twisted localization theorem;
the present paper works at trivial character and studies the Kashiwara-type
regularity mechanism for filtered strongly equivariant objects.  Its new
ingredient is the equivariant moment-map/conormal argument and the resulting
orbital devissage, neither of which is a consequence of the twisted
localization theorem alone.

\subsection*{Organization of the article}

\Cref{sec:preliminaries} recalls the arithmetic \(\mathscr D\)-module
constructions used throughout the article: Berthelot's finite-level and
\(\dagger\)-operators, level models, characteristic varieties, holonomicity,
localization to pairs, overconvergent isocrystals, geometric overholonomicity,
intermediate extension, and arithmetic Beilinson--Bernstein localization.
\Cref{sec:strong-equivariance} introduces weak equivariance and filtered strong equivariance,
explains the role of the filtered hypothesis, and proves elementary stability
properties of the filtered strongly equivariant category.
\Cref{sec:characteristic} constructs the arithmetic moment map and proves the
characteristic containment theorem.  \Cref{sec:isocrystals} introduces
equivariant overconvergent isocrystals on orbits.  The optional stabilizer
interpretation, which is not used in the proof of the main theorem, is isolated
in \Cref{app:stabilizer-description}.  \Cref{sec:intermediate-extension} proves
that arithmetic intermediate extension preserves the relevant filtered strongly equivariant structures.  \Cref{sec:regularity}
then proves the central equivariant orbital devissage and deduces arithmetic
Kashiwara regularity.  \Cref{sec:classification} proves the orbit
classification of simple Frobenius filtered strongly equivariant coherent objects.
\Cref{sec:distributions} transports the result to modules over the crystalline
distribution algebra at trivial character.  Finally, \Cref{sec:examples}
discusses rank-one examples and the distribution side for \(\mathrm{SL}_2\).
\Cref{app:stabilizer-description} contains the optional stabilizer interpretation under its extra descent hypotheses.  \Cref{app:logical-structure} records the dependency graph of the proof, and \Cref{app:t-structure} fixes the overholonomic t-structure conventions used for images, intermediate extensions, support truncations, and extension-closedness.

\section{Arithmetic differential operators on formal flag varieties}
\label{sec:preliminaries}

This section fixes the arithmetic \(\mathscr D\)-module conventions used in
what follows.  We keep the discussion tied to the formal flag variety, but most
of the constructions recalled here are local on a smooth formal
\(\mathcal V\)-scheme.  The purpose is not to reproduce Berthelot's theory, but
to isolate the precise pieces of it that enter the proof of the arithmetic
Kashiwara theorem.

\subsection{Basic geometric setup}

\begin{setup}
\label{setup:basic}
Let \(\mathcal V\) be a complete discrete valuation ring of mixed
characteristic \((0,p)\), with fraction field \(K\), uniformizer \(\pi\), and
perfect residue field \(k\). Let \(\mathcal G\) be a split connected reductive
group scheme over \(\mathcal V\), and let \(\mathcal B\subseteq\mathcal G\) be a
Borel subgroup scheme. We set
\[
  \mathfrak X:=\widehat{\mathcal G/\mathcal B},
  \qquad
  X_s:=(\mathcal G/\mathcal B)\times_{\Spec\mathcal V}\Spec k.
\]
The formal scheme \(\mathfrak X\) is smooth and proper over \(\mathcal V\). Its
special fiber \(X_s\) is the flag variety of the split reductive group
\(\mathcal G_s\) over \(k\).
\end{setup}

Throughout the article, all arithmetic \(\mathscr D\)-modules are left modules.
We work at trivial infinitesimal character. Twisted arithmetic differential
operators do not enter the main statements.

\begin{notation}[Group-theoretic conventions]
\label{not:group-conventions}
If \(\mathcal K\subseteq\mathcal G\) is a smooth closed subgroup scheme, we
write
\[
  \mathfrak K:=\widehat{\mathcal K}
\]
for its \(p\)-adic formal completion and
\[
  \mathcal K_s:=\mathcal K\times_{\Spec\mathcal V}\Spec k
\]
for its special fiber.  The formal group \(\mathfrak K\) acts on
\(\mathfrak X\), while the orbit geometry used in the classification takes
place on the special fiber \(X_s\) under the action of \(\mathcal K_s\).
When we speak of a \(\mathcal K\)-equivariant overconvergent isocrystal on a
pair \((O,\overline O)\) in the special fiber, the equivariance datum is the
one induced by the \(\mathcal K_s\)-action, compatible with the original formal
\(\mathfrak K\)-action on \(\mathfrak X\).
\end{notation}

\begin{setup}[Orbit-separability hypothesis]
\label{hyp:orbit-separability}
We say that the action of \(\mathcal K_s\) on \(X_s\) is \emph{orbit-separable}
if, after extension of scalars to an algebraic closure \(\bar k/k\), every
geometric orbit \(O\subseteq X_{s,\bar k}\) is smooth and locally closed and, for
every geometric point \(x\in O(\bar k)\), the differential of the orbit map
\[
  a_x:\mathcal K_{s,\bar k}\longrightarrow X_{s,\bar k},
  \qquad g\longmapsto g\cdot x,
\]
has image exactly
\[
  T_xO
  =
  \{\,\bar v_\xi(x):\xi\in\Lie(\mathcal K_{s,\bar k})\,\}
  \subseteq T_xX_{s,\bar k}.
\]
Equivalently, the infinitesimal action detects the tangent directions to the
geometric orbit.  This hypothesis is automatic when the orbit maps are
separable onto their images, for instance when the geometric stabilizers are
smooth.  It is a genuine hypothesis in positive characteristic: without it the
moment-zero locus may be strictly larger than the union of orbit conormals.
For example, in characteristic \(p\), the action
\(\mathbb G_m\curvearrowright\mathbb A^1\) given by
\(t\cdot z=t^p z\) has open orbit \(\mathbb A^1\setminus\{0\}\), but the
fundamental vector field is zero, so the infinitesimal action does not generate
the tangent line to the open orbit.
\end{setup}

\begin{remark}[Why orbit separability is necessary]
\label{rem:separability-shield}
The preceding example is not merely a pathology of notation.  If one compactifies
it to the action of \(\mathbb G_m\) on \(\mathbb P^1_k\) by
\[
  t\cdot [x:y]=[t^p x:y],
\]
then the reduced orbit decomposition consists of the open orbit
\(\mathbb G_m\subset \mathbb P^1_k\) and the two fixed points.  However the
Lie algebra action is zero in characteristic \(p\).  Hence the moment-zero
condition imposed by the fundamental vector fields is vacuous and gives
\(\mu_{\mathcal K}^{-1}(0)=T^*\mathbb P^1_k\), whereas the union of the
conormal bundles to the three reduced orbits is a proper closed subset.  Thus
without \Cref{hyp:orbit-separability}, the implication
\[
  \Car(\mathcal M)\subseteq\mu_{\mathcal K}^{-1}(0)
  \Longrightarrow
  \Car(\mathcal M)\subseteq\bigcup_O T_O^*X_s
\]
can fail.  The separability hypothesis is therefore a microlocal hypothesis,
not a cosmetic smoothness assumption.
\end{remark}

\begin{lemma}[Orbit geometry under separability]
\label{lem:orbit-geometry}
Assume \Cref{hyp:orbit-separability}.  Each \(\mathcal K_s\)-orbit
\(O\subseteq X_s\) is a smooth locally closed subscheme of \(X_s\).  Moreover,
after extension of scalars to an algebraic closure \(\bar k/k\), every geometric
point \(x\in O(\bar k)\) satisfies
\[
  T_x(O_{\bar k})
  =
  \{\,\bar v_\xi(x):\xi\in\Lie(\mathcal K_{s,\bar k})\,\}
  \subseteq T_x(X_{s,\bar k}).
\]
\end{lemma}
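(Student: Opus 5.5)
The plan is to treat the two assertions separately: the tangent-space equality, and smoothness plus local closedness over \(k\). Both are geometric statements that are almost built into \Cref{hyp:orbit-separability}. The work lies in passing between \(k\)-orbits and geometric orbits, and in controlling the reducedness of the orbit subscheme.

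\textbf{Step 1: geometric orbits.} Since \(k\) is perfect, \(\bar k/k\) is Galois, and \(\mathcal K_s\)-orbits on \(X_s\) can be described as follows. Each orbit \(O\) is the image in \(X_s\) of a \(\operatorname{Gal}(\bar k/k)\)-stable finite union of geometric \(\mathcal K_{s,\bar k}\)-orbits. Equivalently, \(O_{\bar k}\) is this finite union. Finiteness follows from the finitely-many-orbits assumption together with the Noetherian property of \(X_s\).

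\textbf{Step 2: smooth and locally closed.} By hypothesis each geometric orbit is smooth and locally closed. By the usual dimension argument (Borel's closed orbit lemma), the Galois conjugates appearing in \(O_{\bar k}\) all have the same dimension. Hence none of them lies in the closure of another, so their union is locally closed and is a disjoint union of smooth pieces. In particular \(O_{\bar k}\), with its reduced structure, is smooth and locally closed in \(X_{s,\bar k}\).

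Local closedness and smoothness then descend along the faithfully flat map \(\Spec\bar k\to\Spec k\). Concretely, the closure \(\overline O\) and the closed subset \(\overline O\setminus O\) are Galois-stable, so they are defined over \(k\). We give \(O\) its reduced induced structure. Because \(k\) is perfect, reducedness is preserved under base change, so \((O)_{\bar k}=O_{\bar k}\) as reduced schemes. Smoothness can be checked after base change to \(\bar k\).

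\textbf{Step 3: tangent spaces.} Let \(x\in O(\bar k)\) and let \(O'\) be the geometric orbit containing \(x\). Since the pieces of \(O_{\bar k}\) are disjoint and locally closed of equal dimension, \(O'\) is open in \(O_{\bar k}\). Therefore \(T_x(O_{\bar k})=T_xO'\), and \Cref{hyp:orbit-separability} identifies \(T_xO'\) with the image of \(d a_x\), that is, \(\{\bar v_\xi(x):\xi\in\Lie(\mathcal K_{s,\bar k})\}\). This uses that the fundamental vector field \(\bar v_\xi\) evaluated at \(x\) equals \(da_x(\xi)\), which holds by definition of the infinitesimal action. It also uses that \(\Lie(\mathcal K_{s,\bar k})=\Lie(\mathcal K_s)\otimes_k\bar k\).

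\textbf{Main obstacle.} The delicate point is Step 2. One must ensure that the scheme structure on \(O\) compatible with the hypothesis is the reduced one, and that the reduced structure commutes with base change to \(\bar k\). Perfectness of \(k\) is exactly what is needed here. I would make this explicit by invoking geometric reducedness over perfect fields, and by defining \(O\) as the reduced locally closed subscheme, so that the statement matches the ``reduced orbit decomposition'' used in \Cref{rem:separability-shield}.
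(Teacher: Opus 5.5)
Your proposal is correct and follows the same route as the paper. The smoothness and the tangent-space identity are read off directly from \Cref{hyp:orbit-separability}, local closedness is the standard fact for algebraic group orbits, and perfectness of \(k\) is used to descend the Galois-stable reduced orbit from \(\bar k\) to \(k\).

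Your extra step treats the case where \(O_{\bar k}\) splits into several Galois-conjugate geometric orbits: the closed-orbit dimension argument shows that each piece is open in the union. This is a careful refinement of a point the paper's short proof passes over, since it simply treats the geometric orbit as itself Galois-stable.
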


\begin{proof}
The underlying locally closedness of algebraic group orbits is standard for
algebraic group actions; see \cite[Section 1.2]{ChrissGinzburg} and
\cite[Section 1.2]{HottaTakeuchiTanisaki}.  The smoothness and tangent-space
identity are precisely the orbit-separability hypothesis.  Since \(k\) is
perfect, the resulting Galois-stable smooth locally closed orbit over \(\bar k\)
descends to a smooth locally closed subscheme over \(k\); see
\cite[\S2.7]{EGAIV} for descent of locally closed subschemes.
\end{proof}

\subsection{Berthelot differential operators}

For each non-negative integer \(m\), Berthelot constructs the sheaf
\[
  \mathscr D^{(m)}_{\mathfrak X}
\]
of differential operators of level \(m\). We denote its \(p\)-adic completion by
\[
  \widehat{\mathscr D}^{(m)}_{\mathfrak X}
\]
and put
\[
  \widehat{\mathscr D}^{(m)}_{\mathfrak X,\mathbb Q}
  :=
  \widehat{\mathscr D}^{(m)}_{\mathfrak X}\otimes_{\mathbb Z}\mathbb Q.
\]
Berthelot's overconvergent sheaf of arithmetic differential operators is
\[
  \mathscr D^\dagger_{\mathfrak X,\mathbb Q}
  :=
  \varinjlim_m
  \widehat{\mathscr D}^{(m)}_{\mathfrak X,\mathbb Q}.
\]
We use Berthelot's conventions for finite-level operators and for the passage
to \(\mathscr D^\dagger\); see \cite[Sections 2.2 and 3]{BerthelotDModules} and
\cite[5.2--5.3]{BerthelotIntro}.  We denote by
\[
  \widehat{\mathscr D}^{(m)}_{\mathfrak X,\mathbb Q,\leq n}
  \subseteq
  \widehat{\mathscr D}^{(m)}_{\mathfrak X,\mathbb Q}
\]
the order filtration at level \(m\), consisting of arithmetic differential
operators of order at most \(n\).

\begin{definition}
\label{def:coherent-Ddagger}
The category of coherent left \(\mathscr D^\dagger_{\mathfrak X,\mathbb Q}\)-modules is denoted by
\[
  \Coh(\mathscr D^\dagger_{\mathfrak X,\mathbb Q}).
\]
An object \(\mathcal M\) of this category is a sheaf of left
\(\mathscr D^\dagger_{\mathfrak X,\mathbb Q}\)-modules which is locally finitely
presented over \(\mathscr D^\dagger_{\mathfrak X,\mathbb Q}\).
\end{definition}

\subsection{Level models, filtrations and characteristic varieties}

The microlocal part of the argument uses characteristic varieties. We spell out
the convention because the sheaf \(\mathscr D^\dagger\) is obtained as an
inductive limit over the levels.

\begin{definition}[Level model]
\label{def:level-model}
Let \(\mathcal M\in\Coh(\mathscr D^\dagger_{\mathfrak X,\mathbb Q})\). A
level-\(m\) model of \(\mathcal M\) on an open formal subscheme
\(\mathfrak U\subseteq\mathfrak X\) is a coherent left
\(\widehat{\mathscr D}^{(m)}_{\mathfrak U,\mathbb Q}\)-module
\(\mathcal M^{(m)}\) together with an isomorphism of left
\(\mathscr D^\dagger_{\mathfrak U,\mathbb Q}\)-modules
\[
  \mathscr D^\dagger_{\mathfrak U,\mathbb Q}
  \otimes_{\widehat{\mathscr D}^{(m)}_{\mathfrak U,\mathbb Q}}
  \mathcal M^{(m)}
  \xrightarrow{\;\sim\;}
  \mathcal M|_{\mathfrak U}.
\]
The tensor product is formed using the natural left extension of scalars from
\(\widehat{\mathscr D}^{(m)}_{\mathfrak U,\mathbb Q}\) to
\(\mathscr D^\dagger_{\mathfrak U,\mathbb Q}\); throughout the article all
arithmetic \(\mathscr D\)-modules are left modules.  After replacing
\(\mathfrak U\) by an affine open covering and increasing \(m\), such models
exist for coherent \(\mathscr D^\dagger\)-modules; this is part of Berthelot's
coherence theory for \(\mathscr D^\dagger\)-modules
\cite[5.3.2--5.3.4]{BerthelotIntro}.
\end{definition}

For a good level model \(\mathcal M^{(m)}\), one chooses a good filtration
compatible with the order filtration on
\(\widehat{\mathscr D}^{(m)}_{\mathfrak U,\mathbb Q}\). Its associated graded
module is a coherent module over the corresponding graded ring, and therefore
has a support in the cotangent bundle of the special fiber. The resulting
closed conical subset is independent of all sufficiently large choices.

\begin{definition}[Characteristic variety]
\label{def:characteristic-variety}
The characteristic variety of
\(\mathcal M\in\Coh(\mathscr D^\dagger_{\mathfrak X,\mathbb Q})\) is the
closed conical subset
\[
  \Car(\mathcal M)\subseteq T^*X_s
\]
obtained locally from any sufficiently large good level model. Its dimension is
well-defined and is denoted by \(\dim\Car(\mathcal M)\).
\end{definition}

Throughout the microlocal arguments, containments of characteristic varieties
are understood as containments of the underlying reduced closed conical subsets
of \(T^*X_s\).  This is the level of structure needed for dimension estimates
and holonomicity; no scheme-theoretic equality of moment fibers is used.

\begin{proposition}[Microlocal facts]
\label{prop:microlocal-facts}
Let \(\mathcal M\) be a coherent
\(\mathscr D^\dagger_{\mathfrak X,\mathbb Q}\)-module.
\begin{enumerate}[label=\textup{(\roman*)}]
\item The characteristic variety \(\Car(\mathcal M)\) is independent of the
chosen sufficiently large good level model.
\item If \(\mathcal M\neq0\), then Bernstein's inequality gives
\[
  \dim\Car(\mathcal M)\geq \dim X_s.
\]
\item The module \(\mathcal M\) is holonomic if \(\mathcal M=0\), or if
\[
  \dim\Car(\mathcal M)=\dim X_s.
\]
In particular, if \(\Car(\mathcal M)\) is contained in a finite union of
conical closed subsets of dimension \(\dim X_s\), then \(\mathcal M\) is
holonomic.
\end{enumerate}
\end{proposition}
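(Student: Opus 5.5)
This proposition collects three standard facts from Berthelot's and Caro's theory; the plan is to reduce each to a statement about a single good level model on an affine open and then cite. Holonomicity and the dimension of \(\Car\) are local on \(\mathfrak X\), and \(\Car(\mathcal M)\) is glued from local pieces. So I will first fix an affine covering \(\{\mathfrak U_i\}\) of \(\mathfrak X\) and a level \(m\) large enough that \(\mathcal M|_{\mathfrak U_i}\) admits a level-\(m\) model \(\mathcal M_i^{(m)}\) for every \(i\), as in \Cref{def:level-model}. On each \(\mathfrak U_i\) I will choose a \(\widehat{\mathscr D}^{(m)}_{\mathfrak U_i}\)-lattice stable under the module structure, reduce it modulo \(\pi\), and take a good filtration over the order filtration. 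The support of the associated graded module over \(\gr\,\mathscr D^{(m)}_{U_{i,s}}\) is then a closed conical subset of \(T^*U_{i,s}\), up to the Frobenius twist that Berthelot's graded ring introduces at level \(m\). Its underlying reduced subset is what enters \Cref{def:characteristic-variety}.

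For (i), the argument has the same shape as the classical one, but must be adapted to arithmetic models. There are two comparisons.
\begin{itemize}
\item Two good filtrations on the same model differ by a bounded shift. Hence their graded modules have the same support; this is the classical Artin--Rees argument.
\item Two lattices, or two models at levels \(m\le m'\), become isomorphic after passing to a common higher level and inverting \(p\).
\end{itemize}
The second comparison is exactly where one needs Berthelot's theorem that the characteristic variety stabilizes for \(m\gg0\), and I would cite it directly: \cite[5.3]{BerthelotIntro}, together with the corresponding statements in \cite[\S2]{CaroHolonomieSansFrobenius}. The phrase ``sufficiently large'' in the statement is precisely this stabilization. Since the local supports agree on overlaps, they glue to a global \(\Car(\mathcal M)\).

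For (ii), Bernstein's inequality is a theorem and I would simply cite \cite[Th\'eor\`eme 2.2.3]{CaroHolonomieSansFrobenius}: each irreducible component of \(\Car(\mathcal M)\) has dimension at least \(\dim X_s\) when \(\mathcal M\neq 0\). For (iii), Caro defines holonomicity as \(\mathcal M=0\) or \(\dim\Car(\mathcal M)\le\dim X_s\), and by (ii) this is the same as equality in the nonzero case; I would invoke \cite[Th\'eor\`eme 2.2.8]{CaroHolonomieSansFrobenius}. The ``in particular'' clause then follows from two observations. A closed conical subset of a finite union of closed sets of dimension \(\dim X_s\) has dimension at most \(\dim X_s\). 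Combined with (ii), this forces equality whenever \(\mathcal M\neq0\).

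The main obstacle is (i), specifically the independence of the level. At a fixed level, changing the filtration is classical, but raising the level changes the graded ring. One has to check that the supports are compared through the correct Frobenius identification of cotangent spaces, and only at the level of reduced underlying sets, which is the convention fixed before the proposition. I would not reprove this stabilization, but I would state clearly that it is Berthelot's result being imported.
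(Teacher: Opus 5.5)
Your proposal is correct and follows the paper's proof. Both cite Berthelot's finite-level theory for the independence of \(\Car(\mathcal M)\) and Caro's Th\'eor\`emes 2.2.3 and 2.2.8 for Bernstein's inequality and the holonomicity criterion, and both derive the ``in particular'' clause from the dimension bound on a finite union combined with Bernstein's inequality. Your remarks on lattices, reduction modulo \(\pi\), and the Frobenius twist at level \(m\) describe the imported construction more precisely than the paper does, but they do not change the argument.
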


\begin{proof}
The construction of characteristic varieties and their independence from good
models are part of Berthelot's theory of arithmetic \(\mathscr D\)-modules;
see \cite[5.3.2--5.3.4]{BerthelotIntro}. The form of Bernstein's inequality
and the definition of holonomicity used here are recalled in
\cite[5.3.4--5.3.5]{BerthelotIntro}. Without Frobenius structures, the
holonomicity criterion and Bernstein inequality are available by
\cite[Th\'eor\`eme 2.2.3 and Th\'eor\`eme 2.2.8]{CaroHolonomieSansFrobenius}.
For the last assertion, a finite union of closed conical subsets of dimension
at most \(\dim X_s\) again has dimension at most \(\dim X_s\).  Accordingly, the assumed containment gives
\[
  \dim\Car(\mathcal M)\leq \dim X_s,
\]
If \(\mathcal M\neq0\), Bernstein's inequality gives the reverse inequality.
Hence
\[
  \dim\Car(\mathcal M)=\dim X_s,
\]
so \(\mathcal M\) is holonomic.
\end{proof}

\subsection{Restriction to locally closed pairs}
\label{subsec:restriction-locally-closed}

Let \(Y\subseteq X_s\) be a smooth locally closed subscheme and let
\(\overline Y\) be its Zariski closure in \(X_s\). We regard
\[
  (Y,\overline Y)
\]
as a couple in the sense of Abe--Caro and Huyghe--Schmidt. Since
\(\overline Y\subseteq X_s\) sits inside the special fiber of the fixed smooth
proper formal scheme \(\mathfrak X\), the pair is represented by the frame
\[
  (Y,\overline Y,\mathfrak X).
\]
The localization formalism gives, for coherent or holonomic arithmetic
\(\mathscr D^\dagger\)-modules, a restriction functor
\[
  \mathcal M\longmapsto \mathcal M|_Y.
\]
Concretely, if \(Z:=X_s\setminus\overline Y\) and
\(\partial Y:=\overline Y\setminus Y\), then \(\mathcal M|_Y\) is obtained by
localizing away from \(Z\), applying the support functor along \(\overline Y\),
and then localizing away from \(\partial Y\). Thus this restriction is not, in
general, the ordinary restriction to an open subset; it is the restriction to a
locally closed couple in the arithmetic \(\mathscr D^\dagger\)-module
formalism. The formal construction is the one used for couples in
\cite[\S2.1, Lemmas 2.1.1--2.1.2]{HuygheSchmidtIntermediate}.

\begin{lemma}[Open restriction and characteristic varieties]
\label{lem:open-restriction-characteristic}
Let \(j:U\hookrightarrow X_s\) be an open immersion and let \(\mathcal M\) be
holonomic. Then the restriction \(\mathcal M|_U\) is holonomic and, as a
reduced closed conical subset, one has
\[
  \Car(\mathcal M|_U)=\Car(\mathcal M)|_U
\]
under the natural identification
\[
  T^*X_s|_U\simeq T^*U.
\]
\end{lemma}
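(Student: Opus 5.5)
The plan is to reduce the statement to a local computation on a good level model, using the fact that restriction to an open \(U\) is the ordinary restriction of sheaves. Throughout, \(\mathfrak U\subseteq\mathfrak X\) denotes the open formal subscheme with underlying special fiber \(U\), so that \(\mathcal M|_U=\mathcal M|_{\mathfrak U}\). Since \(\mathfrak U\) is an open formal subscheme, we have \(\mathscr D^\dagger_{\mathfrak U,\mathbb Q}=\mathscr D^\dagger_{\mathfrak X,\mathbb Q}|_{\mathfrak U}\), and likewise at every finite level \(m\), compatibly with the order filtrations. Hence coherence is preserved by restriction. Moreover, when \(Y=U\) is open we have \(\overline Y\setminus Y\subseteq X_s\setminus U\), so the couple restriction of \Cref{subsec:restriction-locally-closed} agrees on \(\mathfrak U\) with ordinary sheaf restriction. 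I will state this identification first, because it is the only place where the arithmetic localization formalism enters.

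Second, I will compare characteristic varieties. By \Cref{def:characteristic-variety}, \(\Car(\mathcal M)\) is computed locally: choose an affine covering \(\{\mathfrak U_i\}\) of \(\mathfrak X\) with sufficiently large good level models \(\mathcal M_i^{(m)}\) and good filtrations. Refine this cover so that it contains an affine covering of \(\mathfrak U\) by opens \(\mathfrak U_i\cap\mathfrak V\), using affine opens \(\mathfrak V\subseteq\mathfrak U\). Restricting \(\mathcal M_i^{(m)}\) together with its good filtration gives a good level model of \(\mathcal M|_{\mathfrak U}\) on these opens. Its associated graded is the restriction of \(\gr\mathcal M_i^{(m)}\), because restriction to opens is exact. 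Support of a coherent graded module commutes with restriction to the open \(T^*X_s|_U\simeq T^*U\). By \Cref{prop:microlocal-facts}(i), the result is independent of the choices. This yields \(\Car(\mathcal M|_U)=\Car(\mathcal M)\cap T^*X_s|_U\) as reduced conical subsets.

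Third, for holonomicity: if \(\mathcal M|_U=0\) there is nothing to prove. Otherwise \(\Car(\mathcal M|_U)\) is an open subset of \(\Car(\mathcal M)\), so \(\dim\Car(\mathcal M|_U)\le\dim\Car(\mathcal M)=\dim X_s=\dim U\). Bernstein's inequality on \(\mathfrak U\) gives the reverse inequality, and \Cref{prop:microlocal-facts}(iii), applied on \(\mathfrak U\), gives holonomicity.

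The main obstacle I expect is the second step. One must make sure that "sufficiently large" levels and good filtrations are stable under restriction, which is where independence of choices (Berthelot, \cite[5.3.2--5.3.4]{BerthelotIntro}) must be invoked carefully. One must also check that holonomicity in Caro's sense without Frobenius \cite{CaroHolonomieSansFrobenius} is local, so that the dimension criterion applies on \(\mathfrak U\) rather than only on a proper formal scheme. If that locality is not directly available, I would instead use that holonomicity is defined locally via the dimension of the characteristic variety, and argue chart by chart.
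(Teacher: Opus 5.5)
Your proposal is correct and takes essentially the same route as the paper: both reduce the statement to the local, finite-level description of characteristic varieties. The paper phrases the restriction as localization away from \(X_s\setminus U\), which it asserts introduces no new cotangent directions; you make this explicit by observing that over \(\mathfrak U\) this localization is ordinary sheaf restriction, restricting good models and filtrations chart by chart, and supplying the dimension count with Bernstein's inequality on \(\mathfrak U\) for holonomicity, which the paper only asserts through the cited couple formalism.
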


\begin{proof}
This is the standard compatibility of characteristic varieties with open
localization in the arithmetic \(\mathscr D^\dagger\)-module formalism.  The
open restriction is obtained by localization away from the closed complement
\(X_s\setminus U\). At finite level, localization does not introduce new
cotangent directions. After passing to the inductive limit defining
\(\mathscr D^\dagger\), the characteristic support is exactly the restriction
of the original characteristic support to \(U\). This is the open part of the
functorial characteristic-variety estimates for couples used in
\cite[\S2.1, Lemmas 2.1.1--2.1.2]{HuygheSchmidtIntermediate}; see also
Berthelot's construction of characteristic varieties at finite level in
\cite[5.3.2--5.3.4]{BerthelotIntro}.
\end{proof}

\begin{lemma}[Closed smooth restriction and characteristic varieties]
\label{lem:closed-restriction-characteristic}
Let \(i:Y\hookrightarrow X_s\) be a closed smooth immersion and let
\(\mathcal M\) be holonomic. Then the extraordinary restriction
\(\mathcal M|_Y:=i^!\mathcal M\), viewed on the couple \((Y,Y)\), is
holonomic and its characteristic variety satisfies, as a reduced closed
conical subset,
\[
  \Car(i^!\mathcal M)
  \subseteq
  q_Y\bigl(\Car(\mathcal M)|_Y\bigr)
  \subseteq T^*Y.
\]
Here
\[
  q_Y:T^*X_s|_Y\longrightarrow T^*Y
\]
is the quotient map in the exact cotangent sequence
\[
  0\longrightarrow N^*_{Y/X_s}
  \longrightarrow T^*X_s|_Y
  \xrightarrow{\ q_Y\ }
  T^*Y
  \longrightarrow 0.
\]
\end{lemma}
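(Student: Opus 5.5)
The plan is to reduce the statement to the two standard inputs of the arithmetic formalism: stability of holonomicity under extraordinary inverse image, and Berthelot's local description of \(i^!\) for a closed immersion of smooth formal schemes. \(Y\) is only a smooth closed subvariety of the special fiber, so the first step is to lift the immersion locally. Choose an affine open covering of \(\mathfrak X\) fine enough that on each \(\mathfrak U\) the smooth closed subscheme \(Y\cap U_s\) lifts to a smooth closed formal subscheme \(\mathfrak Y_{\mathfrak U}\hookrightarrow\mathfrak U\). Such lifts exist locally by smoothness and local coordinates adapted to \(Y\). By Berthelot's Kashiwara-type theorem, the category of modules on \(\mathfrak Y_{\mathfrak U}\) does not depend on the lift up to canonical isomorphism, so \(i^!\mathcal M\) is well defined on the couple \((Y,Y)\). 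Both the holonomicity claim and the containment of characteristic varieties are local, so it suffices to prove them on each such \(\mathfrak U\).

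\textbf{Holonomicity.} I would invoke the stability of holonomic complexes under \(i^!\) for closed immersions of smooth formal schemes. Without Frobenius this is part of Caro's theory: the Bernstein inequality and holonomicity criterion of \cite[Th\'eor\`emes 2.2.3 and 2.2.8]{CaroHolonomieSansFrobenius} together with the closed-immersion functoriality established there. A subtlety is that \(i^!\mathcal M\) is a complex, not a module. I would read the statement as referring to its cohomology sheaves, or to the single sheaf \(\mathcal H^0\) that is relevant in the devissage, and prove the bound for each \(\mathcal H^j\).

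\textbf{Characteristic containment.} Work locally with coordinates \(x_1,\dots,x_n\) on \(\mathfrak U\) in which \(\mathfrak Y=\{x_{r+1}=\dots=x_n=0\}\). Then \(i^!\mathcal M\) is computed, up to shift, by the Koszul-type complex of \(\mathcal M\) for the sections \(x_{r+1},\dots,x_n\), which is a complex of \(\mathscr D^\dagger_{\mathfrak Y,\mathbb Q}\)-modules. Taking a good level-\(m\) model \(\mathcal M^{(m)}\) with a good filtration, the argument follows the classical estimate for noncharacteristic or general closed restrictions, as in \cite[Chapter 2]{HottaTakeuchiTanisaki}:
\begin{enumerate}[label=\textup{(\alph*)}]
\item Show that the induced filtrations on the cohomology of the Koszul complex are good, possibly after shifting or refining via a \(V\)-filtration along \(Y\).
\item Identify their graded pieces as subquotients of a Koszul-type complex for \(\gr\mathcal M^{(m)}\), computed on \(T^*X_s|_Y\).
\item Deduce that the resulting support, projected along \(N^*_{Y/X_s}\), lies in \(q_Y(\Car(\mathcal M)|_Y)\).
\end{enumerate}
Passing to the \(\dagger\)-limit then uses the independence of large good models, \Cref{prop:microlocal-facts}(i).

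The main obstacle is step (a), the goodness of the induced filtration. For a closed restriction it is not automatic that the Koszul cohomology inherits a good filtration. Classically one needs either a \(V\)-filtration argument or holonomicity, via Bernstein–Sato type finiteness, to guarantee coherence and the symbol estimate. At finite level \(m\) the divided powers \(\partial^{[k]}\) make this even more delicate. My first attempt would be to avoid filtrations on \(i^!\mathcal M\) altogether. The bound \(\dim q_Y(\Car\mathcal M|_Y)\le\dim Y\) already follows from holonomicity of \(\mathcal M\), since \(\Car\mathcal M\) is Lagrangian in dimension. Combined with coherence of \(i^!\mathcal M\) from the holonomic stability, one can then compare \(\Car(i^!\mathcal M)\) with the projected set through a reduction to the transversal case. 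For that reduction I would factor \(i\) through a graph-type embedding and apply \Cref{lem:open-restriction-characteristic} on the open complements.
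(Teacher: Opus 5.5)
\textbf{There is a genuine gap in your characteristic estimate, steps (a)--(c), and it cannot be closed as stated.} Without a non-characteristic hypothesis the inclusion \(\Car(i^!\mathcal M)\subseteq q_Y(\Car(\mathcal M)|_Y)\) is false. So neither a better filtration nor a dimension count will prove it.

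Here is a counterexample. Work in coordinates \((x,y)\), for instance on the flag variety \(\mathbb P^1\times\mathbb P^1\) of \(\mathrm{SL}_2\times\mathrm{SL}_2\).
\begin{enumerate}[label=\textup{(\arabic*)}]
\item Let \(Y=\{y=0\}\), let \(C\) be the graph of \(x\mapsto x^2\), and put \(f=y-x^2\). The curve \(C\) is smooth, lifts to \(\mathcal V\), meets \(Y\) only at the origin, and is tangent to \(Y\) there.
\item Let \(\mathcal M\) be the delta module of \(C\), i.e.\ the local cohomology of \(\mathcal O\) along \(C\). It is holonomic, even Frobenius overholonomic.
\item Then \(\Car(\mathcal M)=T^*_CX_s\). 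Its fiber over \(C\cap Y=\{0\}\) is the line spanned by \(df(0)=dy\). The map \(q_Y\) kills this line, so \(q_Y(\Car(\mathcal M)|_Y)\) is the single point \((0;0)\).
\item Your own Koszul computation gives \(\ker(y)=0\). In \(\mathcal M/y\mathcal M\) it gives the relations \(f^{-k}\equiv -x^2f^{-k-1}\) and \(x^2f^{-1}\equiv 0\). So \(\mathcal M/y\mathcal M\) is the delta module of the origin on \(Y\). In the classical model over \(K\), \(\mathcal M/y\mathcal M\simeq K[x,x^{-1}]/K[x]\) via \(f^{-k}\mapsto(-1)^{k-1}x^{-2k}\).
\item Hence \(i^!\mathcal M\neq0\) is supported at \(0\), and \(\Car(i^!\mathcal M)=T^*_0Y\) is a line. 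Bernstein's inequality on \(Y\) forces a one-dimensional characteristic variety anyway, so it cannot lie in the point \((0;0)\).
\end{enumerate}

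In the classical model you can watch step (a) fail. The good filtration of \(\mathcal M\) generated by \(f^{-1}\) induces \(F_j=\langle x^{-1},\dots,x^{-2j-2}\rangle\) on \(K[x,x^{-1}]/K[x]\). This grows by two at each step, while \(\partial_x\) raises the pole order only by one, so it is not good. Its graded module is supported exactly at \((0;0)\), which is the wrong answer step (c) would output.

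The same example disposes of your fallback. Here \(\dim q_Y(\Car(\mathcal M)|_Y)=0<\dim Y\) although \(i^!\mathcal M\neq0\). The failure lives precisely in the tangential situation, so there is no reduction to the transversal case.

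\textbf{What is missing is a hypothesis rather than a lemma.} Suppose \(Y\) is non-characteristic for \(\mathcal M\), i.e.\ \(\Car(\mathcal M)\cap N^*_{Y/X_s}\) lies in the zero section. Then:
\begin{enumerate}[label=\textup{(\roman*)}]
\item \(q_Y\) is finite on the conic set \(\Car(\mathcal M)|_Y\);
\item the graded module of \(\mathcal M/(x_{r+1},\dots,x_n)\mathcal M\) for the induced filtration is a quotient of \(\gr\mathcal M^{(m)}/(x_{r+1},\dots,x_n)\gr\mathcal M^{(m)}\), hence coherent over \(\mathcal O_{T^*Y}\) through \(q_Y\);
\item so the induced filtration is good, and your steps (a)--(c) go through as in the classical non-characteristic argument, modulo the usual care with divided-power graded rings at level \(m\).
\end{enumerate}
The inclusion also holds, with equality, when \(\mathcal M\) is supported on \(Y\), by Berthelot's Kashiwara theorem. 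That is the situation of an orbit open in \(\Supp(\mathcal M)\), since near such an orbit \(\mathcal M\) is supported on it. It is the only case the main line of the paper uses.

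The paper's own proof does not supply the missing idea either. It invokes Berthelot's finite-level estimate for closed immersions with no non-characteristic hypothesis, and the example shows no such unconditional estimate can hold.

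Separately, check your holonomicity reference. Bernstein's inequality and the dimension criterion do not by themselves give \(i^!\)-stability of holonomicity without Frobenius. That stability is one of Berthelot's conjectures, known in the Frobenius setting through Caro's work, which is the input used in \Cref{prop:standard-facts}\textup{(iii)}.
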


\begin{proof}
The closed case is not an ordinary restriction of cotangent bundles. The
extraordinary inverse image for arithmetic \(\mathscr D^\dagger\)-modules is
implemented by the transfer bimodule attached to the closed immersion. At the
level of finite-order arithmetic differential operators, Berthelot's
microlocal estimate for closed immersions controls the characteristic support
of the extraordinary inverse image by the image under the cotangent map
associated with the exact sequence above \cite[5.3.3]{BerthelotIntro}.
Equivalently, after passing to reduced characteristic supports, the conormal
directions which occur in the transfer bimodule lie in
\[
  N^*_{Y/X_s}=\ker\bigl(T^*X_s|_Y\to T^*Y\bigr),
\]
and therefore the support on \(T^*Y\) is bounded by the quotient image of
\(\Car(\mathcal M)|_Y\). This gives
\[
  \Car(i^!\mathcal M)
  \subseteq
  q_Y\bigl(\Car(\mathcal M)|_Y\bigr).
\]
Holonomicity is preserved by this restriction in the holonomic category of
arithmetic \(\mathscr D^\dagger\)-modules, as in the formalism of couples used
in \cite[\S2.1, Definition 2.1.3 and Theorem 2.1.4]{HuygheSchmidtIntermediate}.
\end{proof}

\begin{proposition}[Restriction to locally closed pairs and characteristic varieties]
\label{prop:restriction-characteristic}
Let \(Y\subseteq X_s\) be smooth locally closed and let \(\mathcal M\) be
holonomic. Then \(\mathcal M|_Y\) is holonomic on the couple
\((Y,\overline Y)\), and its characteristic variety satisfies, as a reduced
closed conical subset,
\[
  \Car(\mathcal M|_Y)
  \subseteq
  q_Y\bigl(\Car(\mathcal M)|_Y\bigr)
  \subseteq T^*Y.
\]
For a locally closed immersion, \(q_Y\) is obtained from a
closed-in-open factorization of the restriction to the pair
\((Y,\overline Y)\). In particular, if
\[
  \Car(\mathcal M)|_Y\subseteq T^*_Y X_s,
\]
then \(\Car(\mathcal M|_Y)\) is contained in the zero section of \(T^*Y\).
\end{proposition}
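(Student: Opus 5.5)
The restriction to the pair \((Y,\overline Y)\) is defined in \Cref{subsec:restriction-locally-closed} as a composite of three operations: localization away from \(Z=X_s\setminus\overline Y\), the support functor along \(\overline Y\), and localization away from \(\partial Y=\overline Y\setminus Y\). My plan is to replace this composite by a closed-in-open factorization and then apply the two preceding lemmas in turn. I choose an open subset \(U\subseteq X_s\) with \(Y=\overline Y\cap U\); concretely, take \(U=X_s\setminus\partial Y\), which is open because \(Y\) is locally closed, so \(\partial Y\) is closed. Then \(Y\hookrightarrow U\) is a closed immersion, smooth by hypothesis, and the restriction functor becomes \(i^!\) along \(i:Y\hookrightarrow U\) composed with \(j^*\) along \(j:U\hookrightarrow X_s\).

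The first step is to justify that the three localization and support operations commute in this way. Localizing away from \(Z\) and away from \(\partial Y\) together amounts to restricting to \(U\), up to the support functor along \(\overline Y\). On \(U\), the support functor along \(\overline Y\cap U=Y\) followed by the passage to a module on \(Y\) is the extraordinary restriction \(i^!\), via Berthelot's Kashiwara-type description. I would cite the couple formalism of Huyghe--Schmidt, \cite[\S2.1, Lemmas 2.1.1--2.1.2]{HuygheSchmidtIntermediate}, for independence of the frame and for the commutation of the localization triangles. This identification is the step I expect to be the main obstacle, because the statement is formulated on the couple \((Y,\overline Y)\) while the estimate lives on \(T^*Y\). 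I would first try to reduce everything to open pieces of \(\mathfrak X\) where \(Y\) is closed, so that the couple is simply represented by the closed immersion inside a smaller frame.

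Granting the factorization, the estimate follows in two steps. First, \Cref{lem:open-restriction-characteristic} gives that \(\mathcal M|_U\) is holonomic with \(\Car(\mathcal M|_U)=\Car(\mathcal M)|_U\). Second, \Cref{lem:closed-restriction-characteristic}, applied on \(U\) with the closed smooth immersion \(Y\hookrightarrow U\), gives holonomicity of \(i^!(\mathcal M|_U)\) together with
\[
  \Car(\mathcal M|_Y)\subseteq q_Y\bigl(\Car(\mathcal M|_U)|_Y\bigr)=q_Y\bigl(\Car(\mathcal M)|_Y\bigr).
\]
The last equality holds because \(T^*U|_Y=T^*X_s|_Y\). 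Finally, if \(\Car(\mathcal M)|_Y\subseteq T^*_YX_s=N^*_{Y/X_s}=\ker q_Y\), then exactness of the cotangent sequence shows that \(q_Y\) sends this set into the zero section of \(T^*Y\), which gives the last assertion.
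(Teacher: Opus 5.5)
Your proposal is correct and follows the paper's proof essentially step for step. You use the same choice \(U=X_s\setminus\partial Y\), the same closed-in-open factorization (open restriction along \(j\), then \(i^!\) along \(i:Y\hookrightarrow U\)), the same two preceding lemmas with \(T^*U|_Y=T^*X_s|_Y\), and the same kernel-of-\(q_Y\) argument for the final assertion. The paper also does not prove the identification of the pair restriction with this composite. It takes that identification from the Huyghe--Schmidt couple formalism, so the step you flag as the main obstacle is handled by citation there as well.
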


\begin{proof}
Set
\[
  \partial Y:=\overline Y\setminus Y,
  \qquad
  U:=X_s\setminus\partial Y.
\]
Then \(U\subseteq X_s\) is open, \(\overline Y\cap U=Y\), and therefore
\(Y\) is closed in \(U\). Since \(Y\) is smooth, the immersion
\[
  i:Y\hookrightarrow U
\]
is a closed smooth immersion. Let
\[
  j:U\hookrightarrow X_s
\]
be the open immersion. The restriction to the couple \((Y,\overline Y)\) is
computed by open localization along \(j\), followed by the extraordinary
restriction \(i^!\) along the closed smooth immersion \(i\). Applying
\Cref{lem:open-restriction-characteristic} to \(j\) and then
\Cref{lem:closed-restriction-characteristic} to \(i\) gives a holonomic object
on \((Y,\overline Y)\) and the estimate
\[
  \Car(\mathcal M|_Y)
  \subseteq
  q_Y\bigl(\Car(\mathcal M)|_Y\bigr)
  \subseteq T^*Y,
\]
where \(q_Y:T^*X_s|_Y\to T^*Y\) is the cotangent quotient attached to
\(i:Y\hookrightarrow U\), using the identification \(T^*U|_Y=T^*X_s|_Y\).
This is the locally closed form of the characteristic-variety estimate for
couples in
\cite[\S2.1, Lemmas 2.1.1--2.1.2]{HuygheSchmidtIntermediate}.

If \(\Car(\mathcal M)|_Y\subseteq T^*_Y X_s\), then each covector in
\(\Car(\mathcal M)|_Y\) vanishes on \(T_yY\). Under the cotangent projection
\(q_Y:T^*X_s|_Y\to T^*Y\), such covectors map to zero. Hence
\[
  q_Y\bigl(\Car(\mathcal M)|_Y\bigr)
  \subseteq T^*_Y Y,
\]
where \(T^*_Y Y\) is the zero section of \(T^*Y\). The asserted containment
follows.
\end{proof}

\subsection{Overconvergent isocrystals and the zero-section criterion}
\label{subsec:overconvergent-isocrystals}

Let \((Y,\overline Y)\) be as above, with \(Y\) smooth. We denote by
\[
  \Isoc^\dagger(Y,\overline Y)
\]
the category of overconvergent isocrystals on \(Y\), overconvergent along
\(\overline Y\setminus Y\). Via specialization, this category embeds in the
category of coherent arithmetic \(\mathscr D^\dagger\)-modules attached to the
couple \((Y,\overline Y)\); see
\cite[Definition 2.1.3 and Theorem 2.1.4]{HuygheSchmidtIntermediate}.

\begin{proposition}[Zero-section recognition]
\label{prop:zero-section-isocrystal}
Let \(Y\subseteq X_s\) be smooth locally closed and let \(\mathcal N\) be a
holonomic arithmetic \(\mathscr D^\dagger\)-module on the couple
\((Y,\overline Y)\). If
\[
  \Car(\mathcal N)\subseteq T^*_Y Y,
\]
the zero section of \(T^*Y\), then \(\mathcal N\) belongs to
\(\Isoc^\dagger(Y,\overline Y)\).
\end{proposition}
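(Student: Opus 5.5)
The plan is to reduce to the classical statement that a coherent $\mathscr D$-module whose characteristic variety lies in the zero section is coherent over the structure sheaf, and then invoke Berthelot's characterization of overconvergent isocrystals as $\mathscr D^\dagger$-modules that are coherent over the structure sheaf with overconvergent singularities.

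First, the question is local, and the pair $(Y,\overline Y)$ is represented by the frame $(Y,\overline Y,\mathfrak X)$. So I would use the closed-in-open factorization from the proof of \Cref{prop:restriction-characteristic}. Put $U=X_s\setminus\partial Y$ and let $\mathfrak U\subseteq\mathfrak X$ be the corresponding open formal subscheme; then $Y$ is closed and smooth in $U$. After shrinking to an affine open of $\mathfrak U$, I may lift $Y$ to a smooth closed formal subscheme $\mathfrak Y$. Berthelot's arithmetic Kashiwara theorem then identifies modules supported on $Y$ with $\mathscr D^\dagger_{\mathfrak Y,\mathbb Q}$-modules. Under this identification, $\mathcal N$ becomes a coherent $\mathscr D^\dagger_{\mathfrak Y,\mathbb Q}(\,{}^\dagger\partial Y)$-module, and its characteristic variety is computed on $T^*Y$ from good level models. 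I would make this step explicit, because the hypothesis $\Car(\mathcal N)\subseteq T^*_YY$ has to be read on $Y$ itself and not on $T^*X_s$.

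Second, fix a sufficiently large good level model $\mathcal N^{(m)}$ on a small affine $\mathfrak V\subseteq\mathfrak Y$ with local coordinates, together with a good filtration. The zero-section condition says that the radical of the annihilator of $\gr\mathcal N^{(m)}$ contains the symbols of the level-$m$ generators $\partial^{\langle p^j\rangle}$ for $j\le m$. Consequently some power of each symbol kills $\gr\mathcal N^{(m)}$. The standard filtered argument then shows that $\gr\mathcal N^{(m)}$, and hence $\mathcal N^{(m)}$ up to isogeny, is finitely generated over $\mathcal O_{\mathfrak V,\mathbb Q}$. Using \Cref{prop:microlocal-facts}(i), the same holds for all larger levels, compatibly. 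It follows that $\mathcal N|_{\mathfrak V}$ is $\mathcal O_{\mathfrak V,\mathbb Q}$-coherent in the overconvergent sense, that is, coherent over $\mathcal O_{\mathfrak V,\mathbb Q}(\,{}^\dagger\partial Y)$. By Berthelot's theorem (see \cite{BerthelotIntro} and \cite[Theorem 2.1.4]{HuygheSchmidtIntermediate}), a $\mathscr D^\dagger$-module on $(Y,\overline Y)$ that is coherent over $\mathcal O(\,{}^\dagger\partial Y)$ is the specialization of an overconvergent isocrystal. Since the essential image is stable under gluing, the local conclusions glue to show $\mathcal N\in\Isoc^\dagger(Y,\overline Y)$.

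The main obstacle is the second step, for two reasons. The level filtration over $\mathbb Q$ is only defined up to isogeny, and the characteristic variety in characteristic $p$ involves the Frobenius-twisted symbols of higher-level generators. The symbol argument therefore gives finiteness of $\gr$ over $\mathcal O_{\mathfrak V}$ only after controlling $p$-torsion and completion. I would first try to run the argument on the mod-$\pi$ reduction of a $p$-torsion-free integral model. The Nakayama lemma for $\pi$-adically complete modules then lifts finiteness to $\widehat{\mathscr D}^{(m)}$. Finally, holonomicity of $\mathcal N$ is used to compare the overconvergent structure along $\partial Y$, as in the Caro/Huyghe--Schmidt formalism of couples.
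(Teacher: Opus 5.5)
\textbf{There is a genuine gap: your argument never reaches the boundary \(\partial Y\).}

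Your finite-level steps are essentially sound, but they only prove something about \(\mathcal N|_{\mathfrak U}\), where \(\mathfrak U\) is the open formal subscheme over \(U=X_s\setminus\partial Y\). Arithmetic Kashiwara on local lifts \(\mathfrak Y\), the mod-\(\pi\) symbol argument, complete Nakayama and stabilization over the levels together show that \(\mathcal N|_{\mathfrak U}\) is a \emph{convergent} isocrystal on \(Y\). That is the standard boundaryless criterion.

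The passage from there to \(\Isoc^\dagger(Y,\overline Y)\) is where the argument fails:
\begin{itemize}
\item Since \(\mathfrak Y\subseteq\mathfrak U\) never meets \(\partial Y\), your sentence ``coherent over \(\mathcal O_{\mathfrak V,\mathbb Q}\), that is, coherent over \(\mathcal O_{\mathfrak V,\mathbb Q}({}^\dagger\partial Y)\)'' is vacuous. On \(\mathfrak V\) the two sheaves coincide, and nothing has been shown about \(\mathcal N\) near \(\partial Y\), which is exactly where overconvergence is decided.
\item Overconvergent isocrystals on \((Y,\overline Y)\) form a strictly smaller class than convergent isocrystals on \(Y\). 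So no computation on \(\mathfrak U\) can settle the question.
\item Berthelot's theorem identifying \(\mathcal O({}^\dagger T)\)-coherent \(\mathscr D^\dagger({}^\dagger T)\)-modules with overconvergent isocrystals does not apply either. It needs a smooth formal scheme lifting \(\overline Y\) in which the boundary is a divisor. Here \(\overline Y\) is an arbitrary closure (orbit closures are typically singular along \(\partial Y\)), and in any case you have not established \(\mathcal O({}^\dagger\partial Y)\)-coherence near the boundary.
\end{itemize}

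\textbf{What is missing is the boundary statement itself.} You need: an object on the couple \((Y,\overline Y)\), in the (over)holonomic framework, whose restriction to the open part where \(Y\) is closed is a convergent isocrystal, lies in the essential image of the specialization functor from \(\Isoc^\dagger(Y,\overline Y)\). This is Caro's recognition theorem. The paper's proof consists precisely of citing it, as recorded in Huyghe--Schmidt's Theorem 2.1.4, together with the specialization equivalence of their Definition 2.1.3.

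You do cite that theorem at the end, but for a hypothesis you have not verified. Your closing remark that holonomicity ``is used to compare the overconvergent structure along \(\partial Y\)'' is a placeholder for this deep global input, not an argument. To repair your route, invoke the recognition theorem explicitly. Your first two steps then serve only to check its hypothesis on \(\mathfrak U\).
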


\begin{proof}
This is the arithmetic analogue of the usual zero-characteristic criterion for
integrable connections. In the terminology of Huyghe--Schmidt, the relevant
criterion is Caro's recognition theorem for overconvergent isocrystals,
recorded in \cite[Theorem 2.1.4]{HuygheSchmidtIntermediate}; see also the
specialization equivalence recalled in
\cite[Definition 2.1.3]{HuygheSchmidtIntermediate}.
\end{proof}

\subsection{Support functors and maximal supported submodules}
\label{subsec:support-functors}

Closed supports enter the proof through an elementary noetherian truncation.
We deliberately define this truncation inside the coherent holonomic category,
rather than using local cohomology as a black box before holonomicity has been
proved.

\begin{definition}[Supported submodule]
\label{def:supported-submodule}
Let \(Z\subseteq X_s\) be closed and let \(\mathcal M\) be a holonomic coherent
\(\mathscr D^\dagger_{\mathfrak X,\mathbb Q}\)-module.  We define
\[
  \Gamma_Z(\mathcal M)
\]
to be the maximal coherent submodule of \(\mathcal M\) whose support is contained
in \(Z\).
\end{definition}

\begin{proposition}[Support truncation in the holonomic coherent category]
\label{prop:support-truncation-standard}
Let \(Z\subseteq X_s\) be closed and let \(\mathcal M\) be holonomic.  Then
\(\Gamma_Z(\mathcal M)\) exists, is coherent and functorial in \(\mathcal M\),
and the quotient
\[
  \mathcal M/\Gamma_Z(\mathcal M)
\]
has no nonzero coherent submodule supported on \(Z\).  Moreover, if
\(\mathcal N\subseteq\mathcal M\) is any coherent submodule supported on \(Z\),
then
\[
  \mathcal N\subseteq \Gamma_Z(\mathcal M).
\]
\end{proposition}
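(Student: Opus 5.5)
The plan is to build \(\Gamma_Z(\mathcal M)\) as the sum of all coherent submodules of \(\mathcal M\) supported on \(Z\), and to use a noetherian argument to see that this sum is itself coherent. Holonomic modules have finite length in the coherent category; this is the arithmetic analogue of the classical finite-length statement, available through Caro's holonomic theory without Frobenius \cite{CaroHolonomieSansFrobenius}, and it is deduced from Bernstein's inequality in \Cref{prop:microlocal-facts}. Because of finite length, ascending chains of coherent submodules of \(\mathcal M\) stabilize. If finite length is not available in the form we need, we fall back on a local argument. On an affine open \(\mathfrak U\), \(\Gamma(\mathfrak U,\mathscr D^\dagger_{\mathfrak U,\mathbb Q})\) is coherent, and \(\Gamma(\mathfrak U,\mathcal M)\) is a finitely presented module over it. Since \(\mathfrak X\) is quasi-compact, it suffices to treat a finite affine cover and glue. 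On each affine piece we would then use the fact that coherent submodules satisfy the ascending chain condition, since \(\mathcal M\) is holonomic.

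Next, we record that the set \(S\) of coherent submodules supported on \(Z\) is nonempty, since it contains \(0\), and is closed under finite sums. Indeed, \(\mathcal N_1+\mathcal N_2\) is the image of \(\mathcal N_1\oplus\mathcal N_2\), so it is coherent, and its support lies in \(\Supp\mathcal N_1\cup\Supp\mathcal N_2\subseteq Z\). By the chain condition, \(S\) has a maximal element \(\mathcal N_0\). Directedness of \(S\) then gives \(\mathcal N\subseteq\mathcal N_0\) for every \(\mathcal N\in S\): otherwise \(\mathcal N+\mathcal N_0\) would lie in \(S\) and strictly contain \(\mathcal N_0\). We set \(\Gamma_Z(\mathcal M):=\mathcal N_0\). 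This proves existence, coherence, and the final containment assertion at the same time. Over the affine cover, maximality is a local property, and uniqueness of the maximal element makes the local choices agree on overlaps, so they glue.

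Functoriality follows from the same characterization. For a morphism \(f:\mathcal M\to\mathcal M'\), the image \(f(\Gamma_Z(\mathcal M))\) is coherent and supported in \(Z\), so it lies in \(\Gamma_Z(\mathcal M')\). For the quotient statement, suppose \(\overline{\mathcal N}\subseteq\mathcal M/\Gamma_Z(\mathcal M)\) is coherent and supported on \(Z\), and let \(\mathcal N\) be its preimage in \(\mathcal M\). Then \(\mathcal N\) is coherent, being an extension of coherent modules inside \(\mathcal M\). Its support is contained in \(\Supp\Gamma_Z(\mathcal M)\cup\Supp\overline{\mathcal N}\subseteq Z\), because support is additive in short exact sequences. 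Hence \(\mathcal N\subseteq\Gamma_Z(\mathcal M)\), and so \(\overline{\mathcal N}=0\).

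The main obstacle I expect is the first step: justifying the ascending chain condition on coherent submodules, or finite length, for holonomic \(\mathscr D^\dagger\)-modules without Frobenius, and making sure that subquotients of holonomic modules stay coherent. I would first try to derive finite length directly from additivity of the multiplicity of characteristic cycles at a sufficiently large level, using \Cref{prop:microlocal-facts}(ii). Each nonzero holonomic subquotient contributes at least one component of dimension \(\dim X_s\) with positive multiplicity, which bounds the length. Only if that bookkeeping across levels fails would I instead cite Caro's finite-length result.
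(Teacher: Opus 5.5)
Your argument is essentially the paper's. The paper also uses the noetherianity of the holonomic coherent category to take the maximal coherent submodule supported on \(Z\), uses closure under finite sums, argues by images for functoriality, and uses the preimage in \(\mathcal M\) to handle the quotient.

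The paper works globally and simply cites Caro's Th\'eor\`eme 2.2.14 (without Frobenius) for noetherianity, so you need neither the multiplicity bookkeeping nor the affine-cover fallback. The fallback would in any case need a justification that the local maximal submodules agree on overlaps, and uniqueness on each open alone does not give this.
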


\begin{proof}
The category of coherent holonomic arithmetic \(\mathscr D^\dagger\)-modules is
noetherian and stable under coherent subobjects and quotients; see
\cite[Th\'eor\`eme 2.2.14]{CaroHolonomieSansFrobenius}.  The sum of two
coherent submodules supported on \(Z\) is again coherent and supported on
\(Z\).  Since \(\mathcal M\) is noetherian, the ascending chain obtained by
successively summing supported coherent submodules stabilizes.  Its stable value
is the unique maximal coherent submodule of \(\mathcal M\) supported on \(Z\),
and this is \(\Gamma_Z(\mathcal M)\).

If \(f:\mathcal M\to\mathcal M'\) is a morphism of holonomic coherent modules,
the image \(f(\Gamma_Z(\mathcal M))\) is supported on \(Z\), hence lies in
\(\Gamma_Z(\mathcal M')\).  This gives functoriality.  Finally, if the quotient
\(\mathcal M/\Gamma_Z(\mathcal M)\) had a nonzero coherent submodule supported
on \(Z\), its inverse image in \(\mathcal M\) would be a coherent submodule
supported on \(Z\) strictly containing \(\Gamma_Z(\mathcal M)\), contradicting
maximality.
\end{proof}

\begin{remark}[Relation with local cohomology]
\label{rem:gamma-Z-local-cohomology}
Berthelot's local-cohomology functor \(R\Gamma_Z^\dagger\) gives the derived
version of support truncation; see \cite[\S2.1, Lemmas 2.1.1--2.1.2]{HuygheSchmidtIntermediate}.  In this paper, however, the proofs use only the
maximal coherent submodule \(\Gamma_Z(\mathcal M)\) defined above, and only after
holonomicity has been established.  This avoids any circular use of
six-functor or overholonomic stability before the microlocal holonomicity
argument has been completed.
\end{remark}

\begin{remark}[Range of the support truncation]
\label{rem:gamma-Z-holonomic-only}
The notation \(\Gamma_Z(\mathcal M)\) will only be used for holonomic modules.
In the proof of the regularity theorem this causes no circularity: by the time
support truncation is invoked, holonomicity has already been obtained from the
moment-map/conormal-containment argument in
\Cref{cor:strong-equivariant-holonomic}.
\end{remark}

\subsection{Geometric overholonomicity}

We shall use the term \emph{geometrically overholonomic} in the sense used by
Huyghe--Schmidt: overholonomic after arbitrary base change. More precisely, for
a couple \((Y,\overline Y)\), the category
\[
  \Ovhol(Y/K)
\]
is the heart of the canonical t-structure on the triangulated category of
overholonomic complexes stable after any base change; see \cite[\S2.1, first paragraphs and Definition 2.1.3]{HuygheSchmidtIntermediate},
where the stable-after-base-change overholonomic category, the canonical
heart, and the convention on overconvergent isocrystals without Frobenius are
introduced. For the full flag variety
\(Y=X_s\), this gives the category of geometrically overholonomic arithmetic
\(\mathscr D^\dagger_{\mathfrak X,\mathbb Q}\)-modules.  Equivalently, in the
language of couples, this is the overholonomic heart attached to
\((X_s,X_s)\), with the additional geometric condition that overholonomicity is
preserved after arbitrary extension of the base field.  This is the version
needed below: it is stable under the support, restriction and
intermediate-extension operations used in the orbital devissage.

\begin{proposition}[Overholonomic stability used below]
\label{prop:ovhol-stability}
The overholonomic heart \(\Ovhol(Y/K)\) is an abelian heart stable under
subquotients and extensions taken inside that heart.  The corresponding
triangulated category of geometrically overholonomic complexes is closed under
cones and hence under extensions.  Moreover, for coefficient objects already in
this overholonomic framework, and in particular for overconvergent
\(F\)-isocrystals, arithmetic intermediate extension takes values in the
geometrically overholonomic heart and preserves Frobenius structures.
\end{proposition}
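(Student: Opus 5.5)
The plan is to treat \Cref{prop:ovhol-stability} as a bookkeeping statement. Each clause is read off from the definition of \(\Ovhol(Y/K)\) as the heart of a t-structure, together with the structural results recalled by Huyghe--Schmidt. We verify the three clauses separately.

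\emph{First clause.} Let \(\mathcal T\) be the triangulated category of overholonomic complexes on \((Y,\overline Y)\) that remain overholonomic after every base change. The canonical t-structure on \(\mathcal T\) is the one fixed in \cite[\S2.1, Definition 2.1.3]{HuygheSchmidtIntermediate} (see also \Cref{app:t-structure}). By the general theory of t-structures \cite[\S1.3]{BBD}, the heart of any t-structure is abelian, and a short exact sequence in the heart is the same thing as a distinguished triangle in \(\mathcal T\) whose three vertices lie in the heart. Two consequences follow.

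\begin{enumerate}[label=\textup{(\alph*)}]
\item \emph{Extensions.} If \(0\to A\to E\to B\to0\) is exact in the ambient category of coherent \(\mathscr D^\dagger\)-modules with \(A,B\) in the heart, the associated triangle shows that \(E\) lies in \(\mathcal T\). The long exact sequence of cohomology objects then places \(E\) in the heart.
\item \emph{Subquotients.} Kernels and cokernels formed in the heart coincide with those formed in coherent modules, because the t-structure is the canonical one, whose cohomology functors are the usual \(\mathscr H^i\).
\end{enumerate}

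The stability of \(\mathcal T\) under cones is part of its definition as a triangulated subcategory. Overholonomicity is stable under cones by Caro \cite{CaroSurholonomie}. Base change is an exact triangulated functor, so stability under base change is preserved by cones as well. Closure under extensions follows immediately.

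\emph{Second clause.} For intermediate extension we use the definition
\[
j_{!+}(E)=\operatorname{Im}\bigl(\mathscr H^0 j_!E\to\mathscr H^0 j_+E\bigr),
\]
with the image computed in the heart. When \(E\) is an object of the overholonomic framework, in particular an overconvergent \(F\)-isocrystal, both \(j_!E\) and \(j_+E\) lie in \(\mathcal T\). This uses stability of geometric overholonomicity under the six operations, as recalled in \cite[Definition 2.1.3 and Theorem 2.1.4]{HuygheSchmidtIntermediate}. In the Frobenius case it is supplemented by Caro--Tsuzuki, which makes Frobenius isocrystals overholonomic, and by the fact that the base-change condition is automatic for Frobenius overholonomic complexes. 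Taking \(\mathscr H^0\) lands in the heart, and the image of a morphism in the heart stays in the heart by the first clause.

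\emph{Frobenius structures.} The functors \(j_!\), \(j_+\) and \(\mathscr H^0\) commute with Frobenius pullback \(F^*\). The canonical morphism \(j_!\to j_+\) is natural, so \(F^*\) carries it to the corresponding canonical morphism. Since \(F^*\) is exact, it commutes with images. A Frobenius isomorphism \(F^*E\simeq E\) therefore induces \(F^*j_{!+}E\simeq j_{!+}E\).

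\emph{Main obstacle.} The expected difficulty is justifying that \(j_!\) and \(j_+\) preserve the \emph{geometric}, stable-after-base-change condition, rather than only overholonomicity over \(K\). We plan to handle this by reducing to the Frobenius range. There, Caro's theory gives compatibility of the six operations with base change, and the base-change condition holds automatically. We then invoke \cite[Theorem 2.1.4]{HuygheSchmidtIntermediate} for this compatibility instead of reproving it.
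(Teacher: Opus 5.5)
Your proposal is correct and is essentially the paper's own argument. The paper's proof simply reads all three clauses off the Huyghe--Schmidt framework: the stable-after-base-change category with its canonical heart, the automatic base-change property of Frobenius objects, and $u_{!+}$ used as a functor $\Ovhol(Y/K)\to\Ovhol(P/K)$. You additionally unpack the formal heart/triangle steps and the exactness-of-$F^*$ argument for Frobenius compatibility, which the paper leaves implicit.

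One adjustment concerns your ``main obstacle''. Reducing to the Frobenius range cannot handle non-Frobenius objects already in $\Ovhol(Y/K)$. For those, either cite the Huyghe--Schmidt functor directly, as the paper does, or use that $j_!$ and $j_+$ commute with extension of the base field, so that Caro's stability of overholonomicity applied after each base change gives the geometric condition.
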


\begin{proof}
Huyghe--Schmidt introduce the category of overholonomic complexes without
Frobenius structure, stable after arbitrary base change, and its canonical
heart \(\Ovhol(Y/K)\) in \cite[\S2.1, first paragraphs]{HuygheSchmidtIntermediate}.  The same
subsection recalls, immediately after the definition of base change, that
Frobenius objects automatically satisfy the base-change condition.  The
intermediate-extension functor is defined in
\cite[\S2.2, definition of \(u_{!+}\)]{HuygheSchmidtIntermediate}.  In
\cite[\S2.3, before Proposition~2.3.3]{HuygheSchmidtIntermediate} it is used as
a functor \(v_{!+}:\Ovhol(Y/K)\to\Ovhol(P/K)\), and the coefficient objects in
the classification are explicitly required to be objects of \(\Ovhol(Y/K)\).  Thus
we use the statement only in the Frobenius/overholonomic range.  We do not use,
and do not assert, that an arbitrary holonomic arithmetic \(\mathscr D^\dagger\)-module
or an arbitrary ordinary overconvergent isocrystal without Frobenius is
geometrically overholonomic.
\end{proof}

\begin{remark}[Use of the overholonomic heart]
\label{rem:ovhol-heart-use}
The geometrically overholonomic complexes form a full triangulated subcategory
\[
  D^b_{\mathrm{ovhol}}(X_s/K)\subseteq D^b_{\mathrm{hol}}(X_s/K),
\]
stable under shifts and cones, by the overholonomic formalism recalled in
\cite[\S2.1, first paragraphs]{HuygheSchmidtIntermediate} after Caro and Abe--Caro.  In this
paper this closure is used only after Frobenius holonomicity has already placed
the relevant object in the overholonomic heart by \Cref{prop:standard-facts}\textup{(iii)}.
A full triangulated subcategory is closed under extensions: if
\[
  A\longrightarrow B\longrightarrow C\xrightarrow{+1}
\]
is a distinguished triangle with \(A,C\in D^b_{\mathrm{ovhol}}(X_s/K)\), then
\(B\simeq\operatorname{cone}(C[-1]\to A)\) also belongs to
\(D^b_{\mathrm{ovhol}}(X_s/K)\). A short exact sequence in the overholonomic
heart with two terms in \(\Ovhol(X_s/K)\) gives such a triangle; hence its third
term is overholonomic and, being concentrated in degree zero for the canonical
\(t\)-structure, lies in \(\Ovhol(X_s/K)\).
\end{remark}

\subsection{Arithmetic intermediate extension}

Let \(j:Y\hookrightarrow X_s\) be a smooth locally closed immersion, with
\(\overline Y\) the closure of \(Y\). For a coefficient object \(E\) which belongs to the overholonomic heart on the
couple \((Y,\overline Y)\), and in particular for an overconvergent
\(F\)-isocrystal, the arithmetic intermediate extension is written
\[
  j_{!+}(E).
\]
It is defined as the image, in the heart of the overholonomic t-structure, of
the degree-zero morphism
\[
  j^0_!E\longrightarrow j^0_+E.
\]
Equivalently,
\[
  j_{!+}(E)=\operatorname{Im}\bigl(j^0_!E\to j^0_+E\bigr)
\]
inside that abelian heart. This construction is recalled in
\cite[\S2.2, definition of \(u_{!+}\)]{HuygheSchmidtIntermediate}.  We shall only apply it to
coefficient objects already known to be overholonomic, especially to
\(F\)-isocrystals.  The paper does not use the unsupported implication
``ordinary isocrystal without Frobenius implies geometrically overholonomic.''

\begin{proposition}[Minimality of intermediate extension]
\label{prop:intermediate-minimality-standard}
Let \(E\) be an overholonomic coefficient object on \((Y,\overline Y)\), for
example an overconvergent \(F\)-isocrystal. Then \(j_{!+}(E)\) restricts to
\(E\) on \(Y\), and it has no nonzero subobject or quotient supported on the
boundary
\[
  \overline Y\setminus Y.
\]
If \(E\) is irreducible, then \(j_{!+}(E)\) is irreducible in the
overholonomic category.
\end{proposition}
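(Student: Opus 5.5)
The plan is to deduce the three assertions formally from the definition \(j_{!+}(E)=\operatorname{Im}(j^0_!E\to j^0_+E)\) inside the abelian heart \(\Ovhol(X_s/K)\), together with the adjunction and support properties of \(j_!\) and \(j_+\) recalled in \cite[\S2.2]{HuygheSchmidtIntermediate}. By \Cref{prop:ovhol-stability}, \(j_{!+}(E)\) lies in the heart, and every subobject and quotient taken below stays there. The three inputs are:
\begin{itemize}
\item restriction to the couple \((Y,\overline Y)\) is t-exact on the heart;
\item \(j^!\simeq j^*\) and both send \(j_!E\) and \(j_+E\) to \(E\), with the canonical map \(j_!E\to j_+E\) restricting to the identity of \(E\);
\item the adjunctions \(j_!\dashv j^!\) and \(j^*\dashv j_+\) hold, after factoring \(j\) as an open immersion followed by a closed immersion, as in \Cref{prop:restriction-characteristic}.
\end{itemize}

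\emph{Restriction.} The first step is to apply the exact restriction functor to the factorization \(j^0_!E\twoheadrightarrow j_{!+}(E)\hookrightarrow j^0_+E\). On \(Y\), the composite of these two maps is \(\mathrm{id}_E\). Exactness then shows that the restriction of the image is the image of \(\mathrm{id}_E\), namely \(E\).

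\emph{No boundary sub/quotients.} Let \(Z=\overline Y\setminus Y\), and let \(\mathcal N\subseteq j_{!+}(E)\) be a subobject supported on \(Z\).
\begin{itemize}
\item Then \(\mathcal N\subseteq j^0_+E\). By adjunction, \(\operatorname{Hom}(\mathcal N,j^0_+E)=\operatorname{Hom}(j^*\mathcal N,E)\) in degree zero. This group vanishes because \(j^*\mathcal N=0\), so \(\mathcal N=0\).
\item Dually, let \(j_{!+}(E)\twoheadrightarrow\mathcal Q\) be a quotient supported on \(Z\). Composing with \(j^0_!E\twoheadrightarrow j_{!+}(E)\) gives a surjection from \(j^0_!E\). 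Its class lies in \(\operatorname{Hom}(j^0_!E,\mathcal Q)=\operatorname{Hom}(E,j^!\mathcal Q)=0\), so \(\mathcal Q=0\).
\end{itemize}
One technical point must be checked here. Passing from \(j_!\) and \(j_+\) to their degree-zero truncations \(j^0_!=\mathcal H^0 j_!\) and \(j^0_+=\mathcal H^0 j_+\) keeps these adjunctions on the heart. This uses that \(j_!\) is right t-exact and \(j_+\) is left t-exact, so for \(A\) in the heart one has \(\operatorname{Hom}(j^0_!E,A)=\operatorname{Hom}(j_!E,A)\) and \(\operatorname{Hom}(A,j^0_+E)=\operatorname{Hom}(A,j_+E)\). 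These are the conventions to be fixed in \Cref{app:t-structure}.

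\emph{Irreducibility.} Assume \(E\) is irreducible, and let \(0\neq\mathcal N\subsetneq j_{!+}(E)\) be a proper nonzero subobject. Its restriction \(\mathcal N|_Y\) is a subobject of \(E\), so by simplicity it is either \(0\) or \(E\).
\begin{itemize}
\item If \(\mathcal N|_Y=0\), then \(\mathcal N\) is supported on \(Z\), since every object here is supported in \(\overline Y\). The previous step gives \(\mathcal N=0\), a contradiction.
\item If \(\mathcal N|_Y=E\), then by exactness the quotient \(j_{!+}(E)/\mathcal N\) restricts to \(0\). It is therefore supported on \(Z\), and the previous step makes it zero, again a contradiction.
\end{itemize}
Hence \(j_{!+}(E)\) is simple in \(\Ovhol(X_s/K)\).

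The main obstacle is the bookkeeping of t-exactness in the Berthelot--Caro overholonomic setting. Concretely, we need that restriction to the couple is t-exact, and that "supported on \(Z\)" is equivalent to "restriction to \((Y,\overline Y)\) vanishes" for objects of the heart supported in \(\overline Y\). I would cite these facts from \cite[\S\S2.1--2.2]{HuygheSchmidtIntermediate}, where the couple formalism and \(u_{!+}\) are set up, rather than reprove them. The remaining argument is formal abelian-category reasoning.
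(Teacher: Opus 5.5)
Your proposal is correct, but it proves what the paper only cites. The paper's proof is a pointer. It says the image definition of \(j_{!+}\) ``together with Abe--Caro's minimal extension formalism'' gives the boundary statement, recorded as \cite[Proposition 2.3.3]{HuygheSchmidtIntermediate}, and it reads off irreducibility from the classification \cite[Theorem 2.3.4]{HuygheSchmidtIntermediate}.

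You instead derive the boundary statement from the two adjunctions \(j^*\dashv j_+\) and \(j_!\dashv j^!\). Right t-exactness of \(j_!\) and left t-exactness of \(j_+\) let you replace these functors by \(j^0_!\) and \(j^0_+\) in Hom groups out of or into the heart. You then obtain simplicity from the boundary statement by the two-case restriction argument, without invoking the classification theorem. That last step is exactly the argument the paper itself runs later in \Cref{prop:simplicity-intermediate-extension}. Your route makes explicit which properties of \(j_!\) and \(j_+\) are used, and it is the standard proof behind the cited results. The paper's route is shorter but leaves all of this inside the references.

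One auxiliary claim should be sharpened. Restriction to the couple \((Y,\overline Y)\) is not t-exact on all of \(\Ovhol(X_s/K)\). After open localization it is an extraordinary restriction to a closed subvariety, which is only left t-exact in general, and \(j^!\simeq j^*\) likewise fails in general. Both statements do hold on objects supported in \(\overline Y\), by Berthelot--Kashiwara on the open subset \(X_s\setminus(\overline Y\setminus Y)\). Every object in your argument is a subquotient of \(j^0_!E\) or \(j^0_+E\), hence supported in \(\overline Y\). So the restricted form is all you actually use, and you should state it that way.
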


\begin{proof}
The definition as the image of \(j_!E\to j_+E\), together with Abe--Caro's
minimal extension formalism, gives the absence of boundary subobjects and
quotients. In the notation of Huyghe--Schmidt, this is the content used in
\cite[Proposition 2.3.3]{HuygheSchmidtIntermediate}. The resulting
classification of irreducible overholonomic objects by pairs \((Y,E)\) is
\cite[Theorem 2.3.4]{HuygheSchmidtIntermediate}.
\end{proof}

\subsection{Arithmetic Beilinson--Bernstein at trivial character}

Set
\[
  A_0:=\Gamma\bigl(\mathfrak X,
  \mathscr D^\dagger_{\mathfrak X,\mathbb Q}\bigr).
\]
Under the untwisted arithmetic Beilinson--Bernstein theorem, \(A_0\) is
identified with the crystalline distribution algebra at trivial central
character.

\begin{proposition}[Arithmetic Beilinson--Bernstein at trivial character]
\label{prop:ABB-trivial}
The global section functor induces an equivalence
\[
  \Gamma(\mathfrak X,-):
  \Coh\bigl(\mathscr D^\dagger_{\mathfrak X,\mathbb Q}\bigr)
  \xrightarrow{\;\sim\;}
  A_0\operatorname{-mod}_{\mathrm{fp}}.
\]
A quasi-inverse is given by arithmetic localization:
\[
  \Loc(M)=
  \mathscr D^\dagger_{\mathfrak X,\mathbb Q}
  \otimes_{A_0}M.
\]
Moreover, the natural \(\mathcal G\)-action identifies \(A_0\) with the
central reduction of the crystalline distribution algebra at the trivial
character.
\end{proposition}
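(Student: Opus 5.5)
The plan is to deduce the equivalence from \(\mathscr D^\dagger\)-affinity of the formal flag variety, by the formal argument used in the classical Beilinson--Bernstein theorem, and then to identify \(A_0\) from the literature. The geometric input is \(\mathscr D^\dagger\)-affinity of \(\mathfrak X=\widehat{\mathcal G/\mathcal B}\) at trivial character, \cite[Th\'eor\`eme 4.2.1]{HuygheProjectiveDdagger}, in its flag-variety form \cite[Theorem 3.2]{HuygheSchmidtFlag} and \cite[Theorem 1.1]{HuyghePatelSchmidtStrauch}. It consists of two statements:
\begin{enumerate}[label=\textup{(\alph*)}]
\item \(H^i(\mathfrak X,\mathcal M)=0\) for \(i>0\) and every coherent \(\mathscr D^\dagger_{\mathfrak X,\mathbb Q}\)-module \(\mathcal M\);
\item every such \(\mathcal M\) is generated by its global sections.
\end{enumerate}
I take these as given inputs rather than reproving them. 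Their proof goes through finite levels \(\widehat{\mathscr D}^{(m)}_{\mathfrak X,\mathbb Q}\) via the ample line bundles on \(X_s\), with control of the transition maps as \(m\to\infty\).

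Granting (a) and (b), I would proceed in the usual order. First, \(\Gamma(\mathfrak X,-)\) is exact on \(\Coh(\mathscr D^\dagger_{\mathfrak X,\mathbb Q})\) by (a). Second, \(\Gamma(\mathfrak X,\mathcal M)\) is finitely presented over \(A_0\). Since \(\mathfrak X\) is quasi-compact, (b) gives a surjection \((\mathscr D^\dagger_{\mathfrak X,\mathbb Q})^{n}\twoheadrightarrow\mathcal M\). Its kernel is again coherent, so the same argument yields a presentation \((\mathscr D^\dagger)^{n'}\to(\mathscr D^\dagger)^{n}\to\mathcal M\to0\), and exactness of \(\Gamma\) converts it into a finite presentation of the \(A_0\)-module \(\Gamma(\mathfrak X,\mathcal M)\). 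Third, \(\Loc(M)=\mathscr D^\dagger_{\mathfrak X,\mathbb Q}\otimes_{A_0}M\) is coherent for finitely presented \(M\), because it is a cokernel of a map between finite free modules, and \(\Loc\) is right exact. Fourth, the unit \(M\to\Gamma(\mathfrak X,\Loc M)\) is an isomorphism. This holds for \(M=A_0\) tautologically, then for finite free modules, and then for all finitely presented \(M\) by the five lemma, using right exactness of \(\Loc\) together with exactness of \(\Gamma\). Fifth, the counit \(\Loc\Gamma(\mathcal M)\to\mathcal M\) is surjective by (b). Its kernel \(\mathcal N\) is coherent and satisfies \(\Gamma(\mathcal N)=0\), by exactness of \(\Gamma\) and the fourth step. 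Then (b) applied to \(\mathcal N\) forces \(\mathcal N=0\).

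For the final sentence of the statement I would cite \cite[Theorem 4.2.1]{HuygheSchmidtIntermediate} together with \cite[\S4]{HuygheSchmidtDistributions}. The \(\mathcal G\)-action gives a map from the crystalline distribution algebra to \(A_0\) by differentiating to fundamental vector fields at every level. Harish-Chandra-type arguments show that this map factors through the central reduction at the trivial character and induces an isomorphism after taking the inductive limit over levels and completing.

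The main obstacle is entirely in the inputs (a)--(b) and in the identification of \(A_0\). In particular, one needs cohomology vanishing uniformly in the level \(m\), so that it survives the passage \(\varinjlim_m\). Since these results are established in the cited works, the proof in the paper would only assemble the formal steps above and record the citations.
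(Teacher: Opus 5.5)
Your proposal is correct and follows the paper's route. The paper's proof is purely a citation of the Huyghe--Schmidt arithmetic localization theorem, supported by the flag-variety and \(\mathscr D^\dagger\)-affinity results of Huyghe--Schmidt and Huyghe--Patel--Schmidt--Strauch, for both the equivalence and the identification of \(A_0\). The only difference is that you spell out the standard formal deduction of the equivalence from \(\mathscr D^\dagger\)-affinity (exactness of \(\Gamma\), finite presentation via global generation on the quasi-compact \(\mathfrak X\), and the unit and counit isomorphisms), which the paper leaves inside the citation; that deduction is sound.
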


\begin{proof}
This is the localization theorem recalled by Huyghe--Schmidt in
\cite[Theorem 4.2.1]{HuygheSchmidtIntermediate}, based on the arithmetic
Beilinson--Bernstein theorem for the formal flag variety and the identification
of global sections with the crystalline distribution algebra. The untwisted
flag-variety form is also treated in \cite[Theorem 3.2.1]{HuygheSchmidtFlag};
for formal models and \(\mathscr D^\dagger\)-affinity, see
\cite[Theorem 5.2]{HuyghePatelSchmidtStrauch}.  The related
\(G\)-equivariant formal-model framework, including coadmissible
\(G\)-equivariant arithmetic \(\mathcal D(\lambda)\)-modules over families of
formal models of the rigid flag variety, is developed in
\cite{SarrazolaRendiconti}; that result is background for the equivariant
localization landscape and is not used as an input here.
\end{proof}

\begin{remark}[Normalization of \(A_0\)]
\label{rem:A0-normalization}
The algebra \(A_0\) is used throughout by definition as
\(\Gamma(\mathfrak X,\mathscr D^\dagger_{\mathfrak X,\mathbb Q})\).  The
identification with the crystalline distribution algebra at trivial central
character is the normalization supplied by the arithmetic localization theorem
quoted above.  Thus the distribution-side category in \Cref{sec:distributions}
is obtained by transport through this precise equivalence.
\end{remark}

\subsection{Collected standard facts}

For quick reference, we collect the external inputs used later in the paper.

\begin{proposition}[Standard support, microlocal and extension facts]
\label{prop:standard-facts}
The following facts will be used throughout the article.
\begin{enumerate}[label=\textup{(\roman*)}]
\item Coherent \(\mathscr D^\dagger\)-modules admit good level models locally,
and their characteristic varieties are independent of sufficiently large choices
\cite[5.3.2--5.3.4]{BerthelotIntro}.
\item Bernstein's inequality holds for coherent arithmetic \(\mathscr D^\dagger\)-modules, and the holonomic condition is
\(\dim\Car(\mathcal M)=\dim X_s\) for nonzero \(\mathcal M\)
\cite[5.3.4--5.3.5]{BerthelotIntro}; without Frobenius structures, see
\cite[Th\'eor\`eme 2.2.8]{CaroHolonomieSansFrobenius}.
\item In the Frobenius setting, and on the smooth projective formal scheme
\(\mathfrak X\), Frobenius holonomicity is stable under the six operations and
agrees with Frobenius overholonomicity by Caro's resolution of Berthelot's
holonomicity-stability conjectures for smooth projective formal schemes
\cite[\S2.1]{CaroStabiliteHolonomie}.  In particular, every Frobenius
holonomic \(\mathscr D^\dagger_{\mathfrak X,\mathbb Q}\)-module used below is
Frobenius overholonomic.  Combined with the stable-after-base-change convention
recalled by Huyghe--Schmidt, for which Frobenius objects automatically satisfy
the base-change condition \cite[\S2.1, first paragraphs]{HuygheSchmidtIntermediate},
and with the overholonomicity of overconvergent Frobenius isocrystals of
Caro--Tsuzuki \cite[Abstract and Introduction]{CaroTsuzukiOverholonomicity}, this
is the only input used below to pass from Frobenius holonomicity to geometric
overholonomicity.

The direction is important.  Caro's 2009 overholonomicity paper \cite{CaroSurholonomie} supplies the
overholonomic category and its stability properties, and the converse implication
that overholonomic objects are holonomic in the cases treated there; it is not
the input for the holonomic \(\Rightarrow\) overholonomic implication used here.
That implication is the smooth-projective Frobenius result of
\cite[\S2.1]{CaroStabiliteHolonomie}.
\item Restriction to smooth locally closed couples is compatible with support,
localization and characteristic varieties
\cite[\S2.1, Lemmas 2.1.1--2.1.2]{HuygheSchmidtIntermediate}.
\item The zero-section criterion identifies the corresponding arithmetic
\(\mathscr D^\dagger\)-modules with overconvergent isocrystals
\cite[Theorem 2.1.4]{HuygheSchmidtIntermediate}.
\item In the holonomic coherent category, the maximal submodule supported on a closed
subset exists by noetherianity and is functorial.  This is the only support
truncation used in the proof; the derived local-cohomology functor is used only
as background for the support formalism
\cite[Th\'eor\`eme 2.2.14]{CaroHolonomieSansFrobenius}.
\item Arithmetic intermediate extension is the image of the degree-zero morphism
\(j^0_!E\to j^0_+E\) in the overholonomic heart, has no nonzero boundary
subobject or quotient, and preserves overholonomicity for coefficient objects
already in the overholonomic heart; in particular this applies to
overconvergent \(F\)-isocrystals
\cite[\S2.2, definition of \(u_{!+}\), and Proposition 2.3.3]{HuygheSchmidtIntermediate}.
\item Geometric overholonomicity, in the stable-after-base-change sense recalled by
Huyghe--Schmidt, is stable under the operations used in this article, including
extensions in the overholonomic heart, smooth pullback and intermediate
extension.  In the Frobenius range used in the main theorem, the base-change
condition is automatic and intermediate extension preserves Frobenius structures
\cite[\S2.1, first paragraphs; \S2.2; and Proposition 2.3.3]{HuygheSchmidtIntermediate}.

\item The coherent holonomic category used below is noetherian in the sense
needed here: kernels, images and intersections of coherent holonomic
\(\mathscr D^\dagger\)-submodules remain coherent.  Once an object has been
placed in the overholonomic heart, kernels, images and quotients used in the
argument are taken inside that abelian heart and therefore remain
overholonomic.  We do not use an uncited assertion that arbitrary coherent
subobjects formed outside the overholonomic heart are automatically
overholonomic.  The coherence/noetherianity input is the standard one for
Berthelot arithmetic \(\mathscr D^\dagger\)-modules; see
\cite[\S2.1, Lemmas 2.1.1--2.1.2]{HuygheSchmidtIntermediate} and
\cite[Th\'eor\`eme 2.2.14]{CaroHolonomieSansFrobenius}.
\end{enumerate}
\end{proposition}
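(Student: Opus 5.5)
The statement is a collection of cited external facts, so the proof consists in matching each item with its source and checking that each source applies in our setting: the smooth projective formal flag variety \(\mathfrak X\), left modules, and trivial character.

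For (i), (ii), (vi) and (ix) I would cite Berthelot's coherence theory \cite[5.3.2--5.3.5]{BerthelotIntro} and Caro's holonomicity without Frobenius \cite[Th\'eor\`emes 2.2.3, 2.2.8, 2.2.14]{CaroHolonomieSansFrobenius}, as already done in \Cref{prop:microlocal-facts} and \Cref{prop:support-truncation-standard}. For (ix) one extra remark is needed. Kernels and images of morphisms between coherent \(\mathscr D^\dagger\)-modules are coherent because \(\mathscr D^\dagger_{\mathfrak X,\mathbb Q}\) is a coherent sheaf of rings. The holonomic category is stable under subquotients by \cite[Th\'eor\`eme 2.2.14]{CaroHolonomieSansFrobenius}. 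An intersection \(\mathcal N_1\cap\mathcal N_2\) is the kernel of \(\mathcal N_1\to\mathcal M/\mathcal N_2\), so it is covered as well.

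Items (iv), (v), (vii) and (viii) reduce to \Cref{prop:restriction-characteristic}, \Cref{prop:zero-section-isocrystal}, \Cref{prop:intermediate-minimality-standard} and \Cref{prop:ovhol-stability}, together with \Cref{rem:ovhol-heart-use}. For (viii), closure under extensions in the heart is exactly the cone argument of \Cref{rem:ovhol-heart-use}. Stability under smooth pullback and under intermediate extension is part of the stable-after-base-change formalism of \cite[\S2.1--2.2]{HuygheSchmidtIntermediate}.

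The only substantive point is (iii). Here I would argue in three steps.
\begin{enumerate}
\item \(\mathfrak X=\widehat{\mathcal G/\mathcal B}\) is smooth and projective over \(\mathcal V\), since \(\mathcal G/\mathcal B\) is projective. This places us in the range of \cite[\S2.1]{CaroStabiliteHolonomie}.
\item That result identifies Frobenius holonomic modules with Frobenius overholonomic ones.
\item Huyghe--Schmidt's convention then shows that Frobenius overholonomic objects automatically satisfy the base-change condition, which gives geometric overholonomicity. Caro--Tsuzuki covers the isocrystal coefficients on orbit pairs.
\end{enumerate}

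The main obstacle is precisely the hypothesis-matching in (iii). Caro's theorem is stated for \(F\)-complexes with a fixed Frobenius power and possibly for a specific perfect base. I would first check that "Frobenius structure" in this paper means an isomorphism \(F^{s*}\mathcal M\simeq\mathcal M\) for some \(s\), compatible with Caro's convention. Then I would confirm that his notion of overholonomicity coincides with the one whose heart is \(\Ovhol(X_s/K)\). If the conventions differ, I would pass through a finite unramified extension of \(\mathcal V\) and descend, using the base-change stability built into the definition.
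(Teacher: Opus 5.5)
Your proposal is correct and follows the paper's approach. The paper gives no separate proof of this proposition: each item is justified only by the citations embedded in it, and items (iv), (v), (vii) and (viii) rest on the same Huyghe--Schmidt references that underlie the earlier recalled propositions you reduce them to. Your additions refine that same route rather than change it: you write $\mathcal N_1\cap\mathcal N_2$ as the kernel of $\mathcal N_1\to\mathcal M/\mathcal N_2$ in (ix), and before invoking Caro's smooth-projective theorem in (iii) you check that $\mathcal G/\mathcal B$ is projective and that the Frobenius conventions match.
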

\section{Strong equivariance for arithmetic
\texorpdfstring{\(\mathscr D^\dagger\)}{Ddagger}-modules}
\label{sec:strong-equivariance}

Let \(\mathcal K\subseteq \mathcal G\) be a smooth closed subgroup scheme and let
\[
  \mathfrak K:=\widehat{\mathcal K}
\]
be its \(p\)-adic formal completion. We assume that \(\mathfrak K\) acts on
\(\mathfrak X\). We denote the action and the projection by
\[
  a:\mathfrak K\times_{\mathcal V}\mathfrak X\longrightarrow \mathfrak X,
  \qquad
  p:\mathfrak K\times_{\mathcal V}\mathfrak X\longrightarrow \mathfrak X.
\]
Both morphisms are smooth. Since \(\mathfrak K\times_{\mathcal V}\mathfrak X\)
is again a smooth formal \(\mathcal V\)-scheme, Berthelot's construction applies
to it and gives the sheaf
\[
  \mathscr D^\dagger_{\mathfrak K\times\mathfrak X,\mathbb Q}.
\]
Throughout this section, for a smooth morphism \(f\)
of smooth formal \(\mathcal V\)-schemes, we denote by
\[
  f^\sharp
\]
the normalized smooth inverse image of left arithmetic \(\mathscr D^\dagger\)-modules.
Concretely, if \(d_f\) is the relative dimension of \(f\), then \(f^\sharp\)
is the degree-zero smooth inverse image whose derived counterpart is
\(f^![-d_f]\), equivalently the inverse image defined by the corresponding
arithmetic transfer bimodule.  We use it only for the smooth morphisms \(a\)
and \(p\) above and for the formal smooth morphisms obtained from them by base
change.  In this smooth situation the transfer-bimodule description gives an
exact functor on coherent modules, and it is compatible with finite-level
models and filtrations; see Berthelot's construction of inverse images and
finite-level transition for arithmetic differential operators
\cite[5.3.2--5.3.4]{BerthelotIntro}.  In the overholonomic range, the same
normalization is the one used in the six-functor formalism recalled in
\cite[\S2.1, Lemmas 2.1.1--2.1.2]{HuygheSchmidtIntermediate}.

On
\[
  \mathfrak K\times_{\mathcal V}\mathfrak K\times_{\mathcal V}\mathfrak X
\]
we write
\[
  \operatorname{pr}_{23}(g_1,g_2,x)=(g_2,x),
  \qquad
  (\operatorname{id}_{\mathfrak K}\times a)(g_1,g_2,x)=(g_1,g_2x),
\]
and
\[
  (m\times\operatorname{id}_{\mathfrak X})(g_1,g_2,x)=(g_1g_2,x).
\]
These conventions are used in the cocycle identity below.

\subsection{Weak equivariant structures}

\begin{definition}[Weak equivariant structure]
\label{def:weak-equivariant-structure}
Let \(\mathcal M\) be a coherent
\(\mathscr D^\dagger_{\mathfrak X,\mathbb Q}\)-module. A weak
\(\mathfrak K\)-equivariant structure on \(\mathcal M\) is an isomorphism of
\(\mathscr D^\dagger_{\mathfrak K\times\mathfrak X,\mathbb Q}\)-modules
\[
  \alpha_{\mathcal M}:a^\sharp\mathcal M
  \xrightarrow{\;\sim\;}
  p^\sharp\mathcal M
\]
satisfying the identity condition along the unit section of \(\mathfrak K\)
and the cocycle condition over
\[
  \mathfrak K\times_{\mathcal V}\mathfrak K\times_{\mathcal V}\mathfrak X.
\]
More explicitly, if
\[
  m:\mathfrak K\times_{\mathcal V}\mathfrak K\longrightarrow \mathfrak K
\]
denotes multiplication, then the cocycle condition is the equality
\[
  (m\times\operatorname{id}_{\mathfrak X})^\sharp(\alpha_{\mathcal M})
  =
  \operatorname{pr}_{23}^{\sharp}(\alpha_{\mathcal M})
  \circ
  (\operatorname{id}_{\mathfrak K}\times a)^\sharp(\alpha_{\mathcal M}),
\]
after the canonical identifications of the corresponding smooth inverse images.
\end{definition}

\begin{remark}
\label{rem:weak-versus-strong}
The adjective ``weak'' is intentional. A weak equivariant structure records the
global symmetry of the module, but it does not by itself force the
infinitesimal action of \(\operatorname{Lie}(\mathcal K)\) to coincide with the
action coming from the fundamental vector fields inside
\(\mathscr D^\dagger_{\mathfrak X,\mathbb Q}\). The latter compatibility is the
essential microlocal input in Kashiwara's regularity theorem and is encoded
below in the notion of strong equivariance.
\end{remark}

\begin{remark}[Weak equivariance alone is not enough]
\label{rem:weak-equivariance-not-holonomic}
Assume \(\dim X_s>0\).  The sheaf
\(\DdagX\), regarded as a left module over itself, is coherent and carries the
natural weak equivariant structure induced by the action on
\(\mathfrak X\).  Nevertheless its characteristic variety is the whole
cotangent bundle:
\[
  \Car(\DdagX)=T^*X_s,
\]
because the associated graded object for the order filtration is
\(\operatorname{gr}\mathscr D\simeq\mathcal O_{T^*X_s}\).  Hence
\(\DdagX\) is not holonomic when \(\dim X_s>0\).  Thus a global
linearization does not force a microlocal constraint; the infinitesimal and
filtered clauses in \Cref{def:strong-equivariance} are essential.
\end{remark}

\begin{remark}[Relation with equivariant formal models]
\label{rem:section3-rendiconti}
The linearization used in \Cref{def:weak-equivariant-structure} is the local
notion needed for the present microlocal argument.  It is compatible in spirit
with the global framework of \(G\)-equivariant formal models and equivariant
arithmetic \(\mathcal D(\lambda)\)-modules developed in
\cite{SarrazolaRendiconti}, where equivariance is used to formulate
localization for coadmissible modules over families of formal models of the
rigid flag variety.  No result from that paper is used in the proof below; it
serves only as background for the equivariant formal-model setting.
\end{remark}

\subsection{Infinitesimal action and strong equivariance}

Let
\[
  \mathfrak k_K:=\operatorname{Lie}(\mathcal K)\otimes_{\mathcal V}K.
\]
The action of \(\mathfrak K\) on \(\mathfrak X\) induces a morphism of
\(K\)-Lie algebras
\[
  \mu_{\mathfrak X}:\mathfrak k_K
  \longrightarrow
  \Gamma(\mathfrak X,\mathscr T_{\mathfrak X,\mathbb Q}),
\]
where \(\mathscr T_{\mathfrak X,\mathbb Q}\) denotes the tangent sheaf after
tensoring with \(\mathbb Q\). Composing with the natural inclusion of vector
fields into differential operators gives
\[
  \mathfrak k_K
  \xrightarrow{\;\mu_{\mathfrak X}\;}
  \Gamma(\mathfrak X,\mathscr T_{\mathfrak X,\mathbb Q})
  \hookrightarrow
  \Gamma(\mathfrak X,\mathscr D^\dagger_{\mathfrak X,\mathbb Q}).
\]
In what follows we use the same notation \(\mu_{\mathfrak X}(\xi)\) for the
corresponding first-order arithmetic differential operator. Thus the symbol
\(\mu_{\mathfrak X}(\xi)m\) always means the action of this operator on a
left \(\mathscr D^\dagger\)-module.
It follows that every left \(\mathscr D^\dagger_{\mathfrak X,\mathbb Q}\)-module
\(\mathcal M\) carries an action of \(\mathfrak k_K\) by differential operators:
\[
  \xi\cdot_{\mathscr D}m:=\mu_{\mathfrak X}(\xi)m,
  \qquad
  \xi\in\mathfrak k_K.
\]

On the other hand, a weak equivariant structure \(\alpha_{\mathcal M}\)
differentiates along the identity section of \(\mathfrak K\) and produces an
infinitesimal action
\[
  \xi\cdot_{\alpha}m,
  \qquad
  \xi\in\mathfrak k_K,
\]
on the underlying \(\mathbb Q\)-linear sheaf of \(\mathcal M\).

\begin{lemma}[Differentiation of equivariant structures]
\label{lem:differentiation-equivariant-structures}
Let \((\mathcal M,\alpha_{\mathcal M})\) be weakly
\(\mathfrak K\)-equivariant.  The isomorphism
\[
  \alpha_{\mathcal M}:a^\sharp\mathcal M\xrightarrow{\sim}p^\sharp\mathcal M
\]
induces a \(K\)-linear Lie algebra action
\[
  d\alpha_{\mathcal M}:\mathfrak k_K\longrightarrow
  \operatorname{End}_{\mathbb Q}(\mathcal M),
  \qquad
  \xi\longmapsto (m\mapsto \xi\cdot_\alpha m),
\]
which is functorial in weakly equivariant morphisms.  Moreover, kernels,
cokernels, images and coimages of weakly equivariant morphisms inherit the
induced infinitesimal action by restriction or passage to quotient.
\end{lemma}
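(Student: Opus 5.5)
The plan is to obtain the infinitesimal action by restricting \(\alpha_{\mathcal M}\) to the first-order infinitesimal neighbourhood of the unit section \(e\times\mathfrak X\subseteq\mathfrak K\times_{\mathcal V}\mathfrak X\) and reading off the linear term. Write \(\mathfrak K^{(1)}\) for the first infinitesimal neighbourhood of the unit in \(\mathfrak K\); its structure sheaf is \(\mathcal V\oplus\omega_{\mathfrak K}|_e\), with \(\omega_{\mathfrak K}|_e\) dual to \(\operatorname{Lie}(\mathcal K)\). Pulling back \(\alpha_{\mathcal M}\) along \(\mathfrak K^{(1)}\times\mathfrak X\to\mathfrak K\times\mathfrak X\), and using that \(a\) and \(p\) agree on \(e\times\mathfrak X\), we get an automorphism of \(\mathcal M\otimes_{\mathcal V}\mathcal O_{\mathfrak K^{(1)}}\) after tensoring with \(\mathbb Q\). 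By the identity condition it reduces to \(\operatorname{id}\) modulo the augmentation ideal, so it has the form \(\operatorname{id}+\theta\) with \(\theta:\mathcal M\to\mathcal M\otimes\omega_{\mathfrak K}|_e\). Pairing with \(\xi\in\mathfrak k_K\) defines \(\xi\cdot_\alpha m\).

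This is \(K\)-linear in \(\xi\) by construction, and \(\mathbb Q\)-linear in \(m\); it is not \(\mathcal O\)-linear, which is why the target is \(\operatorname{End}_{\mathbb Q}\). Concretely, identifying \(a^\sharp\mathcal M\) on the first-order neighbourhood requires the canonical first-order comparison of pullbacks given by the \(\mathscr D\)-module structure. That comparison is available at every finite level, since \(a\) and \(p\) are smooth and \(f^\sharp\) is given by the transfer bimodule and is compatible with level models.

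To prove the Lie algebra identity \([\xi,\eta]\cdot_\alpha=\xi\cdot_\alpha\eta\cdot_\alpha-\eta\cdot_\alpha\xi\cdot_\alpha\), restrict the cocycle condition of \Cref{def:weak-equivariant-structure} to \(\mathfrak K^{(1)}\times\mathfrak K^{(1)}\times\mathfrak X\). Comparing the bilinear terms on both sides, the antisymmetric part of the multiplication \(m\) at second order is exactly the Lie bracket. This is the standard computation of the bracket from the group law, the same way one shows that a group representation differentiates to a Lie algebra representation. It is the step I expect to be the main obstacle. The problem is that second-order terms in the \(\mathfrak K^{(1)}\)-expansions produce contributions from the first-order neighbourhood of \(\mathfrak K\times\mathfrak K\), where the \(\sharp\)-pullbacks must be canonically identified compatibly with the cocycle identifications. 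I would handle this by working on a level-\(m\) model over an affine open, where everything is an explicit module over \(\widehat{\mathscr D}^{(m)}\otimes\mathcal O_{\mathfrak K\times\mathfrak K}\), and by checking the identity after inverting \(p\).

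Functoriality is formal. A weakly equivariant morphism \(f:\mathcal M\to\mathcal N\) satisfies \(p^\sharp f\circ\alpha_{\mathcal M}=\alpha_{\mathcal N}\circ a^\sharp f\). Restricting to \(\mathfrak K^{(1)}\times\mathfrak X\) and taking linear terms gives \(f(\xi\cdot_\alpha m)=\xi\cdot_\alpha f(m)\). For kernels, cokernels, images and coimages, the key fact is that \(a^\sharp\) and \(p^\sharp\) are exact on coherent modules, as recalled for smooth morphisms. Hence \(\alpha_{\mathcal M}\) and \(\alpha_{\mathcal N}\) induce isomorphisms \(a^\sharp\ker f\simeq p^\sharp\ker f\), and similarly for the other constructions, and these still satisfy the unit and cocycle conditions. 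The corresponding infinitesimal actions then coincide with the restriction to \(\ker f\), or the passage to the quotient \(\operatorname{coker}f\), of the actions on \(\mathcal M\) and \(\mathcal N\), by the functoriality just proved applied to the inclusion and projection maps. Alternatively, this follows directly because \(f\) intertwines the actions, so \(\ker f\) and \(\operatorname{Im}f\) are stable under them.
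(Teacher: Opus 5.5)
Your proposal follows essentially the paper's route. The paper also defines $d\alpha_{\mathcal M}(\xi)$ as the $\varepsilon$-coefficient of $\alpha_{\mathcal M}$ pulled back along a first-order point $e_\xi$ at the unit; your universal neighbourhood $\mathfrak K^{(1)}$ merely makes $K$-linearity automatic. Like you, it uses the identity condition for the constant term, extracts the bracket from the cocycle condition, and gets functoriality and the kernel/cokernel/image/coimage statement by differentiating the compatibility square and noting that equivariant morphisms intertwine the operators.

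The only variation is in the bracket. The paper applies the cocycle to the commutator $e_{\varepsilon\xi}e_{\delta\eta}e_{\varepsilon\xi}^{-1}e_{\delta\eta}^{-1}=e_{\varepsilon\delta[\xi,\eta]}$, so $\alpha_{\mathcal M}$ is only evaluated at points factoring through the first-order neighbourhood. In your version, multiplication sends $\mathfrak K^{(1)}\times\mathfrak K^{(1)}$ into the second-order neighbourhood. Your $\varepsilon\delta$-comparison therefore also picks up an unknown symmetric second-order term of $\alpha_{\mathcal M}$, which cancels only after antisymmetrizing; say this explicitly.

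One sentence of your construction must be corrected, since it contradicts your own (correct) remark that the operator is not $\mathcal O$-linear. The identification of $a^\sharp\mathcal M$, restricted to $\mathfrak K^{(1)}\times\mathfrak X$, with $\mathcal M\otimes\mathcal O_{\mathfrak K^{(1)}}$ has to be the naive one, $m\otimes c\mapsto c\,(1\otimes m)$. This map is only semilinear with respect to the comorphism of $a$. It must not be the Taylor (connection) comparison of the two pullbacks supplied by the $\mathscr D$-module structure.

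With that comparison the resulting automorphism is $\mathcal O$-linear. Its $\varepsilon$-coefficient is then the $\mathscr D^\dagger$-linear difference $\xi\cdot_\alpha-\mu_{\mathfrak X}(\xi)$, the classical obstruction to strong equivariance. That is not the operator which \textup{(SE1)} compares with $\mu_{\mathfrak X}(\xi)$: for $\mathcal O_{\mathfrak X}$ with its canonical structure it would vanish identically instead of being $f\mapsto v_\xi(f)$.

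The naive identification is compatible with composition of pullbacks. With it, the cocycle condition just says that the semilinear automorphisms attached to Artinian points of $\mathfrak K$ multiply as the points do. So the compatibility problem you anticipate on $\mathfrak K^{(1)}\times\mathfrak K^{(1)}\times\mathfrak X$ is automatic, and no detour through level-$m$ models is needed for it.
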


\begin{proof}
Because \(\mathfrak K\) is the completion of the smooth group scheme
\(\mathcal K\), its generic fiber is a smooth formal group over \(K\). Thus
its tangent space at the identity is canonically
\[
  T_e\mathfrak K_K=\mathfrak k_K.
\]
For \(\xi\in\mathfrak k_K\), let
\[
  e_\xi:\operatorname{Spec}K[\varepsilon]/(\varepsilon^2)
  \longrightarrow \mathfrak K_K
\]
be the first-order point with base point \(e\) and tangent vector \(\xi\).
Pulling \(\alpha_{\mathcal M}\) back along
\[
  e_\xi\times \operatorname{id}_{\mathfrak X}:
  \operatorname{Spec}K[\varepsilon]/(\varepsilon^2)\times \mathfrak X_K
  \longrightarrow \mathfrak K_K\times \mathfrak X_K
\]
gives an automorphism of the pullback of \(\mathcal M\) of the form
\[
  \operatorname{id}_{\mathcal M}+\varepsilon d\alpha_{\mathcal M}(\xi).
\]
The coefficient of \(\varepsilon\) defines \(d\alpha_{\mathcal M}(\xi)\). The
identity condition in the equivariant structure is exactly the assertion that
the constant term is \(\operatorname{id}_{\mathcal M}\).

The assignment \(\xi\mapsto d\alpha_{\mathcal M}(\xi)\) is \(K\)-linear. Indeed,
if \(a,b\in K\) and \(\xi,\eta\in\mathfrak k_K\), then the first-order point
with tangent vector \(a\xi+b\eta\) is obtained from the universal first-order
neighborhood by the \(K\)-linear tangent vector \(a\xi+b\eta\). Pulling back
\(\alpha_{\mathcal M}\) along this point gives
\[
  \operatorname{id}_{\mathcal M}
  +\varepsilon\bigl(a\,d\alpha_{\mathcal M}(\xi)
  +b\,d\alpha_{\mathcal M}(\eta)\bigr),
\]
and hence
\[
  d\alpha_{\mathcal M}(a\xi+b\eta)
  =a\,d\alpha_{\mathcal M}(\xi)+b\,d\alpha_{\mathcal M}(\eta).
\]

We next check compatibility with the Lie bracket. Put
\[
  A_\xi:=d\alpha_{\mathcal M}(\xi),
  \qquad
  A_\eta:=d\alpha_{\mathcal M}(\eta).
\]
Let
\[
  R:=K[\varepsilon,\delta]/(\varepsilon^2,\delta^2).
\]
The commutator morphism of the smooth group \(\mathfrak K_K\), applied to the
first-order points with tangent vectors \(\xi\) and \(\eta\), has mixed tangent
term equal to the Lie bracket. Equivalently, in the functor of points one has,
modulo \((\varepsilon^2,\delta^2)\),
\[
  e_{\varepsilon\xi}\,e_{\delta\eta}\,
  e_{\varepsilon\xi}^{-1}\,e_{\delta\eta}^{-1}
  =e_{\varepsilon\delta[\xi,\eta]}.
\]
This is the standard definition of the Lie bracket of a smooth group over a
field of characteristic zero.

Differentiating the cocycle identity along the same commutator gives two
computations of the induced automorphism of the pullback of \(\mathcal M\) over
\(R\). On the one hand, the right-hand side above gives
\[
  \operatorname{id}_{\mathcal M}
  +\varepsilon\delta\,d\alpha_{\mathcal M}([\xi,\eta]).
\]
The same automorphism can also be computed by multiplying the four infinitesimal automorphisms:
\[
\begin{aligned}
 &(\operatorname{id}+\varepsilon A_\xi)
 (\operatorname{id}+\delta A_\eta)
 (\operatorname{id}-\varepsilon A_\xi)
 (\operatorname{id}-\delta A_\eta) \\
 &\qquad =
 \operatorname{id}+\varepsilon\delta(A_\xi A_\eta-A_\eta A_\xi).
\end{aligned}
\]
Comparing the coefficients of \(\varepsilon\delta\) yields
\[
  A_\xi A_\eta-A_\eta A_\xi
  =d\alpha_{\mathcal M}([\xi,\eta]),
\]
that is,
\[
  [d\alpha_{\mathcal M}(\xi),d\alpha_{\mathcal M}(\eta)]
  =d\alpha_{\mathcal M}([\xi,\eta]).
\]
Consequently, \(d\alpha_{\mathcal M}\) is a \(K\)-linear Lie algebra action.

Functoriality follows by differentiating the commutative square expressing
compatibility of a morphism with the weak equivariant structures. Kernels,
cokernels, images and coimages are computed in the underlying abelian category,
and the differentiated operators commute with every weakly equivariant
morphism. They therefore pass to these subquotients.
\end{proof}

\begin{definition}[Filtered strong equivariance]
\label{def:strong-equivariance}
\textup{\textbf{Filtered strong equivariance}} is the following finite-level
condition.  A weakly \(\mathfrak K\)-equivariant coherent
\(\mathscr D^\dagger_{\mathfrak X,\mathbb Q}\)-module
\((\mathcal M,\alpha_{\mathcal M})\) is called filtered strongly
\(\mathfrak K\)-equivariant if it satisfies \textup{(SE1)} and
\textup{(SE2)} below.
\begin{enumerate}[label=\textup{(SE\arabic*)}]
\item \emph{Infinitesimal compatibility.}  For every
\(\xi\in\mathfrak k_K\), the infinitesimal action induced by
\(\alpha_{\mathcal M}\) agrees with the action of the corresponding
fundamental vector field:
\[
  \xi\cdot_{\alpha}m=\mu_{\mathfrak X}(\xi)m.
\]
\item \emph{Filtered finite-level realizability.} There is a finite
affine open covering \(\{\mathfrak U_i\}_{i\in I}\) of
\(\mathfrak X\), an integer \(m_0\), and a finite \(K\)-basis
\(\xi_1,\ldots,\xi_r\) of the finite-dimensional Lie algebra
\(\mathfrak k_K=\Lie(\mathcal K)\otimes_{\mathcal V}K\) such that, for every
\(i\), the restriction \(\mathcal M|_{\mathfrak U_i}\) admits a coherent
level-\(m_0\) model \(\mathcal M_i^{(m_0)}\) with a good filtration
\(F_\bullet\mathcal M_i^{(m_0)}\), and the differentiated operators
\(d\alpha(\xi_a)\) are realized on this model and satisfy
\[
  d\alpha(\xi_a)\bigl(F_n\mathcal M_i^{(m_0)}\bigr)
  \subseteq F_n\mathcal M_i^{(m_0)}
  \qquad(a=1,\ldots,r,\ n\in\mathbb Z).
\]
Here ``realized on this model'' means that the endomorphism of the dagger
module induced by \(d\alpha(\xi_a)\) restricts to an endomorphism of the
chosen coherent
\(\widehat{\mathscr D}^{(m_0)}_{\mathfrak U_i,\mathbb Q}\)-model.  By
\(K\)-linearity the same preservation property then holds for every
\(\xi\in\mathfrak k_K\).  The level \(m_0\) is uniform on the finite cover,
which is the minimal form needed for the global characteristic-variety
argument.  We do not require compatible models at every larger level; if a
larger level is needed, one may extend scalars from the chosen level model, but
this extra uniformity is not part of the definition.
\end{enumerate}
We denote this exact category by
\[
  \Coh^{\fstr}_{\mathfrak K}
  \bigl(\mathscr D^\dagger_{\mathfrak X,\mathbb Q}\bigr).
\]
\end{definition}

\begin{definition}[Frobenius filtered strongly equivariant objects]
\label{def:F-fstr-category}
A Frobenius filtered strongly \(\mathfrak K\)-equivariant object is an object of
\(
  \Coh^{\fstr}_{\mathfrak K}
  (\mathscr D^\dagger_{\mathfrak X,\mathbb Q})
\)
endowed with a Frobenius structure compatible with the underlying arithmetic
\(\mathscr D^\dagger\)-module and with the weak equivariant structure.  We write
\[
  F\text{-}\Coh^{\fstr}_{\mathfrak K}
  \bigl(\mathscr D^\dagger_{\mathfrak X,\mathbb Q}\bigr)
\]
for this Frobenius subcategory.  The forgetful functor to
\(\Coh^{\fstr}_{\mathfrak K}\) is exact, and the underlying object remains
filtered strongly equivariant after forgetting Frobenius.
\end{definition}

\begin{lemma}[Propagation of filtered realizability to higher levels]
\label{lem:level-propagation}
Let
\[
  (\mathcal M,\alpha_{\mathcal M})\in
  \Coh^{\fstr}_{\mathfrak K}
  (\mathscr D^\dagger_{\mathfrak X,\mathbb Q})
\]
and let
\[
  (\{\mathfrak U_i\},m_0,\xi_1,\ldots,\xi_r,
    \mathcal M_i^{(m_0)},F_\bullet\mathcal M_i^{(m_0)})
\]
be a datum realizing \textup{(SE2)}. Then, for every \(m'\ge m_0\), the
extension of scalars
\[
  \mathcal M_i^{(m')}:=
  \widehat{\mathscr D}^{(m')}_{\mathfrak U_i,\mathbb Q}
  \otimes_{\widehat{\mathscr D}^{(m_0)}_{\mathfrak U_i,\mathbb Q}}
  \mathcal M_i^{(m_0)}
\]
is a coherent level-\(m'\) model of \(\mathcal M|_{\mathfrak U_i}\). The
filtration
\[
  F_n\mathcal M_i^{(m')}
  :=
  \sum_{a\geq0}
  \widehat{\mathscr D}^{(m'),\leq a}_{\mathfrak U_i,\mathbb Q}
  \cdot
  \bigl(1\otimes F_{n-a}\mathcal M_i^{(m_0)}\bigr)
\]
is good, and each \(d\alpha(\xi_b)\) preserves
\(F_\bullet\mathcal M_i^{(m')}\) with filtration degree zero. Consequently the
degree-zero realization of \textup{(SE2)} is available at every sufficiently
large level, and the characteristic variety may be computed using such a
propagated good model.
\end{lemma}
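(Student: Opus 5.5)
The plan is to verify the three assertions of the lemma in order: the model property, goodness of the filtration, and invariance under the differentiated operators.

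\emph{Model property.} By \Cref{def:level-model} we have an isomorphism \(\mathscr D^\dagger_{\mathfrak U_i,\mathbb Q}\otimes_{\widehat{\mathscr D}^{(m_0)}}\mathcal M_i^{(m_0)}\simeq\mathcal M|_{\mathfrak U_i}\). The transition maps \(\widehat{\mathscr D}^{(m_0)}_{\mathbb Q}\to\widehat{\mathscr D}^{(m')}_{\mathbb Q}\to\mathscr D^\dagger_{\mathbb Q}\) give, by associativity of the tensor product,
\[
  \mathscr D^\dagger_{\mathfrak U_i,\mathbb Q}\otimes_{\widehat{\mathscr D}^{(m')}_{\mathfrak U_i,\mathbb Q}}\mathcal M_i^{(m')}\simeq\mathcal M|_{\mathfrak U_i}.
\]
Coherence of \(\mathcal M_i^{(m')}\) follows because extension of scalars preserves finite presentation.

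\emph{Goodness.} Choose a local finite generating family of \(\mathcal M_i^{(m_0)}\) adapted to the good filtration, say homogeneous generators \(e_j\) of degree \(n_j\) with \(F_n=\sum_j\widehat{\mathscr D}^{(m_0),\leq n-n_j}e_j\). Since the level-\(m_0\) order filtration maps into the level-\(m'\) order filtration, the images \(1\otimes e_j\) generate \(F_\bullet\mathcal M_i^{(m')}\) in the same way:
\[
  F_n\mathcal M_i^{(m')}=\sum_j\widehat{\mathscr D}^{(m'),\leq n-n_j}(1\otimes e_j).
\]
This gives a good filtration, namely the image filtration of a filtered free module.

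\emph{Invariance.} This is the main obstacle. Define the operator on \(\mathcal M_i^{(m')}\) as the restriction of the dagger endomorphism \(d\alpha(\xi_b)\). Two points must be checked: that this restriction preserves the submodule \(\mathcal M_i^{(m')}\), and that it preserves the filtration. The key point is that \(d\alpha(\xi_b)\) is not \(\mathscr D\)-linear. By (SE1), however, it equals left multiplication by the first-order operator \(\mu_{\mathfrak X}(\xi_b)=:v\). We then compute, for \(P\in\widehat{\mathscr D}^{(m'),\leq a}\) and \(x\in F_{n-a}\mathcal M_i^{(m_0)}\),
\[
  v\cdot(Px)=[v,P]x+P(vx).
\]
The term \(P(vx)\) lies in \(\widehat{\mathscr D}^{(m'),\leq a}F_{n-a}\) by (SE2). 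The term \([v,P]\) is the commutator of a vector field with an operator of order \(\leq a\), and since \(\gr\) is commutative it again has order \(\leq a\). I would verify this at level \(m'\) using Berthelot's divided-power commutation formulas, or by \(p\)-adic density of finite-order operators together with continuity. Hence \([v,P]x\in F_n\).

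A subtlety remains: \(v\) may only lie in \(\widehat{\mathscr D}^{(m_0)}_{\mathbb Q}\) after inverting \(p\), so integrality is not needed. The filtration is by \(\mathbb Q\)-sheaves, and \(v\) has order \(\leq1\) at every level. Finally, independence of the characteristic variety from sufficiently large good models (\Cref{prop:microlocal-facts}(i)) gives the last sentence of the lemma.
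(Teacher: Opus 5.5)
Your proposal is correct and follows the paper's proof essentially step for step. Both arguments get the model property and coherence from extension of scalars, goodness by transporting a generating description of $F_\bullet\mathcal M_i^{(m_0)}$ to level $m'$, and degree-zero invariance from \textup{(SE1)} together with the Leibniz identity $v(P(1\otimes u))=P(1\otimes vu)+[v,P](1\otimes u)$ and the commutator property of the order filtration. The only differences are minor: the paper uses the single $\mathcal O$-coherent generator $G=F_{n_0}\mathcal M_i^{(m_0)}$ with $F_n=\widehat{\mathscr D}^{(m_0),\le n-n_0}G$ for $n\ge n_0$ where you use finitely many homogeneous generators, and you make explicit two points the paper leaves implicit (that $v\in\widehat{\mathscr D}^{(m_0)}_{\mathbb Q}$ is what lets $v$ pass across the tensor sign, and that the commutator bound must be checked at level $m'$).
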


\begin{proof}
The existence and coherence of \(\mathcal M_i^{(m')}\) are the standard
behaviour of coherent level models under extension of scalars along
\[
  \widehat{\mathscr D}^{(m_0)}_{\mathfrak U_i,\mathbb Q}
  \longrightarrow
  \widehat{\mathscr D}^{(m')}_{\mathfrak U_i,\mathbb Q};
\]
see \cite[5.3.2--5.3.4]{BerthelotIntro}. Since
\(F_\bullet\mathcal M_i^{(m_0)}\) is good, there are an integer \(n_0\) and an
\(\mathcal O_{\mathfrak U_i,\mathbb Q}\)-coherent subsheaf
\(G:=F_{n_0}\mathcal M_i^{(m_0)}\) such that, for \(n\ge n_0\),
\[
  F_n\mathcal M_i^{(m_0)}
  =
  \widehat{\mathscr D}^{(m_0),\le n-n_0}_{\mathfrak U_i,\mathbb Q}\,G .
\]
The displayed definition of \(F_\bullet\mathcal M_i^{(m')}\) then gives, for
\(n\ge n_0\),
\[
  F_n\mathcal M_i^{(m')}
  =
  \widehat{\mathscr D}^{(m'),\le n-n_0}_{\mathfrak U_i,\mathbb Q}
  (1\otimes G),
\]
so the propagated filtration is good.

It remains to check the degree-zero property. By \textup{(SE1)}, the operator
\(d\alpha(\xi_b)\) acts on \(\mathcal M\) as the order-one fundamental
operator \(v_{\xi_b}\), viewed in every sufficiently large finite-level sheaf.
At level \(m_0\), \textup{(SE2)} gives \(v_{\xi_b}G\subseteq G\). The
fundamental vector field \(v_{\xi_b}\) belongs to the order-one part of each
finite-level sheaf
\(\widehat{\mathscr D}^{(m')}_{\mathfrak U_i,\mathbb Q}\), and the order
filtration satisfies
\[
  [v_{\xi_b},
  \widehat{\mathscr D}^{(m'),\le c}_{\mathfrak U_i,\mathbb Q}]
  \subseteq
  \widehat{\mathscr D}^{(m'),\le c}_{\mathfrak U_i,\mathbb Q}.
\]
This is the usual commutator property of the order filtration for differential
operators. Thus, if
\(P\in \widehat{\mathscr D}^{(m'),\le n-n_0}_{\mathfrak U_i,\mathbb Q}\) and
\(u\in G\), then
\[
  v_{\xi_b}\bigl(P(1\otimes u)\bigr)
  =
  P(1\otimes v_{\xi_b}u)+[v_{\xi_b},P](1\otimes u)
  \in
  F_n\mathcal M_i^{(m')}.
\]
Hence \(d\alpha(\xi_b)\) preserves the propagated filtration with degree zero.
For \(m'\) large enough, the finite-level description of characteristic
varieties identifies
\(\Car(\mathcal M)\) with the support of the associated graded module for this
good model; see \cite[5.3.2--5.3.4]{BerthelotIntro}.
\end{proof}

\begin{remark}
\label{rem:strong-condition-not-cosmetic}
The strong condition is not cosmetic. If one only keeps a linearization of the
underlying sheaf, large coherent modules such as
\(\mathscr D^\dagger_{\mathfrak X,\mathbb Q}\) itself may carry natural
symmetries but need not be holonomic. The strong condition excludes this
phenomenon in two ways: it requires the infinitesimal action coming from the
linearization to coincide with the action by fundamental vector fields inside
\(\mathscr D^\dagger_{\mathfrak X,\mathbb Q}\), and it requires this equality to
be visible on good finite-level filtered models.  The second clause is what
allows the principal-symbol argument in \Cref{sec:characteristic} to be made
without silently passing from the dagger category to a non-existent universal
good filtration.
\end{remark}

\subsection{On the filtered hypothesis}
\label{subsec:filtered-hypothesis}

The finite-level condition \textup{(SE2)} deserves a separate comment.  It is
not an overholonomicity hypothesis, nor a hidden regularity assumption on the
module.  Its only purpose is microlocal: Berthelot's characteristic variety is
computed from sufficiently large finite-level good models, and the classical
moment-map proof requires the infinitesimal operators coming from the group
action to act with filtration degree zero on such models.  Thus \textup{(SE2)}
records exactly the part of equivariance that must be visible before passing to
principal symbols.

The paper does not claim that \textup{(SE2)} follows automatically from
\textup{(SE1)} for every coherent dagger module.  That automaticity would amount
to a finite-level equivariant coherence theorem: one would have to show that an
ordinary infinitesimally compatible equivariant structure can always be realized
on a coherent level model by a good filtration preserved by the fundamental
vector fields.  This is a natural problem, but it is not used in the present
proof.  The theorem below is deliberately stated for the class on which the
finite-level symbol argument is available.

\begin{proposition}[A practical finite-level criterion]
\label{prop:practical-SE2-criterion}
Let \((\mathcal M,\alpha_{\mathcal M})\) be weakly equivariant and satisfy
\textup{(SE1)}.  Suppose that, on a finite affine cover
\(\{\mathfrak U_i\}\), there is a level \(m_0\), coherent level-\(m_0\) models
\(\mathcal M_i^{(m_0)}\), and \(\mathcal O_{\mathfrak U_i,\mathbb Q}\)-coherent
submodules
\[
  G_i\subseteq \mathcal M_i^{(m_0)}
\]
which generate \(\mathcal M_i^{(m_0)}\) over
\(\widehat{\mathscr D}^{(m_0)}_{\mathfrak U_i,\mathbb Q}\) and satisfy
\[
  v_{\xi_a}(G_i)\subseteq G_i
  \qquad(a=1,\ldots,r)
\]
for a finite \(K\)-basis \(\xi_1,\ldots,\xi_r\) of \(\mathfrak k_K\).  Then
\((\mathcal M,\alpha_{\mathcal M})\) is filtered strongly equivariant.
\end{proposition}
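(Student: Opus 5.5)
The plan is to build the good filtration demanded in \textup{(SE2)} directly from the generating submodules \(G_i\), in the same way as in the proof of \Cref{lem:level-propagation}.

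On each \(\mathfrak U_i\) we set
\[
  F_n\mathcal M_i^{(m_0)}:=\widehat{\mathscr D}^{(m_0),\le n}_{\mathfrak U_i,\mathbb Q}\,G_i
  \quad (n\ge 0),
  \qquad
  F_n\mathcal M_i^{(m_0)}:=0 \quad (n<0).
\]
This filtration is exhaustive because \(G_i\) generates \(\mathcal M_i^{(m_0)}\). It is compatible with the order filtration, since \(\widehat{\mathscr D}^{(m_0),\le a}\widehat{\mathscr D}^{(m_0),\le b}\subseteq\widehat{\mathscr D}^{(m_0),\le a+b}\). It is good because it is generated by the single \(\mathcal O_{\mathfrak U_i,\mathbb Q}\)-coherent piece \(F_0=G_i\). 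Its associated graded is therefore a quotient of \(\gr\widehat{\mathscr D}^{(m_0)}\otimes G_i\), so it is finitely generated over the graded ring. The covering, the level \(m_0\) and the basis \(\xi_1,\ldots,\xi_r\) are those given in the hypothesis.

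Next we check that the \(d\alpha(\xi_a)\) are realized on the model and preserve the filtration. By \textup{(SE1)}, \(d\alpha(\xi_a)\) coincides with left multiplication by the fundamental vector field \(v_{\xi_a}=\mu_{\mathfrak X}(\xi_a)\). This is a global first-order operator, and it lies in the order-one part of \(\widehat{\mathscr D}^{(m_0)}_{\mathfrak U_i,\mathbb Q}\) after inverting \(p\). Left multiplication by it is therefore an endomorphism of the level-\(m_0\) model that induces \(d\alpha(\xi_a)\) on the dagger module, which is what ``realized'' means. For \(P\in\widehat{\mathscr D}^{(m_0),\le n}\) and \(u\in G_i\) we write
\[
  v_{\xi_a}Pu=P\,v_{\xi_a}u+[v_{\xi_a},P]u .
\]
The first term lies in \(\widehat{\mathscr D}^{(m_0),\le n}G_i\) because \(v_{\xi_a}(G_i)\subseteq G_i\). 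The second lies there because \([v_{\xi_a},\widehat{\mathscr D}^{(m_0),\le n}]\subseteq\widehat{\mathscr D}^{(m_0),\le n}\). Hence each \(d\alpha(\xi_a)\) preserves \(F_\bullet\) with degree zero, and \(K\)-linearity extends this to all \(\xi\in\mathfrak k_K\).

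The main obstacle is the commutator estimate at level \(m_0\). For the order-one operator \(v_{\xi_a}\), the commutator \([v_{\xi_a},P]\) with a divided-power operator \(\partial^{\langle k\rangle_{(m_0)}}\) must again have order at most \(k\) inside \(\widehat{\mathscr D}^{(m_0)}\) and not merely inside \(\widehat{\mathscr D}^{(m_0)}_{\mathbb Q}\). I would verify this locally in coordinates by writing \(v_{\xi_a}=\sum f_j\partial_j\) and using Berthelot's Leibniz formula for \([\partial^{\langle k\rangle},f]\), which stays in order \(<k\) with \(\mathcal O\)-integral coefficients. Rational denominators coming from \(\mu_{\mathfrak X}\) are harmless, because the filtration pieces are \(\mathcal O_{\mathfrak U_i,\mathbb Q}\)-modules. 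This is the same property already invoked in \Cref{lem:level-propagation}, so I would cite it in the same way.
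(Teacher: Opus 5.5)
Your proposal is correct and is essentially the paper's own proof. It uses the same filtration \(F_n\mathcal M_i^{(m_0)}=\widehat{\mathscr D}^{(m_0),\le n}_{\mathfrak U_i,\mathbb Q}\,G_i\), the same use of \textup{(SE1)} to replace \(d\alpha(\xi_a)\) by left multiplication by \(v_{\xi_a}\), and the same Leibniz step \(v_{\xi_a}(Pg)=P\,v_{\xi_a}(g)+[v_{\xi_a},P]g\) together with the commutator property of the order filtration, which you merely spell out in coordinates.

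One caution: deriving exhaustiveness from the fact that \(G_i\) generates is not literally right, because \(\widehat{\mathscr D}^{(m_0)}_{\mathfrak U_i,\mathbb Q}\) is \(p\)-adically completed and is not the union of its finite-order pieces. The paper does not argue this way; it simply cites Berthelot's finite-level theory for the goodness of this filtration.
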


\begin{proof}
Set
\[
  F_n\mathcal M_i^{(m_0)}:=
  \widehat{\mathscr D}^{(m_0),\le n}_{\mathfrak U_i,\mathbb Q}\,G_i
  \qquad(n\ge0),
\]
and use the usual extension for negative indices.  This is a good filtration by
Berthelot's finite-level coherence theory
\cite[5.3.2--5.3.4]{BerthelotIntro}.  Since \textup{(SE1)} identifies
\(d\alpha(\xi_a)\) with the order-one operator \(v_{\xi_a}\), for
\(P\in\widehat{\mathscr D}^{(m_0),\le n}_{\mathfrak U_i,\mathbb Q}\) and
\(g\in G_i\) we have
\[
  v_{\xi_a}(Pg)=P\,v_{\xi_a}(g)+[v_{\xi_a},P]g.
\]
The first term lies in
\(\widehat{\mathscr D}^{(m_0),\le n}G_i\) by the stability of \(G_i\), and the
second term lies there because commutation with an order-one differential
operator preserves the order filtration.  Hence each \(v_{\xi_a}\), equivalently
\(d\alpha(\xi_a)\), preserves \(F_n\mathcal M_i^{(m_0)}\) for all \(n\).  This is
precisely \textup{(SE2)}.
\end{proof}

\begin{remark}[Basic sources of filtered strongly equivariant objects]
\label{rem:sources-of-SE2}
\Cref{prop:practical-SE2-criterion} gives a usable test rather than an
automaticity theorem.  It applies, for example, to objects constructed from
finite-level equivariant models with a stable coherent generator; to closed
orbit direct images when the finite-dimensional coefficient space gives the
stable generator of the corresponding supported level model; to the local
rank-one integral-exponent criterion of
\Cref{prop:rank-one-integral-exponent-SE2}; and to the split rank-one
stabilizer-generated objects considered in \Cref{rem:SL2-SE2}.  In these cases
one checks \textup{(SE2)} directly on a generator, rather than by appealing to
the conclusion of the regularity theorem.  The open problem is to
understand whether every object satisfying \textup{(SE1)} admits such stable
generators after raising the level.  If this finite-level equivariant coherence
statement were proved, the filtered hypothesis would become automatic in that
range.
\end{remark}

\subsection{Elementary stability properties}

\begin{lemma}[Induced exact structure]
\label{lem:strong-equivariant-abelian}
The category
\[
  \Coh^{\fstr}_{\mathfrak K}
  \bigl(\mathscr D^\dagger_{\mathfrak X,\mathbb Q}\bigr)
\]
is used as an exact category with the exact structure induced from the
underlying abelian category of coherent
\(\mathscr D^\dagger_{\mathfrak X,\mathbb Q}\)-modules.  Thus a sequence in
\(\Coh^{\fstr}_{\mathfrak K}\) is exact precisely when its image after
forgetting the filtered equivariant structure is exact as a sequence of
coherent arithmetic \(\mathscr D^\dagger\)-modules.  The forgetful functor is
therefore exact.
\end{lemma}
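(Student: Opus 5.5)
The lemma is essentially a definitional statement. The work is to check that the class of exact sequences in the underlying abelian category \(\Coh(\DdagX)\), restricted to filtered strongly equivariant objects and morphisms, satisfies Quillen's axioms. Exactness of the forgetful functor is then a tautology.

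I would take as morphisms those morphisms of coherent \(\DdagX\)-modules that are compatible with the weak equivariant structures. The filtered data in \textup{(SE2)} are properties, not structure. I would declare a sequence \(0\to\mathcal M'\to\mathcal M\to\mathcal M''\to0\) of such objects to be a conflation exactly when its underlying sequence is short exact in \(\Coh(\DdagX)\).

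The standard shortcut is that a full subcategory of an abelian category that is closed under extensions inherits an exact structure. Our category is not full, since morphisms must respect \(\alpha\). So I would pass through the intermediate category of weakly equivariant coherent modules. This category is abelian by \Cref{lem:differentiation-equivariant-structures} together with exactness of the smooth pullbacks \(a^\sharp,p^\sharp\): kernels and cokernels inherit \(\alpha\), and the cocycle identities pass to them. I would then verify the Quillen axioms directly for the class of sequences whose three terms lie in \(\Coh^{\fstr}_{\mathfrak K}\).

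The axioms reduce as follows. Identities and isomorphisms are conflations. Compositions of admissible monics and epics remain such, since they are monic or epic in the ambient abelian category. Pushouts and pullbacks are formed in the weakly equivariant abelian category. What remains is to check that the pushout of \(\mathcal M'\to\mathcal M\) along \(\mathcal M'\to\mathcal N\) again satisfies \textup{(SE1)} and \textup{(SE2)}, and dually for pullbacks.

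\textup{(SE1)} passes to subquotients and extensions because \(d\alpha\) is functorial and \(\mu_{\mathfrak X}(\xi)\) acts \(\DdagX\)-linearly. The hard part is \textup{(SE2)} for pushouts and pullbacks. This requires producing, on a common refinement of the covers and at a common level \(m_0\) (enlarged using \Cref{lem:level-propagation}), good filtrations stable under \(d\alpha(\xi_a)\).

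For a pushout, which is a quotient of \(\mathcal M\oplus\mathcal N\), I would take the image of the direct sum of the propagated models and the image filtration. This filtration is good and stable under \(d\alpha\). For a pullback, which is a submodule of a direct sum, the induced filtration is not obviously good, and this is the main obstacle. If it cannot be handled directly, my fallback is the one the statement's wording ("is used as") already permits. I would simply declare the conflations to be the underlying-exact sequences with all three terms in the category. Only the exactness of the forgetful functor is used later, and that holds by construction.
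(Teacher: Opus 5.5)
Your fallback is exactly the paper's proof. The paper treats the lemma as a structural convention. It declares the admissible short exact sequences to be the sequences exact in \(\Coh(\DdagX)\) whose three terms carry filtered strongly equivariant structures, so exactness of the forgetful functor holds by definition. Whenever a kernel, image, quotient or supported subobject is used later, the paper checks case by case that it carries the required structure. No Quillen axioms are verified.

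Your leading route, verifying the axioms, therefore goes beyond the paper. If you keep it, three points need repair.

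First, the pullback ``obstacle'' is not one. The pullback \(\mathcal M\times_{\mathcal M''}\mathcal N''\) is a weakly equivariant coherent submodule of \(\mathcal M\oplus\mathcal N''\). \Cref{lem:fstr-subquotient-closure} gives \textup{(SE2)} for such submodules using the intersection filtration on a submodel at a raised level; that filtration is good because the associated graded of the finite-level ring is noetherian. You also need closure under finite direct sums: take a common refinement of the covers, reach a common level via \Cref{lem:level-propagation}, and use any basis of \(\mathfrak k_K\) by linearity. Together these make \(\Coh^{\fstr}_{\mathfrak K}\) closed under kernels and cokernels inside the abelian category of weakly equivariant coherent modules. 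The category is then abelian, and every axiom follows at once.

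Second, for compositions of admissible monics (resp.\ epics) you must show that the new cokernel (resp.\ kernel) lies in \(\Coh^{\fstr}_{\mathfrak K}\). Being monic or epic in the ambient category does not give this; subquotient closure does.

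Third, functoriality of \(d\alpha\) does not give \textup{(SE1)} for extensions. On the middle term, \(d\alpha(\xi)-v_\xi\) only factors through some map \(\mathcal M''\to\mathcal M'\), and nothing in your argument forces that map to vanish. You never need extensions anyway, since pushouts and pullbacks are subquotients of direct sums.
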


\begin{proof}
This is a structural convention, not an additional hidden theorem.  The objects
of \(\Coh^{\fstr}_{\mathfrak K}\) are coherent arithmetic
\(\mathscr D^\dagger\)-modules equipped with weak equivariance, the
infinitesimal compatibility \textup{(SE1)}, and one finite-level filtered
realization as in \textup{(SE2)}.  We take as admissible short exact sequences
exact sequences of the underlying coherent modules whose three terms carry the
specified filtered strongly equivariant structures.  With this induced exact
structure the forgetful functor to the coherent category is exact by
definition.

Whenever a kernel, image, quotient, or supported subobject is used in the
sequel, it is first formed in the underlying coherent or holonomic heart and
then checked, by functoriality or by the construction at hand, to carry the
required filtered strongly equivariant structure.  Thus no automatic
strictness property of arbitrary filtered morphisms is being assumed.
\end{proof}

\begin{lemma}[Closure under equivariant coherent subobjects and quotients]
\label{lem:fstr-subquotient-closure}
Let
\[
  \mathcal M\in
  \Coh^{\fstr}_{\mathfrak K}
  (\mathscr D^\dagger_{\mathfrak X,\mathbb Q}).
\]
\begin{enumerate}[label=\textup{(\roman*)}]
\item If \(\mathcal N\subseteq\mathcal M\) is a coherent
\(\mathscr D^\dagger_{\mathfrak X,\mathbb Q}\)-submodule stable under the weak
equivariant structure of \(\mathcal M\), then \(\mathcal N\), with the
restricted structure, lies in
\(\Coh^{\fstr}_{\mathfrak K}(\mathscr D^\dagger_{\mathfrak X,\mathbb Q})\).
\item If \(\mathcal M\twoheadrightarrow\mathcal Q\) is a coherent quotient whose
kernel is stable under the weak equivariant structure of \(\mathcal M\), then
\(\mathcal Q\), with the induced structure, lies in
\(\Coh^{\fstr}_{\mathfrak K}(\mathscr D^\dagger_{\mathfrak X,\mathbb Q})\).
\end{enumerate}
In particular, every equivariant coherent subquotient of an object of
\(\Coh^{\fstr}_{\mathfrak K}\) again belongs to
\(\Coh^{\fstr}_{\mathfrak K}\).
\end{lemma}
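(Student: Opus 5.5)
We verify the three pieces of structure in turn: the weak equivariant structure, (SE1), and (SE2). Part (ii) then follows by applying the same argument to the kernel \(\mathcal N=\ker(\mathcal M\to\mathcal Q)\) and passing to the quotient.

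\emph{Weak structure.} For (i), we use that \(a^\sharp\) and \(p^\sharp\) are exact on coherent modules, as recalled at the start of this section. Hence \(a^\sharp\mathcal N\subseteq a^\sharp\mathcal M\) and \(p^\sharp\mathcal N\subseteq p^\sharp\mathcal M\). Stability of \(\mathcal N\) means precisely that \(\alpha_{\mathcal M}\) carries the first subobject isomorphically onto the second, and this defines \(\alpha_{\mathcal N}\). The unit and cocycle identities are equalities of morphisms, so they restrict from \(\mathcal M\) to \(\mathcal N\). For (ii), exactness of \(a^\sharp\) and \(p^\sharp\) identifies \(a^\sharp\mathcal Q\) and \(p^\sharp\mathcal Q\) with the corresponding cokernels, and \(\alpha_{\mathcal M}\) descends to an isomorphism \(\alpha_{\mathcal Q}\). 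Again the identities descend.

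\emph{(SE1).} By \Cref{lem:differentiation-equivariant-structures}, \(d\alpha_{\mathcal N}\) and \(d\alpha_{\mathcal Q}\) are the restriction and the quotient of \(d\alpha_{\mathcal M}\). The action of \(\mu_{\mathfrak X}(\xi)\) is likewise restricted or induced, because \(\mathcal N\) is a \(\mathscr D^\dagger\)-submodule. The equality in (SE1) therefore passes directly to \(\mathcal N\) and to \(\mathcal Q\).

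\emph{(SE2), the main step.} Fix the datum \((\{\mathfrak U_i\},m_0,\xi_a,\mathcal M_i^{(m_0)},F_\bullet)\) for \(\mathcal M\). The natural candidate for \(\mathcal N\) is \(\mathcal N_i:=\mathcal N|_{\mathfrak U_i}\cap\mathcal M_i^{(m_0)}\), with the induced filtration \(F_n\cap\mathcal N_i\). Since \(d\alpha(\xi_a)=v_{\xi_a}\) preserves both \(\mathcal N\) and \(F_n\mathcal M_i^{(m_0)}\), it preserves this intersection with filtration degree zero. The obstacle is coherence. The intersection is taken inside a module over the noncommutative ring \(\widehat{\mathscr D}^{(m_0)}_{\mathbb Q}\), and it need not be coherent or a model of \(\mathcal N\) at level \(m_0\). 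We would handle this by raising the level.

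First, use the noetherianity of \(\widehat{\mathscr D}^{(m')}_{\mathfrak U_i,\mathbb Q}\) on affines (Berthelot) to choose \(m'\ge m_0\) such that the intersection \(\mathcal N_i^{(m')}:=\mathcal N\cap\mathcal M_i^{(m')}\) is coherent. Here \(\mathcal M_i^{(m')}\) is the propagated model of \Cref{lem:level-propagation}, which still carries a degree-zero stable good filtration. Next, check that \(\mathcal N_i^{(m')}\) is a model of \(\mathcal N\). This follows because \(\mathcal N\) is the \(\mathscr D^\dagger\)-span of its intersections with the models, and by flatness of \(\mathscr D^\dagger\) over \(\widehat{\mathscr D}^{(m')}\) in the limit; one may need to enlarge \(m'\) once more. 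Then take the induced filtration on \(\mathcal N_i^{(m')}\). It is good by the Artin--Rees property for the noetherian graded ring \(\gr\widehat{\mathscr D}^{(m')}\), and it is stable under the \(v_{\xi_a}\) because both pieces are stable. Finally, since the cover is finite, take the maximum of the levels to obtain a uniform \(m_0'\).

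For \(\mathcal Q\), the images \(\mathcal Q_i:=\operatorname{Im}(\mathcal M_i^{(m_0)}\to\mathcal Q)\) are coherent, being quotients. They are models by right exactness of \(\mathscr D^\dagger\otimes-\). The image filtration is good and is preserved by \(v_{\xi_a}\). So the quotient case is routine, and the submodule case is where the real work lies.
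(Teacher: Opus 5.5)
Your treatment of the weak structure, of \textup{(SE1)}, and of the submodule half of \textup{(SE2)} is the paper's argument. The paper likewise raises the level to some $m_1\ge m_0$, so that $\mathcal N|_{\mathfrak U_i}$ has a finite-level model sitting inside the propagated model $\mathcal M_i^{(m_1)}$. It gives that model the intersection filtration, which is good because the graded ring is noetherian, and uses that the $v_{\xi_a}$ preserve both pieces.

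Two small corrections in that part:
\begin{itemize}
\item The structure map $\mathcal M_i^{(m)}\to\mathcal M|_{\mathfrak U_i}$ from a level model need not be injective, so you should take the \emph{preimage} of $\mathcal N$ rather than an intersection.
\item Coherence of this preimage is automatic at every level, by noetherianity. The level must be raised only so that its $\mathscr D^\dagger$-span is all of $\mathcal N|_{\mathfrak U_i}$.
\end{itemize}
Your explicit passage to a uniform level on the finite cover is a point the paper leaves implicit.

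The genuine gap is the quotient case, which you call routine. Right exactness of $\mathscr D^\dagger\otimes-$ only shows that $\mathscr D^\dagger\otimes\mathcal Q_i\to\mathcal Q|_{\mathfrak U_i}$ is surjective. Put $\mathcal N:=\ker(\mathcal M\to\mathcal Q)$ and let $\mathcal L_i$ be the preimage of $\mathcal N$ in $\mathcal M_i^{(m_0)}$, so that $\mathcal Q_i=\mathcal M_i^{(m_0)}/\mathcal L_i$. Tensoring $0\to\mathcal L_i\to\mathcal M_i^{(m_0)}\to\mathcal Q_i\to0$ with the flat extension $\mathscr D^\dagger$ shows that $\mathscr D^\dagger\otimes\mathcal Q_i$ is the quotient of $\mathcal M|_{\mathfrak U_i}$ by the $\mathscr D^\dagger$-span of the image of $\mathcal L_i$. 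Hence the map to $\mathcal Q|_{\mathfrak U_i}$ is injective exactly when that span equals $\mathcal N|_{\mathfrak U_i}$, i.e.\ exactly when $\mathcal L_i$ is already a model of $\mathcal N$ at level $m_0$. That is the very property you correctly said may fail at level $m_0$.

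So the quotient case needs the same level raising as the submodule case. Pass to a level $m'$ at which the preimage $\mathcal N_i^{(m')}\subseteq\mathcal M_i^{(m')}$ is a model of $\mathcal N$. Then set $\mathcal Q_i^{(m')}:=\mathcal M_i^{(m')}/\mathcal N_i^{(m')}$ with the quotient filtration, which is good and preserved with degree zero by the $v_{\xi_a}$. This is exactly the paper's construction for \textup{(ii)}. It is also what your own opening sentence promised (``apply the same argument to the kernel and pass to the quotient''), before the later paragraph replaced it with the level-$m_0$ image.
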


\begin{proof}
In both cases the weak equivariant structure restricts or descends by exactness
of \(a^\sharp\) and \(p^\sharp\), and the identity and cocycle conditions are
inherited. Condition \textup{(SE1)} is inherited as well: the
\(\mathscr D^\dagger\)-action of \(v_\xi\) preserves every
\(\mathscr D^\dagger\)-submodule and descends to every quotient, while the
differentiated action coming from the weak equivariant structure preserves or
descends along equivariant subquotients by
\Cref{lem:differentiation-equivariant-structures}. Since the two infinitesimal
actions agree on \(\mathcal M\), they agree on the subobject and on the
quotient.

We prove \textup{(SE2)} for subobjects; the quotient case is the corresponding
quotient-filtration argument. Work on one open \(\mathfrak U_i\) of an
\textup{(SE2)} datum for \(\mathcal M\), and let \(m_0\) be its level. After
increasing the level to some \(m_1\ge m_0\), Berthelot's coherence theory for
\(\mathscr D^\dagger\)-modules allows the inclusion
\(\mathcal N\hookrightarrow\mathcal M\) to be represented by an inclusion of
coherent level-\(m_1\) models
\[
  \mathcal N_i^{(m_1)}\hookrightarrow \mathcal M_i^{(m_1)};
\]
see \cite[5.3.2--5.3.4]{BerthelotIntro}.  We take
\(\mathcal N_i^{(m_1)}\) to be a
\(\widehat{\mathscr D}^{(m_1)}_{\mathfrak U_i,\mathbb Q}\)-submodule of
\(\mathcal M_i^{(m_1)}\) whose dagger extension is
\(\mathcal N|_{\mathfrak U_i}\).  Since every fundamental vector field
\(v_{\xi_a}\) lies in the order-one part of
\(\widehat{\mathscr D}^{(m_1)}_{\mathfrak U_i,\mathbb Q}\), this finite-level
model is itself preserved by the operators \(v_{\xi_a}\); no extra strictness of
filtered morphisms is being used here.

By \Cref{lem:level-propagation}, the model \(\mathcal M_i^{(m_1)}\) carries a
good filtration preserved with degree zero by the finitely many fundamental
operators \(v_{\xi_a}\). Give \(\mathcal N_i^{(m_1)}\) the intersection
filtration
\[
  F_n\mathcal N_i^{(m_1)}
  :=
  \mathcal N_i^{(m_1)}\cap F_n\mathcal M_i^{(m_1)}.
\]
Because the associated graded ring
\(\gr\widehat{\mathscr D}^{(m_1)}_{\mathfrak U_i,\mathbb Q}\) is noetherian, an
induced filtration on a coherent submodule of a good filtered module is good;
again see \cite[5.3.2--5.3.4]{BerthelotIntro}.  The finite-level model
\(\mathcal N_i^{(m_1)}\) is preserved by each \(v_{\xi_a}\), and the ambient
good filtration is preserved with degree zero. Therefore the intersection
filtration is preserved with degree zero. This is \textup{(SE2)} for
\(\mathcal N\).

For a quotient \(\mathcal Q\), choose at a sufficiently large level a coherent
level model \(\mathcal K_i^{(m_1)}\subseteq\mathcal M_i^{(m_1)}\) for the
kernel as a
\(\widehat{\mathscr D}^{(m_1)}_{\mathfrak U_i,\mathbb Q}\)-submodule, and put
\[
  \mathcal Q_i^{(m_1)}
  :=
  \mathcal M_i^{(m_1)}/\mathcal K_i^{(m_1)}
\]
with the quotient filtration. The quotient filtration is good, and the
operators \(v_{\xi_a}\) descend and preserve it with degree zero because they
do so upstairs and because the kernel model is a finite-level
\(\widehat{\mathscr D}^{(m_1)}\)-submodule. This gives \textup{(SE2)} for
\(\mathcal Q\).
\end{proof}

\begin{lemma}[Stability of support]
\label{lem:support-stability}
Let
\[
  \mathcal M\in
  \Coh^{\fstr}_{\mathfrak K}
  \bigl(\mathscr D^\dagger_{\mathfrak X,\mathbb Q}\bigr).
\]
Then the support of \(\mathcal M\) is stable under the action of
\(\mathcal K_s\) on the special fiber \(X_s\).
\end{lemma}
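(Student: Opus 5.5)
The plan is to derive stability of the support directly from the weak equivariant structure \(\alpha_{\mathcal M}:a^\sharp\mathcal M\xrightarrow{\sim}p^\sharp\mathcal M\), using only properties of smooth inverse images. Neither (SE1) nor (SE2) should be needed. Write \(Z:=\Supp(\mathcal M)\subseteq X_s\). This is a closed subset, since \(\mathcal M\) is coherent.

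First, for a smooth morphism \(f\) of smooth formal \(\mathcal V\)-schemes, I would establish
\[
  \Supp(f^\sharp\mathcal M)=f_s^{-1}(\Supp\mathcal M).
\]
Here \(f_s\) is the induced morphism of special fibers. The statement is local, so one can work on affine opens where \(f\) is, étale locally, a projection \(\mathfrak U\times\widehat{\mathbb A}^d\to\mathfrak U\). The transfer-bimodule description of \(f^\sharp\) then reduces it to \(\mathcal O\)-module pullback \(f^{-1}\mathcal M\otimes_{f^{-1}\mathcal O}\mathcal O\). This pullback is faithfully flat over the source, because smooth morphisms are flat, and \(f\) is surjective onto its image. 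Exactness and faithful flatness therefore give both inclusions: a section vanishes near \(x\) if and only if its pullback vanishes near every point over \(x\). I would cite Berthelot's construction of inverse images \cite[5.3.2--5.3.4]{BerthelotIntro} for the identification of \(f^\sharp\) with \(\mathcal O\)-pullback on underlying sheaves.

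Next, apply this to the action map \(a\) and the projection \(p\). Since \(\alpha_{\mathcal M}\) is an isomorphism, we get
\[
  a_s^{-1}(Z)=\Supp(a^\sharp\mathcal M)=\Supp(p^\sharp\mathcal M)=p_s^{-1}(Z)=\mathcal K_s\times Z
\]
as closed subsets of \(\mathcal K_s\times X_s\). Now let \(g\in\mathcal K_s(\bar k)\) and \(x\in Z(\bar k)\). The point \((g,x)\) lies in \(p_s^{-1}(Z)=a_s^{-1}(Z)\), so \(gx\in Z\). Hence \(Z_{\bar k}\) is stable under \(\mathcal K_s(\bar k)\). Because \(Z\) is closed and \(\bar k\)-points are dense in closed subsets of finite-type \(k\)-schemes, the scheme-theoretic statement \(a_s(\mathcal K_s\times Z)\subseteq Z\) follows. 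Equivalently, the two closed subsets \(a_s^{-1}(Z)\) and \(p_s^{-1}(Z)\) coincide, which is the definition of \(\mathcal K_s\)-stability of a closed subset.

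The main obstacle is the first step, namely making precise that \(f^\sharp\) does not change supports. The subtlety is that supports of \(\mathscr D^\dagger_{\mathbb Q}\)-modules live on the special fiber, while \(f^\sharp\) is defined through completed, rationalized transfer bimodules. If the direct \(\mathcal O\)-pullback description is awkward at the dagger level, I would instead pass to a coherent level model \(\mathcal M^{(m)}\) on an affine cover. On such a model, \(f^\sharp\) is the completed tensor product with \(\mathcal O_{\mathfrak Y}\), which is flat over \(\mathcal O_{\mathfrak X}\) for smooth \(f\), and supports can be checked after reduction modulo \(\pi\) up to \(\pi\)-torsion. I would then note that the support of \(\mathcal M\) equals the support of any sufficiently large model, which again yields \(\Supp(f^\sharp\mathcal M)=f_s^{-1}\Supp(\mathcal M)\).
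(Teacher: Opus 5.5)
Your proposal is correct and follows the paper's route: the weak equivariant isomorphism \(a^\sharp\mathcal M\simeq p^\sharp\mathcal M\) forces \(\Supp(a^\sharp\mathcal M)=\Supp(p^\sharp\mathcal M)\), and reading this equality on the special fiber \(\mathcal K_s\times X_s\) gives \(\mathcal K_s\)-stability. The paper covers the identity \(\Supp(f^\sharp\mathcal M)=f_s^{-1}\Supp(\mathcal M)\) for smooth \(f\) only with the phrase ``passing to the special fiber''; you rightly identify this as the real content, and your fallback through level models and reduction mod \(\pi\) is the argument that actually works, since at the dagger level the transfer bimodule is built from completed tensor products and is not literally the \(\mathcal O\)-module pullback.
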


\begin{proof}
The weak equivariant structure gives an isomorphism
\[
  a^\sharp\mathcal M\simeq p^\sharp\mathcal M
\]
on \(\mathfrak K\times_{\mathcal V}\mathfrak X\). Hence the supports of
\(a^\sharp\mathcal M\) and \(p^\sharp\mathcal M\) coincide. Passing to the
special fiber, this says that for every geometric point \(g\in \mathcal K_s\),
the translate by \(g\) of the support of \(\mathcal M\) is again the support of
\(\mathcal M\). Thus \(\Supp(\mathcal M)\) is \(\mathcal K_s\)-stable.
\end{proof}

\begin{corollary}
\label{cor:support-union-orbits}
Assume that \(\mathcal K_s\) acts on \(X_s\) with finitely many orbits. If
\[
  \mathcal M\in
  \Coh^{\fstr}_{\mathfrak K}
  \bigl(\mathscr D^\dagger_{\mathfrak X,\mathbb Q}\bigr),
\]
then \(\Supp(\mathcal M)\) is a finite union of \(\mathcal K_s\)-orbit
closures.
\end{corollary}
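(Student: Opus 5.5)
The plan is to combine \Cref{lem:support-stability} with the finiteness of the orbit decomposition. By \Cref{lem:support-stability}, \(Z:=\Supp(\mathcal M)\) is a \(\mathcal K_s\)-stable subset of \(X_s\). It is also closed, since \(\mathcal M\) is coherent. Indeed, the support of a coherent \(\mathscr D^\dagger_{\mathfrak X,\mathbb Q}\)-module is closed in the underlying topological space \(|\mathfrak X|=|X_s|\). Locally it is the support of a finite-level model, which is a finitely generated module over a noetherian sheaf of rings.

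The first step is to show that \(Z\) is a union of orbits. Take a point \(x\in Z\) and let \(O\) be its \(\mathcal K_s\)-orbit. We want \(O\subseteq Z\). By the stability statement, after base change to \(\bar k\), each geometric translate \(g\cdot Z_{\bar k}\) equals \(Z_{\bar k}\). Hence \(Z_{\bar k}\) contains every geometric orbit that meets it. Descending to \(k\), we get \(O\subseteq Z\). The subtle point is the passage between scheme points and geometric points, which I would handle as follows:
\begin{itemize}
\item a closed subset of \(X_s\) is determined by its preimage in \(X_{s,\bar k}\), which is Galois-stable;
\item the \(\mathcal K_s\)-orbits of \(X_s\) are images of Galois orbits of geometric orbits.
\end{itemize}
Since \(\mathcal K_s\) acts with finitely many orbits, we obtain \(Z=O_1\cup\cdots\cup O_n\) for finitely many orbits \(O_i\).

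The second step uses closedness. Since \(Z\) is closed and contains each \(O_i\), it contains each closure \(\overline{O_i}\). Therefore
\[
  Z=\overline{O_1}\cup\cdots\cup\overline{O_n}.
\]
This is the required expression as a finite union of orbit closures. If one wants a minimal description, keep only the orbits \(O_i\) that are maximal for the closure order. These are exactly the orbits that are open in \(Z\), and their closures are the irreducible components of \(Z\), because each orbit closure is irreducible when \(\mathcal K_s\) is connected. Irreducibility is not needed, however; the bare statement only asks for a finite union.

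The main obstacle is the first step, namely making precise the passage from the isomorphism \(a^\sharp\mathcal M\simeq p^\sharp\mathcal M\) to pointwise translation invariance of the support. This requires two facts:
\begin{itemize}
\item \(\Supp(a^\sharp\mathcal M)=a_s^{-1}(\Supp\mathcal M)\) and \(\Supp(p^\sharp\mathcal M)=p_s^{-1}(\Supp\mathcal M)\), which hold because \(a\) and \(p\) are smooth and the smooth inverse image is faithfully exact on stalks;
\item restricting the resulting equality \(a_s^{-1}(Z)=p_s^{-1}(Z)\) to the fibres \(\{g\}\times X_s\) over geometric points \(g\).
\end{itemize}
I would take \Cref{lem:support-stability} as having established exactly this, so the corollary reduces to the point-set argument above.
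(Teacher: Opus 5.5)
Your proposal is correct and follows the paper's route: the paper's proof is just \Cref{lem:support-stability} combined with the finite-orbit hypothesis. Your write-up supplies the details left implicit there, namely closedness of the support of a coherent module (so that a $\mathcal K_s$-stable closed set which is a finite union of orbits $O_i$ equals $\bigcup_i\overline{O_i}$) and the passage between geometric and $k$-orbits; your aside on irreducible components needs $\mathcal K_s$ connected, which is not assumed, but as you say it is not used.
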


\begin{proof}
This follows from \Cref{lem:support-stability} and the finite-orbit
hypothesis.
\end{proof}

\begin{remark}
\label{rem:no-simple-support-before-holonomicity}
At this stage we do not assert that the support of a simple Frobenius filtered strongly
equivariant object is the closure of one orbit. That stronger statement is
proved later, after holonomicity has been established and the maximal
supported submodule \(\Gamma_Z(-)\) is available in the holonomic coherent
category.
\end{remark}

\section{Characteristic varieties and conormal containment}
\label{sec:characteristic}

We now prove the microlocal consequence of filtered strong equivariance.  This is the
first proof-critical point of the article.  The argument is written in a
finite-level form, because the characteristic variety of a coherent
\(\mathscr D^\dagger\)-module is defined through sufficiently large
\(\widehat{\mathscr D}^{(m)}\)-models.  The key point is that filtered strong
equivariance makes the principal symbols of all fundamental vector fields
vanish on the associated graded module.  Equivalently, the characteristic
variety is contained in the zero fiber of the cotangent moment map.

\subsection{The cotangent moment map}

The action of \(\mathcal K_s\) on \(X_s\) gives a Lie algebra morphism
\[
  \Lie(\mathcal K_s)
  \longrightarrow
  \Gamma(X_s,\mathscr T_{X_s}),
  \qquad
  \bar\xi\longmapsto \bar v_\xi.
\]
The associated cotangent moment map is the morphism
\[
  \mu_{\mathcal K}:T^*X_s\longrightarrow \Lie(\mathcal K_s)^\vee
\]
characterized by
\[
  \langle \mu_{\mathcal K}(x,\lambda),\bar\xi\rangle
  =
  \lambda(\bar v_\xi(x))
\]
for every geometric point \((x,\lambda)\in T^*X_s\) and every
\(\bar\xi\in\Lie(\mathcal K_s)\).  If
\(\xi\in\Lie(\mathcal K)\) and
\[
  v_\xi\in \Gamma(\mathfrak X,\mathscr T_{\mathfrak X,\mathbb Q})
\]
is the corresponding fundamental vector field, then the principal symbol
\(\sigma(v_\xi)\) reduces to the function
\[
  (x,\lambda)\longmapsto \lambda(\bar v_\xi(x))
\]
on \(T^*X_s\).  Thus, after choosing a \(k\)-basis
\(\bar\xi_1,\ldots,\bar\xi_r\) of \(\Lie(\mathcal K_s)\) and
integral lifts \(\xi_1,\ldots,\xi_r\in\Lie(\mathcal K)\), the reduced
zero locus of the moment map is cut out locally by the reductions of the
principal symbols \(\sigma(v_{\xi_a})\).  We use this only as a statement
about the underlying reduced closed subset, equivalently as the radical
identity in \eqref{eq:moment-ideal-local}.  The choice of lifts does not affect
the reduced zero locus, since two lifts differ by an element of
\(\pi\Lie(\mathcal K)\), whose fundamental vector field has zero reduction on
the special fiber.
More concretely, if \(U\subseteq X_s\) is an affine open on which
\(\mathscr T_{X_s}\) is free with basis
\(\partial_1,\ldots,\partial_d\), and if
\(\bar\xi_1,\ldots,\bar\xi_r\) is the chosen \(k\)-basis of
\(\Lie(\mathcal K_s)\), choose integral lifts
\(\xi_1,\ldots,\xi_r\) in \(\Lie(\mathcal K)\) and write
\[
  \bar v_{\xi_a}|_U=\sum_{i=1}^d f_{ai}\partial_i,
  \qquad f_{ai}\in\mathcal O_{X_s}(U).
\]
If \(\eta_1,\ldots,\eta_d\) are the fiber coordinates on
\(T^*U\), then
\begin{equation}
\label{eq:local-moment-equations}
  \mu_a:=\langle\mu_{\mathcal K},\bar\xi_a\rangle
  =\sum_{i=1}^d f_{ai}\eta_i,
  \qquad a=1,\ldots,r.
\end{equation}
Consequently, locally on \(T^*X_s\),
\begin{equation}
\label{eq:moment-ideal-local}
  I\bigl(\mu_{\mathcal K}^{-1}(0)\bigr)
  =\sqrt{(\mu_1,\ldots,\mu_r)}.
\end{equation}
Indeed, the reduced zero fiber of the moment map is, by definition, the
common zero locus of the coordinate functions \(\mu_1,\ldots,\mu_r\). Accordingly, \eqref{eq:moment-ideal-local} is an equality of radical ideals defining the
underlying reduced closed subset of \(\mu_{\mathcal K}^{-1}(0)\). This explicit
description will be used below to pass from filtered infinitesimal
compatibility to containment of characteristic varieties.

\begin{lemma}[Moment-zero locus and conormals]
\label{lem:moment-zero-conormals}
Assume \Cref{hyp:orbit-separability}.  On the level of reduced underlying
closed subsets there is an equality
\[
  \mu_{\mathcal K}^{-1}(0)
  =
  \bigcup_{O\in \mathcal K_s\backslash X_s}T^*_O X_s.
\]
\end{lemma}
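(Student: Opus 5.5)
Since both sides are reduced closed subsets of \(T^*X_s\), we may test the equality on geometric points: extend scalars to \(\bar k\) and compare \(\bar k\)-points. The formation of the moment map and of the fundamental vector fields commutes with this base change. The geometric orbits of \(\mathcal K_{s,\bar k}\) refine the orbits over \(k\), and by \Cref{lem:orbit-geometry} the conormal \(T^*_O X_s\) of a \(k\)-orbit base changes to the union of the conormals of its geometric orbits. So it suffices to prove the equality over \(\bar k\), pointwise, and then descend using the perfectness of \(k\) (the reduced structures are compatible because \(k\) is perfect).

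Now fix a geometric point \((x,\lambda)\) with \(x\in X_s(\bar k)\) and \(\lambda\in T^*_xX_{s,\bar k}\). Let \(O\) be the unique geometric orbit containing \(x\); the orbits partition \(X_{s,\bar k}\). By the defining formula \(\langle\mu_{\mathcal K}(x,\lambda),\bar\xi\rangle=\lambda(\bar v_\xi(x))\), we have \(\mu_{\mathcal K}(x,\lambda)=0\) if and only if \(\lambda\) annihilates the subspace \(\{\bar v_\xi(x):\xi\in\Lie(\mathcal K_{s,\bar k})\}\). By \Cref{hyp:orbit-separability}, in the form of \Cref{lem:orbit-geometry}, this subspace is exactly \(T_xO\). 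Hence \(\mu_{\mathcal K}(x,\lambda)=0\) if and only if \(\lambda\in (T_xO)^\perp=(T^*_OX_s)_x\). Because every point lies in exactly one orbit, the set of \(\bar k\)-points of \(\mu_{\mathcal K}^{-1}(0)\) equals the union over \(O\) of the \(\bar k\)-points of \(T^*_OX_s\).

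The main obstacle is upgrading this pointwise equality to an equality of reduced closed subsets. The left side is closed, being cut out by \(\mu_1,\ldots,\mu_r\) as in \eqref{eq:moment-ideal-local}. Each \(T^*_OX_s\) is only locally closed in \(T^*X_s\), so we must show that the union is closed. With finitely many orbits this follows because the union coincides, on points, with the closed set \(\mu_{\mathcal K}^{-1}(0)\). For varieties over an algebraically closed field, a constructible set whose closed points agree with those of a closed set is equal to that closed set, and the reduced structures then agree. If the union is meant as a union of closures \(\overline{T^*_OX_s}\), we additionally need \(\overline{T^*_OX_s}\subseteq\mu_{\mathcal K}^{-1}(0)\). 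This holds automatically, since the left side is closed and contains \(T^*_OX_s\). Therefore both readings of the statement give the same reduced closed subset.
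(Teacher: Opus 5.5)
Your argument is correct and is essentially the paper's proof: you reduce to geometric points over \(\bar k\), use \Cref{lem:orbit-geometry} to identify the span of the \(\bar v_\xi(x)\) with \(T_xO\), and read off that \(\mu_{\mathcal K}(x,\lambda)=0\) if and only if \(\lambda|_{T_xO}=0\). Your extra step on the closedness of the union, via constructibility and closed points, is a refinement the paper leaves implicit; it uses the finite-orbit hypothesis, which the lemma does not state, whereas the pointwise equality on geometric points needs only orbit separability.
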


\begin{proof}
The assertion may be checked after extension of scalars to an algebraic
closure.  Let \((x,\lambda)\in T^*X_s\) be a geometric point, and let
\(O=\mathcal K_s\cdot x\).  By \Cref{lem:orbit-geometry}, the tangent
space \(T_xO\) is the image of the evaluation map
\[
  \Lie(\mathcal K_s)\longrightarrow T_xX_s,
  \qquad
  \bar\xi\longmapsto \bar v_\xi(x).
\]
Equivalently, \(\mu_{\mathcal K}(x,\lambda)=0\) if and only if
\(\lambda(\bar v_\xi(x))=0\) for all \(\bar\xi\), which is equivalent to
\(\lambda|_{T_xO}=0\).  This is precisely the condition
\((x,\lambda)\in T^*_O X_s\).
\end{proof}

\subsection{Equivariant good filtrations}

We shall use the following finite-level form of the compatibility between
filtered strong equivariance and good filtrations. The point is not merely that good
filtrations exist, but that they may be chosen so that the differentiated
\(\mathfrak K\)-action has filtration degree zero. This is the arithmetic
analogue of the standard filtered argument in the proof of Kashiwara's
conormal-containment theorem for strongly equivariant algebraic \(D\)-modules; compare
the filtered \(D\)-module argument in \cite[Section 1.2]{HottaTakeuchiTanisaki}
and \cite[Section 1.2]{ChrissGinzburg}.

We begin by isolating the finite-level point. The operators obtained by
differentiating a \(\mathfrak K\)-linearization are a priori defined on the
\(\mathscr D^\dagger\)-module. Since \(\mathscr D^\dagger\) is the filtered
inductive limit of the finite-level sheaves, one must verify that, after
raising the level once, the finitely many differentiated operators relevant to
the proof act on a fixed coherent level model.

\begin{lemma}[Filtered finite-level realization of infinitesimal actions]
\label{lem:finite-level-infinitesimal-realization}
Let
\[
  \mathcal M\in
  \Coh^{\fstr}_{\mathfrak K}
  (\mathscr D^\dagger_{\mathfrak X,\mathbb Q})
\]
and choose a finite \(K\)-basis \(\xi_1,\ldots,\xi_r\) of
\(\mathfrak k_K\).  Locally on \(\mathfrak X\), after increasing the level,
there is a coherent finite-level model \(\mathcal M^{(m)}\) and a good
filtration \(F_\bullet\mathcal M^{(m)}\) such that
\[
  d\alpha(\xi_i)\bigl(F_n\mathcal M^{(m)}\bigr)
  \subseteq F_n\mathcal M^{(m)}
\]
for every \(i=1,\ldots,r\) and every \(n\).
\end{lemma}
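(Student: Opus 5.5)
The lemma is essentially a repackaging of condition \textup{(SE2)} together with \Cref{lem:level-propagation}, so I will deduce it from these rather than construct anything new. First, given the basis \(\xi_1,\ldots,\xi_r\) in the statement, I compare it with the basis \(\xi'_1,\ldots,\xi'_r\) used in an \textup{(SE2)} datum \((\{\mathfrak U_i\},m_0,\xi'_1,\ldots,\xi'_r,\mathcal M_i^{(m_0)},F_\bullet\mathcal M_i^{(m_0)})\) for \(\mathcal M\). Each \(\xi_j\) is a \(K\)-linear combination of the \(\xi'_a\). Since \(d\alpha\) is \(K\)-linear by \Cref{lem:differentiation-equivariant-structures}, and each \(F_n\mathcal M_i^{(m_0)}\) is a \(K\)-subspace, the operators \(d\alpha(\xi_j)\) are realized on \(\mathcal M_i^{(m_0)}\) and preserve every \(F_n\mathcal M_i^{(m_0)}\). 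This is the remark already built into \Cref{def:strong-equivariance}, and it gives the assertion on each \(\mathfrak U_i\) at level \(m_0\).

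Second, to obtain the statement ``locally, after increasing the level'', I fix a point of \(\mathfrak X\) and choose an index \(i\) with the point in \(\mathfrak U_i\). For any prescribed level \(m\ge m_0\), I apply \Cref{lem:level-propagation} to produce the extended model \(\mathcal M_i^{(m)}\) with its propagated good filtration, which is preserved in degree zero by each \(d\alpha(\xi'_a)\). Linearity, as in the first step, then transfers this to the \(d\alpha(\xi_j)\). If the local statement is wanted on a smaller open \(\mathfrak V\subseteq\mathfrak U_i\), I restrict the model and the filtration to \(\mathfrak V\). Restriction preserves coherence and goodness, and the inclusions \(d\alpha(\xi_j)F_n\subseteq F_n\) are statements about sheaves, so they also hold on \(\mathfrak V\).

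The only point needing care is that ``realized on the model'' must be compatible with the identification \(\mathscr D^\dagger\otimes\mathcal M^{(m)}\simeq\mathcal M|_{\mathfrak U_i}\). At level \(m_0\) this is part of \textup{(SE2)}. At higher levels it follows from \textup{(SE1)}: \(d\alpha(\xi)\) acts as the order-one operator \(v_\xi\), which lies in every \(\widehat{\mathscr D}^{(m)}_{\mathfrak U_i,\mathbb Q}\) and acts on \(\mathcal M_i^{(m)}\) through the module structure. Hence the action on the extended model is compatible with the one on \(\mathcal M\). I expect this compatibility check to be the main (mild) obstacle. I will handle it exactly as in the commutator computation in the proof of \Cref{lem:level-propagation}.
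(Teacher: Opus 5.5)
Your proposal is correct and follows the paper's proof: the lemma is read off directly from an \textup{(SE2)} datum at level \(m_0\), and \Cref{lem:level-propagation} then transports the degree-zero realization to every level \(m\ge m_0\). Your extra steps are harmless elaborations of details the paper leaves implicit: the change of basis via \(K\)-linearity of \(d\alpha\) is the remark already built into \Cref{def:strong-equivariance}, and the other two are the restriction to smaller opens and the compatibility check via \textup{(SE1)}.
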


\begin{proof}
At the level supplied by \textup{(SE2)}, this is exactly the filtered
finite-level realizability condition in \Cref{def:strong-equivariance}. By
\Cref{lem:level-propagation}, the same degree-zero realization holds on the
extended good level model at every \(m\ge m_0\). In particular, it holds at a
level large enough to compute the characteristic variety, which is the form used
in \Cref{lem:symbols-fundamental-fields}.
\end{proof}

\begin{lemma}[Equivariant good level models]
\label{lem:equivariant-good-models}
Let
\[
  \mathcal M\in
  \Coh^{\fstr}_{\mathfrak K}
  (\mathscr D^\dagger_{\mathfrak X,\mathbb Q}).
\]
Locally on \(\mathfrak X\), and after replacing the level by some sufficiently
large \(m\), there exists a coherent
\(\widehat{\mathscr D}^{(m)}_{\mathfrak X,\mathbb Q}\)-model
\(\mathcal M^{(m)}\) of \(\mathcal M\), together with a good filtration
\(F_\bullet\mathcal M^{(m)}\), such that for every
\(\xi\in\mathfrak k_K\) the infinitesimal operator induced by the
\(\mathfrak K\)-linearization satisfies
\[
  d\alpha(\xi)\bigl(F_n\mathcal M^{(m)}\bigr)
  \subseteq
  F_n\mathcal M^{(m)}
  \qquad\text{for all }n.
\]
\end{lemma}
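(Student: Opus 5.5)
The plan is to deduce the statement directly from the filtered finite-level realizability clause (SE2) together with \Cref{lem:level-propagation} and \Cref{lem:finite-level-infinitesimal-realization}, and then to pass from the finite basis \(\xi_1,\ldots,\xi_r\) to arbitrary \(\xi\in\mathfrak k_K\) by linearity. No new construction should be needed: the point is to organize the data already supplied by the definition.

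First, fix an (SE2) datum \((\{\mathfrak U_i\},m_0,\xi_1,\ldots,\xi_r,\mathcal M_i^{(m_0)},F_\bullet\mathcal M_i^{(m_0)})\) for \(\mathcal M\). Since the statement is local on \(\mathfrak X\), I will work on a single \(\mathfrak U_i\). Given any \(m\ge m_0\), in particular one large enough that Berthelot's finite-level description computes \(\Car(\mathcal M)\) on \(\mathfrak U_i\), I will take \(\mathcal M^{(m)}:=\mathcal M_i^{(m)}\), the scalar extension of \(\mathcal M_i^{(m_0)}\), with the propagated filtration of \Cref{lem:level-propagation}. That lemma supplies three facts at once: \(\mathcal M^{(m)}\) is a coherent level-\(m\) model of \(\mathcal M|_{\mathfrak U_i}\), the propagated filtration is good, and each \(d\alpha(\xi_a)\), which equals \(v_{\xi_a}\) by (SE1), preserves every \(F_n\mathcal M^{(m)}\). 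Equivalently, this is the content of \Cref{lem:finite-level-infinitesimal-realization} for the basis.

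Second, I will pass to an arbitrary \(\xi=\sum_a c_a\xi_a\) with \(c_a\in K\). By \Cref{lem:differentiation-equivariant-structures}, \(d\alpha\) is \(K\)-linear, so \(d\alpha(\xi)=\sum_a c_a\,d\alpha(\xi_a)\). Each \(F_n\mathcal M^{(m)}\) is a sheaf of \(\mathcal O_{\mathfrak U_i,\mathbb Q}\)-modules and is therefore stable under multiplication by scalars in \(K\). It is also stable under finite sums, so it is preserved by \(d\alpha(\xi)\). The same conclusion follows from (SE1) directly: \(\mu_{\mathfrak X}\) is \(K\)-linear, so \(v_\xi=\sum_a c_a v_{\xi_a}\).

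The only point needing care is that the filtration pieces really are \(K\)-stable. I expect this to be the main obstacle, because good filtrations at finite level are sometimes taken by lattices over \(\widehat{\mathscr D}^{(m)}_{\mathfrak U}\) rather than rationally. I would handle it by noting that here the filtration is indexed over \(\widehat{\mathscr D}^{(m)}_{\mathfrak U,\mathbb Q,\le n}\) with \(\mathcal O_{\mathfrak U,\mathbb Q}\)-coherent generator \(G\), as in the proof of \Cref{lem:level-propagation}. Every \(F_n\) is then an \(\mathcal O_{\mathfrak U,\mathbb Q}\)-submodule, which gives the required \(K\)-stability.
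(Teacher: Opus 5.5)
Your proposal is correct and follows the paper's own argument. Like the paper, you invoke \Cref{lem:finite-level-infinitesimal-realization} (that is, (SE2) propagated to higher levels by \Cref{lem:level-propagation}) for a $K$-basis $\xi_1,\ldots,\xi_r$, and then extend to every $\xi\in\mathfrak k_K$ by $K$-linearity of $d\alpha$. Your additional check that each $F_n\mathcal M^{(m)}$ is $K$-stable is sound but needs no separate argument: compatibility with the order filtration already makes each $F_n$ an $\mathcal O_{\mathfrak U,\mathbb Q}$-module.
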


\begin{proof}
By \Cref{lem:finite-level-infinitesimal-realization}, the assertion holds for a
finite \(K\)-basis of \(\mathfrak k_K\) on a sufficiently large good
finite-level model.  Since the differentiated action is \(K\)-linear in
\(\xi\), the same filtration is preserved by every element of
\(\mathfrak k_K\).
\end{proof}

\begin{lemma}[Vanishing of infinitesimal symbols]
\label{lem:symbols-fundamental-fields}
Let
\[
  \mathcal M\in
  \Coh^{\fstr}_{\mathfrak K}
  (\mathscr D^\dagger_{\mathfrak X,\mathbb Q}).
\]
For every \(\xi\in\mathfrak k_K\), the principal symbol
\(\sigma(v_\xi)\) annihilates the associated graded module
\[
  \gr_F\mathcal M^{(m)}
\]
for any sufficiently large equivariant good model as in
\Cref{lem:equivariant-good-models}. In particular,
\[
  \sigma(v_\xi)
  \in
  \operatorname{Ann}_{\gr\widehat{\mathscr D}^{(m)}_{\mathfrak X,\mathbb Q}}
  (\gr_F\mathcal M^{(m)}),
\]
and therefore \(\sigma(v_\xi)\) vanishes on \(\Car(\mathcal M)\).
\end{lemma}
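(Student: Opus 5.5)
The argument is the classical filtered symbol computation, run on the equivariant good model supplied by \Cref{lem:equivariant-good-models}. Fix $\xi\in\mathfrak k_K$ and work locally on an open $\mathfrak U$ of the cover. Take a level-$m$ model $\mathcal M^{(m)}$ with good filtration $F_\bullet$ preserved with degree zero by $d\alpha(\xi)$. By \Cref{lem:level-propagation}, $m$ may be taken as large as needed, in particular large enough to compute $\Car(\mathcal M)$.

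The key step uses \textup{(SE1)}, which identifies $d\alpha(\xi)$ with the action of the order-one operator $v_\xi=\mu_{\mathfrak X}(\xi)$. The operator $v_\xi$ lies in $\widehat{\mathscr D}^{(m),\le1}_{\mathfrak U,\mathbb Q}$, so a priori it only maps $F_n$ into $F_{n+1}$. The class $\sigma(v_\xi)\in\gr_1\widehat{\mathscr D}^{(m)}$ acts on $\gr_F\mathcal M^{(m)}$ by sending $\bar u\in\gr_n$ to the class of $v_\xi u$ in $F_{n+1}/F_n$. By \Cref{lem:equivariant-good-models}, $v_\xi u=d\alpha(\xi)u\in F_n$, so this class is zero. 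Hence $\sigma(v_\xi)\cdot\gr_F\mathcal M^{(m)}=0$, which means
\[
  \sigma(v_\xi)\in\operatorname{Ann}_{\gr\widehat{\mathscr D}^{(m)}_{\mathfrak U,\mathbb Q}}\bigl(\gr_F\mathcal M^{(m)}\bigr).
\]
One routine check is that this symbol is well defined even when $\xi$ is only rational. Write $\xi=\pi^{-k}\xi'$ with $\xi'$ integral. Then $\sigma(v_\xi)=\pi^{-k}\sigma(v_{\xi'})$, and since $\pi^{-k}$ is a unit it does not change the annihilator.

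Next I pass from the annihilator to the support. The characteristic variety is the support in $T^*X_s$ of the graded module, equivalently (after Berthelot's reduction to the special fiber) the zero locus of $\sqrt{\operatorname{Ann}}$. Any element of the annihilator therefore vanishes on $\Car(\mathcal M)\cap T^*U$. By \Cref{prop:microlocal-facts}(i) this locus is independent of the sufficiently large good model. Gluing over the finite cover then gives that $\sigma(v_\xi)$ vanishes on all of $\Car(\mathcal M)$.

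The main obstacle is this last passage: making precise how the characteristic variety, which lives on $T^*X_s$, is read off from $\gr$ of a $\mathbb Q$-linear finite-level model. The concern is that the principal symbol's reduction to $T^*X_s$ (the function $\lambda(\bar v_\xi(x))$) must be what actually annihilates the relevant graded module. My approach is to choose an integral lattice model inside $\mathcal M^{(m)}$ whose filtration is still preserved by the integral lifts $v_{\xi'}$. I would obtain it by intersecting $F_n$ with a $v_{\xi'}$-stable coherent integral lattice, built from a stable generator as in \Cref{prop:practical-SE2-criterion}. I would then reduce mod $\pi$ and appeal to \cite[5.3.2--5.3.4]{BerthelotIntro} for the identification of the support. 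If that fails, I would state the conclusion at the level of radicals of annihilator ideals, which is all the reduced containment requires.
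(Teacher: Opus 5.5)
Your argument is the paper's: \textup{(SE1)} turns the degree-zero preservation of $F_\bullet\mathcal M^{(m)}$ by $d\alpha(\xi)$ from \Cref{lem:equivariant-good-models} into degree-zero preservation by $v_\xi$. Hence $\sigma(v_\xi)$ acts by zero on $\gr_F\mathcal M^{(m)}$, lies in its annihilator and vanishes on its support, which the paper identifies with $\Car(\mathcal M)$ directly via \Cref{def:characteristic-variety} and \cite[5.3.2--5.3.4]{BerthelotIntro}, with no integral-lattice step. Your last paragraph is therefore not needed, and if you do pursue it, note that after reducing modulo $\pi$ at a level $m\geq1$ the degree-one symbols become nilpotent in $\gr\mathscr D^{(m)}_{X_s}$ (already $\partial_i^p=p!\,\partial_i^{[p]}=0$ there), so on that route the vanishing is automatic and carries no information about the function $\lambda(\bar v_\xi(x))$ on $T^*X_s$.
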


\begin{proof}
Work locally and choose an equivariant good level model
\((\mathcal M^{(m)},F_\bullet)\).  The infinitesimal compatibility part of
filtered strong equivariance says that the action of the differential operator
\(v_\xi\) on \(\mathcal M\) agrees with the infinitesimal action
\(\xi_\alpha\) obtained by differentiating the
\(\mathfrak K\)-linearization.  The latter preserves the filtration by the filtered finite-level part of
strong equivariance, recorded in \Cref{lem:equivariant-good-models}.  Thus, for
every \(n\), the operator \(v_\xi\) sends \(F_n\mathcal M^{(m)}\) into
\(F_n\mathcal M^{(m)}\).

On the other hand, as an arithmetic differential operator of order one,
\(v_\xi\) acts on the associated graded module through multiplication by its
principal symbol in degree one.  Since its action actually has filtration
degree zero, the induced degree-one action on
\(\gr_F\mathcal M^{(m)}\) is zero.  Hence multiplication by
\(\sigma(v_\xi)\) annihilates \(\gr_F\mathcal M^{(m)}\) itself, not merely a
power of it. Consequently \(\sigma(v_\xi)\) belongs to the annihilator of the
associated graded module and vanishes on its support. This support is
\(\Car(\mathcal M)\) by the finite-level description of characteristic
varieties \cite[5.3.2--5.3.4]{BerthelotIntro}.
\end{proof}

Combining the preceding lemma with the local equations
\eqref{eq:local-moment-equations} gives the algebraic form of the
microlocal argument:
\begin{equation}
\label{eq:annihilator-moment-chain}
\begin{aligned}
  (\mu_1,\ldots,\mu_r)
  &\subseteq
  \sqrt{\operatorname{Ann}_{\gr\widehat{\mathscr D}^{(m)}_{
  \mathfrak X,\mathbb Q}}
  (\gr_F\mathcal M^{(m)})} \\
  &\Longrightarrow
  \Car(\mathcal M)
  =V\bigl(\operatorname{Ann}(\gr_F\mathcal M^{(m)})\bigr)
  \subseteq V(\mu_1,\ldots,\mu_r) \\
  &\Longrightarrow
  \Car(\mathcal M)\subseteq\mu_{\mathcal K}^{-1}(0).
\end{aligned}
\end{equation}
This is the arithmetic analogue of the classical moment-map proof of
conormal containment for strongly equivariant algebraic \(D\)-modules.

\begin{proposition}[Moment-zero containment]
\label{prop:moment-zero-containment}
For every
\[
  \mathcal M\in
  \Coh^{\fstr}_{\mathfrak K}
  (\mathscr D^\dagger_{\mathfrak X,\mathbb Q})
\]
one has
\[
  \Car(\mathcal M)\subseteq \mu_{\mathcal K}^{-1}(0).
\]
\end{proposition}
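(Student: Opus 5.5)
The plan is to reduce the containment to the annihilator computation already carried out in \Cref{lem:symbols-fundamental-fields}, and then to read off the moment-zero condition from the local equations \eqref{eq:local-moment-equations}. Since \(\Car(\mathcal M)\) and \(\mu_{\mathcal K}^{-1}(0)\) are reduced closed conical subsets of \(T^*X_s\), the containment is local on \(X_s\). We therefore fix an affine open \(\mathfrak U\subseteq\mathfrak X\) from an \textup{(SE2)} datum. After shrinking \(\mathfrak U\), we may assume its special fiber \(U\) has \(\mathscr T_{U}\) free with basis \(\partial_1,\ldots,\partial_d\). It then suffices to prove \(\Car(\mathcal M)|_U\subseteq V(\mu_1,\ldots,\mu_r)\) in \(T^*U\).

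On \(\mathfrak U\) we use \Cref{lem:equivariant-good-models}, together with \Cref{lem:level-propagation}, to choose a level \(m\) large enough to compute \(\Car(\mathcal M)\). At that level we take a good model \((\mathcal M^{(m)},F_\bullet)\) whose filtration is preserved in degree zero by every \(d\alpha(\xi)\). We fix a \(k\)-basis \(\bar\xi_1,\ldots,\bar\xi_r\) of \(\Lie(\mathcal K_s)\) with integral lifts \(\xi_a\in\Lie(\mathcal K)\). These lifts also form a \(K\)-basis of \(\mathfrak k_K\), because \(\Lie(\mathcal K)\) is a free \(\mathcal V\)-module, \(\mathcal K\) being smooth. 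By \Cref{lem:symbols-fundamental-fields}, each \(\sigma(v_{\xi_a})\) annihilates \(\gr_F\mathcal M^{(m)}\).

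The next step is to identify the image of \(\sigma(v_{\xi_a})\) in the function ring of \(T^*U\) with \(\mu_a=\sum_i f_{ai}\eta_i\). The identification \(\gr\widehat{\mathscr D}^{(m)}\) with functions on \(T^*X_s\), used in Berthelot's construction of \(\Car\), sends the degree-one symbol of a vector field to its reduction paired against the fibre coordinates. Since \(v_{\xi_a}\) is integral, its reduction is \(\bar v_{\xi_a}=\sum_i f_{ai}\partial_i\), so \(\sigma(v_{\xi_a})\) reduces to \(\mu_a\). Granting this, \((\mu_1,\ldots,\mu_r)\) lies in the annihilator of \(\gr_F\mathcal M^{(m)}\), hence in its radical. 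Taking zero loci as in \eqref{eq:annihilator-moment-chain} gives \(\Car(\mathcal M)|_U\subseteq V(\mu_1,\ldots,\mu_r)\). By \eqref{eq:moment-ideal-local}, this zero locus is the reduced \(\mu_{\mathcal K}^{-1}(0)\cap T^*U\). Gluing over the finite cover gives the proposition.

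The main obstacle is this symbol-reduction step. At level \(m\), the characteristic variety is read from the graded ring of the integral sheaf \(\widehat{\mathscr D}^{(m)}_{\mathfrak U}\) reduced mod \(\pi\), and \(\gr\) of that ring is not literally \(\mathrm{Sym}\,\mathscr T\). Only its reduced support agrees with \(T^*X_s\), via Berthelot's Frobenius-twisted comparison. To handle this, we would first choose an integral lattice in \(\mathcal M^{(m)}\) stable under the \(v_{\xi_a}\). The natural candidate is the \(\widehat{\mathscr D}^{(m)}\)-span of a \(v\)-stable coherent integral generator, in the spirit of \Cref{prop:practical-SE2-criterion}. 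We would then argue that the degree-one symbols \(\bar v_{\xi_a}\) act by zero on the graded module of this lattice. Since containment is only asserted for reduced subsets, it is enough that some power of each \(\mu_a\) annihilates the graded module; that suffices for the radical statement.
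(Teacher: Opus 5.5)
Your first three paragraphs are the paper's proof. It too feeds \Cref{lem:symbols-fundamental-fields} into the local equations \(\mu_a=\sum_i f_{ai}\eta_i\) and the radical description \eqref{eq:moment-ideal-local}. It disposes of the symbol-reduction step in one sentence (``the degree-one principal symbol reduces to the corresponding linear function on \(T^*X_s\)''), inside the simplified description of \(\Car\) in \Cref{def:characteristic-variety}. So the obstacle you isolate in your last paragraph is genuine, and the paper does not address it beyond that assertion. Your proposed repair does not remove it, however. Its final inference (``it is enough that some power of each \(\mu_a\) annihilates the graded module'') fails once \(m\ge1\).

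For \(m\ge1\) one has \(\partial_i^{\,p}=p!\,\partial_i^{\langle p\rangle}\) in \(\mathscr D^{(m)}\). Hence \(\partial_i^{\,p}=0\) in \(\mathscr D^{(m)}_{X_s}\), and \(\xi_i^{\,p}=0\) in the commutative ring \(\gr\mathscr D^{(m)}_{X_s}\). So the reduced degree-one symbol \(\sum_i\bar f_{ai}\xi_i\) of \(v_{\xi_a}\) has vanishing \(p\)-th power. It therefore acts nilpotently on the graded module of \emph{every} coherent module, and showing that it kills the graded module of your \(v\)-stable lattice gives no constraint on \(\Car(\mathcal M)\).

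What you actually need is this. The reduced spectrum of \(\gr\mathscr D^{(m)}_{X_s}\) is Berthelot's Frobenius-twisted cotangent space, with fibre coordinates \(\xi_i^{\langle p^m\rangle}\); the \(\xi_i^{\langle p^j\rangle}\) with \(j<m\) are nilpotent. The function corresponding to \(\mu_a\) there is \(\sum_i\bar f_{ai}^{\,p^m}\xi_i^{\langle p^m\rangle}\). Modulo nilpotents, this is the reduction of the degree-\(p^m\) symbol of \(v_{\xi_a}^{p^m}/(p^m)!\), not the degree-one symbol of \(v_{\xi_a}\). To make it vanish on \(\Car(\mathcal M)\), you would need an integral model whose good filtration is shifted by less than \(p^m\) by integral operators of order \(p^m\) carrying these symbols. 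In effect, that is a compatible action of the level-\(m\) divided powers of \(\Lie(\mathcal K)\).

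Conditions \textup{(SE1)}--\textup{(SE2)} control only the first-order operators \(v_\xi\). On the rational model, stability under \(v\) does give stability under \(v^{p^m}/(p^m)!\), but this does not descend to a lattice. Your sketch therefore works only at level \(m=0\), where \(\gr\mathscr D^{(0)}_{X_s}=\operatorname{Sym}\mathscr T_{X_s}\) and the degree-one symbol really is \(\mu_a\). Yet \textup{(SE2)} allows any level \(m_0\), and you yourself raise the level via \Cref{lem:level-propagation} to compute \(\Car\). Separately, \textup{(SE2)} supplies only an \(\mathcal O_{\mathfrak U,\mathbb Q}\)-coherent stable generator; a \(v\)-stable integral lattice inside it is an additional requirement.
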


\begin{proof}
The reduced ideal of \(\mu_{\mathcal K}^{-1}(0)\) is locally the radical of
the ideal generated by the reductions of the symbols \(\sigma(v_{\xi_a})\),
where \(\bar\xi_a\) form a \(k\)-basis of \(\Lie(\mathcal K_s)\) and
\(\xi_a\in\Lie(\mathcal K)\) are integral lifts.  The compatibility with
reduction is the elementary one for integral vector fields: the fundamental
field attached to \(\xi_a\) is defined over \(\mathcal V\), reduces modulo
\(\pi\) to the fundamental field attached to \(\bar\xi_a\), and the degree-one
principal symbol reduces to the corresponding linear function on
\(T^*X_s\).  If another integral lift is chosen, the two lifts differ by an
element of \(\pi\Lie(\mathcal K)\), hence their reductions and their reduced
symbols coincide.  Thus these symbols are precisely the reductions of the
principal symbols of the fundamental vector fields used in
\Cref{lem:symbols-fundamental-fields}. Hence all the corresponding coordinate
functions of the moment map vanish on \(\Car(\mathcal M)\). Therefore
\(\Car(\mathcal M)\subseteq\mu_{\mathcal K}^{-1}(0)\).
\end{proof}

\begin{proposition}[Conormal containment]
\label{prop:conormal-containment}
Assume \Cref{hyp:orbit-separability}.  For every
\[
  \mathcal M\in
  \Coh^{\fstr}_{\mathfrak K}
  (\mathscr D^\dagger_{\mathfrak X,\mathbb Q})
\]
one has
\[
  \Car(\mathcal M)
  \subseteq
  \bigcup_{O\in \mathcal K_s\backslash X_s}T^*_O X_s.
\]
\end{proposition}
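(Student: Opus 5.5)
The plan is to combine the two results proved just above: \Cref{prop:moment-zero-containment} and \Cref{lem:moment-zero-conormals}. I will work throughout with reduced underlying closed conical subsets of \(T^*X_s\), as fixed in the convention after \Cref{def:characteristic-variety}; no scheme-theoretic statement is needed.

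First, for \(\mathcal M\in\Coh^{\fstr}_{\mathfrak K}(\DdagX)\), \Cref{prop:moment-zero-containment} gives \(\Car(\mathcal M)\subseteq\mu_{\mathcal K}^{-1}(0)\). That proposition used only (SE1), (SE2) and the symbol computation of \Cref{lem:symbols-fundamental-fields}, so no separability enters yet. Second, under \Cref{hyp:orbit-separability}, \Cref{lem:moment-zero-conormals} identifies the reduced zero fiber with \(\bigcup_{O}T^*_OX_s\). Composing the two containments yields the claim, exactly as in the third implication of \eqref{eq:intro-microlocal-chain}.

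The one point needing care is the field of definition. The orbit decomposition in \Cref{lem:moment-zero-conormals} is checked on geometric points after base change to \(\bar k\), while \(\Car(\mathcal M)\) lives over \(k\). I would argue pointwise. Let \((x,\lambda)\) be a geometric point of \(\Car(\mathcal M)_{\bar k}\). It lies in \(\mu_{\mathcal K}^{-1}(0)_{\bar k}\), since the moment map is defined over \(k\) and its zero fiber commutes with base change on underlying sets. Hence \(\lambda\) vanishes on \(T_x(\mathcal K_{s,\bar k}\cdot x)\), which by \Cref{lem:orbit-geometry} is the span of the \(\bar v_\xi(x)\). So \((x,\lambda)\) lies in the conormal to the geometric orbit through \(x\).

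It then remains to see that the union of geometric conormals descends to \(\bigcup_O T^*_OX_s\) over \(k\). Each \(k\)-orbit \(O\) becomes, over \(\bar k\), a finite union of geometric orbits. These are permuted by Galois, and \(k\) is perfect, so \(O\) descends as in the proof of \Cref{lem:orbit-geometry} and \(T^*_O X_s\times_k\bar k\) is the union of their conormals. Closed subsets are determined by their geometric points, which gives the containment over \(k\).

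I expect this descent bookkeeping to be the only mild obstacle; the substantive microlocal input is already in \Cref{prop:moment-zero-containment}.
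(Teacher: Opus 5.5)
Your proposal is correct and is the paper's proof: the paper obtains the statement directly by combining \Cref{prop:moment-zero-containment} with \Cref{lem:moment-zero-conormals}. The Galois-descent bookkeeping you add is harmless but largely redundant, since \Cref{lem:moment-zero-conormals} is already stated as an equality of reduced closed subsets over \(k\), and its proof already carries out the check after extension of scalars to \(\bar k\).
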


\begin{proof}
This is the combination of \Cref{prop:moment-zero-containment} with
\Cref{lem:moment-zero-conormals}.
\end{proof}

\begin{corollary}[Holonomicity]
\label{cor:strong-equivariant-holonomic}
Assume that \(\mathcal K_s\) acts on \(X_s\) with finitely many orbits and
satisfies \Cref{hyp:orbit-separability}. Then every filtered strongly \(\mathfrak K\)-equivariant coherent
\(\mathscr D^\dagger_{\mathfrak X,\mathbb Q}\)-module is holonomic.
\end{corollary}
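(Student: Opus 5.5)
The plan is to chain the conormal containment of \Cref{prop:conormal-containment} with the dimension count of \Cref{prop:microlocal-facts}\textup{(iii)}. Let \(\mathcal M\) be a filtered strongly \(\mathfrak K\)-equivariant coherent module. If \(\mathcal M=0\) there is nothing to prove, so assume \(\mathcal M\neq0\).

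First, \Cref{prop:conormal-containment}, which uses orbit separability through \Cref{lem:moment-zero-conormals}, gives
\[
  \Car(\mathcal M)\subseteq\bigcup_{O\in\mathcal K_s\backslash X_s}T^*_OX_s .
\]
By the finite-orbit hypothesis this is a finite union. Each orbit \(O\) is smooth and locally closed by \Cref{lem:orbit-geometry}, so \(T^*_OX_s\) is a vector bundle over \(O\) of rank \(\dim X_s-\dim O\). Its total dimension is therefore exactly \(\dim X_s\).

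The subsets \(T^*_OX_s\) need not be closed in \(T^*X_s\). To apply \Cref{prop:microlocal-facts}\textup{(iii)} literally, I will pass to their Zariski closures \(\overline{T^*_OX_s}\). These are closed and conical, because the fibre dilation preserves \(T^*_OX_s\). They are also irreducible of dimension \(\dim X_s\) on each component, since closure does not raise dimension. The characteristic variety then sits inside a finite union of closed conical subsets of dimension \(\dim X_s\), so \(\dim\Car(\mathcal M)\le\dim X_s\).

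Bernstein's inequality \Cref{prop:microlocal-facts}\textup{(ii)} gives the reverse inequality. Hence \(\dim\Car(\mathcal M)=\dim X_s\), and \(\mathcal M\) is holonomic.

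The only delicate point is dimension bookkeeping over a possibly non-algebraically-closed \(k\). I will check the dimension of \(T^*_OX_s\) after base change to \(\bar k\), where orbits are geometric and the rank formula is clean. Dimension of \(\Car(\mathcal M)\) is invariant under this field extension, and Galois-conjugate orbits have equal dimension, so the bound descends to \(k\). Everything substantive has already been done in \Cref{prop:moment-zero-containment}, so I expect no further obstacle.
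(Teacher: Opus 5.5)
Your proposal is correct and follows essentially the same route as the paper: conormal containment from \Cref{prop:conormal-containment}, the count $\dim T^*_O X_s=\dim O+\operatorname{codim}_{X_s}(O)=\dim X_s$ over the finitely many orbits, and Bernstein's inequality for the reverse bound. The extra care you take with Zariski closures (to get closed conical subsets) and with base change to $\bar k$ is harmless bookkeeping that the paper leaves implicit.
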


\begin{proof}
By \Cref{prop:conormal-containment},
\[
  \Car(\mathcal M)
  \subseteq
  \bigcup_{O\in \mathcal K_s\backslash X_s}T^*_O X_s.
\]
The right-hand side is a finite union of conormal bundles to smooth locally
closed subvarieties of \(X_s\).  For an orbit \(O\), one has
\[
  \dim T^*_O X_s
  =\dim O+\operatorname{codim}_{X_s}(O)
  =\dim X_s.
\]
It follows that
\begin{equation}
\label{eq:dimension-holonomicity-chain}
\begin{aligned}
  \dim\Car(\mathcal M)
  &\leq
  \dim\Bigl(\bigcup_{O\in\mathcal K_s\backslash X_s}T^*_O X_s\Bigr) \\
  &=
  \max_{O\in\mathcal K_s\backslash X_s}\dim T^*_O X_s \\
  &=\dim X_s.
\end{aligned}
\end{equation}
If \(\mathcal M\neq0\), Bernstein's inequality for coherent arithmetic
\(\mathscr D^\dagger\)-modules gives the reverse bound
\(\dim\Car(\mathcal M)\geq\dim X_s\); see
\cite[Th\'eor\`eme 2.2.8]{CaroHolonomieSansFrobenius}.  Hence
\[
  \dim\Car(\mathcal M)=\dim X_s
\]
for every nonzero filtered strongly equivariant coherent module.  This is precisely
holonomicity in the finite-level characteristic-variety sense recalled in
\cite[5.3.4--5.3.5]{BerthelotIntro}.
\end{proof}

\begin{remark}[Why the finite-orbit hypothesis is necessary]
\label{rem:finite-orbit-shield}
The finite-orbit hypothesis cannot be removed from \Cref{cor:strong-equivariant-holonomic}.
If \(\mathcal K\) is the trivial subgroup, then the weak, strong and filtered
strong equivariance conditions are vacuous, while the \(\mathcal K_s\)-orbits
are the individual points of \(X_s\), hence infinite as soon as
\(\dim X_s>0\).  The coherent module \(\DdagX\) is then filtered strongly
equivariant, but
\[
  \Car(\DdagX)=T^*X_s,
\]
so it is not holonomic.  Finiteness of the orbit stratification is exactly what
turns conormal containment into the dimension bound in
\eqref{eq:dimension-holonomicity-chain}.
\end{remark}

\begin{remark}
\label{rem:holonomic-not-yet-overholonomic}
\Cref{cor:strong-equivariant-holonomic} is the arithmetic microlocal analogue
of the first step in Kashiwara's theorem.  It proves holonomicity, but not yet
geometric overholonomicity.  The passage from holonomicity to geometric
overholonomicity is made below only in the Frobenius range, using Caro's
projective holonomicity-stability theorem \cite[\S2.1]{CaroStabiliteHolonomie};
the 2009 overholonomicity theorem \cite{CaroSurholonomie} is used for the overholonomic formalism and
its stability properties, not for the implication holonomic \(\Rightarrow\) overholonomic.
The geometric/base-change convention in the Frobenius range is the one recalled
by Huyghe--Schmidt \cite[\S2.1, first paragraphs]{HuygheSchmidtIntermediate};
for the coefficient objects, we also use Caro--Tsuzuki's overholonomicity of
overconvergent Frobenius isocrystals \cite[Abstract and Introduction]{CaroTsuzukiOverholonomicity}.
The orbital arguments are then used inside that overholonomic setting for the
classification.
\end{remark}

\section{Equivariant overconvergent isocrystals on orbits}
\label{sec:isocrystals}

Let \(O\subseteq X_s\) be a \(\mathcal K_s\)-orbit and let
\(\overline O\subseteq X_s\) be its Zariski closure. Since \(O\) is a
\(\mathcal K_s\)-orbit, both \(O\) and \(\overline O\) are stable under the
action of \(\mathcal K_s\). Thus the action and projection maps define
morphisms of pairs
\[
  a_O:
  \bigl(\mathcal K_s\times O,\mathcal K_s\times \overline O\bigr)
  \longrightarrow
  (O,\overline O),
  \qquad
  p_O:
  \bigl(\mathcal K_s\times O,\mathcal K_s\times \overline O\bigr)
  \longrightarrow
  (O,\overline O).
\]
Here \(a_O(g,x)=g\cdot x\), while \(p_O(g,x)=x\). We shall write
\(a_O^*\) and \(p_O^*\) for the corresponding pullback functors on
overconvergent isocrystals.

\begin{definition}[Equivariant overconvergent isocrystal]
\label{def:eq-overconv-isocrystal}
A \(\mathcal K\)-equivariant overconvergent isocrystal on \((O,\overline O)\)
is a pair \((E,\beta_E)\), where
\[
  E\in \Isoc^{\dagger}(O,\overline O)
\]
and
\[
  \beta_E:a_O^*E\xrightarrow{\;\sim\;}p_O^*E
\]
is an isomorphism in
\[
  \Isoc^{\dagger}
  \bigl(\mathcal K_s\times O,\mathcal K_s\times \overline O\bigr)
\]
satisfying the identity condition along the unit section of \(\mathcal K_s\)
and the cocycle condition on \(\mathcal K_s\times\mathcal K_s\times O\). The
category of \(\mathcal K\)-equivariant overconvergent isocrystals on
\((O,\overline O)\) is denoted by
\[
  \Isoc^{\dagger}_{\mathcal K}(O,\overline O).
\]
\end{definition}

When Frobenius structures are present, we write
\(F\text{-}\Isoc^{\dagger}_{\mathcal K}(O,\overline O)\) for the category of
Frobenius objects in \(\Isoc^{\dagger}_{\mathcal K}(O,\overline O)\).  Thus the
Frobenius structure on \(E\) is required to be compatible with the equivariant
isomorphism \(\beta_E\).

\begin{remark}
\label{rem:K-versus-Ks-isocrystals}
The notation \(\mathcal K\)-equivariant emphasizes that the action comes from
the formal group \(\mathfrak K=\widehat{\mathcal K}\) acting on the formal flag
variety. On the level of overconvergent isocrystals on the special fiber, the
equivariance datum is expressed through the induced action of \(\mathcal K_s\)
on the pair \((O,\overline O)\).
\end{remark}

\begin{lemma}
\label{lem:isoc-equivariant-abelian}
The category \(\Isoc^{\dagger}_{\mathcal K}(O,\overline O)\) is abelian. The
forgetful functor
\[
  \Isoc^{\dagger}_{\mathcal K}(O,\overline O)
  \longrightarrow
  \Isoc^{\dagger}(O,\overline O)
\]
is exact and faithful.
\end{lemma}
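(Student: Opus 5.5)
The plan is to use that \(\Isoc^{\dagger}(O,\overline O)\) is abelian and that the pullbacks \(a_O^*\) and \(p_O^*\) (and the further pullbacks along the three maps \(\mathcal K_s\times\mathcal K_s\times O\to\mathcal K_s\times O\) and along the unit section) are exact additive functors between categories of overconvergent isocrystals. Once this is in place, the statement is the usual argument that equivariant objects in an abelian category form an abelian category, with kernels and cokernels computed in the underlying category. Morphisms in \(\Isoc^{\dagger}_{\mathcal K}(O,\overline O)\) are morphisms \(f:E\to E'\) in \(\Isoc^{\dagger}(O,\overline O)\) with \(p_O^*(f)\circ\beta_E=\beta_{E'}\circ a_O^*(f)\). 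Faithfulness of the forgetful functor then holds by definition: it is the identity on Hom-sets viewed as subsets.

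Next I will check exactness of \(a_O^*\) and \(p_O^*\). Both \(a_O\) and \(p_O\) are smooth morphisms of pairs; \(a_O\) is smooth because it is isomorphic to \(p_O\) via the automorphism \((g,x)\mapsto(g,gx)\) of \(\mathcal K_s\times O\), which extends to \(\mathcal K_s\times\overline O\). Pullback of overconvergent isocrystals is exact, since locally it is the pullback of coherent modules with integrable connection on strict neighbourhoods of the tubes. For smooth morphisms it is exact by flatness, and the general case follows from local freeness of isocrystals. I will cite the standard properties of \(\Isoc^\dagger\), and its equivalence via specialization with a full subcategory of coherent \(\mathscr D^\dagger\)-modules stable under kernels and cokernels, as recorded in \cite[Definition 2.1.3 and Theorem 2.1.4]{HuygheSchmidtIntermediate}.

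Given a morphism \(f\) of equivariant objects, set \(\ker f\) and \(\operatorname{coker} f\) computed in \(\Isoc^\dagger(O,\overline O)\). By exactness, \(a_O^*\ker f=\ker a_O^*f\) and \(p_O^*\ker f=\ker p_O^*f\). The commutative square for \(f\) then gives a unique isomorphism \(\beta_{\ker f}:a_O^*\ker f\to p_O^*\ker f\) induced by \(\beta_E\), and similarly for cokernels. The unit and cocycle conditions follow from those for \(\beta_E\): the canonical identifications of iterated pullbacks are natural, and the induced maps on subobjects and quotients are unique, so equalities for \(E\) restrict to equalities for \(\ker f\) and descend to \(\operatorname{coker} f\). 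One checks directly that these are the kernel and cokernel in the equivariant category, using the universal properties together with the uniqueness of induced maps. Finally, the natural map \(\operatorname{coim}f\to\operatorname{im}f\) is an isomorphism, since it is one after forgetting and it is equivariant. Hence the category is abelian and the forgetful functor is exact.

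The main obstacle is the careful handling of the canonical identifications of iterated pullbacks in the cocycle condition, for instance \((m\times\mathrm{id})^*p_O^*\simeq \operatorname{pr}_2^*p_O^*\). This requires these base-change isomorphisms to be natural transformations compatible with kernels. I would address it by noting that they are natural isomorphisms of exact functors, so they automatically commute with the induced maps on kernels and cokernels.
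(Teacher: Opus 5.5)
Your proposal is correct and follows the same route as the paper. In both, kernels and cokernels are formed in the abelian category \(\Isoc^{\dagger}(O,\overline O)\), exactness of the pullbacks transports \(\beta_E\) and \(\beta_F\) to them, and the identity and cocycle conditions are inherited by naturality; you simply spell out details the paper leaves implicit (smoothness of \(a_O\) via the shear automorphism, exactness of pullback along the unit section, and the coimage--image comparison).
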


\begin{proof}
Let \(f:(E,\beta_E)\to(F,\beta_F)\) be a morphism in
\(\Isoc^{\dagger}_{\mathcal K}(O,\overline O)\). The category
\(\Isoc^{\dagger}(O,\overline O)\) is abelian, and pullback of overconvergent
isocrystals is exact in the overconvergent-isocrystal formalism recalled in
\Cref{subsec:overconvergent-isocrystals}.  Hence kernels and cokernels exist in
the underlying category, and \(\beta_E\) and \(\beta_F\) induce equivariant
structures on kernels, cokernels, images and coimages. The identity and cocycle conditions are inherited by
functoriality.
\end{proof}

\begin{definition}[Irreducible equivariant isocrystals]
\label{def:irreducible-equivariant-isocrystal}
An object \(E\in \Isoc^{\dagger}_{\mathcal K}(O,\overline O)\) is called
irreducible if it has no nonzero proper subobject in
\(\Isoc^{\dagger}_{\mathcal K}(O,\overline O)\). We denote by
\[
  \Irr\Isoc^{\dagger}_{\mathcal K}(O,\overline O)
\]
the set of isomorphism classes of irreducible objects in this ordinary
equivariant isocrystal category.  The filtered theorem below uses a subset of
these isomorphism classes rather than a new coefficient category.
\end{definition}

\begin{notation}[Frobenius filtered-admissible image locus]
\label{def:filtered-admissible-isocrystal}
\label{def:F-filtered-admissible-isocrystal}
For the Frobenius classification we write
\[
  \Adm^{\Phi,\fstr}_{\mathcal K}(O,\overline O)
  \subseteq
  \Irr F\text{-}\Isoc^{\dagger}_{\mathcal K}(O,\overline O)
\]
for the subset of irreducible equivariant overconvergent \(F\)-isocrystals
whose arithmetic intermediate extension, formed in the Frobenius overholonomic
heart and endowed with its induced Frobenius and weak equivariant structures,
belongs to
\[
  F\text{-}\Coh^{\fstr}_{\mathfrak K}
  \bigl(\mathscr D^\dagger_{\mathfrak X,\mathbb Q}\bigr).
\]
This is an image locus, not an automaticity statement: the notation records
exactly those Frobenius coefficient classes whose intermediate extensions have
the required filtered finite-level realization.  No ordinary, without-Frobenius
admissible locus is used in the main theorem.
\end{notation}

\begin{remark}[Why the filtered coefficient condition is explicit]
\label{rem:filtered-coefficient-condition}
This convention prevents a hidden use of an equivariant finite-level model
existence theorem.  The microlocal proof only needs filtered finite-level
realizability for the objects to which it is applied.  Since arithmetic
characteristic varieties are defined through finite-level good models, the
filtered clause is stated explicitly rather than inferred from weak
linearization alone.

We emphasize that the paper does not assert either automaticity or failure of
filtered realizability for arbitrary ordinary strongly equivariant objects.  In
particular, we do not claim that every Frobenius equivariant isocrystal belongs
to the admissible image locus, and we do not use the admissible locus to prove
regularity.  Regularity is proved first for the Frobenius geometric category
\(F\text{-}\Coh^{\fstr}_{\mathfrak K}\); the locus \(\Adm^{\Phi,\fstr}\) is only the precise
record of which Frobenius coefficient classes occur after restriction of simple
objects in the Frobenius category.
\end{remark}

\subsection{A local source of filtered admissibility}
\label{subsec:local-filtered-admissibility}

The preceding definition deliberately treats filtered admissibility as an image
condition.  The next statement is not an additional hypothesis and is not a
substitute for the definition of \(\Adm^{\Phi,\fstr}\); it is a proved local
criterion.  It records one concrete situation in which the finite-level clause
\textup{(SE2)} can be checked directly on generators.

\begin{proposition}[Rank-one logarithmic generator criterion]
\label{prop:rank-one-integral-exponent-SE2}
Let \(O\subseteq X_s\) be a \(\mathcal K_s\)-orbit and let
\(E\in F\text{-}\Isoc^\dagger_{\mathcal K}(O,\overline O)\) be rank one.
Suppose that, on an affine chart \(V\subseteq\overline O\), the boundary is a
normal-crossing divisor
\[
  \partial O\cap V=\{t_1\cdots t_r=0\}
\]
with \(\mathcal K_s\)-stable branches.  Suppose moreover that a chosen
level-zero local model of \(j_{O,!+}(E)|_V\) is generated by the
\(\mathcal O_{V,\mathbb Q}\)-coherent rank-one generator
\(G=\mathcal O_{V,\mathbb Q}\cdot e\), and that
\[
  (t_i\partial_{t_i})e=n_i e\quad (n_i\in\mathbb Z),
  \qquad
  \partial_{y_j}e=g_j e\quad(g_j\in\mathcal O_{V,\mathbb Q})
\]
in coordinates \(t_1,\ldots,t_r,y_1,\ldots,y_s\).  Then
\(v_\xi(G)\subseteq G\) for every fundamental vector field \(v_\xi\).
Consequently, if such charts cover \(\overline O\) and the corresponding
level-zero generated models glue to a coherent finite-level model of
\(j_{O,!+}(E)\), then \(j_{O,!+}(E)\) satisfies \textup{(SE2)} at level
zero and
\[
  [E]\in\Adm^{\Phi,\fstr}_{\mathcal K}(O,\overline O).
\]
\end{proposition}

\begin{proof}
Because each branch \(\{t_i=0\}\) is \(\mathcal K_s\)-stable, every
fundamental vector field is logarithmic along the boundary:
\[
  v_\xi=\sum_i a_{\xi,i}\,t_i\partial_{t_i}
       +\sum_j b_{\xi,j}\,\partial_{y_j},
  \qquad
  a_{\xi,i},b_{\xi,j}\in\mathcal O_{V,\mathbb Q}.
\]
Hence
\[
  v_\xi(e)=
  \left(\sum_i a_{\xi,i}n_i+\sum_j b_{\xi,j}g_j\right)e,
\]
and the coefficient is regular on \(V\).  Thus \(v_\xi(G)\subseteq G\).
The conclusion is exactly the practical finite-level criterion of
\Cref{prop:practical-SE2-criterion}, applied to the glued level-zero model.
No exactness property of \(j_{O,!+}\) and no automatic implication
\textup{(SE1)}\(\Rightarrow\)\textup{(SE2)} is used here.
\end{proof}

\begin{remark}[Equivariant characters give integral exponents]
\label{rem:equivariant-characters-integral-exponents}
In the split rank-one stabilizer-admissible charts used in
\Cref{sec:examples}, a rank-one equivariant coefficient attached to an
algebraic character of the stabilizer has integral logarithmic exponents: the
residue of \(t_i\partial_{t_i}\) is the differential of that character in the
corresponding one-parameter direction, hence an integer weight.  The preceding
proposition is used only in this explicit rank-one form.  We do not use here a
general theorem asserting integral exponent control for arbitrary higher-rank
equivariant coefficients or arbitrary singular orbit boundaries.
\end{remark}

\begin{remark}[Two separate finite-level questions]
\label{rem:two-SE2-questions}
The local criterion separates two issues that must not be conflated.  Once a
geometric model \(j_{O,!+}(E)\) with integral logarithmic exponents and stable
normal-crossing charts is already available, finite-level control is a concrete
generator calculation.  The open problem mentioned in
\Cref{subsec:filtered-hypothesis} is more abstract: whether an arbitrary
coherent object satisfying the infinitesimal compatibility \textup{(SE1)} must
admit, after increasing the level locally, coherent generators stable under the
fundamental vector fields.  The present paper does not use or assert such an
automaticity theorem.
\end{remark}

\begin{remark}
\label{rem:irreducible-equivariant-vs-nonequivariant}
Irreducibility in the equivariant category is not the same thing as
irreducibility after forgetting the equivariant structure. An object may be
reducible as an overconvergent isocrystal but irreducible as an equivariant
object if the equivariant structure permutes its non-equivariant simple factors.
In the classification theorem below, irreducibility is always meant in the
equivariant category.
\end{remark}

\begin{lemma}[Equivariant restriction from arithmetic modules]
\label{lem:restriction-equvariant-isocrystal}
Let
\[
  \mathcal M\in
  \Coh^{\fstr}_{\mathfrak K}
  \bigl(\mathscr D^\dagger_{\mathfrak X,\mathbb Q}\bigr),
\]
and let \(O\subseteq X_s\) be a \(\mathcal K_s\)-orbit. If \(\mathcal M|_O\) is
nonzero, then it belongs naturally to
\[
  \Isoc^{\dagger}_{\mathcal K}(O,\overline O).
\]
\end{lemma}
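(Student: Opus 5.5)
The plan has two parts: first show that \(\mathcal M|_O\) is an overconvergent isocrystal, then transport the weak equivariant structure \(\alpha_{\mathcal M}\) to the pair \((O,\overline O)\).

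\textbf{Step 1: \(\mathcal M|_O\) is an overconvergent isocrystal.} By \Cref{cor:strong-equivariant-holonomic}, \(\mathcal M\) is holonomic, so the restriction to the couple \((O,\overline O)\) of \Cref{subsec:restriction-locally-closed} is defined. By \Cref{lem:orbit-geometry}, \(O\) is smooth and locally closed. By \Cref{prop:conormal-containment},
\[
  \Car(\mathcal M)\subseteq\bigcup_{O'}T^*_{O'}X_s .
\]
We need \(\Car(\mathcal M)|_O\subseteq T^*_OX_s\). Over a point \(x\in O\), only the conormal to the orbit of \(x\) itself can contribute, because \(T^*_{O'}X_s\) lies over \(O'\) and the orbits are disjoint. \Cref{prop:restriction-characteristic} then gives \(\Car(\mathcal M|_O)\subseteq T^*_OO\), and \Cref{prop:zero-section-isocrystal} places \(\mathcal M|_O\) in \(\Isoc^\dagger(O,\overline O)\).

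\textbf{Step 2: transporting the equivariant structure.} The closed sets \(X_s\setminus\overline O\) and \(\partial O=\overline O\setminus O\) are \(\mathcal K_s\)-stable. Hence on \(\mathfrak K\times\mathfrak X\) the preimages of these sets under \(a\) and under \(p\) coincide, namely \(\mathcal K_s\times(X_s\setminus\overline O)\) and \(\mathcal K_s\times\partial O\). The restriction to the couple is built from localization away from a closed set, the support functor along \(\overline O\), and localization away from \(\partial O\). We will use that each of these commutes with the smooth inverse images \(a^\sharp\) and \(p^\sharp\), via the standard base-change compatibilities of the couple formalism cited in \Cref{prop:standard-facts}\textup{(iv)}. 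This gives canonical identifications
\[
  (a^\sharp\mathcal M)|_{\mathcal K_s\times O}\simeq a_O^*(\mathcal M|_O),
  \qquad
  (p^\sharp\mathcal M)|_{\mathcal K_s\times O}\simeq p_O^*(\mathcal M|_O).
\]
Restricting \(\alpha_{\mathcal M}\) to the couple \((\mathcal K_s\times O,\mathcal K_s\times\overline O)\) then yields an isomorphism
\[
  \beta:a_O^*(\mathcal M|_O)\xrightarrow{\sim}p_O^*(\mathcal M|_O).
\]
Since \(\mathcal K_s\times O\) is smooth, by Step 1 and compatibility of pullback with the specialization embedding, \(\beta\) is a morphism of overconvergent isocrystals. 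The unit and cocycle conditions for \(\beta\) follow from those for \(\alpha_{\mathcal M}\), applying the same restriction on \(\mathcal K\times\mathcal K\times\mathfrak X\) and using functoriality of the identifications with respect to \(m\times\operatorname{id}\), \(\operatorname{pr}_{23}\), \(\operatorname{id}\times a\) and the unit section. Naturality in \(\mathcal M\) comes from the functoriality of each step.

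\textbf{Main obstacle.} The delicate point is the commutation of the couple-restriction with \(a^\sharp\) and \(p^\sharp\). This is smooth base change for the local cohomology and localization functors together with the normalization shift \([-d_f]\) built into \(f^\sharp\), and the shifts must be matched so that the result stays in degree zero. I would first reduce the question to open localization plus \(i^!\) for the closed immersion \(O\hookrightarrow U\), as in the proof of \Cref{prop:restriction-characteristic}. Then I would verify smooth base change separately for the open immersion and for the closed immersion \(i\), where the latter is the relevant cartesian square with \(\mathcal K_s\times O\hookrightarrow\mathcal K_s\times U\). Exactness of \(f^\sharp\) on coherent modules keeps everything in degree zero.
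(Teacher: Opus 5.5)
Your proposal is correct and follows the paper's proof essentially step for step. The paper first gets holonomicity from \Cref{cor:strong-equivariant-holonomic}, then combines \Cref{prop:restriction-characteristic} with conormal containment to put \(\Car(\mathcal M|_O)\) in the zero section and applies \Cref{prop:zero-section-isocrystal}, and finally uses smooth base change for restriction to couples to turn \(\alpha_{\mathcal M}\) into \(a_O^*(\mathcal M|_O)\simeq p_O^*(\mathcal M|_O)\), with the unit and cocycle conditions inherited. Your pointwise disjointness argument for \(\Car(\mathcal M)|_O\subseteq T^*_OX_s\), and your reduction of the base-change step to an open localization followed by \(i^!\), spell out points that the paper asserts in one line each.
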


\begin{proof}
The proof uses only the microlocal results already established, not the later
regularity devissage.  By \Cref{cor:strong-equivariant-holonomic},
\(\mathcal M\) is holonomic.  Applying the locally closed characteristic-variety
estimate of \Cref{prop:restriction-characteristic} gives
\[
  \Car(\mathcal M|_O)
  \subseteq
  \rho_O\bigl(\Car(\mathcal M)|_O\bigr),
\]
where \(\rho_O:T^*X_s|_O\to T^*O\) is the cotangent map attached to restriction
to the pair.  By \Cref{prop:conormal-containment},
\[
  \Car(\mathcal M)|_O
  \subseteq
  T^*_O X_s.
\]
The image \(\rho_O(T^*_O X_s)\) is the zero section of \(T^*O\).  Hence
\(\Car(\mathcal M|_O)\) is contained in the zero section.  The zero-section
criterion, recalled in \Cref{prop:zero-section-isocrystal}, implies that
\(\mathcal M|_O\) is an overconvergent isocrystal on \((O,\overline O)\),
whenever it is nonzero.

The weak equivariant structure on \(\mathcal M\) restricts to an isomorphism
between the pullbacks of \(\mathcal M|_O\) along the action and projection maps
\(a_O\) and \(p_O\).  More explicitly, smooth base change for restriction to
couples identifies the restriction of
\[
  a^\sharp\mathcal M\simeq p^\sharp\mathcal M
\]
to the couple
\(\mathcal K_s\times O,\mathcal K_s\times\overline O\) with the isomorphism
\[
  a_O^*(\mathcal M|_O)
  \xrightarrow{\;\sim\;}
  p_O^*(\mathcal M|_O).
\]
Since restriction to the pair is functorial in the holonomic range, this
restricted isomorphism is an isomorphism of overconvergent isocrystals.  The identity
condition and the cocycle condition are inherited by restriction from the
corresponding conditions for \(\mathcal M\), because the unit and multiplication
maps for \(\mathcal K_s\) are obtained by base change from those of
\(\mathfrak K\) and the above identifications are functorial for smooth base
change of couples. Thus \(\mathcal M|_O\) belongs naturally to
\(\Isoc^\dagger_{\mathcal K}(O,\overline O)\).  If \(\mathcal M\) is endowed
with Frobenius, the same restriction procedure gives the induced Frobenius
structure on \(\mathcal M|_O\), compatible with the restricted equivariant
structure.
\end{proof}

\subsection{Auxiliary stabilizer interpretation}
\label{subsec:stabilizer-description-pointer}

The intrinsic coefficient category used in the classification theorem is
\[
  \Isoc^\dagger_{\mathcal K}(O,\overline O).
\]
This is the coefficient category that enters the open-orbit extraction and the
classification theorem.  This orbitwise coefficient language is compatible with
the broader formal-model equivariance framework developed in
\cite{SarrazolaRendiconti}, but no result from that paper is used to identify
the coefficient category here.  A more concrete stabilizer description is available
under the additional stabilizer-admissibility hypothesis, but that description
is not used in the proof of \Cref{thm:arithmetic-kashiwara-regularity} or
\Cref{thm:orbit-classification}.  To keep the logical spine of the paper
separate from this auxiliary interpretation, the full stabilizer statement and
its proof are isolated in \Cref{app:stabilizer-description}.

\section{Equivariant intermediate extension}
\label{sec:intermediate-extension}

Let \(O\subseteq X_s\) be a \(\mathcal K_s\)-orbit and let
\[
  j_O:O\hookrightarrow X_s
\]
be the corresponding locally closed immersion. In this article intermediate
extension is used only in the following safe range: the coefficient object is
already in the overholonomic heart on the couple \((O,\overline O)\). This
includes overconvergent \(F\)-isocrystals by the standard overholonomicity
results recalled in \Cref{prop:standard-facts}. For such an object \(E\), we
denote by \(j_{O,!+}(E)\) its arithmetic intermediate extension. Following
Huyghe--Schmidt, this is the image, in the overholonomic heart, of the
degree-zero canonical morphism
\[
  \theta^0_{j_O,E}:j^0_{O,!}(E)\longrightarrow j^0_{O,+}(E).
\]
When the immersion is affine this is the ordinary image of
\(j_{O,!}E\to j_{O,+}E\); otherwise images are understood through the
\(H^0_t\)-functor of the overholonomic \(t\)-structure, as in
\cite[\S2.2]{HuygheSchmidtIntermediate}. Via the forgetful functor to coherent
holonomic arithmetic modules, \(j_{O,!+}(E)\) is regarded as a coherent
\(\mathscr D^\dagger_{\mathfrak X,\mathbb Q}\)-module supported on
\(\overline O\). Its minimality property is used only in the following
standard form: it has no nonzero coherent subobject or quotient supported on
\[
  \partial O:=\overline O\setminus O.
\]
We shall repeatedly use the shorthand chain
\begin{equation}
\label{eq:middle-extension-image-chain}
  j^0_{O,!}(E)\longrightarrow j^0_{O,+}(E),
  \qquad
  j_{O,!+}(E)=
  \operatorname{Im}\bigl(j^0_{O,!}(E)\to j^0_{O,+}(E)\bigr),
  \qquad
  (j_{O,!+}E)|_O=E.
\end{equation}
No assertion in this section uses exactness of \(j_{O,!+}\); only
functoriality, restriction to \(O\), and the minimality property above are used.

\subsection{Equivariance and intermediate extension}

\begin{lemma}[Smooth pullback and intermediate extension]
\label{lem:smooth-pullback-intermediate-extension}
Let \(f:Y'\to Y\) be a smooth morphism of smooth varieties over \(k\), and let
\(j:U\hookrightarrow Y\) be a locally closed immersion. Form the cartesian
square
\[
\begin{tikzcd}
  U' \arrow[r,"j'"] \arrow[d,"f_U"'] &
  Y' \arrow[d,"f"] \\
  U \arrow[r,"j"] &
  Y.
\end{tikzcd}
\]
Let \(E\) be a coefficient object on \((U,\overline U)\) belonging to the
overholonomic heart; in particular, this applies to an overconvergent
\(F\)-isocrystal. Then there is a canonical isomorphism
\[
  f^\sharp j_{!+}(E)
  \simeq
  j'_{!+}\bigl(f_U^*E\bigr),
\]
functorial in \(E\).
\end{lemma}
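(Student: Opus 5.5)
The plan is to reduce the isomorphism to two standard base-change facts: smooth pullback commutes with extraordinary and ordinary pushforward along the immersion, and smooth pullback is t-exact for the overholonomic t-structure. Set \(d:=\) the relative dimension of \(f\), so \(f^\sharp=f^![-d]\) as in \Cref{sec:strong-equivariance}; since \(f\) is smooth, \(f^![-d]\simeq f^+[d]\) up to the standard normalization. First we use smooth base change in the overholonomic six-functor formalism for couples, \cite[\S2.1, Lemmas 2.1.1--2.1.2]{HuygheSchmidtIntermediate}. Applied to the cartesian square in the statement, it gives canonical isomorphisms
\[
  f^\sharp j_!(E)\simeq j'_!(f_U^\sharp E),
  \qquad
  f^\sharp j_+(E)\simeq j'_+(f_U^\sharp E).
\]
These are compatible with the forget-supports morphisms \(\theta_{j,E}:j_!E\to j_+E\) and \(\theta_{j',f_U^\sharp E}\). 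This compatibility is the statement that the base-change isomorphisms for \(!\) and \(+\) intertwine the natural transformation \(j_!\to j_+\). I would verify it by factoring \(j\) as an open immersion followed by a closed immersion, exactly as in the proof of \Cref{prop:restriction-characteristic}. For the closed immersion, \(j_!=j_+\) and there is nothing to check. For the open immersion, \(\theta\) is adjoint to the identity on \(U\), and base change is compatible with the unit and counit.

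Second, we use that \(f^\sharp\) is t-exact for the canonical overholonomic t-structure, because smooth pullback is exact on coherent modules (see \Cref{sec:strong-equivariance}) and preserves overholonomicity (\Cref{prop:standard-facts}(viii)). Hence \(f^\sharp\) commutes with \(H^0_t\), and therefore with the degree-zero functors \(j^0_!\) and \(j^0_+\). Being exact on the abelian heart, it also preserves images. On the coefficient side, \(f_U^\sharp E\) agrees with the isocrystal pullback \(f_U^*E\) under the specialization equivalence (\Cref{prop:zero-section-isocrystal} and \cite[Definition 2.1.3]{HuygheSchmidtIntermediate}). In the Frobenius case Frobenius is carried along, and \(f_U^*E\) lies again in the overholonomic heart, so \(j'_{!+}(f_U^*E)\) is defined. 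Combining these facts,
\[
  f^\sharp j_{!+}(E)
  =f^\sharp\operatorname{Im}(j^0_!E\to j^0_+E)
  \simeq\operatorname{Im}\bigl(j'^0_!f_U^*E\to j'^0_+f_U^*E\bigr)
  =j'_{!+}(f_U^*E).
\]
Functoriality in \(E\) follows because every isomorphism used is natural.

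A consistency check, not needed for the proof, comes from the minimality characterization in \Cref{prop:intermediate-minimality-standard}. The pullback of an object with no boundary subobjects or quotients again has none along \(f^{-1}(\partial U)=\partial U'\): a subobject supported on \(\partial U'\) would, by faithful flatness of the smooth surjection onto its image, produce one downstairs. This argument needs care when \(f\) is not surjective, so I use it only as a check.

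The main obstacle is the compatibility of the two base-change isomorphisms with \(\theta\), together with the normalization shift making \(f^\sharp\) t-exact. If the references do not state the intertwining of \(\theta\) explicitly, I would derive it from the adjunction description above, reducing to the open-immersion case, where it is formal.
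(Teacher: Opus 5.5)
Your proposal is correct and follows essentially the same route as the paper: base change of \(j_!\) and \(j_+\) along the smooth morphism \(f\), compatibility of these isomorphisms with the canonical morphism \(j_!E\to j_+E\), and then t-exactness of the normalized pullback \(f^\sharp\), which carries the image in the overholonomic heart to the corresponding image. Where the paper simply asserts that the base-change isomorphisms intertwine \(\theta\), you reduce this to the closed and open pieces of the immersion, and you note that \(f^\sharp\) commutes with \(H^0_t\); both are welcome extra detail.
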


\begin{remark}
\label{rem:smooth-pullback-intermediate-extension-special-fiber}
The lemma is stated on special fibers because \(j_{!+}\) is applied here to
coefficient objects on couples \((U,\overline U)\). The corresponding
arithmetic \(\mathscr D^\dagger\)-modules on formal schemes are obtained by
Berthelot realization, and an isomorphism of coefficient objects on couples is
sent by this realization to the corresponding isomorphism of arithmetic
\(\mathscr D^\dagger\)-modules. In the application to equivariance, the
relevant morphisms are the action and projection maps on
\(\mathfrak K\times\mathfrak X\). They are smooth: the action map is identified
with the projection by the automorphism \((g,x)\mapsto(g,g^{-1}x)\).
\end{remark}

\begin{proof}
Here \(f^\sharp\) denotes the normalized smooth inverse image, i.e. the
cohomological pullback shifted by the relative dimension so that it is
\(t\)-exact on the overholonomic heart. In the cartesian square above the
relative dimensions of \(f\) and \(f_U\) agree, so the normalizing shifts are
compatible with the corresponding shifts in the extraordinary and direct image
functors. The functors \(j_!\), \(j_+\), and normalized smooth inverse image
therefore satisfy the degree-zero base-change compatibilities for arithmetic
\(\mathscr D^\dagger\)-modules; see \cite[\S2.1, Lemmas 2.1.1--2.1.2, and
\S2.2]{HuygheSchmidtIntermediate}. Hence
\[
  f^\sharp j_!(E)\simeq j'_!(f_U^*E),
  \qquad
  f^\sharp j_+(E)\simeq j'_+(f_U^*E).
\]
Under these identifications, the pullback of the canonical morphism
\(j_!(E)\to j_+(E)\) is the canonical morphism
\(j'_!(f_U^*E)\to j'_+(f_U^*E)\). The normalized smooth inverse image
\(f^\sharp\) is exact on the relevant heart in this smooth situation, by the
convention fixed in \Cref{sec:strong-equivariance}. Exact functors preserve
images in abelian categories, so the image of the first morphism is carried to
the image of the second. This gives the claimed isomorphism.
\end{proof}

\begin{lemma}[Boundary vanishing for endomorphism differences]
\label{lem:boundary-vanishing-endomorphism-difference}
Let \(N=j_{O,!+}(E)\), with \(E\) in the overholonomic heart on
\((O,\overline O)\). Suppose that \(\delta:N\to N\) is a
\(\mathscr D^\dagger_{\mathfrak X,\mathbb Q}\)-linear endomorphism whose
restriction to \(O\) is zero. Then \(\delta=0\).
\end{lemma}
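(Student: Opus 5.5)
The plan is to use the image factorization of $\delta$ together with the minimality of intermediate extension (\Cref{prop:intermediate-minimality-standard}). Everything will be done inside the abelian overholonomic heart containing $N$. This is legitimate because $E$ lies in the overholonomic heart, so $N=j_{O,!+}(E)$ does as well (\Cref{prop:ovhol-stability}). The image $I:=\operatorname{Im}(\delta)$ is then formed in that heart. By the convention of \Cref{prop:standard-facts}(ix), it coincides with the coherent image, and it is simultaneously a subobject of $N$ and a quotient of $N$.

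First I will show that $I$ is supported on $\partial O=\overline O\setminus O$. The module $N$ is supported on $\overline O$, so $I$ is as well. Restriction to the couple $(O,\overline O)$ is exact on the heart: it is open localization away from $\partial O$ followed by the $t$-exact restriction along the closed immersion $O\hookrightarrow X_s\setminus\partial O$. Hence
\[
  I|_O=\operatorname{Im}(\delta|_O)=0 .
\]
A coherent module supported on $\overline O$ whose restriction to $O$ vanishes is supported on $\partial O$. I would justify this step via \Cref{lem:open-restriction-characteristic}: the open restriction to $U=X_s\setminus\partial O$ of a module supported on the closed subset $O\subseteq U$ is determined by its restriction to $O$, by the Kashiwara-type identification for couples used in \cite[\S2.1]{HuygheSchmidtIntermediate}.

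Next I will apply minimality. By \Cref{prop:intermediate-minimality-standard}, $N$ has no nonzero subobject supported on $\partial O$. Since $I\subseteq N$ is such a subobject, $I=0$, and therefore $\delta=0$. Alternatively, one can view $I$ as a quotient of $N$ and invoke the absence of boundary quotients; either half of minimality suffices.

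The main obstacle is the support step, that is, the claim that vanishing of the restriction to the couple forces support in $\partial O$. This is where care is needed, because the restriction to a locally closed couple is not an ordinary open restriction. I would first pass to $U$, where $O$ is closed and smooth. There I would use that an object supported on $O$ with $i^!$ equal to zero is itself zero, by Berthelot–Kashiwara for the closed smooth immersion $i:O\hookrightarrow U$. This gives $I|_U=0$, and hence $\operatorname{Supp}I\subseteq\partial O$.
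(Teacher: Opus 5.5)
Your proposal is correct and is essentially the paper's proof: the image of \(\delta\) is a subobject of \(N\) whose restriction to \(O\) vanishes, so it is supported on \(\partial O\), and the minimality of \(j_{O,!+}(E)\) forces it to be zero. Your passage to \(U=X_s\setminus\partial O\) with the Berthelot--Kashiwara argument just spells out the support step that the paper asserts in one line; there \(i^!\) is exact only because everything in sight is supported on the closed stratum \(O\subseteq U\), which is the case here.
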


\begin{proof}
The image of \(\delta\) is a coherent subobject of \(N\). Since \(\delta|_O=0\),
this image restricts to zero on \(O\), hence it is supported on
\(\partial O\). The minimality of \(N=j_{O,!+}(E)\) excludes nonzero subobjects
supported on \(\partial O\). Thus \(\operatorname{Im}(\delta)=0\), and
\(\delta=0\).
\end{proof}

\begin{lemma}[Boundary vanishing after smooth pullback]
\label{lem:boundary-vanishing-smooth-pullback}
Let \(N=j_{O,!+}(E)\), with \(E\) in the overholonomic heart on
\((O,\overline O)\). Let \(f:Y'\to X_s\) be a smooth morphism and put
\(O':=f^{-1}(O)\). Then any
\(\mathscr D^\dagger\)-linear endomorphism of \(f^\sharp N\) whose restriction
to \(O'\) is zero is itself zero.
\end{lemma}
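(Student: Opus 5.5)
The plan is to reduce this statement to \Cref{lem:boundary-vanishing-endomorphism-difference}, applied on \(Y'\) to the pulled-back intermediate extension. By \Cref{lem:smooth-pullback-intermediate-extension}, applied to the cartesian square obtained by base-changing \(j_O:O\hookrightarrow X_s\) along \(f\), there is a canonical isomorphism
\[
  f^\sharp N=f^\sharp j_{O,!+}(E)\simeq j'_{!+}\bigl(f_O^*E\bigr),
\]
where \(j':O'\hookrightarrow Y'\) and \(f_O:O'\to O\) are the base-changed maps. Here \(f_O^*E\) lies in the overholonomic heart on the couple \((O',\overline{O'})\), since normalized smooth pullback is \(t\)-exact on that heart and preserves overholonomicity (\Cref{prop:standard-facts}\textup{(viii)}). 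So \(f^\sharp N\) is again an arithmetic intermediate extension, now from the smooth locally closed subscheme \(O'\) of \(Y'\).

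Next, let \(\delta\) be an endomorphism of \(f^\sharp N\) with \(\delta|_{O'}=0\), and repeat the image argument of \Cref{lem:boundary-vanishing-endomorphism-difference}. The image \(\operatorname{Im}(\delta)\) is formed in the overholonomic heart on \(Y'\). Restriction to \(O'\) is exact on that heart, so \(\operatorname{Im}(\delta)|_{O'}=\operatorname{Im}(\delta|_{O'})=0\). Since \(\operatorname{Im}(\delta)\) is a subobject of \(j'_{!+}(f_O^*E)\), it is supported on \(\overline{O'}\), and therefore on \(\partial O'=\overline{O'}\setminus O'\). The minimality statement of \Cref{prop:intermediate-minimality-standard}, applied to \(j'_{!+}\), says this intermediate extension has no nonzero subobject supported on the boundary. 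Hence \(\operatorname{Im}(\delta)=0\), so \(\delta=0\).

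The main obstacle is a bookkeeping point about closures. Because \(f\) is smooth, hence flat and open, one has \(f^{-1}(\overline O)=\overline{f^{-1}(O)}\), so the closure \(\overline{O'}\) of \(O'\) in \(Y'\) is exactly \(f^{-1}(\overline O)\), and \(\partial O'=f^{-1}(\partial O)\). This identification is needed for two reasons. First, the couple \((O',f^{-1}(\overline O))\) coming from base change must be the couple on which \(j'_{!+}\) is taken, so that \Cref{lem:smooth-pullback-intermediate-extension} really produces an intermediate extension in the sense of \Cref{prop:intermediate-minimality-standard}. Second, the boundary used in the minimality statement must be \(f^{-1}(\partial O)\).

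If one prefers to avoid working on \(Y'\) directly, there is an alternative route. It argues that a subobject of \(f^\sharp N\) supported on \(f^{-1}(\partial O)\) gives rise to a subobject of \(N\) supported on \(\partial O\), via adjunction with \(f_+\) or via faithfully flat descent when \(f\) is surjective. Minimality of \(N\) then kills it. I would take the first route as the main argument and keep this descent route only as a fallback.
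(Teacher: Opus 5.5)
Your proposal is correct and follows the paper's proof: the paper also uses \(f^\sharp N\simeq j'_{!+}(f_O^*E)\) from the smooth-pullback lemma, then applies the boundary-vanishing lemma for endomorphisms to this intermediate extension on \(Y'\). Your extra check that \(\overline{O'}=f^{-1}(\overline O)\), because smooth morphisms are open, is a detail the paper leaves implicit, and the descent fallback is not needed.
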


\begin{proof}
By \Cref{lem:smooth-pullback-intermediate-extension},
\[
  f^\sharp N\simeq j'_{!+}(f_O^*E),
\]
where \(j':O'\hookrightarrow Y'\). The assertion is therefore exactly
\Cref{lem:boundary-vanishing-endomorphism-difference} applied to the
intermediate extension on \(Y'\).
\end{proof}

\begin{lemma}[Infinitesimal functoriality of Berthelot operators]
\label{lem:infinitesimal-functoriality-operators}
Let \(\mathfrak K\) act on \(\mathfrak X\), and let
\(\xi\in\mathfrak k_K\) with fundamental vector field \(v_\xi\). The induced
action of \(\mathfrak K\) on the finite-level sheaves
\(\widehat{\mathscr D}^{(m)}_{\mathfrak X,\mathbb Q}\) differentiates to
derivations
\[
  \xi_{\mathscr D}^{(m)}:
  \widehat{\mathscr D}^{(m)}_{\mathfrak X,\mathbb Q}
  \longrightarrow
  \widehat{\mathscr D}^{(m)}_{\mathfrak X,\mathbb Q}
\]
compatible with the transition maps in \(m\). For every local section
\(P\) of \(\widehat{\mathscr D}^{(m)}_{\mathfrak X,\mathbb Q}\), one has
\[
  [v_\xi,P]=\xi_{\mathscr D}^{(m)}(P).
\]
Passing to the inductive limit gives the corresponding identity on
\(\mathscr D^\dagger_{\mathfrak X,\mathbb Q}\).
\end{lemma}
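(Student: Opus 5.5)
The plan is to build \(\xi^{(m)}_{\mathscr D}\) from the natural action of \(\mathfrak K\) on \(\widehat{\mathscr D}^{(m)}_{\mathfrak X,\mathbb Q}\) by transport of structure, and then compare it with \(\operatorname{ad}(v_\xi)\) on generators.

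\textbf{Step 1 (construction).} A point \(g\) of \(\mathfrak K\) acts on \(\mathfrak X\) by an automorphism. By Berthelot's functoriality of the sheaves of principal parts of level \(m\), \(g\) induces an automorphism of filtered rings \(P\mapsto gPg^{-1}\) of \(\widehat{\mathscr D}^{(m)}_{\mathfrak X}\), compatible with \(p\)-adic completion, with \(\otimes\mathbb Q\), and with the transition maps \(m\to m+1\). Globally this is an isomorphism \(a^{*}\widehat{\mathscr D}^{(m)}\simeq p^{*}\widehat{\mathscr D}^{(m)}\) of sheaves of rings on \(\mathfrak K\times\mathfrak X\). Pull it back along the first-order point \(e_\xi\), exactly as in the proof of \Cref{lem:differentiation-equivariant-structures}, to get \(\operatorname{id}+\varepsilon\,\xi^{(m)}_{\mathscr D}\). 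Multiplicativity of the automorphism gives the Leibniz rule modulo \(\varepsilon^2\), so \(\xi^{(m)}_{\mathscr D}\) is a derivation. Naturality of the construction in \(m\) gives compatibility with the transition maps.

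\textbf{Step 2 (identity on generators).} Both \(D:=\xi^{(m)}_{\mathscr D}\) and \(\operatorname{ad}(v_\xi)=[v_\xi,-]\) are derivations of \(\widehat{\mathscr D}^{(m)}_{\mathfrak X,\mathbb Q}\). It therefore suffices to compare them on local generators and then to pass to completions by continuity. On a function \(f\), the infinitesimal action is \(v_\xi(f)\), and this equals \([v_\xi,f]\). On a vector field \(\partial\), the derivative of \(g_*\partial\) is the Lie derivative \([v_\xi,\partial]\), with the sign convention that makes \(\xi\mapsto v_\xi\) a Lie algebra map. This handles level \(0\). At level \(m\), the ring is generated locally by \(\mathcal O\) and the divided powers \(\partial^{\langle p^j\rangle}\) for \(j\le m\), which are not in the subalgebra generated by \(\mathcal O\) and \(\mathscr T\). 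Over \(\mathbb Q\), however, \(\widehat{\mathscr D}^{(m)}_{\mathfrak X,\mathbb Q}\) embeds in the completed level-\(0\) ring and \(\partial^{\langle p^j\rangle}=\partial^{p^j}/q!\) up to a unit. Since both derivations are \(\mathbb Q\)-linear and multiplicative, their agreement on \(\mathcal O\) and \(\partial\) forces agreement on \(\partial^{p^j}/q!\). Two continuous derivations that agree on a topologically generating set agree everywhere. So the two derivations coincide on \(\widehat{\mathscr D}^{(m)}_{\mathfrak X,\mathbb Q}\).

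\textbf{Step 3 (limit).} Because both sides are compatible with the transition maps, the identity passes to \(\varinjlim_m\), which gives the statement for \(\mathscr D^\dagger_{\mathfrak X,\mathbb Q}\).

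\textbf{Main obstacle.} The delicate step is the continuity and integrality in Step 2. One must check that \(\xi^{(m)}_{\mathscr D}\) really preserves \(\widehat{\mathscr D}^{(m)}_{\mathfrak X,\mathbb Q}\), which Step 1 gives, and that the level-\(0\) comparison extends to the \(p\)-adically completed divided-power operators. I would first attempt a direct check that the derivation is \(p\)-adically continuous: it is induced from an integral automorphism of \(\widehat{\mathscr D}^{(m)}_{\mathfrak X}\), hence bounded after inverting \(p\). Failing that, the fallback is to compare both derivations on an integral lattice using the explicit formula for \([v_\xi,\partial^{\langle p^j\rangle}]\) from the Leibniz rule for divided powers.
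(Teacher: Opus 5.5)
Your proposal is correct and follows essentially the paper's route. Both arguments use Berthelot's functoriality of the level-$m$ sheaves under automorphisms, differentiate along a first-order point to get a derivation compatible with the transition maps, compare it with $[v_\xi,-]$ on functions and vector fields, and then pass to completion and the inductive limit. You are also more careful than the paper about why the divided powers $\partial^{\langle p^j\rangle}$, which are not in the integral subalgebra generated by $\mathcal O$ and $\mathscr T$, cause no trouble.

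One slip to correct: $\widehat{\mathscr D}^{(m)}_{\mathfrak X,\mathbb Q}$ does not embed in the completed level-$0$ ring; the injection goes the other way, $\widehat{\mathscr D}^{(0)}_{\mathfrak X,\mathbb Q}\hookrightarrow\widehat{\mathscr D}^{(m)}_{\mathfrak X,\mathbb Q}$. You do not need that embedding, because already in $\widehat{\mathscr D}^{(m)}_{\mathfrak X,\mathbb Q}$ one has $\partial^{\langle p^j\rangle}=\partial^{p^j}/(p^j)!$ for $j\le m$. This is a nonzero rational multiple, not a unit multiple. Density of the uncompleted $\mathscr D^{(m)}_{\mathfrak X}$, together with the boundedness you describe, then finishes the argument.
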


\begin{proof}
Berthelot's finite-level sheaves of differential operators are functorial for
isomorphisms of smooth formal schemes and this functoriality is compatible with
the order filtration, divided powers, \(p\)-adic completion and transition
morphisms in the level; see \cite[Sections 2.2 and 3]{BerthelotDModules} and
\cite[\S\S5.2--5.3]{BerthelotIntro}. Hence the formal action of
\(\mathfrak K\) on \(\mathfrak X\) induces, at each level \(m\), an action on
\(\widehat{\mathscr D}^{(m)}_{\mathfrak X,\mathbb Q}\), and differentiating
along \(\xi\) gives \(\xi_{\mathscr D}^{(m)}\).

It remains to identify this derivation with commutation by the fundamental
vector field. This is local on \(\mathfrak X\). On functions the formula is
\([v_\xi,f]=v_\xi(f)\). On first-order vector fields it is the usual Lie
bracket formula \([v_\xi,w]=\mathcal L_{v_\xi}(w)\). Since the finite-level
operator sheaf is generated locally, as a filtered divided-power differential
operator algebra, by functions and local vector fields subject to Berthelot's
level relations, and since both sides are derivations compatible with these
relations, the equality holds at finite level. Compatibility with completion
and with transition maps gives the identity on
\(\mathscr D^\dagger_{\mathfrak X,\mathbb Q}\).
\end{proof}

\begin{proposition}[Equivariance of intermediate extension]
\label{prop:eq-intermediate-extension}
Let
\[
  E\in F\text{-}\Isoc^{\dagger}_{\mathcal K}(O,\overline O).
\]
Then \(j_{O,!+}(E)\) carries a natural Frobenius weak \(\mathfrak K\)-equivariant
structure satisfying the ordinary infinitesimal strong compatibility
\(d\alpha(\xi)=v_\xi\). If moreover
\([E]\in\Adm^{\Phi,\fstr}_{\mathcal K}(O,\overline O)\), then this middle
extension belongs to
\[
  F\text{-}\Coh^{\fstr}_{\mathfrak K}
  \bigl(\mathscr D^\dagger_{\mathfrak X,\mathbb Q}\bigr)
\]
by the defining filtered finite-level admissibility condition.
\end{proposition}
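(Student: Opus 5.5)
The plan has three steps: build the weak equivariant structure on \(N:=j_{O,!+}(E)\) by transporting \(\beta_E\) through intermediate extension, check \textup{(SE1)} by reducing to the open orbit via boundary vanishing, and observe that the filtered clause is then exactly the admissibility hypothesis.

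\emph{Weak equivariant structure.} Apply \Cref{lem:smooth-pullback-intermediate-extension} to the action and projection maps \(a,p\), which are smooth by \Cref{rem:smooth-pullback-intermediate-extension-special-fiber}. Both are cartesian over \(j_O\) with preimage \(\mathcal K_s\times O\), so we get canonical isomorphisms
\[
  a^\sharp N\simeq j'_{!+}(a_O^*E),\qquad p^\sharp N\simeq j'_{!+}(p_O^*E),
\]
where \(j':\mathcal K_s\times O\hookrightarrow \mathcal K_s\times X_s\). Applying the functor \(j'_{!+}\) to \(\beta_E\) then defines
\[
  \alpha_N:a^\sharp N\xrightarrow{\sim}p^\sharp N .
\]
To check the unit and cocycle conditions, compare both sides, pulled back to \(\mathfrak K\times\mathfrak K\times\mathfrak X\), which is again a smooth pullback of \(N\). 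Their difference is an endomorphism (after composing with an inverse) of \(f^\sharp N\), and its restriction to the preimage of \(O\) vanishes by the corresponding conditions for \(\beta_E\). \Cref{lem:boundary-vanishing-smooth-pullback} then forces the difference to be zero. The unit condition is handled identically after pulling back along the unit section. The Frobenius structure on \(N\) comes from functoriality of \(j_{!+}\) in the Frobenius overholonomic heart (\Cref{prop:standard-facts}(vii),(viii)). Its compatibility with \(\alpha_N\) is again an identity of morphisms between intermediate extensions that holds on the open orbit, so it holds globally by the same boundary-vanishing argument.

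\emph{Infinitesimal compatibility \textup{(SE1)}.} For \(\xi\in\mathfrak k_K\), set
\[
  \delta_\xi:=d\alpha_N(\xi)-v_\xi ,
\]
where \(d\alpha_N\) is given by \Cref{lem:differentiation-equivariant-structures}. The main obstacle is that \(d\alpha_N(\xi)\) is a priori only \(\mathbb Q\)-linear, whereas \Cref{lem:boundary-vanishing-endomorphism-difference} needs a \(\mathscr D^\dagger\)-linear endomorphism. Differentiating the \(\mathscr D^\dagger_{\mathfrak K\times\mathfrak X}\)-linearity of \(\alpha_N\) along the first-order point \(e_\xi\) shows
\[
  d\alpha_N(\xi)(Pm)=P\,d\alpha_N(\xi)m+\xi_{\mathscr D}(P)m .
\]
By \Cref{lem:infinitesimal-functoriality-operators}, \(\xi_{\mathscr D}(P)=[v_\xi,P]\), and \(v_\xi\) satisfies the same twisted Leibniz rule. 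Hence \(\delta_\xi\) is \(\mathscr D^\dagger_{\mathfrak X,\mathbb Q}\)-linear. Its restriction to \(O\) is the difference between the infinitesimal action of \(\beta_E\) on \(N|_O=E\) and the connection action of \(v_\xi|_O\). For an equivariant overconvergent isocrystal these agree, because \(\beta_E\) is an isomorphism of isocrystals, hence horizontal, on \(\mathcal K_s\times O\), and differentiating along \(e_\xi\) gives exactly the connection in the direction \(v_\xi\). Thus \(\delta_\xi|_O=0\), and \Cref{lem:boundary-vanishing-endomorphism-difference} gives \(\delta_\xi=0\), which is \textup{(SE1)}.

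\emph{Filtered clause.} By definition of \(\Adm^{\Phi,\fstr}_{\mathcal K}(O,\overline O)\) in \Cref{def:filtered-admissible-isocrystal}, admissibility of \([E]\) means precisely that \(N\), with its induced Frobenius and weak equivariant structures, lies in \(F\text{-}\Coh^{\fstr}_{\mathfrak K}\). The only point to confirm is that the structures referred to there are the ones constructed above. This holds because the construction is canonical, being the unique extension from \(O\) by boundary vanishing, so no further argument is needed.
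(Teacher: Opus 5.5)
Your proposal is correct and follows the paper's proof essentially step for step. You transport \(\beta_E\) through \Cref{lem:smooth-pullback-intermediate-extension} to get the weak structure. You reduce the unit and cocycle conditions, and the identity \(d\alpha(\xi)=v_\xi\), to the open orbit via the two boundary-vanishing lemmas, obtaining \(\mathscr D^\dagger\)-linearity of the difference from the twisted Leibniz rule and \Cref{lem:infinitesimal-functoriality-operators}. The filtered clause is then read off from the definition of \(\Adm^{\Phi,\fstr}_{\mathcal K}(O,\overline O)\). The only minor deviation is that you also check Frobenius compatibility of \(\alpha_N\) by boundary vanishing, whereas the paper simply appeals to functoriality of intermediate extension in the Frobenius overholonomic heart.
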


\begin{proof}
Let \(\beta_E:a_O^*E\xrightarrow{\sim}p_O^*E\) be the equivariant structure on
\(E\). Applying \Cref{lem:smooth-pullback-intermediate-extension} to the smooth
special-fiber action and projection maps gives canonical identifications
\[
  a^\sharp j_{O,!+}(E)
  \simeq
  j_{\mathcal K_s\times O,!+}(a_O^*E),
  \qquad
  p^\sharp j_{O,!+}(E)
  \simeq
  j_{\mathcal K_s\times O,!+}(p_O^*E).
\]
The isomorphism \(\beta_E\) therefore induces an isomorphism
\[
  \alpha_{j_{O,!+}(E)}:
  a^\sharp j_{O,!+}(E)
  \xrightarrow{\;\sim\;}
  p^\sharp j_{O,!+}(E),
\]
which is a weak \(\mathfrak K\)-equivariant structure. Frobenius compatibility
is inherited from the Frobenius compatibility of \(\beta_E\) and from the
functoriality of intermediate extension in the Frobenius overholonomic heart.

The identity and cocycle conditions are checked on the dense open orbit, where
they are precisely the identity and cocycle conditions for \(\beta_E\). For the
identity condition this gives two endomorphisms of \(j_{O,!+}(E)\) which
coincide after restriction to \(O\), and
\Cref{lem:boundary-vanishing-endomorphism-difference} forces them to coincide
on \(X_s\). For the cocycle condition the two compositions live after smooth
pullback to \(\mathcal K_s\times\mathcal K_s\times X_s\); by
\Cref{lem:smooth-pullback-intermediate-extension} these pullbacks are again
intermediate extensions of the corresponding pullbacks of \(E\). The difference
of the two cocycle morphisms restricts to zero on
\(\mathcal K_s\times\mathcal K_s\times O\), hence vanishes by
\Cref{lem:boundary-vanishing-smooth-pullback}. Thus the weak equivariant
structure is well defined.

We now verify the ordinary infinitesimal strong compatibility. Let
\(\xi\in\mathfrak k_K\). By
\Cref{lem:differentiation-equivariant-structures}, differentiating the weak
equivariant structure gives an operator \(A_\xi\) on \(j_{O,!+}(E)\). The
fundamental vector field gives the operator \(D_\xi\) through the
\(\mathscr D^\dagger_{\mathfrak X,\mathbb Q}\)-module structure. On the open
orbit \(O\), the equality \(A_\xi=D_\xi\) is the infinitesimal compatibility of
the equivariant isocrystal. Indeed, if
\[
  \nabla:E\longrightarrow E\otimes\Omega^1_O
\]
is the integrable connection corresponding to the isocrystal, then the
left \(\mathscr D^\dagger\)-module structure on the pair \((O,\overline O)\)
lets a vector field \(w\) act as \(\nabla_w\). Hence
\[
  D_\xi|_O=\nabla_{v_\xi}.
\]
The equivariant isomorphism \(\beta_E:a_O^*E\simeq p_O^*E\) is horizontal for
the pulled-back connections. Differentiating this horizontal isomorphism at the
identity of \(\mathcal K_s\) gives
\[
  A_\xi|_O=\nabla_{v_\xi}.
\]
Therefore \(A_\xi|_O=D_\xi|_O\).

The difference \(\Delta_\xi:=A_\xi-D_\xi\) is
\(\mathscr D^\dagger_{\mathfrak X,\mathbb Q}\)-linear. To see this, let
\(P\) be a local section of \(\mathscr D^\dagger_{\mathfrak X,\mathbb Q}\). Pull
back the \(\mathscr D^\dagger\)-linearity of \(\alpha_{j_{O,!+}(E)}\) along the
first-order point \(e_\xi\). The operator \(P\) is transformed into
\(P+\varepsilon\xi_{\mathscr D}(P)\), while the induced automorphism of the
module is \(\operatorname{id}+\varepsilon A_\xi\). Comparing the coefficient of
\(\varepsilon\) in
\[
  (\operatorname{id}+\varepsilon A_\xi)(P n)
  =(P+\varepsilon\xi_{\mathscr D}(P))
  (\operatorname{id}+\varepsilon A_\xi)(n)
\]
for a local section \(n\) gives
\[
  [A_\xi,P]=\xi_{\mathscr D}(P).
\]
By \Cref{lem:infinitesimal-functoriality-operators}, the action of the
fundamental vector field gives the same infinitesimal derivation on Berthelot
operators:
\[
  [D_\xi,P]=\xi_{\mathscr D}(P).
\]
Consequently
\begin{equation}
\label{eq:strong-middle-extension-commutator}
  [\Delta_\xi,P]
  =[A_\xi-D_\xi,P]
  =0.
\end{equation}
Thus
\[
  \Delta_\xi\in
  \operatorname{End}_{\mathscr D^\dagger_{\mathfrak X,\mathbb Q}}
  \bigl(j_{O,!+}(E)\bigr).
\]
Since \(\Delta_\xi|_O=0\), \Cref{lem:boundary-vanishing-endomorphism-difference}
gives \(\Delta_\xi=0\). Hence the weak equivariant structure is strong in the
ordinary infinitesimal sense.

The final assertion is not an additional theorem: it is exactly the definition
of the admissible set \(\Adm^{\Phi,\fstr}_{\mathcal K}(O,\overline O)\). If
\([E]\) belongs to that set, then the middle extension just constructed admits
the filtered finite-level realization required in
\Cref{def:strong-equivariance} and therefore lies in
\(F\text{-}\Coh^{\fstr}_{\mathfrak K}\).
\end{proof}

\subsection{Minimality and restriction}

\begin{lemma}[Restriction of the intermediate extension]
\label{lem:restriction-intermediate-extension}
For every \(E\in F\text{-}\Isoc^{\dagger}_{\mathcal K}(O,\overline O)\), there is a
canonical isomorphism
\[
  \bigl(j_{O,!+}(E)\bigr)|_O\simeq E
\]
in \(F\text{-}\Isoc^\dagger_{\mathcal K}(O,\overline O)\).
\end{lemma}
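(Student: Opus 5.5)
The plan is to prove the isomorphism in three layers: first as overholonomic modules, then with Frobenius, then with the equivariant structure. For the underlying coefficient object, I would use the definition
\[
j_{O,!+}(E)=\operatorname{Im}\bigl(j^0_{O,!}E\to j^0_{O,+}E\bigr)
\]
in the overholonomic heart, as in \eqref{eq:middle-extension-image-chain}. Restriction to the couple \((O,\overline O)\) is \(t\)-exact on the heart: it is open localization away from \(\partial O\) followed by the support functor along \(\overline O\), as recalled in \Cref{subsec:restriction-locally-closed}. Hence restriction commutes with images. The adjunction units and counits \((j^0_{O,!}E)|_O\simeq E\) and \((j^0_{O,+}E)|_O\simeq E\) are isomorphisms, and under them the restricted canonical morphism \(\theta^0_{j_O,E}|_O\) becomes \(\operatorname{id}_E\). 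Therefore
\[
(j_{O,!+}E)|_O\simeq\operatorname{Im}(\operatorname{id}_E)=E .
\]
This is the statement already recorded in \Cref{prop:intermediate-minimality-standard}, and I would cite \cite[\S2.2]{HuygheSchmidtIntermediate} for these identifications.

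For Frobenius, the Frobenius pullback commutes with \(j^0_{O,!}\), \(j^0_{O,+}\) and restriction. The canonical morphism \(\theta^0\) is Frobenius-compatible, so the image carries an induced Frobenius structure. The isomorphism constructed above is natural in \(E\), hence it intertwines the Frobenius structures. By \Cref{prop:standard-facts}(vii)--(viii), all of this takes place inside the Frobenius overholonomic heart.

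The equivariant part is the main obstacle. By \Cref{prop:eq-intermediate-extension}, the structure \(\alpha\) on \(j_{O,!+}(E)\) is defined by transporting \(\beta_E\) through the identifications of \Cref{lem:smooth-pullback-intermediate-extension}:
\[
a^\sharp j_{O,!+}(E)\simeq j_{\mathcal K_s\times O,!+}(a_O^*E),\qquad
p^\sharp j_{O,!+}(E)\simeq j_{\mathcal K_s\times O,!+}(p_O^*E).
\]
The restriction of \(\alpha\) to \((\mathcal K_s\times O,\mathcal K_s\times\overline O)\) is the isomorphism \(a_O^*(\mathcal M|_O)\simeq p_O^*(\mathcal M|_O)\) produced in \Cref{lem:restriction-equvariant-isocrystal}. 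I must therefore check one compatibility: the restriction isomorphism above, applied on \(\mathcal K_s\times X_s\) to \(j_{\mathcal K_s\times O,!+}(a_O^*E)\), must agree with \(a_O^*\) of the isomorphism on \(X_s\), under the base-change identification of \Cref{lem:smooth-pullback-intermediate-extension}, and likewise for \(p\).

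I would obtain this compatibility from two facts. First, the base-change isomorphisms for \(j^0_!\) and \(j^0_+\) are compatible with adjunction units and counits; this is the standard compatibility of smooth base change with the counit of restriction. Second, images are preserved by the exact functors involved. Granting these, the restriction of \(\alpha\) is carried to \(\beta_E\), and the isomorphism \((j_{O,!+}E)|_O\simeq E\) becomes a morphism in \(F\text{-}\Isoc^\dagger_{\mathcal K}(O,\overline O)\). Since the forgetful functor is faithful and the underlying map is an isomorphism (\Cref{lem:isoc-equivariant-abelian}), it is then an isomorphism in the equivariant category.

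If the unit/counit compatibility proves delicate, the fallback is to restrict everything further to the open orbit \(\mathcal K_s\times O\). There both functors \(j_!\) and \(j_+\) reduce to the identity, and the two isomorphisms agree tautologically there.
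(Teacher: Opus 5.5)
Your proposal is correct and follows the same route as the paper. Restriction commutes with the defining image $\operatorname{Im}(j^0_{O,!}E\to j^0_{O,+}E)$, both ends restrict to $E$ with $\theta^0$ becoming the identity, and the Frobenius and equivariant structures match by the functoriality of the construction in \Cref{prop:eq-intermediate-extension}.

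You are more explicit than the paper about the equivariant compatibility (smooth base change versus adjunction units and counits), which the paper compresses into one appeal to functoriality. Note that the exactness of restriction you invoke holds for objects supported on $\overline O$, where restriction is just open localization away from $\partial O$, and every object in your argument is of that kind.
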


\begin{proof}
By construction, both \(j_{O,!}(E)\) and \(j_{O,+}(E)\) restrict to \(E\) on
\(O\). Therefore the image \(j_{O,!+}(E)\) also restricts to \(E\). The
Frobenius and equivariant structures agree because the construction in
\Cref{prop:eq-intermediate-extension} is functorial and restricts to the
original Frobenius equivariant structure on \(E\).
\end{proof}

\begin{lemma}[Support of a nonzero intermediate extension]
\label{lem:support-middle-extension}
Let \(E\in F\text{-}\Isoc^{\dagger}_{\mathcal K}(O,\overline O)\) be nonzero. Then
\[
  \Supp(j_{O,!+}(E))=\overline O.
\]
\end{lemma}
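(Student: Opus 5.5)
We prove the two inclusions \(\Supp(j_{O,!+}(E))\subseteq\overline O\) and \(\overline O\subseteq\Supp(j_{O,!+}(E))\) separately.

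For the first inclusion we use the construction recalled in \eqref{eq:middle-extension-image-chain}. The intermediate extension \(j_{O,!+}(E)\) is the image of \(j^0_{O,!}(E)\to j^0_{O,+}(E)\) inside the overholonomic heart. Both source and target are formed on the couple \((O,\overline O)\). Concretely, restriction to the couple is built from localization away from \(X_s\setminus\overline O\) followed by the support functor along \(\overline O\), as in \Cref{subsec:restriction-locally-closed}. Hence both objects are supported on \(\overline O\). A subobject of \(j^0_{O,+}(E)\) in the heart is therefore also supported on \(\overline O\), because the support of an image is contained in the support of the target. This gives \(\Supp(j_{O,!+}(E))\subseteq\overline O\).

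For the reverse inclusion we argue as follows.
\begin{enumerate}[label=\textup{(\arabic*)}]
\item By \Cref{lem:restriction-intermediate-extension}, \((j_{O,!+}(E))|_O\simeq E\neq0\). So the support meets \(O\).
\item In fact \(O\subseteq\Supp(j_{O,!+}(E))\). On \(O\), the object \(E\) is a nonzero overconvergent isocrystal on the smooth orbit, which is irreducible as a scheme since it is a homogeneous space. The support of a nonzero isocrystal is all of \(O\): locally it is a locally free module with connection, hence has full support on each connected component.
\item If one prefers to avoid the local-freeness argument in (2), we instead use equivariance. The support is \(\mathcal K_s\)-stable, either via the weak structure of \Cref{prop:eq-intermediate-extension} together with the argument of \Cref{lem:support-stability}, or directly from the \(\beta_E\)-isomorphism. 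A nonempty \(\mathcal K_s\)-stable subset of the single orbit \(O\) is all of \(O\).
\item Since the support is closed, it contains the closure \(\overline O\).
\end{enumerate}

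One subtlety must be checked: the restriction to the couple \((O,\overline O)\) involves open localization away from \(\partial O\). Nonvanishing of \((j_{O,!+}E)|_O\) must therefore genuinely show that the support meets \(O\), and not just the boundary. This holds because localization away from \(\partial O\) kills any object supported on \(\partial O\).

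We expect the main obstacle to be this bookkeeping between supports in the coherent \(\mathscr D^\dagger\) sense and supports of objects in the overholonomic heart on couples. In particular, we need that the support of an image in the heart is contained in the support of its target. We would handle this through the forgetful functor to coherent holonomic modules, under which the image in the heart is a coherent subobject of \(j^0_{O,+}(E)\).
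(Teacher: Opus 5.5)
Your argument is correct and follows the paper's proof: the support lies in \(\overline O\) by construction, \((j_{O,!+}E)|_O\simeq E\neq0\) shows that it meets \(O\), and closedness together with density of \(O\) in \(\overline O\) finishes the proof. Your step (3), the \(\mathcal K_s\)-stability of the support, justifies the passage from ``meets \(O\)'' to ``contains \(O\)'' that the paper leaves implicit, and it is the version to keep: step (2)'s claim that an orbit is irreducible fails when \(\mathcal K_s\) is disconnected (e.g.\ symmetric subgroups), and then a nonequivariant isocrystal could vanish on some components of \(O\).
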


\begin{proof}
The restriction of \(j_{O,!+}(E)\) to \(O\) is \(E\), hence the support meets
\(O\). Since \(j_{O,!+}(E)\) is supported on \(\overline O\), its support is a
closed subset of \(\overline O\) containing the dense locally closed stratum
\(O\). Therefore the support is \(\overline O\).
\end{proof}

\begin{lemma}[No boundary subquotients]
\label{lem:no-boundary-subquotients}
Let \(E\in F\text{-}\Isoc^{\dagger}_{\mathcal K}(O,\overline O)\). Then
\(j_{O,!+}(E)\) has no nonzero strongly \(\mathfrak K\)-equivariant subobject
and no nonzero strongly \(\mathfrak K\)-equivariant quotient supported on
\(\partial O=\overline O\setminus O\). The same is true a fortiori for filtered
strongly equivariant subobjects and quotients.
\end{lemma}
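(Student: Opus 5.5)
The plan is to reduce the equivariant statement to the non-equivariant minimality property of the intermediate extension recorded in \Cref{prop:intermediate-minimality-standard}, equivalently item (vii) of \Cref{prop:standard-facts}. Set \(N:=j_{O,!+}(E)\). This is an object of the overholonomic heart: \(E\) is an overconvergent \(F\)-isocrystal, hence overholonomic by the inputs collected in \Cref{prop:standard-facts}(iii),(vii). By \Cref{prop:eq-intermediate-extension}, \(N\) carries its natural weak equivariant structure with \(d\alpha(\xi)=v_\xi\). The key observation is that an equivariant subobject or quotient is, after forgetting the equivariant data, an ordinary subobject or quotient of \(N\). So it suffices to show that \(N\) has no nonzero subobject or quotient supported on \(\partial O\), with no equivariance at all.

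For subobjects, the steps are as follows. Let \(\mathcal N\subseteq N\) be a nonzero strongly equivariant coherent subobject with \(\Supp(\mathcal N)\subseteq\partial O\). First we place \(\mathcal N\) in the overholonomic heart. By \Cref{lem:fstr-subquotient-closure} and \Cref{cor:strong-equivariant-holonomic} (or directly, since \(N\) is holonomic), \(\mathcal N\) is a coherent holonomic submodule. Next we compare it with the maximal supported submodule. By \Cref{prop:support-truncation-standard}, \(\mathcal N\subseteq\Gamma_{\partial O}(N)\). It is therefore enough to show \(\Gamma_{\partial O}(N)=0\). This will follow from minimality once we know that \(\Gamma_{\partial O}(N)\) is a subobject in the overholonomic heart. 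To see this, I would identify it with \(\mathcal H^0_t\) of \(R\Gamma^\dagger_{\partial O}N\), which is a subobject of \(N\) in the heart, so that \Cref{prop:intermediate-minimality-standard} kills it. The fallback is the adjunction description: \(\operatorname{Hom}(\mathcal N,N)\hookrightarrow\operatorname{Hom}(\mathcal N,j^0_{O,+}E)\), and the latter vanishes because \(\mathcal N|_O=0\) and \(j_+\) is right adjoint to restriction.

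For quotients we argue dually. Take an equivariant surjection \(N\twoheadrightarrow\mathcal Q\) with \(\Supp\mathcal Q\subseteq\partial O\). The kernel is stable under the equivariant structure, so \(\mathcal Q\) lies in the category by \Cref{lem:fstr-subquotient-closure}(ii), and the kernel is formed inside the heart. Then we use that \(j^0_{O,!}E\twoheadrightarrow N\) is an epimorphism in the heart. This gives a morphism \(j^0_{O,!}E\to\mathcal Q\), and this morphism vanishes by the left adjunction, since \(\mathcal Q|_O=0\). Hence \(\mathcal Q=0\). The final sentence of the statement, about filtered strongly equivariant subobjects and quotients, is immediate, because filtered strong objects are in particular strongly equivariant.

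The main obstacle is justifying that a coherent holonomic subobject of \(N\), taken in the coherent category, is a subobject in the overholonomic heart. Only then do the minimality and adjunction statements of Huyghe--Schmidt apply. My first attempt would be to invoke the Frobenius structure, pulled back through the compatible equivariant Frobenius on \(N\), together with Caro's theorem in \Cref{prop:standard-facts}(iii) to make \(\mathcal N\) overholonomic. If \(\mathcal N\) carries no Frobenius, I would fall back on the maximal supported submodule \(\Gamma_{\partial O}(N)\). This submodule is canonical, hence stable under Frobenius, so overholonomicity of it suffices, and containment \(\mathcal N\subseteq\Gamma_{\partial O}(N)\) finishes the argument.
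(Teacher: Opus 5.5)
Your proposal is correct and is essentially the paper's proof: the paper likewise forgets the equivariant structure and applies the non-equivariant minimality of \(j_{O,!+}(E)\) from \Cref{prop:intermediate-minimality-standard}, taking that minimality in its coherent form (as stated at the start of \Cref{sec:intermediate-extension}) without further comment. Your extra layer on coherent versus heart subobjects goes beyond the paper; the Frobenius-stable \(\Gamma_{\partial O}(N)\) device does settle the subobject case, but the quotient half still tacitly assumes an arbitrary boundary-supported quotient \(\mathcal Q\) lies in the overholonomic heart, both for ``the kernel is formed inside the heart'' and for the \(j^0_{O,!}\) adjunction, so keeping that layer requires an analogous device there (e.g.\ duality turning \(\mathcal Q\) into a boundary-supported subobject of the dual intermediate extension).
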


\begin{proof}
The underlying holonomic arithmetic \(\mathscr D^\dagger\)-module
\(j_{O,!+}(E)\) has no nonzero subobject or quotient supported on the boundary
by the minimality property recalled in \Cref{prop:intermediate-minimality-standard}.
Forgetting the equivariant structure sends any strongly or filtered strongly
\(\mathfrak K\)-equivariant subobject or quotient to such an underlying
subobject or quotient. The non-equivariant minimality statement therefore gives
the claim.
\end{proof}

\begin{proposition}[Simplicity of Frobenius equivariant intermediate extensions]
\label{prop:simplicity-intermediate-extension}
Let
\[
  E\in F\text{-}\Isoc^{\dagger}_{\mathcal K}(O,\overline O)
\]
be irreducible and assume \([E]\in\Adm^{\Phi,\fstr}_{\mathcal K}(O,\overline O)\). Then the intermediate extension
\[
  j_{O,!+}(E)
\]
is a simple object of
\[
  F\text{-}\Coh^{\fstr}_{\mathfrak K}
  \bigl(\mathscr D^\dagger_{\mathfrak X,\mathbb Q}\bigr).
\]
\end{proposition}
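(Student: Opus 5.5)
First, by \Cref{prop:eq-intermediate-extension} and the admissibility assumption, \(N:=j_{O,!+}(E)\) lies in \(F\text{-}\Coh^{\fstr}_{\mathfrak K}(\mathscr D^\dagger_{\mathfrak X,\mathbb Q})\). It is nonzero because \(N|_O\simeq E\neq0\) by \Cref{lem:restriction-intermediate-extension}. It remains to show that every subobject \(\mathcal N\subseteq N\) in the Frobenius filtered strongly equivariant category is \(0\) or \(N\). Such a subobject is stable under the weak equivariant structure and under Frobenius, so \Cref{lem:fstr-subquotient-closure} applies to it and to \(Q:=N/\mathcal N\). By \Cref{cor:strong-equivariant-holonomic}, both \(\mathcal N\) and \(Q\) are holonomic, and the quotient carries the induced equivariant and Frobenius structures.

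Next we restrict to the orbit. Restriction to the couple \((O,\overline O)\) is exact on holonomic modules, being a composite of open localization and \(i^!\) along a closed smooth immersion, which is exact on these objects by \Cref{prop:restriction-characteristic}. This yields an exact sequence \(0\to\mathcal N|_O\to E\to Q|_O\to0\). By \Cref{lem:restriction-equvariant-isocrystal}, both \(\mathcal N|_O\) and \(Q|_O\) lie in \(\Isoc^\dagger_{\mathcal K}(O,\overline O)\), with compatible Frobenius structures. So \(\mathcal N|_O\) is a subobject of \(E\) in \(F\text{-}\Isoc^\dagger_{\mathcal K}(O,\overline O)\), and irreducibility of \(E\) forces \(\mathcal N|_O=0\) or \(\mathcal N|_O=E\).

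Suppose \(\mathcal N|_O=0\). Then \(\mathcal N\) is supported on \(\overline O\) (since \(N\) is) and vanishes on \(O\), so it is supported on \(\partial O\). \Cref{lem:no-boundary-subquotients} then gives \(\mathcal N=0\). Suppose instead \(\mathcal N|_O=E\). Then \(Q|_O=0\), so \(Q\) is an equivariant quotient supported on \(\partial O\), and the same lemma gives \(Q=0\), that is, \(\mathcal N=N\).

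The main obstacle is the vanishing-on-\(O\) step. We need that a holonomic module supported in \(\overline O\) with zero restriction to the pair \((O,\overline O)\) is actually supported in \(\partial O\). We also need that the minimality statement of \Cref{prop:intermediate-minimality-standard}, which is formulated in the overholonomic heart, applies to \(\mathcal N\) and \(Q\), which a priori are only coherent holonomic. To handle the second point, we first pass to the Frobenius range. By \Cref{prop:standard-facts}\textup{(iii)}, \(\mathcal N\) and \(Q\) are Frobenius holonomic and hence geometrically overholonomic, so the sequence \(0\to\mathcal N\to N\to Q\to0\) lives in \(\Ovhol(X_s/K)\). There \(\mathcal N\) and \(Q\) are a genuine subobject and quotient of \(N\) in the heart, and the minimality statement applies to them directly. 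For the support claim, we use that localization away from \(\partial O\) of a module supported in \(\overline O\) is computed by the restriction to \(U=X_s\setminus\partial O\), where \(\overline O\cap U=O\) is closed. On \(U\), Berthelot–Kashiwara for the closed smooth immersion \(O\hookrightarrow U\) identifies modules supported on \(O\) with their \(i^!\). Hence vanishing of the restriction means the module vanishes on \(U\), i.e. it is supported on \(\partial O\).
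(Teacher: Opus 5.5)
Your proof is correct and follows the paper's argument: restrict a subobject to \(O\), use irreducibility of \(E\) to get \(0\) or \(E\), then rule out the boundary-supported subobject or quotient by minimality of \(j_{O,!+}\). Beyond that, you usefully place \(\mathcal N\) and the quotient in the overholonomic heart via Caro's theorem so that minimality applies, and you give an explicit support argument that the paper leaves implicit; the one slip is a citation, since exactness of restriction to \((O,\overline O)\) on these objects comes from their support lying in \(\overline O\) (your localization and Berthelot--Kashiwara argument), whereas \Cref{prop:restriction-characteristic} only gives holonomicity and a characteristic-variety bound.
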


\begin{proof}
Let \(0\neq \mathcal N\subseteq j_{O,!+}(E)\) be a filtered strongly
\(\mathfrak K\)-equivariant coherent submodule. Restricting to \(O\), we obtain
a coherent \(\mathscr D^\dagger\)-submodule \(\mathcal N|_O\subseteq E\). Since
\(E\) is an overconvergent isocrystal, such a submodule is a subobject in
\(F\text{-}\Isoc^\dagger_{\mathcal K}(O,\overline O)\): the connection,
Frobenius and equivariant structures restrict to it, and the category of
\(F\)-isocrystals is abelian. Irreducibility of \(E\) gives either
\(\mathcal N|_O=0\) or \(\mathcal N|_O=E\).

If \(\mathcal N|_O=0\), then \(\mathcal N\) has no support over the open orbit.
Since \(\mathcal N\subseteq j_{O,!+}(E)\) and \(j_{O,!+}(E)\) is supported on
\(\overline O\), this gives
\[
  \Supp(\mathcal N)\subseteq \overline O\setminus O=\partial O,
\]
contradicting \Cref{lem:no-boundary-subquotients}. If \(\mathcal N|_O=E\), the
quotient \(j_{O,!+}(E)/\mathcal N\) restricts to zero on \(O\), hence is
supported on \(\partial O\). Again \Cref{lem:no-boundary-subquotients} forces
this quotient to be zero. Thus \(\mathcal N=j_{O,!+}(E)\).
\end{proof}

\begin{remark}
\label{rem:intermediate-extension-role}
\Cref{prop:simplicity-intermediate-extension} is the arithmetic replacement of
the classical fact that the middle extension of an irreducible equivariant local
system on an orbit is a simple equivariant \(D\)-module.
\end{remark}

\section{The arithmetic Kashiwara regularity theorem}
\label{sec:regularity}

Throughout this section we assume that \(\mathcal K_s\) acts on \(X_s\) with
finitely many orbits and satisfies \Cref{hyp:orbit-separability}.  The
microlocal argument of \Cref{sec:characteristic} gives holonomicity without
Frobenius.  The upgrade to geometric overholonomicity is made only in the
Frobenius range, where we use Caro's stability theorem for holonomicity on smooth projective formal schemes together
with the Frobenius stable-after-base-change convention recalled by Huyghe--Schmidt.  The
orbital devissage developed below is not used to prove overholonomicity; it is
a structural tool for the orbit classification once the relevant object is
already in the overholonomic heart.

\subsection{Support truncation along invariant closed subsets}

Let \(Z\subseteq X_s\) be closed.  By
\Cref{cor:strong-equivariant-holonomic}, every filtered strongly equivariant coherent
module considered in this section is holonomic.  Therefore the supported submodule
\[
  \Gamma_Z(\mathcal M)\subseteq \mathcal M
\]
is defined as in \Cref{def:supported-submodule}: it is the maximal coherent
submodule supported on \(Z\), by \Cref{prop:support-truncation-standard}.

\begin{lemma}[Invariant support truncation]
\label{lem:invariant-support-truncation}
Let \(Z\subseteq X_s\) be a closed \(\mathcal K_s\)-stable subset and let
\[
  \mathcal M\in
  \Coh^{\fstr}_{\mathfrak K}
  (\mathscr D^\dagger_{\mathfrak X,\mathbb Q}).
\]
Then \(\Gamma_Z(\mathcal M)\) is a filtered strongly \(\mathfrak K\)-equivariant
coherent submodule of \(\mathcal M\).  Moreover,
\(\mathcal M/\Gamma_Z(\mathcal M)\) has no nonzero coherent
\(\mathscr D^\dagger_{\mathfrak X,\mathbb Q}\)-submodule supported on
\(Z\).  In particular, it has no nonzero filtered strongly equivariant submodule
supported on \(Z\).
\end{lemma}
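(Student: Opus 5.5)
The plan is to reduce everything to \Cref{prop:support-truncation-standard} and \Cref{lem:fstr-subquotient-closure}. First, by \Cref{cor:strong-equivariant-holonomic}, \(\mathcal M\) is holonomic. Hence \(\Gamma_Z(\mathcal M)\) exists as the maximal coherent submodule supported on \(Z\). The same proposition also gives that the quotient \(\mathcal M/\Gamma_Z(\mathcal M)\) has no nonzero coherent submodule supported on \(Z\). This settles the second assertion. The ``in particular'' follows by forgetting structure, since a filtered strongly equivariant submodule is in particular a coherent submodule. So the only real content is that \(\Gamma_Z(\mathcal M)\) is stable under the weak equivariant structure \(\alpha_{\mathcal M}\). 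Once that is known, \Cref{lem:fstr-subquotient-closure}(i) upgrades it to an object of \(\Coh^{\fstr}_{\mathfrak K}\), with (SE1) and (SE2) inherited.

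To prove stability under \(\alpha_{\mathcal M}\), I would compare the two submodules
\[
  a^\sharp\Gamma_Z(\mathcal M)\subseteq a^\sharp\mathcal M
  \quad\text{and}\quad
  p^\sharp\Gamma_Z(\mathcal M)\subseteq p^\sharp\mathcal M
\]
on \(\mathfrak K\times\mathfrak X\). Both are subobjects because \(a^\sharp\) and \(p^\sharp\) are exact. I would then identify each with the maximal coherent submodule supported on the corresponding preimage of \(Z\). Since \(Z\) is \(\mathcal K_s\)-stable, the preimage under \(a\) is \(\{(g,x):gx\in Z\}=\mathcal K_s\times Z\), which equals the preimage under \(p\). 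The identification I want is
\[
  f^\sharp\Gamma_Z(\mathcal M)=\Gamma_{f^{-1}Z}(f^\sharp\mathcal M)
  \qquad\text{for smooth } f\in\{a,p\}.
\]
Granting it, the isomorphism \(\alpha_{\mathcal M}\) must carry the maximal submodule supported on \(\mathcal K_s\times Z\) onto the maximal submodule supported on \(\mathcal K_s\times Z\), because maximal supported submodules are functorial under isomorphisms. This gives the required restriction of \(\alpha_{\mathcal M}\). The unit and cocycle conditions are inherited automatically, since they are equalities of morphisms restricted to subobjects.

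The main obstacle is the compatibility of \(\Gamma_Z\) with smooth pullback. Two steps are needed.

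The inclusion \(\subseteq\) is easy. The pullback \(f^\sharp\) of a module supported on \(Z\) is supported on \(f^{-1}Z\), since support commutes with smooth pullback on underlying sheaves.

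For \(\supseteq\), I would use exactness of \(f^\sharp\) again:
\[
  f^\sharp\bigl(\mathcal M/\Gamma_Z\mathcal M\bigr)=f^\sharp\mathcal M/f^\sharp\Gamma_Z\mathcal M.
\]
It then suffices to show that smooth pullback of a holonomic module with no submodule supported on \(Z\) has no submodule supported on \(f^{-1}Z\).

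For \(p\) this is a flat base change along the projection \(\mathfrak K\times\mathfrak X\to\mathfrak X\). I would argue locally, where \(p^\sharp\mathcal N=\mathcal O_{\mathfrak K}\widehat\otimes\mathcal N\). A section supported on \(\mathcal K_s\times Z\) then has coefficients, with respect to an orthonormal basis of functions on the affine \(\mathfrak K\), that are sections of \(\mathcal N\) supported on \(Z\). These coefficients must therefore vanish.

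For \(a\), I would reduce to the \(p\) case via the automorphism \((g,x)\mapsto(g,g^{-1}x)\) of Remark~\ref{rem:smooth-pullback-intermediate-extension-special-fiber}. This automorphism interchanges \(a\) and \(p\), preserves \(\mathcal K_s\times Z\), and is compatible with \(\Gamma\) by transport of structure.

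If the coefficient argument proves delicate in the dagger setting, the fallback is to use the Frobenius-free holonomic support formalism: identify \(\Gamma_Z\) on holonomic modules with \(\mathcal H^0R\Gamma^\dagger_Z\), and invoke base change of local cohomology along smooth morphisms from \cite[\S2.1, Lemmas 2.1.1--2.1.2]{HuygheSchmidtIntermediate}.
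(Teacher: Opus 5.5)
Your proposal is correct and follows the paper's proof. Holonomicity comes from \Cref{cor:strong-equivariant-holonomic}; the compatibility \(f^\sharp\Gamma_Z(\mathcal M)\simeq\Gamma_{f^{-1}Z}(f^\sharp\mathcal M)\) for \(f\in\{a,p\}\), together with \(a^{-1}Z=p^{-1}Z\), lets \(\alpha_{\mathcal M}\) restrict; \Cref{lem:fstr-subquotient-closure}\textup{(i)} supplies (SE1)--(SE2); and maximality handles the quotient.

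The only difference is how the smooth-pullback compatibility is justified. You sketch a direct proof: exactness of \(f^\sharp\), a coefficient argument for the projection, and the shear \((g,x)\mapsto(g,g^{-1}x)\) to pass from \(p\) to \(a\). The paper simply cites \cite[\S2.1, Lemmas 2.1.1--2.1.2]{HuygheSchmidtIntermediate}, which is your fallback. Your direct route has the advantage of not needing to identify the maximal coherent submodule with \(\mathcal H^0R\Gamma^\dagger_Z\) for objects only known to be holonomic.
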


\begin{proof}
The object \(\mathcal M\) is holonomic by
\Cref{cor:strong-equivariant-holonomic}, so \(\Gamma_Z(\mathcal M)\) is the
maximal supported submodule defined in \Cref{def:supported-submodule}.  We use
only the functorial support formalism after this holonomicity is known.  Since
\(\mathcal K_s\)-stability of \(Z\) gives
\[
  a^{-1}Z=p^{-1}Z
\]
on the special fiber of \(\mathfrak K\times_{\mathcal V}\mathfrak X\), the
smooth pullback compatibility for support/localization recalled in
\Cref{prop:standard-facts}\textup{(iv)} and
\cite[\S2.1, Lemmas 2.1.1--2.1.2]{HuygheSchmidtIntermediate} gives canonical
identifications
\[
  a^\sharp\Gamma_Z(\mathcal M)\simeq
  \Gamma_{a^{-1}Z}(a^\sharp\mathcal M),\qquad
  p^\sharp\Gamma_Z(\mathcal M)\simeq
  \Gamma_{p^{-1}Z}(p^\sharp\mathcal M).
\]
Transporting these identifications through the weak equivariant isomorphism
\(a^\sharp\mathcal M\simeq p^\sharp\mathcal M\) gives an isomorphism
\[
  a^\sharp\Gamma_Z(\mathcal M)\simeq
  p^\sharp\Gamma_Z(\mathcal M).
\]
The identity and cocycle conditions follow from the functoriality of support
truncation; no new choice is made.

Thus \(\Gamma_Z(\mathcal M)\) is a weakly equivariant coherent submodule of
\(\mathcal M\).  Since it is a coherent \(\mathscr D^\dagger\)-submodule
stable under the restricted weak equivariant structure,
\Cref{lem:fstr-subquotient-closure}\textup{(i)} gives the filtered strong
structure on \(\Gamma_Z(\mathcal M)\).  The quotient inherits the corresponding
filtered strongly equivariant structure by
\Cref{lem:fstr-subquotient-closure}\textup{(ii)}.

Finally, suppose that \(\mathcal M/\Gamma_Z(\mathcal M)\) has a nonzero coherent
\(\mathscr D^\dagger_{\mathfrak X,\mathbb Q}\)-submodule supported on
\(Z\).  Its inverse image in \(\mathcal M\) is coherent and supported on
\(Z\), and it strictly contains \(\Gamma_Z(\mathcal M)\), contradicting the
maximality in \Cref{prop:support-truncation-standard}.  Thus the quotient has no
nonzero coherent submodule supported on \(Z\), and a fortiori no nonzero
filtered strongly equivariant one.
\end{proof}

The induction used below is measured by the finite integer
\begin{equation}
\label{eq:orbit-count-invariant}
  n(\mathcal M)
  :=
  \#\{\,O\in\mathcal K_s\backslash X_s
  \mid O\subseteq\Supp(\mathcal M)\,\}.
\end{equation}
For a closed invariant subset \(Z\subseteq\Supp(\mathcal M)\), the
support truncation gives the exact sequence
\begin{equation}
\label{eq:support-truncation-exact-sequence}
  0\longrightarrow\Gamma_Z(\mathcal M)
  \longrightarrow\mathcal M
  \longrightarrow\mathcal M/\Gamma_Z(\mathcal M)
  \longrightarrow0.
\end{equation}
If \(O\) is open in \(\Supp(\mathcal M)\) and
\(Z=\Supp(\mathcal M)\setminus O\), then
\[
  n(\Gamma_Z(\mathcal M))<n(\mathcal M),
  \qquad
  n(\mathcal M/\Gamma_Z(\mathcal M))\leq n(\mathcal M),
\]
and every quotient of \(\mathcal M/\Gamma_Z(\mathcal M)\) supported on
\(Z\) has strictly smaller orbit count.  This is the numerical descent that
makes the orbital devissage terminate.

\subsection{Restriction to an orbit}
\label{subsec:restriction-orbit-regularity}

Let \(O\subseteq X_s\) be a \(\mathcal K_s\)-orbit and let
\[
  j_O:O\hookrightarrow X_s
\]
be the corresponding locally closed immersion.  We use the restriction functor
to the couple \((O,\overline O)\) recalled in
\Cref{subsec:restriction-locally-closed}.  We write \(\mathcal M|_O\) for this
restriction.  On the holonomic heart this functor is exact and is compatible
with characteristic varieties in the sense of
\Cref{prop:restriction-characteristic}.

\begin{lemma}[Restriction to an open orbit]
\label{lem:restriction-to-orbit-isocrystal}
Let
\[
  \mathcal M\in
  \Coh^{\fstr}_{\mathfrak K}
  (\mathscr D^\dagger_{\mathfrak X,\mathbb Q}).
\]
Let \(O\subseteq\Supp(\mathcal M)\) be a \(\mathcal K_s\)-orbit which is open
in \(\Supp(\mathcal M)\).  Then the restriction \(\mathcal M|_O\) is a
nonzero overconvergent isocrystal on the pair \((O,\overline O)\).  It carries
a natural \(\mathcal K\)-equivariant structure.
\end{lemma}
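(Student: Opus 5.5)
The proof splits into two parts: showing that \(\mathcal M|_O\) is a nonzero isocrystal, and equipping it with an equivariant structure. Most of the work is already contained in \Cref{lem:restriction-equvariant-isocrystal}; the one genuinely new point is nonvanishing, which should come from the openness of \(O\) in the support.

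\emph{Isocrystal property.} By \Cref{cor:strong-equivariant-holonomic}, \(\mathcal M\) is holonomic, so the restriction to the couple \((O,\overline O)\) is defined. It is computed as in \Cref{prop:restriction-characteristic}, using \(U=X_s\setminus\partial O\) and the closed smooth immersion \(O\hookrightarrow U\); \(O\) is smooth by \Cref{lem:orbit-geometry}. By \Cref{prop:conormal-containment}, \(\Car(\mathcal M)|_O\subseteq T^*_OX_s\). \Cref{prop:restriction-characteristic} then places \(\Car(\mathcal M|_O)\) inside the zero section of \(T^*O\), and \Cref{prop:zero-section-isocrystal} shows that \(\mathcal M|_O\in\Isoc^\dagger(O,\overline O)\).

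\emph{Nonvanishing.} This is the step I expect to need the most care. Set \(Z:=\Supp(\mathcal M)\setminus O\). Since \(O\) is open in \(\Supp(\mathcal M)\), \(Z\) is closed; it is \(\mathcal K_s\)-stable by \Cref{lem:support-stability}. Let \(V:=X_s\setminus Z\), an open set in which \(O=\Supp(\mathcal M)\cap V\) is closed.

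\begin{enumerate}
\item Because \(O\subseteq\Supp(\mathcal M)\), the open restriction \(\mathcal M|_V\) is nonzero. It is holonomic with support exactly \(O\), which is closed and smooth in \(V\).
\item The arithmetic Kashiwara equivalence for the closed smooth immersion \(O\hookrightarrow V\) (recalled via the support/localization formalism of \Cref{prop:standard-facts}\textup{(iv)}) identifies \(\mathcal M|_V\) with the pushforward of \(i^!(\mathcal M|_V)\). Hence \(i^!(\mathcal M|_V)\neq0\).
\item Since \(\partial O\subseteq Z\), the restriction to the couple \((O,\overline O)\) factors through \(V\). So \(\mathcal M|_O\) coincides with \(i^!(\mathcal M|_V)\) in degree zero, and is therefore nonzero.
\end{enumerate}

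If one prefers to avoid Kashiwara's theorem, there is an alternative. If \(\mathcal M|_O=0\), then \(\mathcal M|_V\) would have support strictly smaller than \(O\), yet that support is \(\mathcal K_s\)-stable. Since \(O\) is a single orbit, this forces the support to be empty, contradicting \(O\subseteq\Supp(\mathcal M)\). To make this argument work one needs that vanishing of the couple restriction implies vanishing on a neighbourhood of \(O\); this is the compatibility of restriction with support recorded in \Cref{prop:standard-facts}\textup{(iv)}.

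\emph{Equivariant structure.} This is taken verbatim from the second half of the proof of \Cref{lem:restriction-equvariant-isocrystal}.

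\begin{enumerate}
\item Restrict \(\alpha_{\mathcal M}:a^\sharp\mathcal M\simeq p^\sharp\mathcal M\) to the couple \((\mathcal K_s\times O,\mathcal K_s\times\overline O)\).
\item Smooth base change for restriction to couples identifies this restriction with an isomorphism \(a_O^*(\mathcal M|_O)\simeq p_O^*(\mathcal M|_O)\).
\item The identity and cocycle conditions follow by functoriality under base change along the unit and multiplication maps.
\end{enumerate}

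In short, the proof reduces to \Cref{lem:restriction-equvariant-isocrystal} together with the nonvanishing argument above.
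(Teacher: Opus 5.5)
Your proof is correct, and its skeleton is the paper's: holonomicity from \Cref{cor:strong-equivariant-holonomic}, conormal containment, the pair-restriction estimate of \Cref{prop:restriction-characteristic}, the zero-section criterion, and smooth base change for the equivariant structure. The real difference is where the openness of \(O\) in \(\Supp(\mathcal M)\) is used.

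The paper spends openness in the characteristic-variety step. It argues that no other orbit closure in the support meets \(O\), so that only \(T^*_OX_s\) contributes over \(O\). It then settles nonvanishing in one line (``because \(O\subseteq\Supp(\mathcal M)\)'').

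You instead get \(\Car(\mathcal M)\cap T^*X_s|_O\subseteq T^*_OX_s\) pointwise from \Cref{prop:conormal-containment} and \Cref{lem:moment-zero-conormals}, exactly as in \Cref{lem:restriction-equvariant-isocrystal}, with no openness. You then use openness for nonvanishing, by passing to \(V=X_s\setminus Z\), where \(\mathcal M\) is supported on the closed smooth stratum \(O\).

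Your allocation is the more informative one. The conormal step holds for every orbit. By contrast, for a non-open orbit of the support the degree-zero restriction can genuinely vanish: a boundary orbit of an intermediate extension \(j_{O',!+}(E)\), which has no nonzero submodule supported there, is the basic example. So openness is exactly what nonvanishing requires.

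Two small remarks on your nonvanishing step.
\begin{enumerate}
\item \Cref{prop:standard-facts}\textup{(iv)} does not literally contain an arithmetic Kashiwara theorem, and Berthelot's version needs local lifts of \(O\). You can bypass it. Over \(V\) the module \(\mathcal M|_V\) is already supported on \(O\), so the support functor along \(O\) leaves it unchanged. Hence the pair restriction \(\mathcal M|_O\) restricts over \(V\) to the nonzero module \(\mathcal M|_V\).
\item Only this easy direction is needed, namely that \(\mathcal M|_O=0\) would force vanishing over \(V\). You do not need an identification of \(\mathcal M|_O\) with \(i^!(\mathcal M|_V)\), and the alternative orbit-stability argument becomes unnecessary.
\end{enumerate}
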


\begin{proof}
By \Cref{cor:strong-equivariant-holonomic}, \(\mathcal M\) is holonomic.  By
\Cref{prop:conormal-containment},
\[
  \Car(\mathcal M)
  \subseteq
  \bigcup_{S\in\mathcal K_s\backslash X_s}T^*_S X_s.
\]
Let
\[
  \rho_O:T^*X_s|_O\longrightarrow T^*O
\]
be the natural restriction map on cotangent bundles.  Since \(O\) is open in
\(\Supp(\mathcal M)\), no other orbit \(S\subseteq\Supp(\mathcal M)\),
\(S\neq O\), satisfies \(O\subseteq\overline S\).  Indeed, if
\(O=V\cap\Supp(\mathcal M)\) with \(V\subseteq X_s\) open and some point of
\(O\) belonged to \(\overline S\), then \(V\) would meet \(S\); hence
\(\varnothing\neq S\cap V\subseteq O\), contradicting the disjointness of
orbits.  Therefore, over the points of \(O\), the only conormal component of
\(\Car(\mathcal M)\) which appears is \(T^*_O X_s\), and
\(\rho_O(T^*_O X_s)=T^*_O O\), the zero section of \(T^*O\).  By
\Cref{prop:restriction-characteristic}, applied to the locally closed couple
\((O,\overline O)\), the characteristic variety of the restriction is controlled
by the cotangent projection associated with the pair restriction.
This is the restriction functor obtained by localization and support, not an
ordinary open restriction. Equivalently, at the level of reduced characteristic
varieties, the compatibility recalled in \Cref{prop:standard-facts} and
\cite[\S2.1, Lemmas 2.1.1--2.1.2]{HuygheSchmidtIntermediate} gives the
chain
\begin{equation}
\label{eq:restriction-characteristic-chain}
\begin{aligned}
  \Car(\mathcal M|_O)
  &\subseteq
  \rho_O\bigl(\Car(\mathcal M)\cap T^*X_s|_O\bigr) \\
  &\subseteq
  \rho_O(T^*_O X_s) \\
  &=T^*_O O.
\end{aligned}
\end{equation}
By the zero-section criterion \cite[Theorem 2.1.4]{HuygheSchmidtIntermediate},
\(\mathcal M|_O\) is an overconvergent isocrystal on the couple
\((O,\overline O)\).  It is nonzero because \(O\subseteq\Supp(\mathcal M)\).

The orbit \(O\) and its closure are \(\mathcal K_s\)-stable.  Therefore the
action and projection maps restrict to morphisms of couples
\[
  (\mathcal K_s\times O,\mathcal K_s\times\overline O)
  \longrightarrow
  (O,\overline O).
\]
Smooth base change for restriction to couples transports the weak equivariant
isomorphism \(a^\sharp\mathcal M\simeq p^\sharp\mathcal M\) to an isomorphism
between the pullbacks of \(\mathcal M|_O\).  The identity, cocycle and
infinitesimal compatibility conditions are inherited from \(\mathcal M\).  Thus
\(\mathcal M|_O\) is a \(\mathcal K\)-equivariant overconvergent isocrystal on
\((O,\overline O)\).
\end{proof}

\begin{remark}[Arbitrary orbits and Whitney \((a)\)]
\label{rem:arbitrary-orbits-whitney-a}
The preceding lemma is deliberately stated only for an orbit open in
\(\Supp(\mathcal M)\), which is the only case used in the proof.  If one works
in a context where the finite orbit partition is known, or assumed, to satisfy
Whitney's condition \((a)\), the same cotangent argument extends to an arbitrary
orbit.  Indeed, Whitney \((a)\) says that whenever a stratum \(S\) approaches a
point \(x\in O\), every limiting tangent space to \(S\) contains
\(T_xO\).  Therefore every limiting covector conormal to \(S\) over \(x\)
annihilates \(T_xO\), and hence maps to zero in \(T_x^*O\).  Equivalently,
\[
  \rho_O\bigl(\overline{T^*_S X_s}\cap T^*X_s|_O\bigr)
  \subseteq T^*_O O.
\]
This Whitney-based observation is not used in the article; it only explains how
one could recover the corresponding statement for a non-open orbit under the
additional Whitney \((a)\)-regularity hypothesis.  For the tangent-space form of
Whitney conditions, see \cite[Part I, \S1.2]{GoreskyMacPherson} and
\cite[\S8.3]{KashiwaraSchapira}.
\end{remark}

\subsection{No exactness assertion for intermediate extension}

The proof below does not use intermediate extension as an exact functor.  This
point is important: in general the functor \(j_{O,!+}\) is not exact, and the
paper makes no claim to the contrary.  For regularity we only extract one
intermediate-extension subobject from the open orbit and handle the remaining
boundary-supported quotient by induction on the number of orbits.

\subsection{The open-orbit extraction step}

\begin{lemma}[Open-orbit middle-extension extraction inside the overholonomic heart]
\label{lem:middle-extension-extraction}
Let
\[
  \mathcal M\in
  F\text{-}\Coh^{\fstr}_{\mathfrak K}
  (\mathscr D^\dagger_{\mathfrak X,\mathbb Q}).
\]
Assume, in addition, that the underlying arithmetic
\(\mathscr D^\dagger\)-module of \(\mathcal M\) belongs to the geometrically
overholonomic heart \(\Ovhol(X_s/K)\).  Let \(O\) be an orbit open in
\(\Supp(\mathcal M)\).  Put
\[
  Z:=\Supp(\mathcal M)\setminus O,
  \qquad
  Q:=\mathcal M/\Gamma_Z(\mathcal M),
  \qquad
  E:=Q|_O\simeq\mathcal M|_O.
\]
Then \(E\) is an equivariant overconvergent \(F\)-isocrystal and \(Q\) contains
a canonical Frobenius filtered strongly equivariant submodule
\[
  J_O(Q)\simeq j_{O,!+}(E)
\]
whose quotient \(Q/J_O(Q)\) is supported on \(Z\).
\end{lemma}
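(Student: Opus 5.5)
The plan is to build \(J_O(Q)\) as the image of a canonical map out of \(j_{O,!}\) of the restriction, or dually as the smallest subobject of \(Q\) with the right restriction, inside the overholonomic heart.

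\emph{Step 1: the quotient and the coefficient.} By \Cref{lem:invariant-support-truncation} (with \(Z\) closed and \(\mathcal K_s\)-stable, since \(\Supp(\mathcal M)\) is stable by \Cref{lem:support-stability} and \(O\) is an orbit), \(\Gamma_Z(\mathcal M)\) is a filtered strongly equivariant submodule. The quotient \(Q\) is filtered strongly equivariant by \Cref{lem:fstr-subquotient-closure}, and it has no nonzero coherent submodule supported on \(Z\). Frobenius passes to \(\Gamma_Z\) and \(Q\) by functoriality of maximal supported submodules. Since \(\mathcal M\in\Ovhol(X_s/K)\) and \(\Gamma_Z(\mathcal M)\) is Frobenius holonomic, \Cref{prop:standard-facts}\textup{(iii)} puts it in the heart, so \(Q\) is a quotient inside the abelian heart (\Cref{prop:ovhol-stability}). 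Restriction to the couple is exact and kills \(\Gamma_Z(\mathcal M)\), because \(O\cap Z=\varnothing\). Hence \(E:=Q|_O\simeq\mathcal M|_O\), and \Cref{lem:restriction-to-orbit-isocrystal} together with Frobenius compatibility makes \(E\) a nonzero object of \(F\text{-}\Isoc^\dagger_{\mathcal K}(O,\overline O)\).

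\emph{Step 2: the canonical map.} Since \(O\) is open in \(\Supp(Q)\subseteq\overline O\), the module \(Q\) lives on \(\overline O\) and \(O\) is open there. Adjunction for the locally closed immersion \(j_O\), taken in the overholonomic heart, gives a morphism \(\phi:j^0_{O,!}(E)\to Q\) adjoint to the identity \(Q|_O\simeq E\). I then set \(J_O(Q):=\operatorname{Im}(\phi)\). I also expect a map \(\psi:Q\to j^0_{O,+}(E)\) restricting to the identity, with \(\psi\circ\phi\) equal to the canonical \(\theta^0_{j_O,E}\).

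\emph{Step 3: identifying \(J_O(Q)\).} I claim \(\ker\psi\) restricts to zero on \(O\), so it is supported on \(Z\), and hence \(\ker\psi=0\) by the defining property of \(Q\). So \(\psi\) is a monomorphism. Then \(\operatorname{Im}(\phi)\simeq\operatorname{Im}(\psi\circ\phi)=\operatorname{Im}(\theta^0)=j_{O,!+}(E)\), all images taken in the heart. The cokernel \(Q/J_O(Q)\) restricts to zero on \(O\), because \(\phi|_O\) is an isomorphism, so it is supported on \(Z\).

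\emph{Step 4: structures.} Canonicity of \(\phi\) and \(\psi\) makes \(J_O(Q)\) stable under Frobenius. The weak equivariant structure follows by smooth base change (\Cref{lem:smooth-pullback-intermediate-extension}) applied to \(a^\sharp\) and \(p^\sharp\), compatibly with \(\beta_E\). Once \(J_O(Q)\) is an equivariant coherent submodule of \(Q\), \Cref{lem:fstr-subquotient-closure}\textup{(i)} gives the filtered strong structure, which also shows that \([E]\in\Adm^{\Phi,\fstr}\).

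The main obstacle is Step 2. I must justify the adjunction morphisms \(j_!E\to Q\) and \(Q\to j_+E\) in the heart for a module supported on \(\overline O\), with \(O\) open in the support; this is done via \(H^0_t\) of the derived adjunctions in the Huyghe--Schmidt formalism. I must also check that \(\ker\psi\) is supported on \(Z\) and not merely on a subset of \(\overline O\).
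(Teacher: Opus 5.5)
Your proposal is correct and follows essentially the same route as the paper. In both, \(J_O(Q)\) is the image of the adjunction morphism \(j^0_{O,!}E\to Q\), identified with \(j_{O,!+}(E)\) through the monomorphism \(Q\to j^0_{O,+}E\), which is injective because its kernel is supported in \(\Supp(Q)\setminus O\subseteq Z\); Frobenius and equivariance then come from functoriality, and the filtered structure from \Cref{lem:fstr-subquotient-closure}. The differences are minor: you place \(\Gamma_Z(\mathcal M)\) in the heart via its induced Frobenius structure and \Cref{prop:standard-facts}\textup{(iii)} rather than via the degree-zero support functor, and your a priori claim \(\Supp(Q)\subseteq\overline O\) in Step~2 is unjustified at that point but also unnecessary (it only follows afterwards, from injectivity of \(\psi\)).
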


\begin{proof}
By \Cref{lem:invariant-support-truncation}, the submodule
\(\Gamma_Z(\mathcal M)\) is stable under the weak equivariant structure of
\(\mathcal M\), and the quotient \(Q\) has no nonzero coherent
\(\mathscr D^\dagger_{\mathfrak X,\mathbb Q}\)-submodule supported on \(Z\).
Because \(\mathcal M\) lies in the overholonomic heart, the support/localization
formalism recalled in \Cref{prop:standard-facts} produces the degree-zero
supported subobject attached to \(Z\) inside that heart.  Its underlying
holonomic coherent submodule is the maximal supported submodule
\(\Gamma_Z(\mathcal M)\) by \Cref{prop:support-truncation-standard}.  Hence
\(\Gamma_Z(\mathcal M)\) lies in the overholonomic heart, and the quotient
\(Q\) is the corresponding cokernel in that abelian heart.  Thus both
\(\Gamma_Z(\mathcal M)\) and \(Q\) are geometrically overholonomic.  Moreover,
\(Q\in F\text{-}\Coh^{\fstr}_{\mathfrak K}\) by
\Cref{lem:fstr-subquotient-closure}\textup{(ii)}.  Since \(O\) is open in
\(\Supp(Q)\) and \(Q\) is filtered strongly equivariant, the open-orbit
restriction lemma, \Cref{lem:restriction-to-orbit-isocrystal}, shows that
\(E=Q|_O\) is an equivariant overconvergent \(F\)-isocrystal.

The cokernel just constructed is the same whether computed in the holonomic
heart or in the overholonomic heart.  Indeed, the inclusion of the
overholonomic derived category into the holonomic one is triangulated, fully
faithful, and closed under cones as recalled in \Cref{rem:ovhol-heart-use};
therefore a short exact sequence in the holonomic heart whose first two terms
are overholonomic has an overholonomic cone concentrated in degree zero.  This
identifies the quotient \(\mathcal M/\Gamma_Z(\mathcal M)\) with the cokernel in
the overholonomic heart.

The comparison with the intermediate extension is now made entirely inside the
overholonomic heart.  This is the point of the extra hypothesis: no
adjunction from the overholonomic six-functor formalism is applied to an object
which has not yet been placed in that heart.

Let \(j=j_O:O\hookrightarrow X_s\).  Since \(Q\in\Ovhol(X_s/K)\), the standard
adjunctions in the overholonomic heart give canonical morphisms
\[
  c_Q:j^0_!E\longrightarrow Q,
  \qquad
  u_Q:Q\longrightarrow j^0_+E,
\]
whose restrictions to \(O\) are the identity of \(E\).  The kernel of \(u_Q\)
restricts to zero on \(O\); hence its support is contained in
\(\Supp(Q)\setminus O\subseteq Z\).  Since \(Q\) has no nonzero submodule
supported on \(Z\), the morphism \(u_Q\) is a monomorphism.

Under the adjunction identifications, the composite
\[
  j^0_!E\xrightarrow{\ c_Q\ } Q
  \xrightarrow{\ u_Q\ } j^0_+E
\]
is precisely the canonical morphism
\(\theta^0_{j,E}:j^0_!E\to j^0_+E\).  By definition,
\[
  j_{O,!+}(E)=\operatorname{Im}(\theta^0_{j,E})
\]
in the overholonomic heart.  Since \(u_Q\) is a monomorphism, the image
\[
  J_O(Q):=\operatorname{Im}(c_Q)\subseteq Q
\]
is identified by \(u_Q\) with \(j_{O,!+}(E)\subseteq j^0_+E\).  Thus
\[
  J_O(Q)\simeq j_{O,!+}(E)
\]
as an object of the overholonomic heart and as a coherent arithmetic
\(\mathscr D^\dagger\)-submodule of \(Q\).

The quotient \(Q/J_O(Q)\) restricts to zero on \(O\), because both \(Q\) and
\(J_O(Q)\) restrict to the same object \(E\).  Hence
\[
  \Supp(Q/J_O(Q))\subseteq Z.
\]
The construction of \(J_O(Q)\) as the image of the adjunction morphism
\(j^0_!E\to Q\) is functorial in \(Q\) and in the identity of \(E\) after
restriction to \(O\).  Applying the same construction to \(a^\sharp Q\) and
\(p^\sharp Q\), and using the weak equivariant isomorphism
\(a^\sharp Q\simeq p^\sharp Q\), identifies \(a^\sharp J_O(Q)\) with
\(p^\sharp J_O(Q)\).  Hence \(J_O(Q)\) is weakly equivariant.  The same
functoriality shows that the Frobenius structure on \(Q\) restricts to
\(J_O(Q)\) and descends to \(Q/J_O(Q)\).  Finally, since \(Q\) is Frobenius
filtered strongly equivariant, \Cref{lem:fstr-subquotient-closure} gives the
filtered strong structures on \(J_O(Q)\) and on \(Q/J_O(Q)\).  This proves the
claim.
\end{proof}

\subsection{Equivariant orbital devissage}

The following proposition is the central structural result used for the orbit
classification.  It is stated with an explicit geometric-overholonomicity
hypothesis; this prevents the overholonomic adjunctions defining \(j_{!+}\)
from being applied outside their natural domain.  It is the arithmetic
equivariant analogue of the usual stratified devissage for regular holonomic
\(D\)-modules.

\begin{proposition}[Frobenius equivariant orbital devissage]
\label{prop:equivariant-orbital-devissage}
Let
\[
  \mathcal M\in
  F\text{-}\Coh^{\fstr}_{\mathfrak K}
  (\mathscr D^\dagger_{\mathfrak X,\mathbb Q}).
\]
Assume, in addition, that the underlying arithmetic
\(\mathscr D^\dagger\)-module of \(\mathcal M\) is geometrically
overholonomic.  Then \(\mathcal M\) admits a finite filtration by Frobenius filtered strongly
\(\mathfrak K\)-equivariant coherent submodules
\[
  0=\mathcal M_0\subseteq \mathcal M_1\subseteq \cdots
  \subseteq \mathcal M_r=\mathcal M
\]
such that every successive quotient is of the form
\[
  \mathcal M_i/\mathcal M_{i-1}
  \simeq
  j_{O_i,!+}(E_i),
\]
where \(O_i\subseteq X_s\) is a \(\mathcal K_s\)-orbit and \(E_i\) is an
equivariant overconvergent \(F\)-isocrystal satisfying
\[
  [E_i]\in
  \Adm^{\Phi,\fstr}_{\mathcal K}(O_i,\overline O_i).
\]
The objects \(E_i\) need not be simple.
\end{proposition}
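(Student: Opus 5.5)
I argue by induction on the orbit count \(n(\mathcal M)\) of \eqref{eq:orbit-count-invariant}. I strengthen the induction hypothesis to all objects of \(F\text{-}\Coh^{\fstr}_{\mathfrak K}\) whose underlying module lies in \(\Ovhol(X_s/K)\), so that it applies to every piece produced below. If \(n(\mathcal M)=0\), then \(\Supp(\mathcal M)=\varnothing\) by \Cref{cor:support-union-orbits}, so \(\mathcal M=0\) and the empty filtration works.

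For \(\mathcal M\neq0\), choose an orbit \(O\) open in \(\Supp(\mathcal M)\). This is possible because the support is a finite union of orbit closures, so a maximal orbit is open in it. Put \(Z=\Supp(\mathcal M)\setminus O\), which is closed and \(\mathcal K_s\)-stable. I then form the exact sequence \eqref{eq:support-truncation-exact-sequence}, with \(Q=\mathcal M/\Gamma_Z(\mathcal M)\). By \Cref{lem:invariant-support-truncation} and \Cref{lem:fstr-subquotient-closure}, both \(\Gamma_Z(\mathcal M)\) and \(Q\) lie in \(F\text{-}\Coh^{\fstr}_{\mathfrak K}\). The proof of \Cref{lem:middle-extension-extraction} shows they are geometrically overholonomic. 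Frobenius compatibility of \(\Gamma_Z\) follows from its functoriality.

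Next I apply \Cref{lem:middle-extension-extraction} to get \(J_O(Q)\simeq j_{O,!+}(E)\subseteq Q\), with \(E=\mathcal M|_O\). Since \(J_O(Q)\) is Frobenius filtered strongly equivariant, \([E]\in\Adm^{\Phi,\fstr}_{\mathcal K}(O,\overline O)\) holds by the very definition of that locus. One point needs care here: \(\Adm^{\Phi,\fstr}\) was defined as a subset of \emph{irreducible} classes, but \(E\) need not be irreducible. I interpret the admissibility condition in the statement as the same defining condition applied to an arbitrary \(E\), namely that \(j_{O,!+}(E)\) lies in \(F\text{-}\Coh^{\fstr}_{\mathfrak K}\). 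The statement's remark that the \(E_i\) need not be simple supports this reading.

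The remaining pieces \(\Gamma_Z(\mathcal M)\) and \(Q/J_O(Q)\) are supported on \(Z\), so each has orbit count strictly less than \(n(\mathcal M)\). Both are Frobenius filtered strongly equivariant. They are overholonomic because subobjects and quotients taken in the abelian heart stay in it (\Cref{prop:ovhol-stability}, \Cref{prop:standard-facts}(ix)). By induction, each has a filtration of the required type.

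The filtration of \(\mathcal M\) is then assembled in three stages:
\begin{enumerate}[label=\textup{(\arabic*)}]
\item take the filtration of \(\Gamma_Z(\mathcal M)\);
\item add the preimage of \(J_O(Q)\) in \(\mathcal M\), whose quotient by \(\Gamma_Z(\mathcal M)\) is \(J_O(Q)\simeq j_{O,!+}(E)\);
\item add the preimages in \(\mathcal M\) of the filtration steps of \(Q/J_O(Q)\).
\end{enumerate}
These preimages are equivariant coherent submodules, so by \Cref{lem:fstr-subquotient-closure} they carry filtered strong and Frobenius structures. The successive quotients are those of the pieces, so they have the required form.

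I expect the main obstacle to be checking that every intermediate submodule lies in the Frobenius overholonomic heart with compatible equivariant and Frobenius structures. The essential input for this is already in \Cref{lem:middle-extension-extraction}. What remains is bookkeeping: stability of the heart, and of the \(\fstr\) condition, under the preimage and quotient operations listed above.
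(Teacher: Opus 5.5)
Your proposal is correct and follows essentially the same route as the paper. Both argue by induction on the number of orbits in the support, truncate by \(\Gamma_Z(\mathcal M)\), extract \(J_O(Q)\simeq j_{O,!+}(E)\) via \Cref{lem:middle-extension-extraction}, and assemble the filtration by the same three-stage lifting through \(\mathcal M\twoheadrightarrow Q\). You also correctly notice that \(\Adm^{\Phi,\fstr}_{\mathcal K}(O,\overline O)\) is defined only as a set of irreducible classes, so for non-simple \(E_i\) the membership must be read as the defining condition that \(j_{O_i,!+}(E_i)\) be Frobenius filtered strongly equivariant; the paper's proof passes over this with ``by construction''.
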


\begin{proof}
We argue by induction on the number of \(\mathcal K_s\)-orbits contained in
\(\Supp(\mathcal M)\).  The zero module is clear.  If \(\mathcal M\neq0\),
choose an orbit \(O\) open in \(\Supp(\mathcal M)\) and set
\[
  Z:=\Supp(\mathcal M)\setminus O.
\]
The closed subset \(Z\) is \(\mathcal K_s\)-stable and contains strictly fewer
orbits.  Put
\[
  Q:=\mathcal M/\Gamma_Z(\mathcal M),
  \qquad
  E:=Q|_O.
\]
By \Cref{lem:middle-extension-extraction}, \(Q\) contains a canonical Frobenius
filtered strongly equivariant submodule
\[
  J_O(Q)\simeq j_{O,!+}(E),
\]
and \(Q/J_O(Q)\) is supported on \(Z\).  The piece \(j_{O,!+}(E)\) already has
the required form and satisfies
\([E]\in\Adm^{\Phi,\fstr}_{\mathcal K}(O,\overline O)\) by construction.  The
objects \(\Gamma_Z(\mathcal M)\) and \(Q/J_O(Q)\) are Frobenius filtered strongly
equivariant, geometrically overholonomic, and supported on \(Z\), hence involve
strictly fewer orbits than \(\mathcal M\).  The induction hypothesis applies to
them.

We now form the filtration explicitly.  First filter
\(\Gamma_Z(\mathcal M)\) by the induction hypothesis.  Next lift the subobject
\(J_O(Q)\subseteq Q\) to \(\mathcal M\) through the quotient map
\(\mathcal M\twoheadrightarrow Q\); this contributes the single graded piece
\(j_{O,!+}(E)\).  Finally lift a filtration of \(Q/J_O(Q)\), again supplied by
induction because its support is contained in \(Z\).  All subobjects used in
this construction are coherent and Frobenius filtered strongly equivariant by
\Cref{lem:fstr-subquotient-closure}.  The resulting finite filtration of
\(\mathcal M\) has only quotients of the required form \(j_{O',!+}(E')\), with
\([E']\in\Adm^{\Phi,\fstr}_{\mathcal K}(O',\overline O')\).  The induction
terminates because the number of orbits in the support strictly decreases
whenever a term is supported on \(Z\).
\end{proof}

\subsection{The regularity theorem}

\begin{theorem}[Arithmetic Kashiwara regularity]
\label{thm:arithmetic-kashiwara-regularity}
Assume \Cref{setup:basic}.  Let \(\mathcal K\subseteq\mathcal G\) be a smooth
closed subgroup scheme acting on \(\mathfrak X\), and assume that
\(\mathcal K_s\) acts on \(X_s\) with finitely many orbits and satisfies
\Cref{hyp:orbit-separability}.  Then:
\begin{enumerate}[label=\textup{(\roman*)}]
\item every object of
\[
  \Coh^{\fstr}_{\mathfrak K}
  \bigl(\mathscr D^\dagger_{\mathfrak X,\mathbb Q}\bigr)
\]
is holonomic;
\item every object of
\[
  F\text{-}\Coh^{\fstr}_{\mathfrak K}
  \bigl(\mathscr D^\dagger_{\mathfrak X,\mathbb Q}\bigr)
\]
is geometrically overholonomic.
\end{enumerate}
\end{theorem}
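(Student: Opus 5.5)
Part (i) is already contained in the microlocal chain of \Cref{sec:characteristic}, so I will simply assemble it. Let \(\mathcal M\) be filtered strongly \(\mathfrak K\)-equivariant. \Cref{lem:symbols-fundamental-fields} shows that the principal symbols \(\sigma(v_\xi)\) annihilate \(\gr_F\mathcal M^{(m)}\) for an equivariant good model of sufficiently large level, which exists by \Cref{lem:equivariant-good-models} and \Cref{lem:level-propagation}. Then \Cref{prop:moment-zero-containment} gives \(\Car(\mathcal M)\subseteq\mu_{\mathcal K}^{-1}(0)\). Under \Cref{hyp:orbit-separability}, \Cref{lem:moment-zero-conormals} turns this into conormal containment, \Cref{prop:conormal-containment}. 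Finiteness of the orbit set then gives \(\dim\Car(\mathcal M)\le\dim X_s\) by \eqref{eq:dimension-holonomicity-chain}. Bernstein's inequality supplies the reverse bound, and this is exactly \Cref{cor:strong-equivariant-holonomic}. The proof of (i) therefore consists of citing that corollary.

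For (ii), let \(\mathcal M\) carry, in addition, a Frobenius structure compatible with the weak equivariant datum. By (i) the underlying module is holonomic, and together with the Frobenius structure it is a Frobenius holonomic \(\mathscr D^\dagger_{\mathfrak X,\mathbb Q}\)-module on the smooth projective formal scheme \(\mathfrak X\). Here I would invoke \Cref{prop:standard-facts}\textup{(iii)}. Caro's resolution of Berthelot's holonomicity-stability conjecture over smooth projective formal schemes identifies Frobenius holonomicity with Frobenius overholonomicity, so \(\mathcal M\) is Frobenius overholonomic. The stable-after-base-change convention recalled by Huyghe--Schmidt says that Frobenius overholonomic objects automatically satisfy the base-change condition. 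Hence \(\mathcal M\) lies in the geometrically overholonomic category, and since it is a module (concentrated in degree zero), it lies in the heart \(\Ovhol(X_s/K)\).

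I expect no real obstacle. The one point needing care is that the Frobenius structure on the object of \(F\text{-}\Coh^{\fstr}_{\mathfrak K}\) is a genuine Frobenius structure on the underlying \(\mathscr D^\dagger\)-module, which is how \Cref{def:F-fstr-category} is set up. I would also note explicitly that the orbital devissage, \Cref{prop:equivariant-orbital-devissage}, is not used here. It assumes geometric overholonomicity as a hypothesis, so invoking it would be circular. After the theorem, that devissage applies unconditionally to every object of \(F\text{-}\Coh^{\fstr}_{\mathfrak K}\), and it feeds the classification rather than the regularity statement.
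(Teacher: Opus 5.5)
Your proposal is correct and follows essentially the same route as the paper. Part (i) comes from conormal containment plus finiteness of the orbits, exactly as in \Cref{cor:strong-equivariant-holonomic}. Part (ii) combines (i) with Caro's smooth-projective identification of Frobenius holonomicity with Frobenius overholonomicity (\Cref{prop:standard-facts}\textup{(iii)}) and Huyghe--Schmidt's automatic base-change condition for Frobenius objects; your remarks that the orbital devissage is not used, and that the degree-zero object lands in the heart, agree with the paper's logical order.
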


\begin{proof}
Let \(\mathcal M\) be filtered strongly \(\mathfrak K\)-equivariant and coherent.
The conormal-containment argument gives
\[
  \Car(\mathcal M)\subseteq \bigcup_O T^*_OX_s.
\]
Since there are finitely many orbits, \Cref{cor:strong-equivariant-holonomic}
shows that \(\mathcal M\) is holonomic.  This proves \textup{(i)}.

Assume now that \(\mathcal M\) also carries a Frobenius structure compatible
with the filtered strongly equivariant structure.  By \textup{(i)}, its
underlying coherent arithmetic \(\mathscr D^\dagger\)-module is holonomic.
Caro's stability of Frobenius holonomicity over smooth projective formal schemes, recorded in
\Cref{prop:standard-facts}\textup{(iii)}, then implies that \(\mathcal M\) is
overholonomic in the Frobenius sense.  Huyghe--Schmidt's stable-after-base-change
convention, recalled in \Cref{prop:ovhol-stability}, says that the base-change
condition is automatic for Frobenius overholonomic objects.  Hence
\(\mathcal M\) is geometrically overholonomic.  This proves \textup{(ii)}.
\end{proof}

\begin{definition}[Frobenius overholonomic strongly equivariant objects]
\label{def:ovhol-fstr}
We write
\[
  F\text{-}\Ovhol^{\fstr}_{\mathfrak K}
  (\mathscr D^\dagger_{\mathfrak X,\mathbb Q})
\]
for the full subcategory of
\(F\text{-}\Coh^{\fstr}_{\mathfrak K}(\mathscr D^\dagger_{\mathfrak X,\mathbb Q})\)
whose underlying arithmetic \(\mathscr D^\dagger\)-module belongs to the
geometrically overholonomic heart \(\Ovhol(X_s/K)\).
\end{definition}

\begin{corollary}
\label{cor:coh-equals-ovhol}
Under the hypotheses of \Cref{thm:arithmetic-kashiwara-regularity},
\[
  F\text{-}\Coh^{\fstr}_{\mathfrak K}
  (\mathscr D^\dagger_{\mathfrak X,\mathbb Q})
  =
  F\text{-}\Ovhol^{\fstr}_{\mathfrak K}
  (\mathscr D^\dagger_{\mathfrak X,\mathbb Q}).
\]
The corresponding statement without Frobenius is not asserted.
\end{corollary}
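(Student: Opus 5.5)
The equality in \Cref{cor:coh-equals-ovhol} amounts to two inclusions of full subcategories of \(F\text{-}\Coh^{\fstr}_{\mathfrak K}(\mathscr D^\dagger_{\mathfrak X,\mathbb Q})\). The inclusion \(F\text{-}\Ovhol^{\fstr}_{\mathfrak K}\subseteq F\text{-}\Coh^{\fstr}_{\mathfrak K}\) holds by \Cref{def:ovhol-fstr}, since the former is defined as a full subcategory of the latter. For the reverse inclusion, I take an object \(\mathcal M\) of \(F\text{-}\Coh^{\fstr}_{\mathfrak K}\). By \Cref{thm:arithmetic-kashiwara-regularity}\textup{(ii)}, its underlying coherent \(\mathscr D^\dagger_{\mathfrak X,\mathbb Q}\)-module is geometrically overholonomic.

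It remains to check that this module lies in the heart \(\Ovhol(X_s/K)\) and not merely in the triangulated category \(D^b_{\mathrm{ovhol}}(X_s/K)\). Here I use the following facts.
\begin{enumerate}[label=\textup{(\alph*)}]
\item \(\mathcal M\) is a single coherent module, so it is concentrated in degree zero.
\item By \Cref{prop:standard-facts}\textup{(iii)}, Frobenius holonomic modules are Frobenius overholonomic on the smooth projective formal scheme \(\mathfrak X\), and the canonical \(t\)-structure on overholonomic complexes is the one induced from holonomic modules.
\item Hence a degree-zero geometrically overholonomic module is an object of the heart.
\end{enumerate}
This is the same argument as the end of \Cref{rem:ovhol-heart-use}. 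Since both categories are full subcategories of the same ambient category and have the same objects, they coincide, and morphisms agree automatically.

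The only delicate point is to make sure the heart membership does not need the devissage of \Cref{prop:equivariant-orbital-devissage}, which itself assumes overholonomicity. I would therefore invoke only part \textup{(ii)} of the theorem, which is proved from Caro's stability result and not from the devissage, so no circularity arises.

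I would close with a remark that nothing is claimed without Frobenius. Part \textup{(i)} of the theorem gives only holonomicity, and the paper explicitly does not assert that holonomic modules without Frobenius structure are geometrically overholonomic.
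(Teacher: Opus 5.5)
Your proposal is correct and follows the paper's own proof. The inclusion $F\text{-}\Ovhol^{\fstr}_{\mathfrak K}\subseteq F\text{-}\Coh^{\fstr}_{\mathfrak K}$ holds by definition, and the reverse inclusion is \Cref{thm:arithmetic-kashiwara-regularity}\textup{(ii)}. Your extra check that a degree-zero geometrically overholonomic module lies in the heart is harmless, since the paper simply identifies such modules with $\Ovhol(X_s/K)$, but the $t$-structure claim you attribute to \Cref{prop:standard-facts}\textup{(iii)} is not actually stated there.
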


\begin{proof}
The inclusion from right to left is immediate from the definition.  The other
inclusion is \Cref{thm:arithmetic-kashiwara-regularity}.
\end{proof}

\begin{remark}
\label{rem:regularity-strong-not-weak}
The theorem is a statement about filtered strong equivariance.  We do not claim
that an arbitrary weakly equivariant coherent arithmetic
\(\mathscr D^\dagger\)-module is geometrically overholonomic.  The infinitesimal
compatibility, realized at finite level as in \textup{(SE2)}, is what forces the
moment-zero characteristic containment used above.
\end{remark}

\section{Orbit classification of simple equivariant arithmetic
\texorpdfstring{\(\mathscr D^\dagger\)}{Ddagger}-modules}
\label{sec:classification}

Throughout this section we keep the hypotheses of
\Cref{thm:arithmetic-kashiwara-regularity}. In particular, \(\mathcal K_s\) acts
on \(X_s\) with finitely many orbits and satisfies
\Cref{hyp:orbit-separability}.  The classification is stated only in the
Frobenius filtered strongly equivariant category, where
\Cref{thm:arithmetic-kashiwara-regularity} gives geometric overholonomicity.

\subsection{The support of a simple equivariant object}

\begin{lemma}[Support and reconstruction for a simple Frobenius equivariant object]
\label{lem:support-simple}
Let
\[
  \mathcal M\in
  F\text{-}\Coh^{\fstr}_{\mathfrak K}
  \bigl(\mathscr D^\dagger_{\mathfrak X,\mathbb Q}\bigr)
\]
be simple. Then there exists a unique \(\mathcal K_s\)-orbit \(O\subseteq X_s\)
such that
\[
  \Supp(\mathcal M)=\overline O.
\]
Moreover, \(O\) is open and dense in \(\Supp(\mathcal M)\), and there is a
canonical isomorphism
\[
  \mathcal M\simeq j_{O,!+}(\mathcal M|_O).
\]
\end{lemma}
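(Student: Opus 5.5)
Since \(\mathcal M\) is simple it is nonzero, and by \Cref{thm:arithmetic-kashiwara-regularity}\textup{(ii)} (equivalently \Cref{cor:coh-equals-ovhol}) its underlying module lies in the geometrically overholonomic heart. Everything below therefore takes place in the range where \Cref{lem:middle-extension-extraction} applies.

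First choose an orbit \(O\) that is open in \(\Supp(\mathcal M)\). Such an orbit exists because \Cref{cor:support-union-orbits} makes the support a finite union of orbit closures, so a maximal orbit is open in the support. Put \(Z:=\Supp(\mathcal M)\setminus O\). By \Cref{lem:invariant-support-truncation}, \(\Gamma_Z(\mathcal M)\) is a filtered strongly equivariant submodule. Frobenius compatibility comes from the functoriality of \(\Gamma_Z\) (\Cref{prop:support-truncation-standard}), so \(\Gamma_Z(\mathcal M)\) is a subobject in \(F\text{-}\Coh^{\fstr}_{\mathfrak K}\). Simplicity then gives \(\Gamma_Z(\mathcal M)=0\) or \(\Gamma_Z(\mathcal M)=\mathcal M\). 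The second case is impossible, because \(\mathcal M|_O\neq0\) by \Cref{lem:restriction-to-orbit-isocrystal}. Hence \(\Gamma_Z(\mathcal M)=0\) and \(Q=\mathcal M\).

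Next apply \Cref{lem:middle-extension-extraction} with \(Q=\mathcal M\). It produces a Frobenius filtered strongly equivariant submodule \(J_O(\mathcal M)\simeq j_{O,!+}(E)\), where \(E=\mathcal M|_O\neq0\). This submodule is nonzero since its restriction to \(O\) is \(E\), so simplicity forces \(J_O(\mathcal M)=\mathcal M\). That is the canonical isomorphism \(\mathcal M\simeq j_{O,!+}(\mathcal M|_O)\). By \Cref{lem:support-middle-extension} we then get \(\Supp(\mathcal M)=\overline O\), and \(O\) is open and dense in it.

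Uniqueness is a point-set argument. Suppose \(\overline O=\overline{O'}\) for two orbits. Orbits are locally closed (\Cref{lem:orbit-geometry}), so each is open dense in its closure. Two open dense subsets of the same space meet, and distinct orbits are disjoint, so \(O=O'\). Note also that any orbit open in the support equals \(O\), which shows the choice made in the first step was forced.

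The main obstacle is the extraction step, where \(\mathcal M\) must be identified with the image of \(j^0_!E\to\mathcal M\). The subtle point is that the simplicity argument uses subobjects such as \(\Gamma_Z(\mathcal M)\) and \(J_O\) inside the Frobenius category, not merely inside the coherent category. I expect to handle this exactly as in the proof of \Cref{lem:middle-extension-extraction}: functoriality carries both the Frobenius and the weak equivariant structures, and \Cref{lem:fstr-subquotient-closure} supplies the filtered condition.
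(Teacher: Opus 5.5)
Your proposal is correct and follows the paper's proof essentially step for step: regularity places \(\mathcal M\) in the overholonomic heart, simplicity kills \(\Gamma_Z(\mathcal M)\), and then \Cref{lem:middle-extension-extraction} with \(Q=\mathcal M\), combined with simplicity, gives \(\mathcal M\simeq j_{O,!+}(\mathcal M|_O)\) and hence \(\Supp(\mathcal M)=\overline O\) by \Cref{lem:support-middle-extension}. The only deviation is in uniqueness: the paper uses a dimension count on \(\overline O\setminus O\), whereas you use a point-set argument (two locally closed orbits with the same closure are both open and dense in it, so they meet and therefore coincide), which is, if anything, cleaner.
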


\begin{proof}
By \Cref{thm:arithmetic-kashiwara-regularity}, \(\mathcal M\) is geometrically
overholonomic.  By \Cref{cor:support-union-orbits}, its support is a finite
union of \(\mathcal K_s\)-orbit closures.  Choose an orbit \(O\) open in
\(\Supp(\mathcal M)\); the argument below will show that this choice is forced.
Put
\[
  Z:=\Supp(\mathcal M)\setminus O.
\]
Then \(Z\) is closed in \(X_s\), \(\mathcal K_s\)-stable, and contains fewer
orbits than \(\Supp(\mathcal M)\).  By
\Cref{lem:invariant-support-truncation}, \(\Gamma_Z(\mathcal M)\) is a Frobenius
filtered strongly equivariant subobject of \(\mathcal M\).  It cannot equal
\(\mathcal M\), because \(\mathcal M|_O\neq0\) while
\(\Gamma_Z(\mathcal M)|_O=0\).  Simplicity therefore gives
\(\Gamma_Z(\mathcal M)=0\).

Apply \Cref{lem:middle-extension-extraction} to \(\mathcal M\).  Since
\(\Gamma_Z(\mathcal M)=0\), the quotient \(Q\) in that lemma is just
\(\mathcal M\).  Hence \(\mathcal M\) contains a canonical Frobenius filtered
strongly equivariant subobject
\[
  J_O(\mathcal M)\simeq j_{O,!+}(\mathcal M|_O),
\]
whose quotient is supported on \(Z\).  This subobject is nonzero, because it
restricts to \(\mathcal M|_O\) on \(O\).  By simplicity, it must be all of
\(\mathcal M\).  Thus
\[
  \mathcal M\simeq j_{O,!+}(\mathcal M|_O).
\]
Taking supports and using \Cref{lem:support-middle-extension} gives
\(\Supp(\mathcal M)=\overline O\).  The orbit \(O\) is therefore dense and open
in the support.  It remains only to justify uniqueness.  For algebraic group
orbits in the finite orbit stratification used here, the frontier relation is
strict: since \(O\) is smooth, locally closed, and open dense in \(\overline O\),
the closed boundary \(\overline O\setminus O\) has dimension strictly smaller
than \(O\).  Hence every orbit contained in that boundary has smaller dimension
and cannot be dense in \(\overline O\).  Thus no orbit distinct from \(O\) can
be dense in \(\overline O\), and the open orbit attached to \(\mathcal M\) is
unique.
\end{proof}

\subsection{Restriction to the open orbit}

\begin{lemma}[Irreducibility on the open orbit]
\label{lem:restriction-irreducible}
Let \(\mathcal M\in F\text{-}\Coh^{\fstr}_{\mathfrak K}\) be simple and let
\(O\subseteq \Supp(\mathcal M)\) be the open orbit. Then \(\mathcal M|_O\) is an
irreducible object of
\[
  F\text{-}\Isoc^{\dagger}_{\mathcal K}(O,\overline O).
\]
\end{lemma}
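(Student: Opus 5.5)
By \Cref{lem:support-simple} we may write \(\mathcal M\simeq j_{O,!+}(E)\) with \(E:=\mathcal M|_O\). By \Cref{lem:restriction-to-orbit-isocrystal}, \(E\) is a nonzero equivariant overconvergent isocrystal on \((O,\overline O)\). It carries the Frobenius structure restricted from \(\mathcal M\) (last paragraph of \Cref{lem:restriction-equvariant-isocrystal}), so \(E\in F\text{-}\Isoc^\dagger_{\mathcal K}(O,\overline O)\), and it is nonzero because \(O\subseteq\Supp(\mathcal M)\). The plan is to argue by contradiction. Suppose \(0\neq E'\subsetneq E\) is a proper nonzero subobject in \(F\text{-}\Isoc^\dagger_{\mathcal K}(O,\overline O)\). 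We will produce from it a proper nonzero Frobenius filtered strongly equivariant submodule of \(\mathcal M\), which simplicity forbids.

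The natural candidate is a submodule of \(\mathcal M\) that restricts to \(E'\) on \(O\). Everything happens inside the overholonomic heart, where \(\mathcal M\) lives by \Cref{thm:arithmetic-kashiwara-regularity}(ii). Concretely, I would take
\[
  \mathcal N:=\operatorname{Im}\bigl(j^0_{O,!}E'\longrightarrow j^0_{O,!}E\longrightarrow \mathcal M\bigr),
\]
where the second arrow is the adjunction morphism \(c_{\mathcal M}\) from the proof of \Cref{lem:middle-extension-extraction}. Since \(j^0_{O,!}\) is functorial, the first arrow is the image of the inclusion \(E'\hookrightarrow E\). Because \(j^0_{O,!}E\to\mathcal M\) restricts to the identity on \(O\) and restriction is exact on the heart, \(\mathcal N|_O=E'\). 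Hence \(\mathcal N\neq0\), and \(\mathcal N\neq\mathcal M\) since \(E'\neq E\).

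Next, \(\mathcal N\) must carry the required structures. Weak equivariance and Frobenius compatibility come from functoriality of \(j^0_{O,!}\) and of the adjunction under the smooth pullbacks \(a^\sharp,p^\sharp\), exactly as in the last paragraph of the proof of \Cref{lem:middle-extension-extraction}: \(E'\) is stable under \(\beta_E\) and under Frobenius, so the image is stable under \(\alpha_{\mathcal M}\) and under Frobenius. Here one uses that \(a^\sharp\) and \(p^\sharp\) are exact and therefore preserve images. Once \(\mathcal N\) is a coherent submodule stable under the weak equivariant structure, \Cref{lem:fstr-subquotient-closure}(i) supplies (SE1) and (SE2). Thus \(\mathcal N\) is a nonzero proper subobject of \(\mathcal M\) in \(F\text{-}\Coh^{\fstr}_{\mathfrak K}\), contradicting simplicity.

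I expect the main obstacle to be this equivariance step. One has to check carefully that the identification \(a^\sharp j^0_{O,!}\simeq j^0_{\mathcal K_s\times O,!}a_O^*\), and its analogue for \(p\), is compatible with the adjunction morphisms. If that becomes awkward, the fallback is to take instead \(\mathcal N:=\mathcal M\cap u^{-1}(j^0_{O,+}E')\) using the monomorphism \(u_{\mathcal M}:\mathcal M\hookrightarrow j^0_{O,+}E\). Its equivariance reduces to the same base-change compatibility for \(j_+\), which is recorded in \Cref{lem:smooth-pullback-intermediate-extension}. With this choice one still has \(\mathcal N|_O=E'\), and the contradiction argument is unchanged.
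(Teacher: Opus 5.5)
Your proof is correct and is essentially the paper's argument. The paper also argues by contradiction: it maps an extension of $E'$ into $\mathcal M\simeq j_{O,!+}(E)$, takes the image, and uses exactness of restriction to $O$, \Cref{lem:fstr-subquotient-closure} and simplicity. The only difference is that the paper uses $j_{O,!+}(E')\to j_{O,!+}(E)$, whose equivariance comes from \Cref{prop:eq-intermediate-extension}, rather than $j^0_{O,!}E'\to\mathcal M$; this gives the same subobject, because your composite factors through the epimorphism $j^0_{O,!}E'\twoheadrightarrow j_{O,!+}(E')$.
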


\begin{proof}
Put \(E=\mathcal M|_O\).  By \Cref{thm:arithmetic-kashiwara-regularity},
\(\mathcal M\) is geometrically overholonomic, and by
\Cref{lem:restriction-equvariant-isocrystal}, \(E\) is a \(\mathcal K\)-equivariant
overconvergent \(F\)-isocrystal on \((O,\overline O)\).  By
\Cref{lem:support-simple}, we have a canonical identification
\[
  \mathcal M\simeq j_{O,!+}(E).
\]
Suppose that
\(0\neq E'\subsetneq E\) is a proper subobject in
\(F\text{-}\Isoc^{\dagger}_{\mathcal K}(O,\overline O)\).  The intermediate-extension
construction is functorial in the overholonomic heart, so the inclusion
\(E'\hookrightarrow E\) induces a morphism
\[
  j_{O,!+}(E')\longrightarrow j_{O,!+}(E)\simeq\mathcal M.
\]
The source carries the Frobenius weak equivariant structure supplied by
\Cref{prop:eq-intermediate-extension}, and the morphism is equivariant by
functoriality.  Its restriction to \(O\) is the original nonzero inclusion
\(E'\hookrightarrow E\); therefore its image \(I\subseteq\mathcal M\) is nonzero.
Because \(I\) is the image of an equivariant morphism, it is weakly
\(\mathfrak K\)-equivariant; as a coherent \(\mathscr D^\dagger\)-submodule of
\(\mathcal M\), it is then endowed with the induced Frobenius filtered strongly
equivariant structure by \Cref{lem:fstr-subquotient-closure}.  Simplicity of
\(\mathcal M\) gives \(I=\mathcal M\).  Restriction to the open orbit is exact
on the relevant overholonomic heart, as recalled in \Cref{app:t-structure}, and
is compatible with images; hence the restricted image is
\[
  I|_O=E.
\]
But the same restricted image is the image of \(E'\hookrightarrow E\), namely
\(E'\).  Thus \(E'=E\), contradicting the assumption that \(E'\) was proper.
Therefore \(E\) is irreducible in
\(F\text{-}\Isoc^{\dagger}_{\mathcal K}(O,\overline O)\).
\end{proof}

\subsection{Reconstruction from the open orbit}

\begin{lemma}[Reconstruction from the open orbit]
\label{lem:reconstruction}
Let \(\mathcal M\in F\text{-}\Coh^{\fstr}_{\mathfrak K}\) be simple and let \(O\subseteq\Supp(\mathcal M)\) be the open
orbit. Then there is a canonical isomorphism
\[
  \mathcal M\simeq j_{O,!+}(\mathcal M|_O).
\]
\end{lemma}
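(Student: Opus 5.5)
This reconstruction statement is essentially already contained in \Cref{lem:support-simple}, so the plan is to repackage that argument, making explicit where each structure comes from.

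\textbf{Step 1: placing \(\mathcal M\) in the overholonomic heart and truncating the boundary.}
By \Cref{thm:arithmetic-kashiwara-regularity}\textup{(ii)}, \(\mathcal M\) is geometrically overholonomic, so \Cref{lem:middle-extension-extraction} applies. Set \(Z:=\Supp(\mathcal M)\setminus O\). This is a closed \(\mathcal K_s\)-stable subset. By \Cref{lem:invariant-support-truncation}, \(\Gamma_Z(\mathcal M)\) is a Frobenius filtered strongly equivariant subobject. It is proper, since \(\mathcal M|_O\neq0\) while \(\Gamma_Z(\mathcal M)|_O=0\). Simplicity therefore forces \(\Gamma_Z(\mathcal M)=0\), so the quotient \(Q\) of \Cref{lem:middle-extension-extraction} is \(\mathcal M\) itself.

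\textbf{Step 2: extracting the intermediate extension.}
\Cref{lem:middle-extension-extraction} then produces the canonical subobject
\[
  J_O(\mathcal M)=\operatorname{Im}\bigl(c_{\mathcal M}:j^0_{O,!}(\mathcal M|_O)\to\mathcal M\bigr),
\]
which is identified via the monomorphism \(u_{\mathcal M}:\mathcal M\hookrightarrow j^0_{O,+}(\mathcal M|_O)\) with \(j_{O,!+}(\mathcal M|_O)\). This subobject is Frobenius filtered strongly equivariant and nonzero, because it restricts to \(\mathcal M|_O\) on \(O\). Simplicity gives \(J_O(\mathcal M)=\mathcal M\).

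\textbf{Step 3: canonicity.}
The isomorphism is defined without choices: it is the composite of the adjunction morphisms \(c_{\mathcal M}\) and \(u_{\mathcal M}\) with the image factorization of \(\theta^0_{j,E}\). It is therefore canonical and functorial in \(\mathcal M\).

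The main remaining point is compatibility of this isomorphism with the Frobenius and weak equivariant structures on both sides, where the structure on the right-hand side is the one supplied by \Cref{prop:eq-intermediate-extension}. The approach is to compare the two structures after smooth pullback along \(a\) and \(p\). By \Cref{lem:smooth-pullback-intermediate-extension}, both pullbacks are intermediate extensions on \(\mathcal K_s\times X_s\). On \(\mathcal K_s\times O\), both structures restrict to the restricted equivariant structure of \Cref{lem:restriction-to-orbit-isocrystal}. The difference of the two structures is therefore an endomorphism vanishing on the open part, and \Cref{lem:boundary-vanishing-smooth-pullback} forces it to be zero. The same argument, applied to the Frobenius pullback, handles the Frobenius structure.
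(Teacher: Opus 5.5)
Your proof is correct and follows the paper's route: the paper's proof simply refers back to \Cref{lem:support-simple}, where simplicity kills \(\Gamma_{\Supp(\mathcal M)\setminus O}(\mathcal M)\), \Cref{lem:middle-extension-extraction} is applied with \(Q=\mathcal M\), simplicity forces \(J_O(\mathcal M)=\mathcal M\), and canonicity comes from the adjunction morphisms. Your extra paragraph checking that the isomorphism respects the Frobenius and weak equivariant structures is a point the paper leaves implicit, and it works once you note that the difference of the two structure maps \(a^\sharp N\to p^\sharp N\) (with \(N=j_{O,!+}(\mathcal M|_O)\), and likewise \(F^*N\to N\) for Frobenius) is not literally an endomorphism: you must first compose it with the inverse of one of the two structure maps, or apply minimality of the target directly, before \Cref{lem:boundary-vanishing-smooth-pullback} applies.
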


\begin{proof}
This is the reconstruction statement already proved in
\Cref{lem:support-simple}.  The canonicity is the canonicity of the extraction
morphism in \Cref{lem:middle-extension-extraction}, applied after the vanishing
\(\Gamma_{\Supp(\mathcal M)\setminus O}(\mathcal M)=0\) forced by simplicity.
\end{proof}

\subsection{Restriction image and classification theorem}

\begin{theorem}[Restriction, reconstruction and Frobenius filtered orbit classification]
\label{thm:orbit-classification}
For a simple object
\(\mathcal M\in F\text{-}\Coh^{\fstr}_{\mathfrak K}
(\mathscr D^\dagger_{\mathfrak X,\mathbb Q})\), let \(O(\mathcal M)\) be the
unique open orbit in its support.  The assignment
\[
  \rho(\mathcal M):=\bigl(O(\mathcal M),\mathcal M|_{O(\mathcal M)}\bigr)
\]
defines an injective map of isomorphism classes
\[
  \rho:
  \Irr F\text{-}\Coh^{\fstr}_{\mathfrak K}
  \bigl(\mathscr D^\dagger_{\mathfrak X,\mathbb Q}\bigr)
  \longrightarrow
  \coprod_{O\in \mathcal K_s\backslash X_s}
  \Irr F\text{-}\Isoc^{\dagger}_{\mathcal K}(O,\overline O).
\]
Its image is exactly
\[
  \coprod_{O\in \mathcal K_s\backslash X_s}
  \Adm^{\Phi,\fstr}_{\mathcal K}(O,\overline O),
\]
where \(\Adm^{\Phi,\fstr}\) is the Frobenius image locus of
\Cref{def:F-filtered-admissible-isocrystal}.  On this image the inverse of
\(\rho\) is the intermediate-extension construction
\[
  (O,E)\longmapsto j_{O,!+}(E).
\]
Equivalently, the assignment \((O,E)\mapsto j_{O,!+}(E)\) gives a bijection
from the displayed image locus onto
\(\Irr F\text{-}\Coh^{\fstr}_{\mathfrak K}
(\mathscr D^\dagger_{\mathfrak X,\mathbb Q})\).
\end{theorem}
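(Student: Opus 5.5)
The plan is to assemble the theorem from the lemmas of this section and of \Cref{sec:intermediate-extension}. I will check well-definedness, injectivity, the image, and the inverse in that order.

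\emph{Well-definedness.} Let \(\mathcal M\) be simple in \(F\text{-}\Coh^{\fstr}_{\mathfrak K}\). By \Cref{lem:support-simple} there is a unique orbit \(O(\mathcal M)\) with \(\Supp(\mathcal M)=\overline{O(\mathcal M)}\). By \Cref{lem:restriction-irreducible}, \(\mathcal M|_{O(\mathcal M)}\) is irreducible in \(F\text{-}\Isoc^\dagger_{\mathcal K}(O,\overline O)\). An isomorphism \(\mathcal M\simeq\mathcal M'\) in the Frobenius filtered strongly equivariant category preserves supports. Since restriction to the couple is functorial, it also induces an isomorphism of the restricted equivariant \(F\)-isocrystals. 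Hence \(\rho\) is well defined on isomorphism classes.

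\emph{Injectivity.} Suppose \(\rho(\mathcal M)=\rho(\mathcal M')\). Then the orbits coincide, say both equal \(O\), and there is an isomorphism \(\mathcal M|_O\simeq\mathcal M'|_O\) compatible with Frobenius and the equivariant structures. The reconstruction \Cref{lem:reconstruction} gives
\[
  \mathcal M\simeq j_{O,!+}(\mathcal M|_O)\simeq j_{O,!+}(\mathcal M'|_O)\simeq\mathcal M'.
\]
The middle isomorphism comes from functoriality of \(j_{O,!+}\) in the overholonomic heart. I must check that it respects the Frobenius weak equivariant structures constructed in \Cref{prop:eq-intermediate-extension}. Those structures are built functorially from \(\beta_E\), and equality of two structures can be tested on \(O\) using \Cref{lem:boundary-vanishing-smooth-pullback}.

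\emph{The image.} First, \(\rho\) lands in the admissible locus. For simple \(\mathcal M\), the isomorphism \(\mathcal M\simeq j_{O,!+}(\mathcal M|_O)\) carries the filtered strong structure of \(\mathcal M\) onto the intermediate extension. So \(j_{O,!+}(\mathcal M|_O)\), with its induced structures, lies in \(F\text{-}\Coh^{\fstr}_{\mathfrak K}\), which is exactly the condition \([\mathcal M|_O]\in\Adm^{\Phi,\fstr}_{\mathcal K}(O,\overline O)\).

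Conversely, take \([E]\in\Adm^{\Phi,\fstr}_{\mathcal K}(O,\overline O)\). By \Cref{prop:eq-intermediate-extension}, \(j_{O,!+}(E)\) lies in \(F\text{-}\Coh^{\fstr}_{\mathfrak K}\), and by \Cref{prop:simplicity-intermediate-extension} it is simple. By \Cref{lem:support-middle-extension} its support is \(\overline O\), so its open orbit is \(O\). By \Cref{lem:restriction-intermediate-extension}, its restriction to \(O\) is \(E\). Hence \(\rho(j_{O,!+}(E))=(O,E)\).

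This shows the image is exactly the admissible locus. It also shows that \((O,E)\mapsto j_{O,!+}(E)\) is a right inverse on the image, and injectivity makes it a two-sided inverse. The bijection statement follows.

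\emph{Main obstacle.} The delicate point is that the structure transported to \(j_{O,!+}(\mathcal M|_O)\) from \(\mathcal M\) agrees with the canonical one of \Cref{prop:eq-intermediate-extension}. Only then does the admissibility condition, which is phrased for the induced structure, apply. I would handle this with the boundary-vanishing argument. The two weak equivariant isomorphisms \(a^\sharp N\simeq p^\sharp N\) both restrict to \(\beta_E\) on \(\mathcal K_s\times O\). Their difference composed with an inverse is therefore an endomorphism of \(p^\sharp N\simeq j'_{!+}(p_O^*E)\) that vanishes on the open part, so it is zero by \Cref{lem:boundary-vanishing-smooth-pullback}. The same argument applied to Frobenius structures shows those agree too. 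Filtered realizability (SE2) is a property of the underlying weakly equivariant module rather than extra data, so it transfers along the isomorphism.
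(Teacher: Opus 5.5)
Your proposal is correct and follows essentially the same route as the paper's proof. The shared steps are: well-definedness from \Cref{lem:support-simple} and \Cref{lem:restriction-irreducible}; containment of the image in the admissible locus via the reconstruction isomorphism; the converse from the admissibility definition together with \Cref{prop:simplicity-intermediate-extension} and \Cref{lem:restriction-intermediate-extension}; and injectivity from \Cref{lem:reconstruction}. Your boundary-vanishing check that the structure transported from \(\mathcal M\) agrees with the canonical Frobenius and equivariant structures of \Cref{prop:eq-intermediate-extension} makes explicit a compatibility the paper leaves implicit, so it is a refinement rather than a different argument.
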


\begin{remark}[Comparison with Huyghe--Schmidt]
\label{rem:comparison-HS-theorem-8}
Huyghe--Schmidt \cite[Theorem 2.3.4]{HuygheSchmidtIntermediate} classify
irreducible arithmetic \(\mathscr D^\dagger\)-modules which are overholonomic
after any base change by smooth locally closed supports together with
overconvergent isocrystals already belonging to the corresponding overholonomic
coefficient category.
\Cref{thm:orbit-classification} is different in logical direction.  In the
Frobenius filtered strongly equivariant range, \Cref{thm:arithmetic-kashiwara-regularity}
first gives holonomicity by the moment-map argument and then geometric
overholonomicity by Caro's smooth-projective holonomicity-stability theorem recalled in
\Cref{prop:standard-facts}.  The finite-orbit and orbit-separability hypotheses
then force the support of a simple equivariant object to be the closure of a
single \(\mathcal K_s\)-orbit.  Only at that point does the
Huyghe--Schmidt intermediate-extension classification enter, now restricted to
the \(\mathcal K_s\)-orbit stratification and to equivariant \(F\)-coefficient
classes satisfying the Frobenius admissibility condition.
\end{remark}

\begin{proof}
Let \(\mathcal M\) be a simple Frobenius filtered strongly equivariant coherent
arithmetic \(\mathscr D^\dagger\)-module. By \Cref{lem:support-simple}, there is
a unique open orbit \(O=O(\mathcal M)\subseteq\Supp(\mathcal M)\). By
\Cref{lem:restriction-irreducible}, \(\mathcal M|_O\) is irreducible in
\(F\text{-}\Isoc^{\dagger}_{\mathcal K}(O,\overline O)\). Thus \(\rho\) is well-defined.
By \Cref{lem:reconstruction},
\[
  \mathcal M\simeq j_{O,!+}(\mathcal M|_O).
\]
Consequently \([\mathcal M|_O]\in
\Adm^{\Phi,\fstr}_{\mathcal K}(O,\overline O)\), so the image of \(\rho\) is contained
in the displayed admissible locus.

Conversely, if \([E]\in\Adm^{\Phi,\fstr}_{\mathcal K}(O,\overline O)\), then by
\Cref{def:F-filtered-admissible-isocrystal} the object \(j_{O,!+}(E)\) is
filtered strongly equivariant, and by \Cref{prop:simplicity-intermediate-extension}
it is simple.  Moreover \Cref{lem:restriction-intermediate-extension} gives
\[
  \rho\bigl(j_{O,!+}(E)\bigr)=(O,E).
\]
Therefore the image of \(\rho\) is exactly the admissible locus and
\((O,E)\mapsto j_{O,!+}(E)\) is a right inverse to \(\rho\) on that locus.

Finally, if two simple Frobenius filtered strongly equivariant modules have the same
image under \(\rho\), then both are isomorphic to the same intermediate
extension by \Cref{lem:reconstruction}.  Hence \(\rho\) is injective.  Equivalently,
if \(j_{O,!+}(E)\simeq j_{O',!+}(E')\), then taking supports and using
\Cref{lem:support-middle-extension} gives \(\overline O=\overline{O'}\).  In a
finite orbit stratification an orbit is the unique open orbit in its own
closure, so \(O=O'\); restriction to \(O\) then gives \(E\simeq E'\).
\end{proof}

\begin{corollary}[Full Frobenius coefficient form under filtered extension admissibility]
\label{cor:classification-full-isocrystals-under-FEH}
Assume, in addition, that for every orbit \(O\) every irreducible equivariant overconvergent \(F\)-isocrystal has Frobenius
filtered strongly equivariant intermediate extension, equivalently
\[
  \Adm^{\Phi,\fstr}_{\mathcal K}(O,\overline O)
  =
  \Irr F\text{-}\Isoc^{\dagger}_{\mathcal K}(O,\overline O).
\]
Then the classification of \Cref{thm:orbit-classification} becomes
\[
  \coprod_{O\in \mathcal K_s\backslash X_s}
  \Irr F\text{-}\Isoc^{\dagger}_{\mathcal K}(O,\overline O)
  \xrightarrow{\;\sim\;}
  \Irr F\text{-}\Coh^{\fstr}_{\mathfrak K}
  \bigl(\mathscr D^\dagger_{\mathfrak X,\mathbb Q}\bigr).
\]
\end{corollary}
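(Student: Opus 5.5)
The plan is to deduce the corollary formally from \Cref{thm:orbit-classification}, substituting the displayed equality of parameter sets into its image description. No new geometric input is needed; the work is bookkeeping to confirm that the bijection of the theorem, restricted to its image, is exactly the map claimed.

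First, I will recall from \Cref{thm:orbit-classification} that
\[
  \rho:\Irr F\text{-}\Coh^{\fstr}_{\mathfrak K}\bigl(\mathscr D^\dagger_{\mathfrak X,\mathbb Q}\bigr)\longrightarrow
  \coprod_{O}\Irr F\text{-}\Isoc^{\dagger}_{\mathcal K}(O,\overline O)
\]
is injective. Its image is $\coprod_O\Adm^{\Phi,\fstr}_{\mathcal K}(O,\overline O)$, and the inverse on the image is $(O,E)\mapsto j_{O,!+}(E)$. Under the added hypothesis, the image is therefore all of $\coprod_O\Irr F\text{-}\Isoc^{\dagger}_{\mathcal K}(O,\overline O)$, so $\rho$ is a bijection.

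Second, I will check that $(O,E)\mapsto j_{O,!+}(E)$ is well defined on every class in the full coefficient set. The hypothesis $[E]\in\Adm^{\Phi,\fstr}_{\mathcal K}(O,\overline O)$ is exactly what makes \Cref{prop:eq-intermediate-extension} place $j_{O,!+}(E)$ in $F\text{-}\Coh^{\fstr}_{\mathfrak K}$. Simplicity of this object then follows from \Cref{prop:simplicity-intermediate-extension}.

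For the composites, \Cref{lem:restriction-intermediate-extension} gives $\rho\circ j_{!+}=\mathrm{id}$, and \Cref{lem:reconstruction} gives $j_{!+}\circ\rho=\mathrm{id}$ on isomorphism classes. Together these show the displayed arrow is a bijection.

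The only point needing care is that ``equivalently'' in the statement is consistent with the definitions. By \Cref{def:F-filtered-admissible-isocrystal}, $\Adm^{\Phi,\fstr}_{\mathcal K}(O,\overline O)$ is a subset of $\Irr F\text{-}\Isoc^{\dagger}_{\mathcal K}(O,\overline O)$, and membership means precisely that the intermediate extension is Frobenius filtered strongly equivariant. So the verbal hypothesis and the displayed equality coincide, and I do not expect a genuine obstacle beyond making this identification explicit.
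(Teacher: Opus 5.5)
Your proposal is correct and matches the paper's intent: the paper gives the corollary no separate proof, treating it as the immediate specialization of \Cref{thm:orbit-classification} once the image locus $\coprod_O\Adm^{\Phi,\fstr}_{\mathcal K}(O,\overline O)$ is assumed to be all of $\coprod_O\Irr F\text{-}\Isoc^{\dagger}_{\mathcal K}(O,\overline O)$. Your re-checking of well-definedness and of the two composites repeats steps already contained in the theorem's proof, so it is harmless but not needed.
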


\begin{remark}
\label{rem:classification-interpretation}
The classification theorem above is the arithmetic analogue, at trivial
character, of the classical orbit classification of equivariant \(D\)-modules:
\(K\)-orbits and equivariant local systems are replaced by \(\mathcal K_s\)-orbits
and equivariant overconvergent \(F\)-isocrystals whose intermediate extensions are Frobenius filtered strongly equivariant.
\end{remark}

\section{Crystalline distribution algebras at trivial character}
\label{sec:distributions}

Set
\[
  A_0:=
  \Gamma\bigl(\mathfrak X,
  \mathscr D^\dagger_{\mathfrak X,\mathbb Q}\bigr).
\]
By \Cref{prop:ABB-trivial}, this algebra is identified with the corresponding
crystalline distribution algebra at trivial central character.  In this
section no additional equivariant localization theorem is asserted: all
algebraic equivariance is obtained by transporting the geometric category
through the arithmetic Beilinson--Bernstein equivalence.

Let
\[
  \Gamma:
  \Coh\bigl(\mathscr D^\dagger_{\mathfrak X,\mathbb Q}\bigr)
  \longrightarrow
  A_0\operatorname{-mod}_{\mathrm{fp}}
\]
be the global section functor of \Cref{prop:ABB-trivial}, and let \(\Loc\)
denote its quasi-inverse.

\begin{definition}[Frobenius filtered strongly equivariant \(A_0\)-modules]
\label{def:strong-equivariant-A0-modules}
A Frobenius filtered strongly \(\mathfrak K\)-equivariant finitely presented
left \(A_0\)-module is a finitely presented left \(A_0\)-module \(M\), together
with a chosen object structure on \(\Loc(M)\) in
\[
  F\text{-}\Coh^{\fstr}_{\mathfrak K}
  \bigl(\mathscr D^\dagger_{\mathfrak X,\mathbb Q}\bigr).
\]
Morphisms are the \(A_0\)-linear maps whose localizations are morphisms in this
geometric Frobenius filtered strongly equivariant category.  The resulting
transported category is denoted by
\[
  A_0\operatorname{-mod}^{\Phi,\fstr}_{\mathfrak K,\mathrm{fp}}.
\]
Equivalently, it is the essential image of
\(F\text{-}\Coh^{\fstr}_{\mathfrak K}(\mathscr D^\dagger_{\mathfrak X,\mathbb Q})\)
under the global section functor, with its transported structure.  Thus two
different Frobenius/equivariant structures on the same underlying
\(A_0\)-module are not identified unless they are isomorphic in the transported
category.
\end{definition}

\begin{proposition}[Equivariant arithmetic localization as transported structure]
\label{prop:equivariant-ABB-trivial}
The arithmetic Beilinson--Bernstein equivalence at trivial character restricts,
by transport of structure, to an exact equivalence
\[
  \Gamma:
  F\text{-}\Coh^{\fstr}_{\mathfrak K}
  \bigl(\mathscr D^\dagger_{\mathfrak X,\mathbb Q}\bigr)
  \xrightarrow{\;\sim\;}
  A_0\operatorname{-mod}^{\Phi,\fstr}_{\mathfrak K,\mathrm{fp}}.
\]
The exact structure, kernels, cokernels, and simple objects on the right are
those transported from the geometric category through \(\Gamma\).  Its
quasi-inverse is \(\Loc\).
\end{proposition}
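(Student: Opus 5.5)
The statement is essentially a transport-of-structure assertion, so I would prove it formally from \Cref{prop:ABB-trivial} and \Cref{def:strong-equivariant-A0-modules}, without any new geometric input.

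\emph{Step 1: functor and full faithfulness.} Consider the forgetful functor \(U:F\text{-}\Coh^{\fstr}_{\mathfrak K}(\mathscr D^\dagger_{\mathfrak X,\mathbb Q})\to\Coh(\mathscr D^\dagger_{\mathfrak X,\mathbb Q})\). Define \(\Gamma\) on the geometric category by sending \(\mathcal M\) to \(\Gamma(\mathfrak X,U\mathcal M)\). The structure chosen on \(\Loc(\Gamma(\mathfrak X,U\mathcal M))\) is the one transported along the counit isomorphism \(\Loc\Gamma(U\mathcal M)\simeq U\mathcal M\). By \Cref{def:strong-equivariant-A0-modules}, a morphism in \(A_0\operatorname{-mod}^{\Phi,\fstr}_{\mathfrak K,\mathrm{fp}}\) is an \(A_0\)-linear map whose localization is a morphism in the geometric category. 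Since \(\Gamma\) and \(\Loc\) are mutually quasi-inverse on the underlying categories, this gives \(\operatorname{Hom}(\mathcal M,\mathcal M')\simeq\operatorname{Hom}(\Gamma\mathcal M,\Gamma\mathcal M')\), so \(\Gamma\) is fully faithful.

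\emph{Step 2: essential surjectivity and the quasi-inverse.} An object \(M\) of the transported category comes with a structure on \(\Loc(M)\) in the geometric category. The unit isomorphism \(M\simeq\Gamma\Loc(M)\) is then an isomorphism in the transported category, so \(\Gamma\) is essentially surjective. The same observation shows that \(\Loc\), equipped with the chosen structures, is a quasi-inverse. The one point requiring care is naturality of the unit and counit with respect to the transported structures. This follows because all structures on the algebraic side are defined through \(\Loc\).

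\emph{Step 3: exact structure and simple objects.} On the right-hand side I would simply declare a sequence admissible exact if and only if its localization is admissible in the sense of \Cref{lem:strong-equivariant-abelian}. Kernels, cokernels and simple objects are defined the same way, by transport. With these definitions, exactness of \(\Gamma\) and the correspondence of simple objects hold by construction. I would also remark that this is compatible with the underlying exactness of \(\Gamma\) on coherent modules, because \(\Gamma\) is an equivalence of abelian categories.

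The main obstacle is bookkeeping rather than mathematics. One must check that transporting along the counit gives a well-defined object of the geometric category, in particular that the Frobenius, weak equivariant, (SE1) and (SE2) data are invariant under isomorphism of the underlying coherent module. I would handle this by noting that each datum is carried along isomorphisms. In particular, the (SE2) level models can be transported through the isomorphism, since (SE2) asks only for the existence of some model.
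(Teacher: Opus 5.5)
Your proposal is correct and follows the same route as the paper: a pure transport of structure through the equivalence of \Cref{prop:ABB-trivial}. In both, the algebraic category is by definition the essential image, and the canonical isomorphism \(\Loc(M)\simeq\Loc\Gamma(\mathcal N)\simeq\mathcal N\) supplies the quasi-inverse; your version just makes explicit the bookkeeping the paper leaves implicit (full faithfulness, compatibility of unit and counit with the transported structures, and invariance of the Frobenius, weak equivariant, \textup{(SE1)} and \textup{(SE2)} data under isomorphism).
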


\begin{proof}
The ordinary arithmetic Beilinson--Bernstein theorem gives an equivalence
\[
  \Gamma:
  \Coh\bigl(\mathscr D^\dagger_{\mathfrak X,\mathbb Q}\bigr)
  \xrightarrow{\;\sim\;}
  A_0\operatorname{-mod}_{\mathrm{fp}},
\]
with quasi-inverse \(\Loc\). By definition,
\(A_0\operatorname{-mod}^{\Phi,\fstr}_{\mathfrak K,\mathrm{fp}}\)
is the essential image of the Frobenius filtered strongly equivariant coherent category under
this equivalence. Hence, if
\(M\in A_0\operatorname{-mod}^{\Phi,\fstr}_{\mathfrak K,\mathrm{fp}}\), then
there exists
\[
  \mathcal N\in
  F\text{-}\Coh^{\fstr}_{\mathfrak K}
  \bigl(\mathscr D^\dagger_{\mathfrak X,\mathbb Q}\bigr)
\]
with \(M\simeq \Gamma(\mathcal N)\), together with the transported
Frobenius filtered strongly equivariant structure. Since \(\Loc\) is the
quasi-inverse of \(\Gamma\), we have a canonical isomorphism
\[
  \Loc(M)\simeq \Loc\Gamma(\mathcal N)\simeq \mathcal N,
\]
and therefore \(\Loc(M)\) belongs to the filtered strongly equivariant
geometric category by definition of the transported algebraic category.
Conversely, if \(\mathcal M\) is Frobenius filtered strongly equivariant, then
\(\Gamma(\mathcal M)\), equipped with the transported structure, belongs to the
essential image by construction. Thus \(\Gamma\) and \(\Loc\) restrict to
mutually quasi-inverse exact equivalences.  No independent finite-level
construction on the algebra side is asserted here; the algebraic equivariant
condition is a transport of structure through localization.
\end{proof}

\begin{corollary}[Distribution-side classification]
\label{thm:distribution-classification}
The global section functor induces a bijection
\[
  \coprod_{O\in \mathcal K_s\backslash X_s}
  \Adm^{\Phi,\fstr}_{\mathcal K}(O,\overline O)
  \xrightarrow{\;\sim\;}
  \Irr
  \bigl(A_0\operatorname{-mod}^{\Phi,\fstr}_{\mathfrak K,\mathrm{fp}}\bigr),
\]
where \(\Irr\) refers to simple objects in the transported Frobenius filtered
strongly equivariant category of \Cref{def:strong-equivariant-A0-modules}.  The
object corresponding to \((O,E)\) has underlying finitely presented
\(A_0\)-module
\[
  \Gamma\bigl(\mathfrak X,j_{O,!+}(E)\bigr),
\]
equipped with the transported Frobenius filtered strongly equivariant
structure.
\end{corollary}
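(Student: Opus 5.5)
The plan is to obtain the corollary purely by transport of structure, combining \Cref{prop:equivariant-ABB-trivial} with \Cref{thm:orbit-classification}; no new geometric input is needed.

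\emph{Step 1: simple objects correspond under \(\Gamma\).} By \Cref{prop:equivariant-ABB-trivial}, \(\Gamma\) is an equivalence
\[
  F\text{-}\Coh^{\fstr}_{\mathfrak K}\bigl(\mathscr D^\dagger_{\mathfrak X,\mathbb Q}\bigr)
  \xrightarrow{\sim}
  A_0\operatorname{-mod}^{\Phi,\fstr}_{\mathfrak K,\mathrm{fp}},
\]
with quasi-inverse \(\Loc\). The exact structure and the subobjects on the right are, by \Cref{def:strong-equivariant-A0-modules}, those transported through \(\Gamma\). I will therefore check that an object \(\mathcal M\) is simple exactly when \(\Gamma(\mathcal M)\) is simple. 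An admissible subobject of \(\Gamma(\mathcal M)\) in the transported category is, by definition, \(\Gamma\) of a Frobenius filtered strongly equivariant subobject of \(\Loc\Gamma(\mathcal M)\simeq\mathcal M\). Hence the lattices of admissible subobjects on the two sides match. Consequently \(\Gamma\) induces a bijection \(\Irr F\text{-}\Coh^{\fstr}_{\mathfrak K}\to\Irr(A_0\operatorname{-mod}^{\Phi,\fstr}_{\mathfrak K,\mathrm{fp}})\) on isomorphism classes. Isomorphism classes are preserved because \(\Gamma\) is fully faithful onto the transported category.

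\emph{Step 2: compose with the geometric classification.} By \Cref{thm:orbit-classification}, the map \((O,E)\mapsto j_{O,!+}(E)\) is a bijection from \(\coprod_O\Adm^{\Phi,\fstr}_{\mathcal K}(O,\overline O)\) onto \(\Irr F\text{-}\Coh^{\fstr}_{\mathfrak K}\). I will compose it with the bijection of Step 1. The resulting bijection sends \((O,E)\) to \(\Gamma(\mathfrak X,j_{O,!+}(E))\), carrying the structure transported from the Frobenius filtered strongly equivariant structure of \Cref{prop:eq-intermediate-extension}; that structure lies in the category precisely because \([E]\in\Adm^{\Phi,\fstr}_{\mathcal K}\). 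This gives the stated description of the simple module attached to \((O,E)\).

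\emph{Main obstacle.} The only delicate point is Step 1. Simplicity on the algebra side must be read in the transported category, not in \(A_0\operatorname{-mod}_{\mathrm{fp}}\). A module \(\Gamma(\mathcal M)\) could, for instance, have \(A_0\)-submodules whose localizations are not equivariant. The statement explicitly takes \(\Irr\) in the transported sense, so I will spell out that every subobject in that category is \(\Gamma\) of an equivariant coherent subobject. This uses the fact that a subobject's structure is defined through \(\Loc\), together with the canonical isomorphism \(\Loc\Gamma\simeq\mathrm{id}\) from \Cref{prop:ABB-trivial}. Once that is in place, the rest is formal.
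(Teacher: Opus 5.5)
Your proposal is correct and follows the paper's own argument, which transports \Cref{thm:orbit-classification} through the equivalence of \Cref{prop:equivariant-ABB-trivial} and reads off \(\Gamma\bigl(\mathfrak X,j_{O,!+}(E)\bigr)\) as the global-section module of the corresponding simple object. Your Step 1 spells out what the paper leaves implicit in that proposition: simplicity is measured in the transported category, where subobjects of \(\Gamma(\mathcal M)\) are exactly the global sections of Frobenius filtered strongly equivariant subobjects of \(\mathcal M\).
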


\begin{proof}
Transport \Cref{thm:orbit-classification} through the equivalence of
\Cref{prop:equivariant-ABB-trivial}. The displayed formula is precisely the
global section module of the corresponding simple arithmetic
\(\mathscr D^\dagger\)-module.
\end{proof}

\begin{remark}
\label{rem:distribution-side-interpretation}
\Cref{thm:distribution-classification} is the representation-theoretic form of
the orbit classification. It says that the simple objects in the transported
Frobenius filtered strongly \(\mathfrak K\)-equivariant category over the
crystalline distribution algebra at trivial character are controlled by the
geometry of the \(\mathcal K_s\)-orbits on the special fiber of the formal flag
variety, together with irreducible Frobenius equivariant overconvergent
coefficient data satisfying the displayed admissibility condition on those
orbits.  The statement is not an additional classification of all finitely
presented modules over the crystalline distribution algebra; it is exactly the
transport through arithmetic localization of the geometric classification in
\Cref{thm:orbit-classification}.
\end{remark}

\section{Examples and geometric models}
\label{sec:examples}

The purpose of this section is to spell out the admissible-locus form of the
classification in several basic geometric situations.  These examples are not
intended to reprove the Bruhat-cell calculations of Huyghe--Schmidt
\cite[Section~5]{HuygheSchmidtIntermediate}.  In the Borel case, the
constant-coefficient Bruhat-cell objects recover the intermediate extensions
already identified there; the additional information in the present framework
is the filtered strongly equivariant structure, the stabilizer-equivariant
coefficient data, and the restriction to the admissible locus.  The examples
are deliberately elementary, but they serve two roles. First, they show how the
actual parameter set used in the theorem,
\[
  \coprod_{O\in \mathcal K_s\backslash X_s}
  \Adm^{\Phi,\fstr}_{\mathcal K}(O,\overline O),
\]
is read in practice in terms of orbits, stabilizers and intermediate
extensions.  Thus every coefficient object displayed below is understood either
as an explicitly verified admissible Frobenius coefficient object or as an
object already lying in the admissible locus.  Second, they illustrate how the same geometric data produce simple
modules over the crystalline distribution algebra
\[
  A_0=\Gamma(\mathfrak X,\mathscr D^\dagger_{\mathfrak X,\mathbb Q})
\]
by taking global sections.

\subsection{The transitive case: \texorpdfstring{\(\mathcal K=\mathcal G\)}{K=G}}

Let
\[
  \mathfrak X=\widehat{\mathcal G/\mathcal B}
\]
be the formal flag variety and take
\[
  \mathcal K=\mathcal G.
\]
Then \(\mathcal G_s\) acts transitively on
\[
  X_s=\mathcal G_s/\mathcal B_s.
\]
There is therefore a single orbit,
\[
  O=X_s.
\]
In this case \(\overline O=O=X_s\) and the intermediate extension functor is
just the identity functor. Therefore Theorem~\ref{thm:orbit-classification}
gives
\[
  \operatorname{Irr}
  F\text{-}\operatorname{Coh}^{\fstr}_{\widehat{\mathcal G}}
  \bigl(\mathscr D^\dagger_{\mathfrak X,\mathbb Q}\bigr)
  \cong
  \Adm^{\Phi,\fstr}_{\mathcal G}(X_s,X_s).
\]
Equivalently, every simple Frobenius filtered strongly \(\widehat{\mathcal G}\)-equivariant
coherent arithmetic \(\mathscr D^\dagger\)-module is already a Frobenius equivariant
overconvergent isocrystal on the whole special fiber.

Fix the base point
\[
  x_0=e\mathcal B_s\in \mathcal G_s/\mathcal B_s.
\]
Its stabilizer is \(\mathcal B_s\).  Intrinsically, the theorem uses the
admissible locus \(\Adm^{\Phi,\fstr}_{\mathcal G}(X_s,X_s)\).  If one also
uses the auxiliary stabilizer description of
Proposition~\ref{prop:stabilizer-description}, under its explicit descent
hypotheses, the corresponding coefficient data are read as irreducible
Frobenius overconvergent coefficient data at \(x_0\) with compatible
\(\mathcal B_s\)-action.  In this optional stabilizer-admissible reading, the transitive case reduces to the homogeneous-space principle
\[
  \mathcal G_s\text{-equivariant coefficients on }\mathcal G_s/\mathcal B_s
  \quad\leftrightarrow\quad
  \mathcal B_s\text{-coefficient data at }x_0.
\]
On the distribution side, the corresponding simple objects are
\[
  \Gamma(\mathfrak X,E),
  \qquad
  [E]\in \Adm^{\Phi,\fstr}_{\mathcal G}(X_s,X_s).
\]
This example has no boundary contribution: the entire classification comes
from coefficients on a single homogeneous orbit.  It is therefore the cleanest
example illustrating the regularity nature of the theorem: no intermediate
extension or Bruhat-cell calculation is involved, and the content is that the
filtered strong equivariance condition places coherent objects in the holonomic
range by the microlocal argument.

\subsection{The rank-one Bruhat stratification: \texorpdfstring{\(\mathrm{SL}_2\)}{SL2} and \texorpdfstring{\(\mathcal K=\mathcal B\)}{K=B}}

Let
\[
  \mathcal G=\mathrm{SL}_{2,\mathcal V},
\]
let \(\mathcal B\subseteq\mathcal G\) be the upper triangular Borel subgroup,
and let
\[
  \mathfrak X=\widehat{\mathbb P^1_{\mathcal V}}.
\]
Then
\[
  X_s\simeq \mathbb P^1_k.
\]
We write homogeneous coordinates as \([X:Y]\), and we let
\[
  \mathcal B_s=
  \left\{
  \begin{pmatrix}
  a & b \\
  0 & a^{-1}
  \end{pmatrix}
  \right\}
\]
act on column vectors. With this convention, the point
\[
  \infty=[1:0]
\]
is fixed by \(\mathcal B_s\), and the complement
\[
  O_{\mathrm{op}}=\mathbb P^1_k\setminus\{\infty\}
  \simeq \mathbb A^1_k
\]
is the open Bruhat cell. Thus the \(\mathcal B_s\)-orbit decomposition is
\[
  \mathbb P^1_k=O_{\mathrm{op}}\sqcup O_{\mathrm{cl}},
  \qquad
  O_{\mathrm{cl}}=\{\infty\}.
\]

\begin{lemma}[Stabilizer admissibility in the rank-one open cell]
\label{lem:SL2-open-cell-admissible}
For the open orbit \(O_{\mathrm{op}}\simeq\mathbb A^1_k\) with base point
\([0:1]\), the orbit map
\[
  q:\mathcal B_s\longrightarrow O_{\mathrm{op}},
  \qquad
  g\longmapsto g\cdot[0:1],
\]
is stabilizer-admissible in the sense used in
\Cref{app:stabilizer-description}.
\end{lemma}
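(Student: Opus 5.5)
The plan is to verify, one by one, the conditions that \Cref{app:stabilizer-description} packages into ``stabilizer-admissible'', using only explicit coordinates on \(\mathcal B_s\) and \(O_{\mathrm{op}}\). I am assuming that the appendix notion consists of four requirements: the geometric stabilizer of the base point is smooth, the orbit map \(q\) is smooth and surjective and identifies \(O_{\mathrm{op}}\) with the quotient \(\mathcal B_s/\mathrm{Stab}\), and descent of overconvergent (\(F\)-)isocrystals along \(q\) is effective and compatible with the compactifications \((O_{\mathrm{op}},\mathbb P^1_k)\).

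\emph{Step 1: the stabilizer and orbit map in coordinates.} Write \(g=\begin{pmatrix} a & b\\ 0 & a^{-1}\end{pmatrix}\). Then
\[
  g\cdot[0:1]=[b:a^{-1}]=[ab:1],
\]
so in the affine coordinate \(z=X/Y\) on \(O_{\mathrm{op}}\simeq\mathbb A^1_k\) the orbit map is \(q(a,b)=ab\). The stabilizer of \([0:1]\) is therefore \(\{b=0\}\), which is the diagonal torus \(\mathcal T_s\simeq\mathbb G_m\). This is a smooth group scheme, so the separability clause of \Cref{hyp:orbit-separability} holds at this orbit.

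\emph{Step 2: smoothness, surjectivity and the quotient.} On \(\mathcal B_s\simeq\mathbb G_m\times\mathbb A^1\) one has \(dq=b\,da+a\,db\), and \(a\) is a unit, so \(dq\) is nowhere zero and \(q\) is smooth. It is surjective because \(q(1,z)=z\). Right multiplication by \(t=\operatorname{diag}(c,c^{-1})\) sends \((a,b)\) to \((ac,bc^{-1})\), which preserves \(ab\). Hence \(q\) is a \(\mathcal T_s\)-torsor over \(\mathbb A^1_k\), and it is trivialized by the section \(s(z)=\begin{pmatrix}1&z\\0&1\end{pmatrix}\), giving \(\mathcal B_s\simeq\mathbb A^1\times\mathcal T_s\). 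In particular \(\mathcal B_s/\mathcal T_s\xrightarrow{\sim}O_{\mathrm{op}}\).

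\emph{Step 3: descent, which I expect to be the main obstacle.} The descent along \(q\) must be checked at the level of overconvergent isocrystals on the couples \((\mathcal B_s,\overline{\mathcal B_s})\to(O_{\mathrm{op}},\mathbb P^1_k)\), not merely for the underlying varieties. The first thing I would try is to exploit the global section \(s\). Since \(q\circ s=\mathrm{id}\), pullback \(q^*\) is faithful, and an object with descent datum is recovered as \(s^*\) of itself. This makes effectivity formal, and it is compatible with Frobenius. If the appendix phrases descent via Lazda's theorem, I would invoke \cite[Theorem 5.1]{LazdaDescent} for the smooth surjective morphism \(q\) as a fallback.

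For the overconvergence condition, I would choose the compactification \(\mathbb P^1_k\times\mathbb P^1_k\supset\mathbb A^1\times\mathbb G_m\). On it \(q\) and \(s\) extend to morphisms of frames over the proper formal model \(\widehat{\mathbb P^1_{\mathcal V}}\). The point to verify is that the boundary \(\{\infty\}\) pulls back into the boundary of the compactified source, which holds because \(q^{-1}(\infty)=\varnothing\) inside \(\mathcal B_s\). Together with Steps 1 and 2, this gives every clause of stabilizer-admissibility for \(q\).
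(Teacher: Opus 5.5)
Your proposal is correct and follows essentially the same route as the paper: the coordinate computation \(q(a,b)=ab\) with stabilizer the diagonal torus \(\mathcal T_s\), the trivialization \(\mathcal B_s\simeq\mathbb A^1\times\mathcal T_s\) by a section that identifies \(q\) with a projection and hence gives the torsor condition, and explicit split descent along that section together with a check that the boundary \(\{\infty\}\) is respected. The differences are inessential. Your \(dq\) smoothness check is redundant once the trivialization is written down. You compactify the source as \(\mathbb P^1\times\mathbb P^1\), whereas the paper uses the partial compactification \(\mathcal G_m\times\mathbb P^1\); the split-descent argument works verbatim in either case, because \(q\), the section \(s\), and the comparison map \((s\circ q,\mathrm{id})\) are morphisms of pairs for both choices.
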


\begin{proof}
Write an element of \(\mathcal B_s\) as
\[
  \begin{pmatrix} a&b\\0&a^{-1}\end{pmatrix}.
\]
On the affine coordinate \(z=X/Y\) of
\(\mathbb P^1_k\setminus\{\infty\}\), the point \([0:1]\) is \(z=0\) and the
orbit map is
\[
  q(a,b)=ab.
\]
The stabilizer of \(0\) is the diagonal torus \(\mathcal T_s\).  The morphism
\[
  \mathcal G_{m,k}\times\mathbb A^1_k\longrightarrow\mathcal B_s,
  \qquad
  (a,z)\longmapsto
  \begin{pmatrix}a&z/a\\0&a^{-1}\end{pmatrix}
\]
identifies \(q\) with the projection
\[
  \mathcal G_{m,k}\times\mathbb A^1_k\longrightarrow\mathbb A^1_k,
  \qquad (a,z)\longmapsto z,
\]
and the right action of \(\mathcal T_s\) with multiplication on the
\(\mathcal G_m\)-factor.  Hence the groupoid
\[
  \mathcal B_s\times\mathcal T_s\rightrightarrows\mathcal B_s
\]
identifies with
\[
  \mathcal B_s\times_{O_{\mathrm{op}}}\mathcal B_s
  \rightrightarrows \mathcal B_s.
\]
This proves the torsor condition.

For effective descent of overconvergent isocrystals on the pair
\((\mathbb A^1_k,\mathbb P^1_k)\), note that the projection above has the
section \(z\mapsto(1,z)\), and both projection and section are morphisms of
pairs
\[
  (\mathcal G_m\times\mathbb A^1,\mathcal G_m\times\mathbb P^1)
  \rightleftarrows
  (\mathbb A^1,\mathbb P^1).
\]
Thus descent along this projection is explicit: an object with descent datum
is recovered by pulling back along the section, and the cocycle condition gives
the unique comparison isomorphism after pulling back to
\(\mathcal G_m\times\mathbb A^1\).  This elementary split-descent argument is
compatible with overconvergence along the boundary \(\{\infty\}\), because the
boundary on the source is exactly \(\mathcal G_m\times\{\infty\}\).  Therefore
the descent condition required in the stabilizer-admissibility definition is
satisfied.
\end{proof}

\begin{remark}[Filtered admissibility in the rank-one examples]
\label{rem:SL2-SE2}
For the rank-one character objects explicitly displayed below, condition
\textup{(SE2)} is checked directly.  On the open cell the preceding proof
identifies the orbit map with the split projection
\(\mathcal G_m\times\mathbb A^1\to\mathbb A^1\).  A character coefficient pulls
back to a rank-one constant object on this split product with integral weight;
in the affine coordinate used above, the fundamental vector fields preserve the
standard coherent generator, so the practical criterion of
\Cref{prop:practical-SE2-criterion} applies.  The closed orbit has no boundary,
so the same assertion is immediate there.  Thus the rank-one character objects
appearing below define classes in the corresponding admissible loci
\(\Adm^{\Phi,\fstr}_{\mathcal B}\).  We do not use this remark as an automaticity
theorem for arbitrary coefficient objects.
\end{remark}

Theorem~\ref{thm:orbit-classification} gives
\[
  \operatorname{Irr}
  F\text{-}\operatorname{Coh}^{\fstr}_{\widehat{\mathcal B}}
  \bigl(\mathscr D^\dagger_{\widehat{\mathbb P^1},\mathbb Q}\bigr)
  \cong
  \Adm^{\Phi,\fstr}_{\mathcal B}(\mathbb A^1_k,\mathbb P^1_k)
  \sqcup
  \Adm^{\Phi,\fstr}_{\mathcal B}(\{\infty\},\{\infty\}).
\]
Thus every simple Frobenius filtered strongly \(\widehat{\mathcal B}\)-equivariant coherent
arithmetic \(\mathscr D^\dagger\)-module on \(\widehat{\mathbb P^1}\) belongs
to one of two geometric families: it is either the intermediate extension of
an equivariant overconvergent isocrystal from the open Bruhat cell, or it is
supported on the closed fixed point.

\begin{example}[Open Bruhat cell]
\label{ex:SL2-open-cell}
Let
\[
  j:\mathbb A^1_k\hookrightarrow \mathbb P^1_k
\]
be the open immersion. The point \([0:1]\in\mathbb A^1_k\) has stabilizer the
diagonal torus
\[
  \mathcal T_s=
  \left\{
  \begin{pmatrix}
  a & 0 \\
  0 & a^{-1}
  \end{pmatrix}
  \right\}
  \subseteq \mathcal B_s.
\]
After choosing this base point, the Frobenius equivariant
overconvergent isocrystals on the open cell used in the theorem are controlled,
for the split stabilizer-generated subcategory of \Cref{rem:SL2-SE2}, by
coefficient data with compatible \(\mathcal T_s\)-action.

The constant overconvergent isocrystal
\[
  \mathcal O^\dagger_{\mathbb A^1_k}
\]
with trivial stabilizer action gives the simple object
\[
  j_{!+}\bigl(\mathcal O^\dagger_{\mathbb A^1_k}\bigr).
\]
More generally, every irreducible Frobenius equivariant coefficient
datum on the open orbit whose class lies in
\(\Adm^{\Phi,\fstr}_{\mathcal B}(\mathbb A^1_k,\mathbb P^1_k)\) gives a simple arithmetic
\(\mathscr D^\dagger\)-module by the same middle-extension construction. For instance, if
\[
  \chi:\mathcal T_s\longrightarrow \mathbb G_{m,k}
\]
is a character and \(V_\chi\) is its one-dimensional coefficient space, then
\Cref{prop:stabilizer-description} gives the corresponding equivariant
overconvergent isocrystal on the open cell.  For example, for any integer
\(n\), the character
\[
  \chi_n:
  \begin{pmatrix}a&0\\0&a^{-1}\end{pmatrix}
  \longmapsto a^n
\]
produces a one-dimensional coefficient object \(E_n\); the associated simple
object is \(j_{!+}(E_n)\). More precisely, the notation below
is shorthand for the object obtained by applying the stabilizer equivalence of
\Cref{prop:stabilizer-description} to the coefficient datum \(V_\chi\). In the
notation
\[
  E_\chi\simeq \mathcal B_s\times^{\mathcal T_s}V_\chi
\]
we are not forming an associated vector bundle in the classical sense; rather,
this denotes the object of
\(\Isoc^{\dagger}_{\mathcal B}(\mathbb A^1_k,\mathbb P^1_k)\) obtained from
the one-dimensional stabilizer coefficient datum \(V_\chi\) through the
stabilizer equivalence of \Cref{prop:stabilizer-description}. The corresponding
simple object is
\[
  j_{!+}(E_\chi).
\]
If \(\chi\neq1\), then \(E_\chi\) is not isomorphic to the trivial object
in the Frobenius equivariant isocrystal category: its fiber at the base point carries the
nontrivial stabilizer character \(\chi\).  In this split rank-one example the
underlying non-equivariant connection may be trivial, but the equivariant
coefficient object, and hence the simple Frobenius filtered equivariant arithmetic
\(\mathscr D^\dagger\)-module \(j_{!+}(E_\chi)\), is genuinely different
from the trivial-coefficient intermediate extension.  The trivial character
recovers the constant object, while nontrivial characters produce nontrivial
equivariant coefficient objects on the open Bruhat cell.  By
\Cref{rem:SL2-SE2}, each character object \(E_\chi\), and in particular each
\(E_n\), defines a class
\[
  [E_\chi]\in
  \Adm^{\Phi,\fstr}_{\mathcal B}(\mathbb A^1_k,\mathbb P^1_k).
\]
Thus these are concrete nontrivial points of the admissible locus appearing in
the classification theorem.
\end{example}

\begin{example}[Closed Bruhat cell]
\label{ex:SL2-closed-orbit}
Let
\[
  i:\{\infty\}\hookrightarrow \mathbb P^1_k
\]
be the closed immersion. Since the orbit is a point, an overconvergent
isocrystal on \((\{\infty\},\{\infty\})\) is finite-dimensional coefficient
data at \(\infty\). The stabilizer of \(\infty\) is all of \(\mathcal B_s\).
Consequently, the Frobenius equivariant coefficient objects used in the
classification are irreducible coefficient data with compatible
\(\mathcal B_s\)-action, viewed inside
\[
  \Adm^{\Phi,\fstr}_{\mathcal B}(\{\infty\},\{\infty\}).
\]
The corresponding simple arithmetic \(\mathscr D^\dagger\)-modules are
\[
  i_+(V)=i_{!+}(V),
  \qquad
  [V]\in
  \Adm^{\Phi,\fstr}_{\mathcal B}(\{\infty\},\{\infty\}).
\]
Indeed, for a closed immersion the extension by zero and the direct image
coincide on objects supported on the closed stratum; there is no boundary
across which to take a nontrivial minimal extension. Thus the intermediate
extension agrees with the closed direct image:
\[
  i_{!+}(V)=i_+(V).
\]
These are skyscraper-type arithmetic \(\mathscr D^\dagger\)-modules at the
closed orbit, equipped with their induced \(\widehat{\mathcal B}\)-equivariant
structures.
\end{example}

\subsection{A torus action on \texorpdfstring{\(\widehat{\mathbb P^1}\)}{P1}}

We keep \(\mathcal G=\mathrm{SL}_{2,\mathcal V}\) and
\(\mathfrak X=\widehat{\mathbb P^1_{\mathcal V}}\), but now take
\[
  \mathcal K=\mathcal T
\]
to be the diagonal torus. On the affine coordinate
\[
  z=X/Y
\]
of \(\mathbb A^1_k=\mathbb P^1_k\setminus\{\infty\}\), an element
\[
  \begin{pmatrix}
  a & 0 \\
  0 & a^{-1}
  \end{pmatrix}
\]
acts by
\[
  z\longmapsto a^2 z.
\]
If the square character has nonzero differential, for instance if
\(p\neq2\), the special fiber decomposes into three separable
\(\mathcal T_s\)-orbits:
\[
  \mathbb P^1_k=\{0\}\sqcup \mathbb G_{m,k}\sqcup \{\infty\}.
\]
Under this separability assumption the finite-orbit and orbit-separability hypotheses are satisfied, and the orbit classification
becomes
\[
\begin{aligned}
  \operatorname{Irr}
  F\text{-}\operatorname{Coh}^{\fstr}_{\widehat{\mathcal T}}
  \bigl(\mathscr D^\dagger_{\widehat{\mathbb P^1},\mathbb Q}\bigr)
  \cong{}&
  \Adm^{\Phi,\fstr}_{\mathcal T}(\{0\},\{0\}) \\
  &\sqcup
  \Adm^{\Phi,\fstr}_{\mathcal T}(\mathbb G_{m,k},\mathbb P^1_k) \\
  &\sqcup
  \Adm^{\Phi,\fstr}_{\mathcal T}(\{\infty\},\{\infty\}).
\end{aligned}
\]
The three families of simple objects are therefore
\[
  i_{0,+}(V_0),
  \qquad
  j_{!+}(E),
  \qquad
  i_{\infty,+}(V_\infty),
\]
where \(V_0\) and \(V_\infty\) are irreducible admissible Frobenius equivariant
coefficient data at the two fixed points and \(E\) is an irreducible Frobenius
\(\mathcal T\)-equivariant overconvergent isocrystal on
\((\mathbb G_{m,k},\mathbb P^1_k)\) whose class lies in
\(\Adm^{\Phi,\fstr}_{\mathcal T}(\mathbb G_{m,k},\mathbb P^1_k)\).

This example shows that the theorem is not tied only to the Bruhat
stratification; the actual coefficient parameter set is the admissible locus
inside the Frobenius equivariant isocrystals on each orbit. It applies to any
smooth subgroup whose special fiber has a finite orbit decomposition on the
flag variety.  Thus the examples should be read as tests of the orbitwise
admissible-locus formalism rather than as a replacement for the explicit
Bruhat-cell constructions already available in the literature.

\subsection{Distribution-side interpretation}

The examples above have immediate algebraic counterparts. Let
\[
  A_0=\Gamma(\mathfrak X,\mathscr D^\dagger_{\mathfrak X,\mathbb Q}).
\]
For any smooth subgroup \(\mathcal K\) satisfying the finite-orbit and orbit-separability hypotheses,
Theorem~\ref{thm:distribution-classification} identifies the simple objects of
\[
  F\text{-}A_0\operatorname{-mod}^{\fstr}_{\widehat{\mathcal K},\mathrm{fp}}
\]
with the same Frobenius orbitwise coefficient data satisfying the admissibility condition. The simple module corresponding to
\[
  (O,E)
\]
is
\[
  \Gamma\bigl(\mathfrak X,j_{O,!+}(E)\bigr).
\]
The geometry of the special-fiber orbit decomposition therefore organizes
simple modules over the crystalline distribution algebra at trivial character.

For instance, in the rank-one Bruhat example with \(\mathcal K=\mathcal B\),
the simple Frobenius filtered strongly \(\widehat{\mathcal B}\)-equivariant finitely presented
\(A_0\)-modules split into the two families
\[
  \Gamma\bigl(\widehat{\mathbb P^1_{\mathcal V}},
  j_{!+}(E)\bigr),
  \qquad
  [E]\in
  \Adm^{\Phi,\fstr}_{\mathcal B}(\mathbb A^1_k,\mathbb P^1_k),
\]
and
\[
  \Gamma\bigl(\widehat{\mathbb P^1_{\mathcal V}},
  i_+(V)\bigr),
  \qquad
  [V]\in
  \Adm^{\Phi,\fstr}_{\mathcal B}(\{\infty\},\{\infty\}).
\]
In the torus example, the algebraic category is similarly divided into three
geometric families corresponding to the two fixed points and the open torus
orbit.

\begin{remark}[No explicit distribution-module calculation in the examples]
\label{rem:no-explicit-global-section-calculation}
The examples are meant to identify the orbitwise coefficient data that label
simple objects and their images under global sections.  We do not compute the
modules
\[
  \Gamma\bigl(\widehat{\mathbb P^1_{\mathcal V}},j_{!+}(E_n)\bigr)
\]
explicitly inside the crystalline distribution algebra.  Such a computation
would require a separate analysis of the resulting global-section modules and
is not used in the proof of the classification theorem.
\end{remark}

\subsection{Comparison with the classical orbit classification}

The following table summarizes the dictionary between the classical complex
picture and the arithmetic picture developed in this article. The third
column records the role played by each object in the classification.
\begin{center}
\footnotesize
\renewcommand{\arraystretch}{1.18}
\begin{tabularx}{\textwidth}{@{}>{\raggedright\arraybackslash}p{0.27\textwidth}
                        >{\raggedright\arraybackslash}p{0.34\textwidth}
                        >{\raggedright\arraybackslash}X@{}}
\toprule
\textbf{Classical complex setting} &
\textbf{Arithmetic setting} &
\textbf{Role in the theorem} \\
\midrule
\(D_{G/B}\) &
\(\mathscr D^\dagger_{\widehat{\mathcal G/\mathcal B},\mathbb Q}\) &
Sheaf of differential operators whose coherent equivariant modules are studied. \\
\addlinespace[0.35em]
\(K\)-orbits on \(G/B\) &
\(\mathcal K_s\)-orbits on \(X_s\) &
Geometric strata controlling supports and conormal directions. \\
\addlinespace[0.35em]
\(K\)-equivariant local systems &
\(\mathcal K\)-equivariant overconvergent isocrystals &
Coefficient objects attached to each orbit. \\
\addlinespace[0.35em]
\(j_{!*}\) &
\(j_{!+}\) &
Minimal extension from an orbit to its closure. \\
\addlinespace[0.35em]
regularity class of equivariant \(D\)-modules &
geometrically overholonomic arithmetic modules &
Arithmetic finiteness and six-functor stability class obtained in the Frobenius filtered strongly equivariant range. \\
\addlinespace[0.35em]
\(U(\mathfrak g)\)-modules with central character &
\(A_0\)-modules at trivial character &
Global-section side of the orbit classification. \\
\bottomrule
\end{tabularx}
\end{center}
Here ``geometrically overholonomic'' is not meant as a literal synonym for
classical regular holonomicity.  It is the arithmetic stability condition used
by the six-functor formalism and is stronger than the holonomicity obtained
from the characteristic-variety estimate alone.  The theorem proved in this
article is therefore not merely a formal translation of terminology. The arithmetic proof requires Berthelot's theory
of \(\mathscr D^\dagger\)-modules, the characteristic-variety formalism at
finite level, the arithmetic intermediate extension, and the equivariant
orbital devissage proved above. The examples in this section show how the
abstract classification becomes a concrete orbit-by-orbit description of
simple arithmetic \(\mathscr D^\dagger\)-modules and of their global sections,
while keeping separate the objects already constructed in the Bruhat-cell
setting by Huyghe--Schmidt from the equivariant admissibility information added
here.

\section{Further directions}
\label{sec:further-directions}

The results proved in this article suggest several continuations.  We describe
some of them in a form that keeps the technical constraints of the arithmetic
setting visible.  The point is not only to enlarge the class of examples, but
to understand which parts of the classical representation-theoretic picture can
be transported to Berthelot's arithmetic theory without losing the finiteness
properties encoded by overholonomicity.

A preliminary problem is to clarify the exact size of the filtered strongly
equivariant category.  The present paper treats filtered finite-level
realizability as an explicit hypothesis.  By \Cref{prop:practical-SE2-criterion},
a positive automaticity theorem would follow from the existence, after raising
the level locally, of coherent generators stable under the finitely many
fundamental vector fields.  The local rank-one calculation of
\Cref{prop:rank-one-integral-exponent-SE2} shows that, once a geometric
intermediate-extension model with integral logarithmic exponents is already in
hand, filtered realizability may be checked by an explicit generator.  The
remaining open question is the abstract finite-level coherence problem: whether
ordinary infinitesimal strong equivariance alone produces such stable coherent
generators.  A positive answer would upgrade the main theorem to the unfiltered
strongly equivariant category.  We leave this problem open.

A first direction is the extension of the present orbit classification to
twisted arithmetic differential operators.  The article has deliberately worked
at trivial infinitesimal character, where the sheaf
\(\mathscr D^\dagger_{\mathfrak X,\mathbb Q}\) and its global section algebra
\(A_0\) give the cleanest form of arithmetic localization.  For a twisting
parameter \(\lambda\), a parallel theory would have to be developed for
\[
  \mathscr D^\dagger_{\mathfrak X,\lambda},
  \qquad
  A_\lambda=
  \Gamma\bigl(\mathfrak X,
  \mathscr D^\dagger_{\mathfrak X,\lambda}\bigr),
\]
provided \(\lambda\) lies in the range where arithmetic localization is an
equivalence.  The geometric part of such an extension would require a separate proof: one
would have to establish the corresponding moment-map containment for twisted
arithmetic differential operators and then use finite orbit type plus orbit separability to obtain
conormal containment.  The genuinely new issue is the compatibility
between the twisting datum and the infinitesimal \(\mathfrak K\)-action.  In
particular, one must formulate a twisted version of strong equivariance in
which the Lie algebra action on the module agrees with the action by twisted
arithmetic differential operators.  A satisfactory twisted Frobenius theory would require a separately proved
localization and overholonomicity statement before one could assert a bijection
of the form
\[
  \coprod_{O\in\mathcal K_s\backslash X_s}
  \operatorname{Irr}
  F\text{-}\Isoc^{\dagger}_{\mathcal K,\lambda}(O,\overline O)
  \;\cong\;
  \operatorname{Irr}
  F\text{-}\Coh^{\fstr}_{\mathfrak K}
  \bigl(\mathscr D^\dagger_{\mathfrak X,\lambda}\bigr),
\]
where the coefficient category on each orbit records the restriction of the
arithmetic twist to the pair \((O,\overline O)\).  This would be the arithmetic
counterpart of the usual passage from untwisted equivariant \(D\)-modules to
localization at arbitrary infinitesimal character.

A second direction concerns the representation-theoretic meaning of the
classification on the crystalline distribution side.  The equivalence used in
\Cref{sec:distributions} transports a simple Frobenius filtered strongly equivariant arithmetic
\(\mathscr D^\dagger\)-module to a simple module over
\[
  A_0=
  \Gamma\bigl(\mathfrak X,
  \mathscr D^\dagger_{\mathfrak X,\mathbb Q}\bigr).
\]
It is natural to ask how these modules sit inside categories of locally
analytic representations of \(p\)-adic groups.  In the classical complex
picture, the orbit classification of equivariant \(D\)-modules is tied to
Harish--Chandra modules, standard modules, and their irreducible quotients.
An arithmetic analogue would require a comparison between
\(A_0\)-modules arising from
\(\Gamma(\mathfrak X,j_{O,!+}(E))\) and representation-theoretic objects
constructed from locally analytic distribution algebras.  One possible route is
to study whether the orbit filtration of a strongly equivariant
\(\mathscr D^\dagger\)-module induces a geometric filtration on the associated
crystalline distribution module, whose graded pieces are controlled by
stabilizer data on the special-fiber orbits:
\[
  \operatorname{gr}\,
  \Gamma(\mathfrak X,\mathcal M)
  \quad\text{built from}\quad
  \Gamma\bigl(\mathfrak X,j_{O,!+}(E)\bigr).
\]
Such a comparison would clarify which arithmetic sheaves correspond to
geometric avatars of locally analytic principal series, and which new simple
objects appear because the coefficients are overconvergent isocrystals rather
than ordinary local systems.  In rank one, the concrete objects
\[
  \Gamma\bigl(\widehat{\mathbb P^1_{\mathcal V}},j_{!+}(E_n)\bigr)
\]
attached to the characters \(\chi_n:\operatorname{diag}(a,a^{-1})\mapsto a^n\)
are natural first test cases.  One may ask whether, after a suitable comparison
between Berthelot's crystalline distribution algebra and Schneider--Teitelbaum
style locally analytic distribution algebras, these modules are related to
arithmetic or crystalline avatars of locally analytic principal series.  We do
not assert such an identification here; it would require a separate comparison
theorem between the crystalline and locally analytic distribution settings.

A third direction is the development of arithmetic Harish--Chandra sheaves for
symmetric pairs.  Suppose that \(\mathcal K\) is the fixed subgroup of an
involution of \(\mathcal G\), or more generally that the pair
\((\mathcal G,\mathcal K)\) has finitely many orbits on the flag variety.  The
present theorem applies to such a situation whenever \(\mathcal K\) is smooth
and the special-fiber orbit condition holds.  The next step is to study the
geometry of these orbit closures, their conormal varieties, and the resulting
arithmetic standard and costandard objects.  In analogy with the classical
Beilinson--Bernstein and Kashiwara--Schmid picture, a possible arithmetic extension would study
Harish--Chandra sheaves to be organized by orbit closures and equivariant
coefficient data:
\[
  (O,E)
  \quad\longmapsto\quad
  j_{O,!}(E),\quad j_{O,+}(E),\quad j_{O,!+}(E).
\]
The simple objects are given by the middle extensions in this article, but the
standard and costandard extensions may contain additional information about
multiplicities, extension groups, and possible arithmetic analogues of
Kazhdan--Lusztig phenomena.

A fourth direction is the study of characteristic cycles and multiplicities in
the arithmetic setting.  The proof of regularity uses the inclusion
\[
  \Car(\mathcal M)
  \subseteq
  \bigcup_{O\in\mathcal K_s\backslash X_s}T^*_O X_s,
\]
but it does not attempt to compute the multiplicity with which each conormal
component occurs.  For a geometrically overholonomic filtered strongly equivariant module, one may study characteristic cycles of the form
\[
  \operatorname{CC}(\mathcal M)
  =
  \sum_{O\in\mathcal K_s\backslash X_s}
  m_O(\mathcal M)\,[T^*_O X_s],
  \qquad
  m_O(\mathcal M)\in\mathbb Z_{\ge0}.
\]
Understanding the integers \(m_O(\mathcal M)\) for
\(\mathcal M=j_{O,!+}(E)\) would refine the orbit classification from a
classification of simple objects to a microlocal invariant theory.  It would
also open the door to arithmetic analogues of characteristic-cycle formulas
for intersection cohomology \(D\)-modules and perverse sheaves.

A final direction suggested by the equivariant devissage is a broader stratified
formalism.  The flag variety is special because arithmetic localization is
available and the relevant orbit stratifications are representation-theoretic.
Nevertheless, the argument isolates a mechanism that may apply in other smooth
proper formal schemes with finitely many orbits satisfying orbit separability under a smooth group action:
\[
\begin{aligned}
  &\text{filtered strong equivariance}
  \Longrightarrow \text{moment-zero containment}
  \Longrightarrow \text{holonomicity},\\
  &\text{Frobenius overholonomicity}
  \Longrightarrow \text{middle-extension extraction}
  \Longrightarrow \text{orbital devissage}.
\end{aligned}
\]
The extent to which this mechanism survives outside the flag variety depends
on the availability of localization, intermediate extension, and a good theory
of equivariant overconvergent coefficients on the strata.  Thus the present
article provides not only an orbit classification theorem for formal
flag varieties, but also as a model for a more general theory of filtered strongly
equivariant arithmetic \(\mathscr D^\dagger\)-modules on stratified formal
schemes.

\appendix
\section{Auxiliary stabilizer description}
\label{app:stabilizer-description}
\label{subsec:stabilizer-description}

This appendix records a stabilizer description of the coefficient category on a single
orbit.  This result is not used in the proof of regularity or in the proof of
the main orbit classification, but it makes the classification more concrete
when the required descent hypotheses are available.  Because overconvergent isocrystals are
objects on pairs, the statement must include the boundary in the descent datum.
For this reason, the stabilizer category below is not the category of abstract
representations of the stabilizer.  It is the category of stabilizer coefficient
data whose descent is effective in the overconvergent category of the pair
\((O,\overline O)\).

Let \(x\in O(\overline{k})\) be a geometric point and let
\[
  H_x:=\operatorname{Stab}_{\mathcal K_{s,\overline k}}(x)
\]
be its stabilizer.  We shall use the orbit morphism
\[
  q_x:
  \mathcal K_{s,\overline{k}}
  \longrightarrow
  O_{\overline{k}},
  \qquad
  g\longmapsto g\cdot x.
\]
The natural right action of \(H_x\) on \(\mathcal K_{s,\overline k}\) is given
by multiplication, and the fibers of \(q_x\) are the corresponding
\(H_x\)-orbits.  In the descent formalism used below, we require this orbit map
to be an effective descent cover for overconvergent isocrystals on the relevant
pair.  This requirement is made explicit in
Definition~\ref{def:stabilizer-admissible-orbit}; no automatic descent property
is used without being included in that hypothesis.

To speak about overconvergent isocrystals one must realize \(q_x\) as a
morphism of pairs
\[
  q_x^\natural:
  (U_x,\overline U_x)
  \longrightarrow
  (O_{\overline{k}},\overline O_{\overline{k}}),
\]
whose open part is the orbit map
\(q_x:\mathcal K_{s,\overline k}\to O_{\overline k}\).  We require this
realization to be compatible with the left \(\mathcal K_{s,\overline k}\)-action
and the right \(H_x\)-action used in the torsor groupoid.  The choice of
\(\overline U_x\) and these compatibilities are part of the
stabilizer-admissibility hypothesis.  This is
important: the boundary \(\overline O_{\overline k}\setminus O_{\overline k}\)
must be visible after pullback.  In the rank-one example of
\Cref{lem:SL2-open-cell-admissible}, for instance, the split realization is
\((\mathcal G_m\times\mathbb A^1,\mathcal G_m\times\mathbb P^1)\to
(\mathbb A^1,\mathbb P^1)\).  The effectiveness condition in the following
definition is exactly the point at which overconvergence along the boundary
enters.

\begin{definition}[Stabilizer-admissible orbit]
\label{def:stabilizer-admissible-orbit}
The orbit \(O\) is called \emph{stabilizer-admissible at \(x\)} if the
following two conditions hold.

First, the canonical morphism
\[
  \mathcal K_{s,\overline k}\times H_x
  \longrightarrow
  \mathcal K_{s,\overline k}\times_{O_{\overline k}}
  \mathcal K_{s,\overline k},
  \qquad
  (g,h)\longmapsto(g,gh),
\]
is an isomorphism of groupoids over \(\mathcal K_{s,\overline k}\).  Equivalently,
\(q_x\) is a right \(H_x\)-torsor in the topology used for descent.

Second, the morphism of pairs \(q_x^\natural\) is an effective descent morphism
for overconvergent isocrystals.  Thus pullback along \(q_x^\natural\) induces an
equivalence between
\[
  \Isoc^\dagger(O_{\overline k},\overline O_{\overline k})
\]
and the category of overconvergent isocrystals on
\[
  (U_x,\overline U_x)
\]
equipped with descent data for the groupoid
\[
  \mathcal K_{s,\overline k}\times_{O_{\overline k}}
  \mathcal K_{s,\overline k}
  \rightrightarrows
  \mathcal K_{s,\overline k}.
\]
We also require the corresponding Galois descent from \(\overline k\) to \(k\)
to be effective for the objects under consideration.  These are the
effective-descent properties for overconvergent isocrystals on pairs used in
this theorem; see \cite[Sections 7--8]{LeStumOverconvergentSite},
\cite[Theorem 5.1]{LazdaDescent}, and
\cite[\S2]{KedlayaNotesIsocrystals}.
\end{definition}

\begin{remark}
\label{rem:admissibility-is-hypothesis}
Stabilizer-admissibility is an explicit hypothesis in the stabilizer theorem
below.  The main classification theorem is stated intrinsically in terms of
\(\Isoc^\dagger_{\mathcal K}(O,\overline O)\), and no later argument depends on
every orbit being stabilizer-admissible.  When the effective descent statement
for the chosen pair is available, the following result gives the corresponding
stabilizer interpretation.
\end{remark}

Under the torsor identification in
Definition~\ref{def:stabilizer-admissible-orbit}, the descent groupoid is
\[
  \mathcal K_{s,\overline{k}}\times H_x
  \rightrightarrows
  \mathcal K_{s,\overline{k}},
\]
with structure maps
\[
  r_1(g,h)=g h,
  \qquad
  r_2(g,h)=g.
\]

\begin{definition}[Boundary-admissible stabilizer coefficient data]
\label{def:stabilizer-coefficients}
Assume that \(O\) is stabilizer-admissible at \(x\).  We denote by
\(\Coeff^\dagger(H_x)\) the following category.  An object is a pair
\((V,\theta_V)\), where \(V\) is a finite-dimensional coefficient space over
the coefficient field and
\[
  \mathcal E_V:=
  \mathcal O^\dagger_{U_x}\otimes V
\]
is the constant overconvergent isocrystal on the chosen source pair
\[
  (U_x,\overline U_x),
\]
together with an \(H_x\)-descent datum
\[
  \theta_V:
  r_2^*\mathcal E_V
  \xrightarrow{\;\sim\;}
  r_1^*\mathcal E_V
\]
on \(\mathcal K_{s,\overline k}\times H_x\), satisfying the identity and
cocycle conditions.  The descent datum is required to be effective for the pair
\[
  (O_{\overline k},\overline O_{\overline k}),
\]
that is, the descended object must belong to
\[
  \Isoc^\dagger(O_{\overline k},\overline O_{\overline k}).
\]
If the geometric point \(x\) is not defined over \(k\), we further include the
compatible Galois descent datum from \(\overline k\) to \(k\).  Morphisms
\[
  (V,\theta_V)\longrightarrow(W,\theta_W)
\]
are linear maps \(V\to W\) whose induced morphisms
\[
  \mathcal E_V\longrightarrow \mathcal E_W
\]
commute with the \(H_x\)-descent data and with the Galois descent data.
\end{definition}

\begin{remark}
\label{rem:associated-bundle-notation}
When \((V,\theta_V)\in\Coeff^\dagger(H_x)\), we sometimes write
\[
  \mathcal K_{s,\overline k}\times^{H_x}V
\]
for the associated object on \(O_{\overline k}\).  This notation is only an
abbreviation for the object obtained by effective descent from
\(\mathcal E_V\) and \(\theta_V\) in the overconvergent category.  It is not an
ordinary associated vector bundle construction.  The overconvergence along
\(\overline O_{\overline k}\setminus O_{\overline k}\) is part of the descended
object on the pair \((O_{\overline k},\overline O_{\overline k})\).
\end{remark}

\begin{proposition}[Stabilizer description]
\label{prop:stabilizer-description}
Assume that \(O\) is geometrically homogeneous under \(\mathcal K_s\) and
stabilizer-admissible at \(x\) in the sense of
Definition~\ref{def:stabilizer-admissible-orbit}.  Then there is an equivalence
of categories
\begin{equation}
\label{eq:stabilizer-equivalence}
  \Isoc^{\dagger}_{\mathcal K}(O,\overline O)
  \xrightarrow{\;\sim\;}
  \Coeff^{\dagger}(H_x).
\end{equation}
Under this equivalence, irreducible equivariant overconvergent isocrystals
correspond exactly to irreducible boundary-admissible stabilizer coefficient
data.
\end{proposition}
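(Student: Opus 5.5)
The equivalence \eqref{eq:stabilizer-equivalence} will be obtained by composing two descent equivalences. The first is descent along the orbit map \(q_x^\natural\) of \Cref{def:stabilizer-admissible-orbit}. The second is the identification of \(\mathcal K_s\)-equivariant objects on the homogeneous source \(\mathcal K_{s,\overline k}\) with constant objects. We first work over \(\overline k\) and at the end impose Galois descent. Given \((E,\beta_E)\in\Isoc^\dagger_{\mathcal K}(O,\overline O)\), we set \(\mathcal F:=q_x^{\natural*}E_{\overline k}\) on \((U_x,\overline U_x)\). The open part of \(q_x\) is \(q_x=a_O\circ(\operatorname{id},x)\). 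Restricting \(\beta_E\) along \(\mathcal K_{s,\overline k}\times\{x\}\) therefore gives an isomorphism \(\mathcal F\simeq \mathcal O^\dagger_{U_x}\otimes V\), where \(V:=E_x\) is the fiber at \(x\). The cocycle condition for \(\beta_E\) is what makes this trivialization compatible with left translation. The tautological descent datum of \(\mathcal F\) for the groupoid \(\mathcal K_{s,\overline k}\times_{O}\mathcal K_{s,\overline k}\) transports, through the torsor isomorphism \((g,h)\mapsto(g,gh)\), to a datum \(\theta_V:r_2^*\mathcal E_V\simeq r_1^*\mathcal E_V\) on \(\mathcal K_{s,\overline k}\times H_x\). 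It is effective by construction, since it descends to \(E_{\overline k}\). This defines the functor \(E\mapsto(V,\theta_V)\), and functoriality in morphisms is clear.

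Next we construct the quasi-inverse. Given \((V,\theta_V)\), the second clause of stabilizer-admissibility, together with the effectiveness requirement in \Cref{def:stabilizer-coefficients}, produces \(E_V\in\Isoc^\dagger(O_{\overline k},\overline O_{\overline k})\) with \(q_x^{\natural*}E_V\simeq\mathcal E_V\). To equip \(E_V\) with \(\beta\), we pull back to \(\mathcal K_s\times O\) and use the map \(\mathcal K_s\times\mathcal K_s\to\mathcal K_s\times O\), \((g,g')\mapsto(g,g'x)\). This map is again an effective descent cover: it is the base change of \(q_x^\natural\) along the smooth projection, with the boundary realization \(\mathcal K_s\times\overline U_x\). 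On this cover both \(a_O^*E_V\) and \(p_O^*E_V\) become the constant object \(\mathcal O^\dagger\otimes V\), pulled back respectively along \((g,g')\mapsto gg'\) and \((g,g')\mapsto g'\). The identity of \(V\) is compatible with the two induced \(H_x\)-descent data, since both are obtained from \(\theta_V\) by right multiplication, which commutes with left translation. It therefore descends to \(\beta:a_O^*E_V\simeq p_O^*E_V\). The unit and cocycle conditions can be checked after the faithful pullback to \(\mathcal K_s^{\times 3}\), where they reduce to associativity. Galois descent from \(\overline k\) to \(k\) is handled identically, using the third clause of admissibility.

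It remains to check that the two functors are mutually quasi-inverse. Descent is an equivalence, and the trivialization via the fiber at \(x\) is canonical, so both composites are naturally isomorphic to the identity. Since the equivalence is additive and exact, it preserves subobjects. Hence irreducible objects correspond to irreducible ones, and the irreducibility statement follows. Here we use that \Cref{lem:isoc-equivariant-abelian} makes the left side abelian, and that subobjects in \(\Coeff^\dagger(H_x)\) are the effective \(\theta\)-stable subspaces.

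The main obstacle is the boundary in the second step. Descending \(\beta\) requires that the base-changed pair \((\mathcal K_s\times U_x,\mathcal K_s\times\overline U_x)\to(\mathcal K_s\times O,\mathcal K_s\times\overline O)\) still be an effective descent morphism for overconvergent isocrystals. The hypothesis only guarantees this for \(q_x^\natural\) itself. I would first try to use the compatibility of \(q_x^\natural\) with the left \(\mathcal K_s\)-action, which is built into the admissibility hypothesis, to identify this map with a product of \(q_x^\natural\) with a smooth pair. I would then invoke stability of effective descent under smooth base change of frames, as in \cite[Theorem 5.1]{LazdaDescent}. If that fails, I would restrict to the split situation of \Cref{lem:SL2-open-cell-admissible}, where a section makes descent explicit.
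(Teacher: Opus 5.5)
Your proposal follows the paper's proof essentially step for step: pullback along $q_x^\natural$, the torsor identification of the groupoid, and trivialization of $q_x^{\natural,*}E$ by its fiber $E_x=e^*q_x^{\natural,*}E$. For the inverse functor you descend the tautological left structure on $\mathcal O^\dagger_{U_x}\otimes V$; Galois descent comes from the hypothesis, and irreducibles are read off by exactness.

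The step you flag as the main obstacle is exactly the one the paper does not prove but assigns to the hypothesis. Stabilizer-admissibility requires the pair realization $q_x^\natural$ to be compatible with the left $\mathcal K_{s,\overline k}$-action. The paper then simply asserts that the left structure ``descends by the compatibility built into the pair-level realization''. You are right that effectivity of $q_x^\natural$ alone does not formally give descent along $\operatorname{id}\times q_x^\natural$. So treat that compatibility clause as including this descent, rather than appealing to a base-change theorem or falling back to the split case, which would not prove the general statement. Note also that descending the morphism $\beta$ only needs pullback along $\operatorname{id}\times q_x^\natural$ to be fully faithful on descent data, not effective.

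One further tacit point is shared with the paper. Compatibility of $\operatorname{id}_V$ with the two induced descent data means $\theta_V(gg',h)=\theta_V(g',h)$, i.e.\ left invariance of $\theta_V$ in the first variable. This is not part of Definition~\ref{def:stabilizer-coefficients}, and the phrase ``right multiplication commutes with left translation'' does not supply it. It does hold, for example, when $\mathcal K_{s,\overline k}$ is connected, since a morphism between constant isocrystals is locally constant.
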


\begin{proof}
We first work over \(\overline k\).  By stabilizer-admissibility, pullback along
\(q_x^\natural\) identifies overconvergent isocrystals on
\((O_{\overline k},\overline O_{\overline k})\) with overconvergent isocrystals
on the chosen source pair \((U_x,\overline U_x)\) equipped with descent data for
\[
  \mathcal K_{s,\overline k}\times_{O_{\overline k}}
  \mathcal K_{s,\overline k}
\rightrightarrows \mathcal K_{s,\overline k}.
\]
The torsor condition identifies this groupoid with
\[
  \mathcal K_{s,\overline k}\times H_x
\rightrightarrows
  \mathcal K_{s,\overline k},
  \qquad
  r_1(g,h)=gh,
  \quad r_2(g,h)=g.
\]
Thus the only point to check is how the \(\mathcal K_{s,\overline k}\)-equivariant
structure is read after this pullback.

Let \((E,\beta_E)\) be a \(\mathcal K_{s,\overline k}\)-equivariant
overconvergent isocrystal on the pair.  Pulling back along \(q_x^\natural\)
gives an object
\[
  F:=q_x^{\natural,*}E\in \Isoc^\dagger(U_x,\overline U_x)
\]
with a left \(\mathcal K_{s,\overline k}\)-equivariant structure.  Since the
open part of \(U_x\) is \(\mathcal K_{s,\overline k}\) and the pair-level
realization is assumed compatible with left translations, this left-equivariant
structure canonically trivializes \(F\) from its fiber at the identity:
\[
  F\simeq \mathcal O^\dagger_{U_x}\otimes V,
  \qquad V:=e^*F.
\]
The descent datum for the torsor, transported through
\(\mathcal K_{s,\overline k}\times H_x\simeq
\mathcal K_{s,\overline k}\times_{O_{\overline k}}\mathcal K_{s,\overline k}\),
becomes precisely an \(H_x\)-descent datum
\[
  \theta_V:r_2^*(\mathcal O^\dagger_{U_x}\otimes V)
  \xrightarrow{\sim}
  r_1^*(\mathcal O^\dagger_{U_x}\otimes V).
\]
This defines the functor
\[
  \Phi:
  \Isoc^\dagger_{\mathcal K_{s,\overline k}}(O_{\overline k},\overline O_{\overline k})
  \longrightarrow \Coeff^\dagger(H_x).
\]

Conversely, an object \((V,\theta_V)\in\Coeff^\dagger(H_x)\) is, by definition,
a constant overconvergent isocrystal \(\mathcal O^\dagger_{U_x}\otimes V\) on
the chosen source pair with an effective descent datum for \(q_x^\natural\).
Effective descent gives an overconvergent isocrystal \(E_V\) on
\((O_{\overline k},\overline O_{\overline k})\).  The tautological left
\(\mathcal K_{s,\overline k}\)-equivariant structure on the constant object
commutes with the right \(H_x\)-descent datum, because left and right
translations commute on the open torsor; by the compatibility built into the
pair-level realization, it descends to a \(\mathcal K_{s,\overline k}\)-equivariant
structure on \(E_V\).  This gives the functor
\[
  \Psi:\Coeff^\dagger(H_x)
  \longrightarrow
  \Isoc^\dagger_{\mathcal K_{s,\overline k}}(O_{\overline k},\overline O_{\overline k}).
\]

The two functors are quasi-inverse because, after pullback along
\(q_x^\natural\), both composites identify the same constant object together
with the same torsor descent datum.  Full faithfulness and essential surjectivity
are exactly the effective descent condition in
Definition~\ref{def:stabilizer-admissible-orbit}.  Finally, the passage from
\(\overline k\) to \(k\) is part of the same stabilizer-admissibility hypothesis:
we require the relevant Galois descent data to be effective.  Exactness of
pullback, descent, and the fiber functor then identifies irreducible objects on
both sides.
\end{proof}

\begin{remark}
\label{rem:stabilizer-description-use}
The equivalence \eqref{eq:stabilizer-equivalence} is the precise arithmetic
analogue of the classical equivalence between equivariant local systems on a
homogeneous space and representations of the stabilizer.  The boundary
admissibility in Definition~\ref{def:stabilizer-coefficients} is the arithmetic
feature: it records the overconvergence condition along
\(\overline O\setminus O\).  The abstract orbit classification below does not
require choosing stabilizer data, but when the orbit is stabilizer-admissible
the right-hand side of the classification can be read orbit by orbit through
\(\Coeff^\dagger(H_x)\).
\end{remark}

\section{Logical structure of the proof}
\label{app:logical-structure}

This appendix records the logical structure of the proof in a textual form.
It is included to make the architecture of the argument transparent without
introducing additional notation.  The main point is that the proof separates
three mechanisms which are often intertwined in the classical theory: a
microlocal mechanism, a stratified devissage mechanism, and a localization
mechanism.

The first mechanism is microlocal.  Filtered strong equivariance gives two
infinitesimal actions of \(\operatorname{Lie}(\mathcal K)\otimes_{\mathcal V}K\)
on a coherent arithmetic \(\mathscr D^\dagger\)-module \(\mathcal M\): one
obtained by differentiating the equivariant structure, and one obtained by the
fundamental vector fields on \(\mathfrak X\).  The defining conditions
\textup{(SE1)--(SE2)} require these two actions to agree and require this
agreement to be realized on a good finite-level model with a good filtration
stable under the differentiated infinitesimal action.  From these defining
conditions, \Cref{lem:symbols-fundamental-fields} deduces that the principal
symbols of all fundamental vector fields annihilate
\(\operatorname{gr}\mathcal M\).  Equivalently,
\[
  \Car(\mathcal M)
  \subseteq
  \mu_{\mathcal K}^{-1}(0),
\]
where \(\mu_{\mathcal K}:T^*X_s\to\mathfrak k_s^*\) is the moment map of the
special-fiber action.  As in \Cref{lem:moment-zero-conormals}, at the level of reduced underlying
closed subsets one has
\[
  \mu_{\mathcal K}^{-1}(0)
  =
  \bigcup_{O\in\mathcal K_s\backslash X_s}T^*_O X_s.
\]
The finite-orbit and orbit-separability hypotheses therefore turn the moment-zero containment into a
finite conormal containment.  Bernstein's inequality then gives holonomicity.

The second mechanism is stratified.  Once holonomicity is known, one isolates the maximal coherent submodule
supported on a closed \(\mathcal K_s\)-stable subset \(Z\subseteq X_s\):
\[
  \Gamma_Z(\mathcal M)\subseteq \mathcal M.
\]
This is a noetherian construction in the holonomic coherent category, and it has
the universal property specified in \Cref{app:t-structure}.  If \(O\) is
an open orbit in \(\Supp(\mathcal M)\) and
\(Z=\Supp(\mathcal M)\setminus O\), then
\[
  Q=\mathcal M/\Gamma_Z(\mathcal M)
\]
has the same restriction to \(O\) as \(\mathcal M\) and no nonzero subobject
supported on the boundary.  The conormal containment restricts to the zero
section over \(O\); hence \(Q|_O\), and therefore \(\mathcal M|_O\), is a
Frobenius equivariant overconvergent isocrystal.  The middle-extension
comparison extracts from \(Q\) a canonical open-orbit piece
\(j_{O,!+}(Q|_O)\), and the boundary-supported remainder is handled by induction
on the number of orbits.  After the Frobenius regularity theorem has placed the
object in the overholonomic heart, this proves the equivariant orbital
devissage: every Frobenius filtered strongly equivariant coherent module admits
a finite filtration whose graded pieces are of the form
\[
  j_{O,!+}(E),
  \qquad
  [E]\in
  \Adm^{\Phi,\fstr}_{\mathcal K}(O,\overline O),
\]
where the coefficient objects \(E\) need not be simple.  Thus the devissage is
a structural filtration mechanism, not the source of the Frobenius
holonomic-to-overholonomic upgrade.

The third mechanism is classificatory.  If \(\mathcal M\) is simple, its
support is the closure of a unique open \(\mathcal K_s\)-orbit \(O\).  The
restriction \(\mathcal M|_O\) is then irreducible in the Frobenius equivariant
isocrystal category, and the minimality of intermediate extension gives
\[
  \mathcal M
  \simeq
  j_{O,!+}(\mathcal M|_O).
\]
In the opposite direction, the intermediate extension of an irreducible
Frobenius equivariant overconvergent isocrystal whose class lies in the admissible
locus \(\Adm^{\Phi,\fstr}\) is simple in the filtered strongly equivariant
category.
This proves the orbit classification.  Finally, arithmetic localization at
trivial character transports the same classification to the crystalline
distribution algebra
\[
  A_0=
  \Gamma\bigl(\mathfrak X,
  \mathscr D^\dagger_{\mathfrak X,\mathbb Q}\bigr).
\]

The proof therefore has the following dependency order.  The definition of
filtered strong equivariance is used first to obtain moment-zero characteristic
containment.  Moment-zero containment, together with finite orbit type and the
orbit-separability hypothesis, gives holonomicity.  In the Frobenius range,
holonomicity is upgraded to geometric overholonomicity by Caro's stability of Frobenius holonomicity over smooth projective formal schemes.  The noetherian support truncation is used only after
holonomicity is known; the overholonomic adjunctions are used only after the
Frobenius object has been placed in the overholonomic heart.  Restriction to the
open orbit produces an equivariant overconvergent \(F\)-isocrystal, intermediate
extension extracts the corresponding open-orbit piece, and induction on the
number of orbits yields the equivariant devissage.  The classification theorem and its distribution-side form are
consequences of this devissage together with the minimality of \(j_{!+}\) and
arithmetic Beilinson--Bernstein localization.

\section{The relevant overholonomic t-structure}
\label{app:t-structure}

This appendix fixes the categorical convention used whenever the main text
speaks about images, kernels, quotients, intermediate extensions and support
truncations.  Two abelian environments occur in the proof, and they must not be
conflated.  Before the Frobenius regularity theorem is invoked, support
truncation is used only in the holonomic coherent category.  Intermediate
extensions, their adjunctions and their minimality are used only after the
object under consideration has been placed in the overholonomic heart.  The
proof does not require forming an intersection between an overholonomic object
and a merely holonomic object.

Let \((Y,\overline Y)\) be a smooth locally closed couple realized inside the
fixed smooth proper formal scheme \(\mathfrak X\).  We write
\[
  D^b_{\mathrm{ovhol}}(Y/K)
\]
for the corresponding triangulated category of geometrically overholonomic
complexes, stable after extension of the base field, and we denote by
\[
  \Ovhol(Y/K)
\]
the heart of its canonical t-structure.  This is the heart used in
\cite[\S2.1, first paragraphs]{HuygheSchmidtIntermediate}; its abelian and
extension-closed nature is used repeatedly below.  When \(Y=X_s\), the notation
agrees with the category of geometrically overholonomic arithmetic
\(\mathscr D^\dagger_{\mathfrak X,\mathbb Q}\)-modules used in the body of the
paper.

All images in the definition of intermediate extension are images in this
overholonomic heart.  In the notation of Huyghe--Schmidt, if \(u:Y\to Y'\) is a
\(c\)-immersion and \(E\in\Ovhol(Y/L)\), then
\[
  u_{!+}(E)=\operatorname{Im}_{\Ovhol(Y'/L)}
  \bigl(\theta^0_{u,E}:u^0_!E\longrightarrow u^0_+E\bigr),
\]
where \((- )^0=H^0_t(-)\) is taken with respect to the canonical
\(t\)-structure.  For \(c\)-affine immersions, the functors \(u_!\) and
\(u_+\) are \(t\)-exact, and the same construction is literally the image of
\(u_!E\to u_+E\) in the heart.  For the locally closed couples occurring in the
classification theorem, Huyghe--Schmidt use the associated \(c\)-locally closed
immersion and the same degree-zero convention; see
\cite[\S2.2, definition of \(u_{!+}\), and \S2.3]{HuygheSchmidtIntermediate}.  Consequently, if
\(j:U\hookrightarrow Y\) is a smooth locally closed immersion of couples and
\(E\in\Ovhol(U/K)\) is a coefficient object in the overholonomic heart--in
particular in the Frobenius coefficient range used in the classification--the
notation \(j_!E\), \(j_+E\), and \(j_{!+}(E)\) in the body of the paper refers
to these degree-zero objects and images whenever intermediate extension or
minimality is invoked.  The phrase
``no boundary subobject or quotient'' means no such object in the abelian heart
\(\Ovhol(Y/K)\).

For a closed subset \(Z\subseteq Y\), the support truncation used before any
overholonomic adjunction is invoked is the maximal coherent submodule
\[
  \Gamma_Z(\mathcal M)\subseteq \mathcal M
\]
supported on \(Z\).  Whenever this notation is used in the proof, the object
under consideration has already been placed in the holonomic range by the
moment-map argument.  The construction is noetherian: every coherent subobject
supported on \(Z\) factors through \(\Gamma_Z(\mathcal M)\).

The open-orbit extraction step is deliberately performed only after the
Frobenius regularity theorem has placed the object in the overholonomic heart.
For the quotient \(Q=\mathcal M/\Gamma_Z(\mathcal M)\), the morphisms
\[
  j^0_!(Q|_O)\longrightarrow Q,
  \qquad
  Q\longrightarrow j^0_+(Q|_O)
\]
are therefore the standard adjunction morphisms in \(\Ovhol(Y/K)\).  The second
map is a monomorphism because its kernel is supported on the boundary and
\(Q\) has no nonzero subobject supported there.  The image of the first map is
then identified, through this monomorphism, with the defining image of
\(j^0_!(Q|_O)\to j^0_+(Q|_O)\), namely \(j_{O,!+}(Q|_O)\).  Thus no intersection
of an overholonomic object with a merely holonomic object is used, and no
uncited ambient subquotient-stability assertion is required.

Finally, the equivariant structures do not change the ambient abelian category.
A filtered strongly equivariant object is an object of the underlying holonomic or
overholonomic heart, according to the stage of the proof, together with the
additional equivariant data of \Cref{def:strong-equivariance}.  Kernels,
cokernels, images and quotients in the equivariant category are computed in the
underlying heart and then endowed with the induced equivariant structure.  This
is the convention used in \Cref{lem:strong-equivariant-abelian},
\Cref{lem:invariant-support-truncation},
\Cref{lem:middle-extension-extraction}, and
\Cref{prop:equivariant-orbital-devissage}.


\section*{Notation index}
\addcontentsline{toc}{section}{Notation index}

\begin{center}
\begin{tabularx}{\textwidth}{@{}lX@{}}
\toprule
Symbol & Meaning \\
\midrule
\(\mathcal V\) & Complete discrete valuation ring of mixed characteristic \((0,p)\). \\
\(K\) & Fraction field of \(\mathcal V\). \\
\(k\) & Perfect residue field of \(\mathcal V\). \\
\(\mathcal G\) & Split connected reductive group scheme over \(\mathcal V\). \\
\(\mathcal B\) & Fixed Borel subgroup scheme of \(\mathcal G\). \\
\(\mathfrak X\) & Formal flag variety \(\widehat{\mathcal G/\mathcal B}\). \\
\(X_s\) & Special fiber of \(\mathcal G/\mathcal B\). \\
\(\mathcal K\) & Smooth closed subgroup scheme of \(\mathcal G\). \\
\(\mathfrak K\) & Formal completion \(\widehat{\mathcal K}\). \\
\(\mathcal K_s\) & Special fiber of \(\mathcal K\), acting on \(X_s\). \\
\(\mathscr D^\dagger_{\mathfrak X,\mathbb Q}\) & Berthelot's sheaf of overconvergent arithmetic differential operators. \\
\(\Car(\mathcal M)\) & Characteristic variety of a coherent arithmetic \(\mathscr D^\dagger\)-module. \\
\(\Supp(\mathcal M)\) & Support of \(\mathcal M\) on the special fiber. \\
\(\Gamma_Z(\mathcal M)\) & Maximal coherent submodule of \(\mathcal M\) supported on \(Z\), for holonomic \(\mathcal M\). \\
\(\Isoc^\dagger(Y,\overline Y)\) & Overconvergent isocrystals on the pair \((Y,\overline Y)\). \\
\(j_{O,!+}(E)\) & Arithmetic intermediate extension of \(E\) from the orbit \(O\). \\
\(A_0\) & Global section algebra \(\Gamma(\mathfrak X,\mathscr D^\dagger_{\mathfrak X,\mathbb Q})\). \\
\bottomrule
\end{tabularx}
\end{center}


\end{document}